\newcommand{\R}{\mathbb{R}}
\newcommand{\C}{\mathbb{C}}
\newcommand{\N}{\mathbb{N}}
\newcommand{\D}{\mathcal{D}}
\newcommand{\tr}{\boldsymbol{t}_\Sigma}
\newcommand{\dom}{\textup{dom }}
\newcommand{\ran}{\textup{ran }}
\newcommand{\sign}{\textup{sign}}
\newcommand{\supp}{ \textup{supp }}
\newcommand{\abs}[1]{\left\lvert{#1}\right\rvert}
\newcommand{\norm}[1]{{\left\lVert{#1}\right\rVert}}
\newcommand{\spv}[2]{{\left({#1},{#2}\right)}}
\newcommand{\spf}[2]{{\left\langle{#1},{#2}\right\rangle}}
\theoremstyle{plain}
\newtheorem{theorem}{Theorem}[section]
\newtheorem{lemma}[theorem]{Lemma}
\newtheorem{corollary}[theorem]{Corollary}
\newtheorem{proposition}[theorem]{Proposition}
\theoremstyle{definition}
\newtheorem{definition}[theorem]{Definition}
\newtheorem{hypothesis}[theorem]{Hypothesis}
\theoremstyle{remark}
\newtheorem{remark}[theorem]{Remark}
\numberwithin{equation}{section}
\begin{document}

\title[Approximation of Dirac operators with $\boldsymbol{\delta}$-potentials]{Approximation of Dirac operators with $\boldsymbol{\delta}$-shell potentials in the norm resolvent sense, I. Qualitative results}

\author[J. Behrndt]{Jussi Behrndt}
 \address{Technische Universit\"{a}t Graz\\
 Institut f\"ur Angewandte Mathematik\\
 Steyrergasse 30\\
 8010 Graz, Austria}
 \email{behrndt@tugraz.at}
% \urladdr{\url{https://www.math.tugraz.at/~behrndt/}}
% 

\author[M. Holzmann]{Markus Holzmann}
 \address{Technische Universit\"{a}t Graz\\
 Institut f\"ur Angewandte Mathematik\\
 Steyrergasse 30\\
 8010 Graz, Austria}
 \email{holzmann@math.tugraz.at}
% \urladdr{\url{https://www.math.tugraz.at/~behrndt/}}

\author[C. Stelzer]{Christian Stelzer}
 \address{Technische Universit\"{a}t Graz\\
 Institut f\"ur Angewandte Mathematik\\
 Steyrergasse 30\\
 8010 Graz, Austria}
 \email{christian.stelzer@tugraz.at}
% \urladdr{\url{https://www.math.tugraz.at/~behrndt/}}

\maketitle
% 
% \title{On the approximation of Dirac operators with $\boldsymbol{\delta}$-shell potentials in the norm resolvent sense}
% 
% \author{Jussi~Behrndt\footnote{
% Institut f\"{u}r Angewandte Mathematik,
% Technische Universit\"{a}t Graz,
% Steyrergasse 30, A 8010 Graz, Austria,
% \texttt{behrndt@tugraz.at, holzmann@math.tugraz.at, christian.stelzer@tugraz.at}} \and Markus Holzmann$^*$ \and
% Christian Stelzer$^*$}

% \address{Institut f\"ur Angewandte Mathematik\\
% Technische Universit\"at Graz \\
% Steyrergasse 30\\
% 8010 Graz \\
% Austria}
% \email{behrndt@tugraz.at, holzmann@math.tugraz.at, christian.stelzer@tugraz.at}

% 
% 
% \maketitle
\begin{abstract}
 	In this paper the approximation of Dirac operators with general $\delta$-shell potentials supported on 
	 	$C^2$-curves in $\R^2$ or $C^2$-surfaces in $\R^3$, which may be bounded or unbounded, is studied. It is shown under suitable conditions on the weight of the $\delta$-interaction that a family of Dirac operators with regular, squeezed potentials converges in the norm resolvent sense to the Dirac operator with the $\delta$-shell interaction.
\end{abstract}

% \tableofcontents
%------------------------------------------------------------------------

%%%%%%%%%%%%%%%%%%%%%%%%%%%%%%%%%%%%%%%%%%%%%%%%%%%%%%%%%%%%%%%%%%%%%%%%%%
%%%%%%%%%%%%%%%%%%%%%%%%%%%%%%%%%%%%%%%%%%%%%%%%%%%%%%%%%%%%%%%%%%%%%%%%%%

%\cs{$f,g,h$: Funktionen in Bochner Räumen; $u,v,w$: Funktionen auf $\Omega_\pm$ und $\R$; $\varphi,\psi$: Funktionen am Rand}

\section{Introduction}\label{sec_intro}

In mathematical physics, singular potentials supported on a set $\Sigma$ of measure zero are often used as replacements of potentials that have large values in a vicinity of $\Sigma$ and small values elsewhere, assuming that such idealized models have similar (spectral) properties as the original ones. Nonrelativistic Schr\"odinger operators with $\delta$-potentials supported on points were already considered in the early days of quantum mechanics \cite{dKP31, T35}, see also the monograph \cite{AGHH05}, and then later also Schr\"odinger operators with $\delta$-potentials supported on curves in $\mathbb{R}^2$ or $\mathbb{R}^3$, surfaces in $\mathbb{R}^3$, and similar structures in higher dimensions were investigated, see, e.g., \cite{BLL13, BEKS94, E08} and the references therein. Concerning relativistic Dirac operators with a singular $\delta$-potential, the one-dimensional case was first studied in the 1980s \cite{AGHH05, GS87},  and further investigated in \cite{BD94,BMP17,CMP13,H99,PR14}. Recently also two and three-dimensional Dirac operators coupled with $\delta$-shell potentials supported on general curves in $\mathbb{R}^2$ and surfaces in~$\mathbb{R}^3$ were treated; cf. \cite{AMV14, AMV15,BHOP20, BHSS22,CLMT21} and the discussion below.

In all of the above situations, it is necessary to justify that the differential operators with the singular interactions have indeed similar properties as the original models with regular potentials. One way to do this is to prove suitable approximation results: One constructs a family of potentials that converge to the idealized singular interaction and shows that the associated differential operators also converge in the norm resolvent sense to the idealized operator -- then also the spectral properties of the approximating and the idealized models are approximately the same \cite{kato, RS72}. For Schr\"odinger operators with singular potentials this approximation problem has been solved under the assumption that the interaction support is a sufficiently smooth hypersurface in $\mathbb{R}^\theta$, $\theta \geq 1$; cf. \cite{BEHL17} and the references therein.

The literature on the approximation of Dirac operators with singular potentials is less complete than for their nonrelativistic counterparts. Choose units such that the speed of light $c$ and the reduced Planck constant $\hbar$ are both equal to one. Then, the Dirac operator with a singular $\delta$-potential in $\mathbb{R}^\theta$, $\theta \in \{ 1, 2, 3 \}$, is formally given by
\begin{equation}\label{dirac111}
-i \sum_{j=1}^\theta \alpha_j \partial_j + m \beta + \widetilde{V} \delta_\Sigma, 
\end{equation}
where $\delta_\Sigma$ denotes the $\delta$-distribution supported on a point in $\mathbb{R}$ or the boundary $\Sigma$ of a bounded or unbounded domain in $\R^2$ or $\R^3$,
$\widetilde V$ is a matrix-valued function on $\Sigma$ modeling the position-dependent strength of the singular interaction, $m\in\R$,
and $\alpha_1, \dots ,\alpha_\theta,$ and $\beta$ denote the Dirac matrices defined in~\eqref{def_Dirac_matrices_2d}--\eqref{def_Dirac_matrices_3d} below (in dimension $\theta=1$ one can use the same choice as for $\theta = 2$).  
% The spectral analysis of Dirac operators coupled with singular $\delta$-shell potentials has attracted a lot of attention in the recent past; we refer the reader to \cite{AMV14, BEHL19, BHOP20, Ben21, BD94, BMP18,CMP13, CLMT21,GS87, H99, OV16} and the references therein. Such type of operators are formally given by 
% \begin{equation}\label{dirac111}
% -i(\alpha \cdot \nabla) + m \beta + \widetilde{V} \delta_\Sigma, 
% \end{equation}
% where $\delta_\Sigma$ denotes the $\delta$-distribution supported on the boundary $\Sigma$ of a bounded or unbounded domain in $\R^2$ or $\R^3$,
% $\widetilde V$ is a matrix-valued function on $\Sigma$ modeling the position-dependent strength of the singular interaction, $m\in\R$,
% % and $\alpha,\beta$ denote the Dirac matrices. The self-adjoint operator $H_{\widetilde V}$ in \eqref{def_H_Vtilde} associated with \eqref{dirac111} in 
% $L^2$ is regarded as an idealization of a Dirac operator coupled with a strongly localized potential. However, in 
% order to justify this viewpoint and the applicability of the model it is necessary to prove corresponding approximation results. In fact,
% the natural goal is to show that the singular Dirac operator $H_{\widetilde V}$ can be
% approximated in the norm resolvent sense by regular Dirac operators $H_\varepsilon=
% % -i(\alpha \cdot \nabla) + m \beta + V_\varepsilon$ with strongly localized potentials $V_\varepsilon$ converging (up to a certain rescaling due to  
% Klein's paradox) to $\widetilde V$ for $\varepsilon\rightarrow 0$.
The first approximation result in dimension one goes back to \v{S}eba \cite{S89}, where the norm resolvent convergence for so-called electrostatic (i.e. $\widetilde{V} = \widetilde{\eta} I_2$) and Lorentz scalar (i.e. $\widetilde{V} = \widetilde{\tau} \beta$) interactions supported on a point was proved. 
In \cite{S89} it already turned out that one has to renormalize the interaction strength in order to obtain convergence, i.e. when $v_\varepsilon \rightarrow \eta \delta_\Sigma$ in the distributional sense, then $-i \alpha_1 \partial_x + m \beta + v_\varepsilon I_2$ does not converge to $-i \alpha_1 \partial_x + m \beta + \eta I_2 \delta_\Sigma$, but to a similar operator, where the coefficient of $\delta_\Sigma$ depends in a nonlinear way on $\eta$; for Lorentz scalar potentials a similar effect appears as well. \v{S}eba suggested that this renormalization is related to Klein's paradox for the Dirac equation. 
Later Hughes proved in \cite{H97,H99} strong resolvent convergence for general point interactions and also determined the exact form of the renormalization
in the one-dimensional situation. Recently, Tu\v{s}ek proved norm resolvent convergence in the one-dimensional case for Dirac operators with general point interactions in \cite{T20}.
In the two and three-dimensional setting the approximation problem is not as well-studied as in dimension one and so far 
there exist only results on strong resolvent convergence. In dimension three the approximation of Dirac operators with purely electrostatic or Lorentz scalar $\delta$-shell potentials that fulfil a certain smallness condition supported on boundaries of bounded domains was investigated in \cite{MP17,MP18}. The approximation of two-dimensional Dirac operators with electrostatic, Lorentz scalar, and anomalous magnetic $\delta$-shell potentials supported on closed bounded curves and on a straight line was considered in \cite{CLMT21} and \cite{BHT22b}, respectively. We note that also in the two and three-dimensional setting a renormalization of the interaction strength was observed in \cite{BHT22b, CLMT21,MP18}.

In this paper we study the approximation problem for two and three-dimensional Dirac operators with $\delta$-shell potentials and improve the present
state of art in the following three ways: Under suitable conditions we show (i) norm resolvent convergence on (ii) bounded and unbounded supports $\Sigma$ with (iii) general position-dependent interaction strengths $\widetilde V$. More precisely:
\begin{itemize}
 \item [{\rm (i)}] Instead of strong resolvent convergence we prove the norm resolvent convergence of the approximating family, which has not been 
 established in the multidimensional situation so far. This type of convergence ensures that the spectrum of the limit operator $H_{\widetilde V}$ can be completely  characterized by the spectra of the approximating operators and it also implies the convergence of the related spectral projections.
 \item [{\rm (ii)}] Instead of bounded curves in $\R^2$, the straight line in $\R^2$, or bounded surfaces in $\R^3$, we treat a general class 
 of bounded and unbounded interaction supports $\Sigma$ which can be described by finitely many rotated graphs of $C^2$-functions with bounded
 derivatives
 (see Hypothesis~\ref{hypothesis_Sigma}) and which includes, in particular, graphs of $C^2$-functions with bounded derivatives and boundaries of bounded  $C^2$-domains.
 \item [{\rm (iii)}] Instead of considering only electrostatic, Lorentz scalar, and anomalous magnetic interactions (which can be described by three real-valued functions) we allow general symmetric $2\times2$ or $4 \times 4$ matrix-valued functions as interaction strengths in dimensions two or three, respectively, and  provide an explicit formula for the nonlinear renormalization when passing to the limit.
\end{itemize}

In the following we explain the approximation procedure and our main result in more detail. For this some notation needs to be fixed. 
We denote the space dimension by $\theta \in \{ 2, 3\}$ and set $N = 2$ for $\theta=2$ and $N = 4$ for $\theta = 3$. Let $\Sigma \subset \mathbb{R}^\theta$ be the boundary of a bounded or unbounded $C^2$-domain $\Omega_+ \subset \R^{\theta}$ that satisfies 
Hypothesis~\ref{hypothesis_Sigma}. The curve or surface $\Sigma$ splits $\mathbb{R}^\theta$ into two disjoint parts $\Omega_+$ and $\Omega_- := \R^{\theta} \setminus \overline{\Omega_+}$, and $\nu$ is the unit normal vector field at $\Sigma$ pointing outwards of $\Omega_+$. 
For  $u: \mathbb{R}^\theta \rightarrow \mathbb{C}^N$ we write $u_\pm := u \upharpoonright \Omega_\pm$ and we denote the Dirichlet trace operator by $\tr^\pm: H^1(\Omega_\pm; \mathbb{C}^N) \rightarrow H^{1/2}(\Sigma; \mathbb{C}^N)$, where $H^s$ are the $L^2$-based Sobolev spaces. Moreover, $\alpha_1, \dots, \alpha_\theta, \beta \in \mathbb{C}^{N \times N}$ are the Dirac matrices defined in~\eqref{def_Dirac_matrices_2d}--\eqref{def_Dirac_matrices_3d} and we make use of the notations
\begin{equation*}
  \alpha \cdot \nabla := \sum_{j = 1}^\theta \alpha_j \partial_j \quad \text{and} \quad \alpha \cdot x := \sum_{j = 1}^\theta \alpha_j x_j, \quad x = (x_1, \dots, x_\theta) \in \C^\theta.
\end{equation*}
We introduce for $m \in \R$ and $\widetilde{V} \in L^\infty(\Sigma;\C^{N\times N})$  such that $\widetilde{V}(x_\Sigma) = (\widetilde{V}(x_\Sigma))^*$ for $\sigma$-a.e. $x_\Sigma \in \Sigma$ in $L^2(\mathbb{R}^\theta; \mathbb{C}^N)$ the operator
\begin{equation} \label{def_H_Vtilde}
	\begin{split}
        H_{\widetilde{V}}u & := (-i(\alpha \cdot \nabla) + m \beta) u_+ \oplus (-i(\alpha \cdot \nabla ) + m \beta) u_-, \\
		\dom H_{\widetilde{V}}&:= \biggl\{ u \in H^1 (\Omega_+;\C^N) \oplus H^1(\Omega_-; \mathbb{C}^N): \\
		&\qquad \qquad i(\alpha \cdot \nu )(\tr^+ u_+  - \tr^- u_-) +  \frac{\widetilde{V}}{2}(\tr^+ u_+  + \tr^- u_-) =0 \biggr\}.
	\end{split}
\end{equation}
This  is a rigorous definition of  the formal operator \eqref{dirac111}, whose self-adjointness and spectrum  was studied under various assumptions on the coefficients $\widetilde{V}$ and the interaction support $\Sigma$ in \cite{AMV14, AMV15,BEHL18, BEHL19, BHOP20, BHSS22, Ben21,  CLMT21} by means of boundary triples and potential operators. Furthermore, in \cite{R21, R22a}  a sufficient condition for the self-adjointness of $H_{\widetilde{V}}$ was derived for a wide class of bounded and unbounded $C^2$-smooth interaction supports and bounded $C^1$-smooth interaction matrices. Under the general assumptions that we are going to make in this paper even the self-adjointness of $H_{\widetilde{V}}$ is not yet established, but
will follow as a byproduct of our main result below as $H_{\widetilde{V}}$ is the norm resolvent limit of a sequence of self-adjoint operators.

It is the main goal of the present paper to show that the operator $H_{\widetilde{V}}$ in~\eqref{def_H_Vtilde} can be approximated in the norm resolvent sense by Dirac operators with strongly localized potentials. To introduce the latter operators, define the map 
\begin{equation}\label{eq_def_iota}
	\begin{split}
	\iota: \Sigma \times \R \to \R^{\theta}, \qquad \iota (x_\Sigma,t) := x_\Sigma + t\nu(x_\Sigma),
	\end{split}
\end{equation}
and for $\varepsilon > 0$ the set
\begin{equation} \label{def_Omega_eps}
 \Omega_\varepsilon:= \iota(\Sigma \times (-\varepsilon,\varepsilon)), \quad \varepsilon>0.
\end{equation} 
We call $\Omega_\varepsilon$ \textit{tubular neighbourhood of $\Sigma$}. Hypothesis~\ref{hypothesis_Sigma} and Proposition \ref{prop_tubular_neighbourhood} imply that there exists $\varepsilon_1 > 0$ such that $\iota\vert_{\mathbb{R} \times (-\varepsilon_1, \varepsilon_1)} $ is injective and hence, all points in  $\Omega_{\varepsilon_1}$ can be uniquely described by the map $\iota$. 
To define the above mentioned strongly localized potentials choose 
\begin{equation}\label{eq_q}
	q \in L^{\infty}((-1,1);\R) \quad \text{with} \quad  \int_{-1}^{1} q(s)  \, ds = 1
\end{equation}
and
\begin{equation}\label{eq_V}
V \in W^1_\infty(\Sigma;\C^{N \times N}) \quad  \text{such that} \quad  V(x_\Sigma) = (V(x_\Sigma))^* \text{ for } \sigma \text{-a.e. } x_\Sigma \in \Sigma,
\end{equation} 
where $W^1_\infty$ denotes the first order $L^\infty$-based Sobolev space and $\sigma$ is the Hausdorff measure on $\Sigma$.
Since $\iota\vert_{\mathbb{R} \times (-\varepsilon_1, \varepsilon_1)}$ is injective, we can define for $\varepsilon \in (0, \varepsilon_1)$
\begin{equation}\label{eq_short_range_potential}  
	V_\varepsilon(x) := 
	\begin{cases}
		\frac{1}{\varepsilon}V(x_\Sigma)q\bigl(\frac{t}{\varepsilon}\bigr),& \text{if } x = \iota(x_\Sigma,t) \in \Omega_\varepsilon,\\
		0,& \text{if } x \notin \Omega_\varepsilon,
	\end{cases}
\end{equation} 
and for $m \in \R$ and $\varepsilon \in (0, \varepsilon_1)$ the operator
\begin{equation}\label{eq_H_eps}
	\begin{split}
		H_\varepsilon u &:= -i(\alpha \cdot \nabla) u + m \beta u + V_\varepsilon  u, \quad \dom H_\varepsilon :=  H^1(\R^\theta;\C^N).
	\end{split}
\end{equation}
Note that $H_\varepsilon$ is self-adjoint in $L^2(\mathbb{R}^\theta; \mathbb{C}^N)$, as the function  $V_\varepsilon \in L^\infty(\mathbb{R}^\theta; \mathbb{C}^{N \times N})$ is symmetric and the unperturbed Dirac operator $H = H_\varepsilon - V_\varepsilon$ defined on $H^1(\R^\theta;\C^N)$ is self-adjoint in
$L^2(\mathbb{R}^\theta; \mathbb{C}^N)$; cf. Section~\ref{sec_free_Dirac}.

The sequence $V_\varepsilon$ converges to $V \delta_\Sigma $ in the sense of distributions. However, as in \cite{BHT22b,CLMT21,H97,H99,MP18,S89,T20} the sequence $H_\varepsilon$ does not converge to $H_V$, but to a similar operator with a renormalized interaction strength $\widetilde{V}$. In the present situation this renormalization is expressed with the help of the matrix-valued function
\begin{equation}\label{eq_scaling_matrix}
S = \textup{sinc}\bigl(\tfrac{1}{2}(\alpha \cdot \nu)V\bigr)\cos\bigl(\tfrac{1}{2}(\alpha \cdot \nu)V\bigr)^{-1},
\end{equation}
where analytic functions of matrices are defined via the corresponding power series, or equivalently via the Riesz-Dunford functional calculus. 
Now we are prepared to formulate the main result of this paper.

\begin{theorem}\label{THEO_MAIN}
	Let  $q$, $V$, and $\varepsilon_2>0$ be as in \eqref{eq_q}, \eqref{eq_V}, and \eqref{eq_eps_2},  and assume that for some $z \in \mathbb{C} \setminus ((-\infty, -|m|] \cup [|m|, \infty))$ the condition
	\begin{equation}\label{cond123}
	\norm{V}_{W^1_\infty(\Sigma;\C^{N\times N})}   \norm{q}_{L^\infty(\R;\R)} < X_z
	\end{equation}
	holds with $X_z$ given by~\eqref{def_X_z}. Furthermore, assume that
	\begin{equation}\label{cond234}
	\cos\bigl(\tfrac{1}{2}(\alpha \cdot \nu)V\bigr)^{-1} \in W^1_\infty(\Sigma;\C^{N\times N})
	\end{equation}
	and set $\widetilde V =  VS$, where $S$ is defined as in~\eqref{eq_scaling_matrix}. Then, the operator 
	$H_{\widetilde{V}}$ in \eqref{def_H_Vtilde} is self-adjoint in $L^2(\mathbb{R}^\theta; \mathbb{C}^N)$, 
	$z \in \rho(H_\varepsilon) \cap \rho(H_{\widetilde{V}})$ for all $\varepsilon \in (0,\varepsilon_2)$, and
	for any $r \in (0,\tfrac{1}{2})$ there exists $C>0$ such that 
	\begin{equation*}
		\norm{(H_\varepsilon-z)^{-1} - (H_{\widetilde{V}}-z)^{-1}}_{L^2(\R^\theta;\C^N) \to L^2(\R^\theta;\C^N)} \leq C \varepsilon^{1/2-r}, \qquad  \varepsilon \in (0, \varepsilon_2).
	\end{equation*} 
	In particular, $H_{\varepsilon}$ converges to $H_{\widetilde{V}}$ in the norm resolvent sense as $\varepsilon \to 0+$. 
\end{theorem}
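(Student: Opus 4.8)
\medskip\noindent\emph{Sketch of the proof.}
The idea is to represent $(H_\varepsilon-z)^{-1}$ by a Birman--Schwinger formula, to rescale the transversal variable in the $\varepsilon$-neighbourhood $\Omega_\varepsilon$, and to compare the resulting limit with the known Krein-type resolvent formula for Dirac operators with $\delta$-shell interactions from \cite{AMV14, BEHL18, BEHL19}. Let $H := H_\varepsilon - V_\varepsilon$ be the free Dirac operator, so $z\in\rho(H)$, and factorise $V_\varepsilon = B_\varepsilon\,\omega\,B_\varepsilon$ with $B_\varepsilon := |V_\varepsilon|^{1/2}$ and a bounded self-adjoint sign factor $\omega$ (which, in the rescaled variable, does not depend on $\varepsilon$). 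For $\norm{B_\varepsilon(H-z)^{-1}B_\varepsilon}<1$ the resolvent identity gives
\begin{equation*}
 (H_\varepsilon-z)^{-1} = (H-z)^{-1} - (H-z)^{-1}B_\varepsilon\bigl(\omega^{-1}+B_\varepsilon(H-z)^{-1}B_\varepsilon\bigr)^{-1}B_\varepsilon(H-z)^{-1},
\end{equation*}
and the smallness assumption \eqref{cond123}, with $X_z$ from~\eqref{def_X_z}, is precisely designed so that this holds uniformly for $\varepsilon\in(0,\varepsilon_2)$.

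The heart of the matter is the limit, as $\varepsilon\to0$, of the Birman--Schwinger operator $\mathcal K_\varepsilon := B_\varepsilon(H-z)^{-1}B_\varepsilon$ and of the factor $B_\varepsilon(H-z)^{-1}$. Using Hypothesis~\ref{hypothesis_Sigma} and the tubular map $\iota$ from~\eqref{eq_def_iota}, one changes variables in $\Omega_\varepsilon$ via $x=\iota(x_\Sigma,\varepsilon s)$, $(x_\Sigma,s)\in\Sigma\times(-1,1)$; the prefactor $\varepsilon^{-1}$ in~\eqref{eq_short_range_potential} is absorbed by the Jacobian of $\iota$, which converges to $1$ uniformly by Proposition~\ref{prop_tubular_neighbourhood}, so that $\mathcal K_\varepsilon$ becomes the integral operator on $L^2(\Sigma\times(-1,1);\C^N)$ with kernel $(1+O(\varepsilon))\,|V(x_\Sigma)|^{1/2}|q(s)|^{1/2}\,G_z\bigl(\iota(x_\Sigma,\varepsilon s)-\iota(y_\Sigma,\varepsilon s')\bigr)\,|V(y_\Sigma)|^{1/2}|q(s')|^{1/2}$, where $G_z$ is the kernel of $(H-z)^{-1}$. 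Splitting the $\Sigma$-integration at scale $|x_\Sigma-y_\Sigma|\sim\varepsilon^\beta$ with a suitable $\beta\in(0,1)$, one checks that the far part converges, after integrating the transversal variables against $\int_{-1}^1 q=1$, to the Cauchy-type singular integral operator on $\Sigma$ entering the Krein formula for the $\delta$-shell operator, while the near part, where $\Sigma$ is flat to leading order and the strongly singular term $c_\theta\,(\alpha\cdot(x-y))|x-y|^{-\theta}$ of $G_z$ dominates, converges---using that the odd tangential contribution integrates to zero and $\int_{\R^{\theta-1}}\tau(|\xi|^2+\tau^2)^{-\theta/2}\,d\xi = c_\theta'\,\sign(\tau)$ with $c_\theta c_\theta'=\tfrac i2$---to the operator acting fibrewise over $x_\Sigma$ with kernel $\tfrac i2|V(x_\Sigma)|^{1/2}|q(s)|^{1/2}(\alpha\cdot\nu(x_\Sigma))\sign(s-s')|V(x_\Sigma)|^{1/2}|q(s')|^{1/2}$; this is exactly the Birman--Schwinger operator of the one-dimensional massless Dirac operator $-i(\alpha\cdot\nu)\partial_s$ with potential $Vq$ on $(-1,1)$, whose free Green's function is $\tfrac i2(\alpha\cdot\nu)\sign(s-s')$. (The mass term and $z$ enter only at order $\varepsilon$ after the rescaling.) Establishing these convergences in \emph{operator norm} with the claimed rate is the main obstacle: it relies on the precise asymptotics of $G_z$ from Section~\ref{sec_free_Dirac}, on Lipschitz control of $V$ (hence the $W^1_\infty$-assumption) and of the geometry of $\Sigma$ for the near-diagonal Taylor expansions, and on balancing the near- and far-diagonal errors through the choice of $\beta$, which is what produces the exponent $\tfrac12-r$. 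The factor $B_\varepsilon(H-z)^{-1}$ is treated analogously, its limit factoring through the potential operator of $\Sigma$.

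Once the limits are identified, the renormalisation is a finite-dimensional computation at each $x_\Sigma$. Since $V(x_\Sigma)q(s)$ and $V(x_\Sigma)q(s')$ commute for all $s,s'$ and $(\alpha\cdot\nu)^2=I_N$, the transversal one-dimensional Dirac problem is explicitly solvable: its transfer matrix yields $\tr^-u_- = \exp\bigl(-i(\alpha\cdot\nu)V\bigr)\tr^+u_+$ (recall that $\nu$ points out of $\Omega_+$, so the end $s=1$ of the support lies in $\Omega_-$), whereas the interface condition in~\eqref{def_H_Vtilde} with strength $\widetilde V$ gives $\tr^-u_- = \bigl(i(\alpha\cdot\nu)-\tfrac12\widetilde V\bigr)^{-1}\bigl(i(\alpha\cdot\nu)+\tfrac12\widetilde V\bigr)\tr^+u_+$. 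Equating the two leads to $\tfrac i2(\alpha\cdot\nu)\widetilde V = (I-e^{-iB})(I+e^{-iB})^{-1}$ with $B=(\alpha\cdot\nu)V$, that is $(\alpha\cdot\nu)\widetilde V = 2\sin(\tfrac B2)\cos(\tfrac B2)^{-1}$, which by the functional-calculus intertwining $Vf((\alpha\cdot\nu)V)=f(V(\alpha\cdot\nu))V$ simplifies to $\widetilde V = V\,\textup{sinc}\bigl(\tfrac12(\alpha\cdot\nu)V\bigr)\cos\bigl(\tfrac12(\alpha\cdot\nu)V\bigr)^{-1}=VS$; assumption~\eqref{cond234} guarantees $\widetilde V\in W^1_\infty(\Sigma;\C^{N\times N})$, so that the Krein formula for $H_{\widetilde V}$ is available and reproduces exactly the identified limit $R(z)$ of $(H_\varepsilon-z)^{-1}$.

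Finally, from $\norm{(H_\varepsilon-z)^{-1}-R(z)}\le C\varepsilon^{1/2-r}$ one concludes: $R(z)$, being the operator-norm limit of the (uniformly bounded) resolvents at $z$ of the self-adjoint operators $H_\varepsilon$, is itself the resolvent at $z$ of a self-adjoint operator; comparing $R(z)$ with the boundary-triple description of $H_{\widetilde V}$ identifies this operator as $H_{\widetilde V}$ and hence proves its self-adjointness. Consequently $z\in\rho(H_\varepsilon)\cap\rho(H_{\widetilde V})$ for $\varepsilon\in(0,\varepsilon_2)$, and the displayed estimate, and with it the norm resolvent convergence, follows.
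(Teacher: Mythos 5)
Your proposal captures the correct architecture — a Birman–Schwinger-type resolvent formula, rescaling of the transverse variable, a kernel-level limit analysis, and a transfer-matrix computation for the renormalisation — but it misses the single technical idea that makes \emph{norm} resolvent convergence (rather than merely strong resolvent convergence) possible, and it introduces a regularity issue through the symmetrised factorisation.

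\textbf{The central gap: convergence of the Birman–Schwinger operator.} After the change of variables to $\Sigma\times(-1,1)$, your operator $\mathcal K_\varepsilon$ is (a symmetrised variant of) $B_\varepsilon(z)Vq$ in~\eqref{def_ABC_operators}. The difficulty, which the paper makes explicit in the introduction, is that this operator converges to its limit only \emph{strongly} in $L^2((-1,1)\times\Sigma;\C^N)$, not in operator norm: the limiting singular integral operator $\mathcal C_z$ and the finite-rank-in-$t$ structure of $B_0(z)$ produce exactly the kind of rank/nonlocality that obstructs a norm estimate in $L^2$. Splitting the $\Sigma$-integration at scale $\varepsilon^\beta$ and balancing near- and far-diagonal errors, as you propose, does not repair this — it is the standard route to strong convergence and the rate estimates in \cite{MP17,MP18}, but a norm estimate on $(\omega^{-1}+\mathcal K_\varepsilon)^{-1}-(\omega^{-1}+\mathcal K_0)^{-1}$ requires $\mathcal K_\varepsilon-\mathcal K_0$ to be small in a uniform operator topology, which fails in $L^2\to L^2$. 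The paper's resolution is to \emph{change the spaces}: the factor $C_\varepsilon(z)$ gains half a derivative, so it maps into $L^2((-1,1);H^{1/2}(\Sigma;\C^N))$ (Proposition~\ref{prop_C_eps}), and one only needs $(I+B_\varepsilon(z)Vq)^{-1}$ to converge as an operator from that Bochner–Sobolev space into $L^2((-1,1);L^2(\Sigma;\C^N))$; this in turn is controlled by the estimate $\|B_\varepsilon(z)-B_0(z)\|_{1/2\to 0}\le C\varepsilon^{1/2-r}$ of Proposition~\ref{prop_B_conv}, which is provable precisely because the half-derivative gain of $\Phi_z$ (Proposition~\ref{prop_Phi_z}) can be spent against the shift operator $\tau_\delta$ (Proposition~\ref{prop_shift_operator}, estimate~\eqref{estimate_tau_delta}). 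Nothing in your sketch accounts for this smoothing bookkeeping, and without it the argument delivers at best strong resolvent convergence.

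\textbf{A secondary issue: the symmetrised factorisation.} You factor $V_\varepsilon=|V_\varepsilon|^{1/2}\omega|V_\varepsilon|^{1/2}$. For matrix-valued $V\in W^1_\infty(\Sigma;\C^{N\times N})$ the matrix square root $|V(x_\Sigma)|^{1/2}$ need not be Lipschitz in $x_\Sigma$ (the map $A\mapsto|A|^{1/2}$ is only H\"older near eigenvalue crossings), so $|V|^{1/2}$ and the associated $\omega$ may fail to be $W^1_\infty$, which would destroy the mapping properties on $H^{1/2}(\Sigma)$ that the rest of the argument needs. The paper avoids this by using the nonsymmetric factorisation $V_\varepsilon=v_\varepsilon u_\varepsilon$ with $u_\varepsilon=\chi_{\Omega_\varepsilon}/\sqrt\varepsilon$ scalar and $v_\varepsilon=\sqrt\varepsilon\,V_\varepsilon$, keeping the full $W^1_\infty$ regularity of $V$ intact in the operator $Vq$.

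\textbf{What your proposal gets right, and differently.} Your transfer-matrix derivation of the renormalisation — solving the transverse problem $-i(\alpha\cdot\nu)\partial_s u+Vq u=0$ to get $\tr^-u_-=\exp(-i(\alpha\cdot\nu)V)\tr^+u_+$, matching with the interface condition to deduce $\tfrac i2(\alpha\cdot\nu)\widetilde V=(I-e^{-iB})(I+e^{-iB})^{-1}$ with $B=(\alpha\cdot\nu)V$, and simplifying via $V\textup{sinc}(B/2)=2(\alpha\cdot\nu)\sin(B/2)$ to $\widetilde V=VS$ — is algebraically correct and provides a transparent heuristic for where~\eqref{eq_scaling_matrix} comes from. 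The paper arrives at the same formula by an operator-level route (Lemma~\ref{lem_I+T} computes $(I+T(\alpha\cdot\nu)Vq)^{-1}$ explicitly, Lemma~\ref{lem_BtoC_z} conjugates it against $B_0(z)$ and $\mathcal C_z$), which is what is actually needed to equate $\mathcal L(z)$ with the Krein resolvent of $H_{\widetilde V}$ in Proposition~\ref{prop_main_limit}; your transfer-matrix argument would need to be upgraded to a statement about $(I+B_0(z)Vq)^{-1}$ before it could be used. The final step — identifying the norm limit as the resolvent of a self-adjoint operator via \cite[Theorem~VIII.22]{RS72} and then matching it with $H_{\widetilde V}$ — agrees with the paper.
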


The main assumption in Theorem~\ref{THEO_MAIN} is condition \eqref{cond123}, which means that $V$ has to be small in a suitable sense and which also appears in a similar form in \cite{MP18}, where the strong resolvent convergence of $H_\varepsilon$ for some special potentials $V$ was proved. The condition for $V$ can be optimized by choosing $q = \tfrac{1}{2} \chi_{(-1,1)}$, where $\chi_{(-1,1)}$ denotes the characteristic function of the interval $(-1,1)$, but since for some applications also more general $q$ can be useful, we keep the more general formulation of ~\eqref{cond123}; a more detailed discussion of \eqref{cond123} 
can be found in Remark~\ref{remark_kleinheit}. We shall show later in Lemma~\ref{lemma_smallness_condition} that 
$X_z \leq \frac{\pi}{4}$, if $\Sigma$ is compact and smooth. Let us also mention that one can replace \eqref{cond123} by the more abstract conditions (i) and (ii) from Proposition~\ref{prop_I+B_conv}; cf. Remark~\ref{remark_smallnes_assumption}.

In the following, let us consider the special choice
\begin{equation}\label{specialv}
V = \eta I_N + \tau \beta + \lambda i(\alpha\cdot\nu)\beta,
\end{equation}
where $\eta,\tau,\lambda \in W^1_\infty(\Sigma;\R)$ are real-valued functions (or simply real constants) that model electrostatic, Lorentz scalar, and anomalous magnetic interactions, respectively. Such potentials $V$ are of special physical interest and the existing literature on Dirac operators with $\delta$-shell potentials mostly focuses on this case.
It turns out that for such $V$ 
the renormalization matrix in \eqref{eq_scaling_matrix} has the simple form 
$$S = \frac{2}{\sqrt{d}}\tan\Bigl(\tfrac{\sqrt{d}}{2}\Bigr)I_N,$$ 
where $d = \eta^2-\tau^2-\lambda^2$, and \eqref{cond234} simplifies, see \eqref{eq_scaling_condition} below. Note that analogous conditions and renormalizations  are given in \cite[Theorem 2.6]{CLMT21}, \cite[Theorem IV.2]{MP17} and \cite[Remarks]{S89}. 
In the present situation Theorem~\ref{THEO_MAIN} reduces to the following corollary.

\begin{corollary}\label{cor_main}
	Let  $q$, $V$, and $\varepsilon_2>0$ be as in \eqref{eq_q}, \eqref{specialv}, and  \eqref{eq_eps_2}, respectively, with $\eta,\tau,\lambda \in W^1_\infty(\Sigma; \mathbb{R})$, $d = \eta^2-\tau^2-\lambda^2$, and assume that for some $z \in \mathbb{C} \setminus ((-\infty, -|m|] \cup [|m|, \infty))$ condition \eqref{cond123}
	holds with $X_z$ given by~\eqref{def_X_z}. Furthermore, assume 
	\begin{equation}\label{eq_scaling_condition}
		\inf_{x_\Sigma \in \Sigma, k \in \N_0} \lvert(2k+1)^2\pi^2- d(x_\Sigma)\rvert>0
	\end{equation} 
	and let $\widetilde{V} =  \frac{2}{\sqrt{d}}\tan(\sqrt{d}/2)V$. Then, 
	$H_{\widetilde{V}}$ in \eqref{def_H_Vtilde} is self-adjoint in $L^2(\mathbb{R}^\theta; \mathbb{C}^N)$, 
	$z \in \rho(H_\varepsilon) \cap \rho(H_{\widetilde{V}})$ for all $\varepsilon \in (0,\varepsilon_2)$, and
	for any $r \in (0,\tfrac{1}{2})$ there exists $C>0$ such that 
	\begin{equation*}
		\norm{(H_\varepsilon-z)^{-1} - (H_{\widetilde{V}}-z)^{-1}}_{L^2(\R^\theta;\C^N) \to L^2(\R^\theta;\C^N)} \leq C \varepsilon^{1/2-r}, \qquad  \varepsilon \in (0, \varepsilon_2).
	\end{equation*} 
	In particular, $H_{\varepsilon}$ converges to $H_{\widetilde{V}}$ in the norm resolvent sense as $\varepsilon \to 0+$. 
\end{corollary}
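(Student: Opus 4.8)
The plan is to deduce Corollary~\ref{cor_main} directly from Theorem~\ref{THEO_MAIN} by specializing the interaction matrix to the form \eqref{specialv} and carrying out the algebra needed to identify the renormalization matrix $S$ and the scaling condition \eqref{cond234}. First I would observe that for $V = \eta I_N + \tau\beta + \lambda i(\alpha\cdot\nu)\beta$ the matrix $(\alpha\cdot\nu)V$ squares to a scalar multiple of the identity. Indeed, using the anticommutation relations of the Dirac matrices together with $\nu$ being a unit vector, one computes $(\alpha\cdot\nu)^2 = I_N$, $\beta^2 = I_N$, $(\alpha\cdot\nu)\beta = -\beta(\alpha\cdot\nu)$, and hence $\bigl((\alpha\cdot\nu)V\bigr)^2 = \bigl(\eta(\alpha\cdot\nu) + \tau(\alpha\cdot\nu)\beta + \lambda i\beta\bigr)^2$; expanding and cancelling the cross terms by anticommutativity leaves $(\eta^2 - \tau^2 - \lambda^2)I_N = d\, I_N$ (pointwise on $\Sigma$). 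This is the key computational fact.

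Next I would use this identity to evaluate the analytic functions of $\tfrac12(\alpha\cdot\nu)V$ appearing in \eqref{eq_scaling_matrix}. Since $\bigl(\tfrac12(\alpha\cdot\nu)V\bigr)^2 = \tfrac{d}{4} I_N$, every even power of $\tfrac12(\alpha\cdot\nu)V$ is a scalar multiple of $I_N$ and every odd power is that scalar times $\tfrac12(\alpha\cdot\nu)V$. Plugging into the power series for $\cos$ and $\operatorname{sinc}$ gives $\cos\bigl(\tfrac12(\alpha\cdot\nu)V\bigr) = \cos(\tfrac{\sqrt d}{2}) I_N$ and $\operatorname{sinc}\bigl(\tfrac12(\alpha\cdot\nu)V\bigr) = \operatorname{sinc}(\tfrac{\sqrt d}{2}) I_N$ (these formulas are even in $\sqrt d$, so they are well defined also when $d<0$, giving hyperbolic functions). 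Therefore $S = \operatorname{sinc}(\tfrac{\sqrt d}{2})\cos(\tfrac{\sqrt d}{2})^{-1} I_N = \tfrac{2}{\sqrt d}\tan(\tfrac{\sqrt d}{2}) I_N$, which is exactly the claimed form, and consequently $\widetilde V = VS = \tfrac{2}{\sqrt d}\tan(\tfrac{\sqrt d}{2})V$. One should also note that $S$ commutes with $V$ here since $S$ is scalar, so $VS = SV$ and there is no ordering ambiguity.

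It then remains to check that condition \eqref{eq_scaling_condition} is equivalent to \eqref{cond234} in this special case, i.e. that $\cos\bigl(\tfrac12(\alpha\cdot\nu)V\bigr)^{-1} \in W^1_\infty(\Sigma;\C^{N\times N})$. Since $\cos\bigl(\tfrac12(\alpha\cdot\nu)V\bigr) = \cos(\tfrac{\sqrt d}{2}) I_N$, its inverse exists and is bounded precisely when $\cos(\tfrac{\sqrt d}{2})$ is bounded away from zero on $\Sigma$, i.e. when $\tfrac{\sqrt d}{2}$ stays uniformly away from the zeros $\tfrac{(2k+1)\pi}{2}$ of cosine, equivalently $d$ stays uniformly away from $(2k+1)^2\pi^2$ for all $k\in\N_0$ — this is \eqref{eq_scaling_condition}. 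To promote boundedness of $\cos(\tfrac{\sqrt d}{2})^{-1}$ to membership in $W^1_\infty$, I would use that $d = \eta^2 - \tau^2 - \lambda^2 \in W^1_\infty(\Sigma;\R)$ (being a polynomial in the $W^1_\infty$-functions $\eta,\tau,\lambda$, and $W^1_\infty$ is an algebra), that the map $d \mapsto \cos(\tfrac{\sqrt d}{2})^{-1}$ is real-analytic (as an even function of $\sqrt d$, it is an analytic function of $d$) on the relevant region, and that composition of a $W^1_\infty$-function with a $C^1$-function on a neighbourhood of its range lies again in $W^1_\infty$ via the chain rule. Finally one simply invokes Theorem~\ref{THEO_MAIN}: conditions \eqref{cond123} and \eqref{cond234} are satisfied, so all conclusions — self-adjointness of $H_{\widetilde V}$, $z\in\rho(H_\varepsilon)\cap\rho(H_{\widetilde V})$, and the resolvent estimate with rate $\varepsilon^{1/2-r}$ — transfer verbatim. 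I do not anticipate a genuine obstacle here; the only mildly delicate point is the bookkeeping for the $W^1_\infty$ regularity of $\cos(\tfrac{\sqrt d}{2})^{-1}$ and making sure the even-power structure is used correctly so that the square-root is never actually branch-cut-sensitive, but this is routine given that all the functions involved are even in $\sqrt d$.
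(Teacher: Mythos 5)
Your proposal is correct and follows essentially the same route as the paper's proof: compute $((\alpha\cdot\nu)V)^2 = d\,I_N$ from the Dirac algebra, use the even-power structure to reduce $\cos$ and $\operatorname{sinc}$ of $\tfrac12(\alpha\cdot\nu)V$ to scalar multiples of $I_N$, identify $S = \tfrac{2}{\sqrt d}\tan(\tfrac{\sqrt d}{2})I_N$, observe that \eqref{cond234} reduces to \eqref{eq_scaling_condition}, and invoke Theorem~\ref{THEO_MAIN}. Your extra care about the $W^1_\infty$-regularity of $\cos(\tfrac{\sqrt d}{2})^{-1}$ via the algebra property and chain rule is a reasonable elaboration of a point the paper leaves implicit.
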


In the previous two results we stated conditions on a given $V$ ensuring that $H_\varepsilon$ converges in the norm resolvent sense to $H_{\widetilde{V}}$. In the following corollary, we address the inverse question and state conditions on $\widetilde{V}$ such that $H_{\widetilde{V}}$ is the limit of the operators $H_\varepsilon$.

\begin{corollary}\label{cor_main_inv}
	Let $\widetilde{V} \in W_\infty^1(\Sigma;\C^{N \times N})$ such that $\widetilde{V}(x_\Sigma) = (\widetilde{V}(x_\Sigma))^*$ for  $\sigma$-a.e. $x_\Sigma \in \Sigma$, $\varepsilon_2>0$ be as in \eqref{eq_eps_2}, and	assume that for some given $z \in \mathbb{C} \setminus ((-\infty, -|m|] \cup [|m|, \infty))$
	\begin{equation}\label{cond567}
		\|(\alpha \cdot \nu) \widetilde{V}\|_{W^1_\infty(\Sigma;\C^{N \times N})} < 2\tanh\biggl( \frac{X_z}{\norm{\alpha \cdot \nu}_{W_\infty^1(\Sigma;C^{N \times N})}}\biggr)
	\end{equation}
	holds.
	Moreover, set $q = \tfrac{1}{2} \chi_{(-1,1)}$ and
	\begin{equation*}
		V = 2 (\alpha \cdot \nu )\arctan\bigl(\tfrac{1}{2}(\alpha \cdot \nu)\widetilde{V}\bigr) = 2(\alpha \cdot \nu ) \sum_{n = 0}^{\infty}  (-1)^n\frac{((\alpha \cdot \nu) \widetilde{V})^{2n +1}}{2^{2n+1} (2n+1)},
	\end{equation*}
	and let $H_\varepsilon$ be the operator defined in~\eqref{eq_H_eps} with these specific  choices of $q$ and $V$.
	Then, the operator 
	$H_{\widetilde{V}}$ in \eqref{def_H_Vtilde} is self-adjoint in $L^2(\mathbb{R}^\theta; \mathbb{C}^N)$, 
	$z \in \rho(H_\varepsilon) \cap \rho(H_{\widetilde{V}})$ for all $\varepsilon \in (0,\varepsilon_2)$, and
	for any $r \in (0,\tfrac{1}{2})$ there exists $C>0$ such that 
	\begin{equation*}
		\norm{(H_\varepsilon-z)^{-1} - (H_{\widetilde{V}}-z)^{-1}}_{L^2(\R^\theta;\C^N) \to L^2(\R^\theta;\C^N)} \leq C \varepsilon^{1/2-r}, \qquad  \varepsilon \in (0, \varepsilon_2).
	\end{equation*} 
	In particular, $H_{\varepsilon}$ converges to $H_{\widetilde{V}}$ in the norm resolvent sense as $\varepsilon \to 0+$.
\end{corollary}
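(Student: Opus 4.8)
The plan is to derive Corollary~\ref{cor_main_inv} from Theorem~\ref{THEO_MAIN} by checking that the given $V$ and $q = \tfrac12\chi_{(-1,1)}$ satisfy the hypotheses of that theorem and produce the prescribed renormalized strength $\widetilde V$. So the argument splits into three verifications: first, that with this choice of $V$ the renormalization map $V \mapsto VS$ inverts $\widetilde V \mapsto 2(\alpha\cdot\nu)\arctan(\tfrac12(\alpha\cdot\nu)\widetilde V)$; second, that the scaling condition \eqref{cond234} holds; and third, that the smallness condition \eqref{cond123} is implied by \eqref{cond567}.

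For the first point I would work at the level of the fixed matrix $A := (\alpha\cdot\nu)$, which satisfies $A^2 = I_N$ (from the anticommutation relations and $|\nu|=1$), so every function in play is a function of the single "variable" matrix $AV$, and one can simply diagonalize $A$ (eigenvalues $\pm1$) and reduce to scalar identities. With $q = \tfrac12\chi_{(-1,1)}$ the sinc/cosine combination in \eqref{eq_scaling_matrix} is exactly $S = \mathrm{sinc}(\tfrac12 AV)\cos(\tfrac12 AV)^{-1} = \tfrac{2}{AV}\sin(\tfrac12 AV)\cos(\tfrac12 AV)^{-1} \cdot \tfrac12 = \tan(\tfrac12 AV)(\tfrac12 AV)^{-1}$, hence $\widetilde V = VS = V\tan(\tfrac12 AV)(\tfrac12 AV)^{-1}$; multiplying by $A$ on the left and using $A^2=I$, $A(\alpha\cdot\nu)\widetilde V = AV\cdot\tan(\tfrac12 AV)(\tfrac12 AV)^{-1}$, which by the scalar identity $x\tan(x/2)\cdot(x/2)^{-1}\big|_{x=2\arctan(y/2)} = y$ gives back $A\widetilde V$ when $AV = 2\arctan(\tfrac12 A\widetilde V)$, i.e. $V = 2A\arctan(\tfrac12 A\widetilde V)$ as claimed. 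One should note here that $\widetilde V$ symmetric and $A=A^*$ imply $A\widetilde V$ has real spectrum, so $\arctan(\tfrac12 A\widetilde V)$ is well-defined by its (everywhere-convergent, since $\|\tfrac12 A\widetilde V\|<1$ by \eqref{cond567}, as $\tanh<1$ and $\|A\|_{W^1_\infty}\ge 1$) power series, and then $V$ lies in $W^1_\infty(\Sigma;\C^{N\times N})$ because $A=(\alpha\cdot\nu)\in W^1_\infty$ by Hypothesis~\ref{hypothesis_Sigma} and $W^1_\infty$ is a Banach algebra closed under the analytic functional calculus for elements of small norm; it is also symmetric since $A\arctan(\tfrac12 A\widetilde V)$ is (expand the odd series and use $A^2=I$, $(A\widetilde V)^* = \widetilde V A$).

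For the scaling condition \eqref{cond234}: since $S$ is invertible iff $\cos(\tfrac12 AV)$ is invertible, and $AV = 2\arctan(\tfrac12 A\widetilde V)$ has spectrum in $(-2\arctan(1),2\arctan(1)) \subset (-\pi/2,\pi/2)$ whenever $\|\tfrac12 A\widetilde V\| < 1$ — which again follows from \eqref{cond567} — we have $\cos(\tfrac12 AV)$ with spectrum bounded away from $0$, hence $\cos(\tfrac12 AV)^{-1}$ exists and, by the Banach-algebra functional calculus argument above, lies in $W^1_\infty(\Sigma;\C^{N\times N})$. For the last point, I need $\|V\|_{W^1_\infty}\|q\|_{L^\infty} < X_z$; here $\|q\|_{L^\infty} = \tfrac12$, and using $V = 2A\arctan(\tfrac12 A\widetilde V)$ together with $\arctan = \tanh^{-1}\circ(i\,\cdot\,)/i$-type monotonicity estimates — more directly, the elementary bound $\|2A\arctan(\tfrac12 A\widetilde V)\|_{W^1_\infty} \le 2\|A\|_{W^1_\infty}\,\mathrm{arctanh}\!\big(\tfrac12\|A\widetilde V\|_{W^1_\infty}/\|A\|_{W^1_\infty}\cdot\|A\|_{W^1_\infty}\big)$, appropriately applied via the functional-calculus estimate in the Banach algebra $W^1_\infty$ — one checks that \eqref{cond567}, namely $\|A\widetilde V\|_{W^1_\infty} < 2\tanh(X_z/\|A\|_{W^1_\infty})$, is exactly what makes $\tfrac12\|V\|_{W^1_\infty} < X_z$. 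Then Theorem~\ref{THEO_MAIN} applies verbatim with this $V$ and $q$ and delivers self-adjointness of $H_{\widetilde V}$, the resolvent inclusion, and the rate $C\varepsilon^{1/2-r}$.

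The main obstacle I anticipate is the third verification: getting the constants in the $W^1_\infty$-norm estimate for $V$ in terms of $\widetilde V$ sharp enough that \eqref{cond567} precisely implies \eqref{cond123}. The subtlety is that $W^1_\infty(\Sigma;\C^{N\times N})$ is a Banach algebra but not commutative and the Leibniz rule produces the submultiplicative constant, so one must track how $\arctan$ (an odd analytic function with radius of convergence $1$) acts: the clean way is to expand $2A\arctan(\tfrac12 A\widetilde V) = \sum_{n\ge0}(-1)^n 2^{-2n}(2n+1)^{-1}A(A\widetilde V)^{2n+1} = \sum_{n\ge0}(-1)^n 2^{-2n}(2n+1)^{-1}(\widetilde V A)^n\widetilde V(A\widetilde V)^n$ using $A^2=I$, estimate the $W^1_\infty$-norm of the $n$-th term by $(2n+1)^{-1}2^{-2n}\|A\|_{W^1_\infty}^{2n}\|\widetilde V\|_{W^1_\infty}^{2n+1}$... but in fact it is cleaner to bound in terms of $\|A\widetilde V\|_{W^1_\infty}$ directly, giving the series $\|A\|_{W^1_\infty}^{-1}\sum (2n+1)^{-1}(\tfrac12\|A\widetilde V\|_{W^1_\infty}/\|A\|_{W^1_\infty})^{?}$ — the bookkeeping is what must be done carefully, and \eqref{cond567} is written in exactly the form ($2\tanh(X_z/\|A\|_{W^1_\infty})$) that closes the computation because $\sum_{n\ge0}(2n+1)^{-1}y^{2n+1} = \mathrm{arctanh}(y)$ and $\mathrm{arctanh}$ is the inverse of $\tanh$. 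Everything else is a routine transcription of Theorem~\ref{THEO_MAIN}.
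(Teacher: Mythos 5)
Your overall strategy---deriving the corollary from Theorem~\ref{THEO_MAIN} by checking \eqref{cond123}, \eqref{cond234}, and $VS = \widetilde V$ for $V = 2(\alpha\cdot\nu)\arctan\bigl(\tfrac12(\alpha\cdot\nu)\widetilde V\bigr)$, using $\|q\|_{L^\infty} = \tfrac12$---is precisely the paper's strategy, and your computations verifying $VS = \widetilde V$ (via $(\alpha\cdot\nu)^2 = I_N$ and the scalar identity for $\tan\circ\arctan$) and the smallness bound $\tfrac12\|V\|_{W^1_\infty} \le \|\alpha\cdot\nu\|_{W^1_\infty}\,\textup{artanh}\bigl(\tfrac12\|(\alpha\cdot\nu)\widetilde V\|_{W^1_\infty}\bigr) < X_z$ agree with the paper's.

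There is, however, a genuine error in your justification of \eqref{cond234}. You assert that $\widetilde V = \widetilde V^*$ and $(\alpha\cdot\nu) = (\alpha\cdot\nu)^*$ imply $(\alpha\cdot\nu)\widetilde V$ has real spectrum, and you then place the spectrum of $\tfrac12(\alpha\cdot\nu)V$ in $(-\pi/4,\pi/4)$ to conclude $\cos\bigl(\tfrac12(\alpha\cdot\nu)V\bigr)$ is invertible. The real-spectrum claim is false: for the anomalous-magnetic interaction $\widetilde V = \lambda i(\alpha\cdot\nu)\beta$ with a small constant $\lambda$, which is Hermitian, one has $(\alpha\cdot\nu)\widetilde V = i\lambda\beta$, whose spectrum $\{\pm i\lambda\}$ is purely imaginary. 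Moreover, you do not actually justify why $\cos\bigl(\tfrac12(\alpha\cdot\nu)V\bigr)^{-1}$ lies in $W^1_\infty(\Sigma;\C^{N\times N})$ rather than merely existing pointwise; ``by the Banach-algebra functional calculus argument above'' does not by itself produce the inverse of a general invertible element. The paper sidesteps both issues with the purely algebraic identity $\cos(\arctan(A))^{-1} = \cos(\arctan(A))\bigl(I_N + A^2\bigr)$ from \eqref{eq_tan_id}, proved via the Riesz--Dunford calculus: the right-hand side is visibly a product of $W^1_\infty$ elements, so no spectral localization is needed. (A repair in your spirit would expand $z \mapsto \cos(\arctan(z))^{-1} = \sqrt{1+z^2}$ as a power series convergent on $|z|<1$ and apply it to $\tfrac12(\alpha\cdot\nu)\widetilde V$ in the Banach algebra $W^1_\infty$, but as written your argument does not establish \eqref{cond234}.)
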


    Let us briefly explain the strategy to prove Theorem~\ref{THEO_MAIN} and the structure of this paper. 
First, in Section~\ref{sec_preliminaries} we collect some preliminary results regarding the geometry of $\Sigma$, the free Dirac operator, and some associated potential and boundary integral operators (see also Appendix~\ref{appendix_Phi_C}).    
    In 
    Proposition~\ref{prop_resolvent_formula} we then prove the resolvent formula  
\begin{equation*}
  (H_\varepsilon - z)^{-1} = (H-z)^{-1} -  A_\varepsilon( z) Vq(I + B_\varepsilon( z) Vq)^{-1}C_\varepsilon( z),
\end{equation*}
where $H$ is the free Dirac operator (see \eqref{def_free_op}) and $A_\varepsilon(z), B_\varepsilon(z), C_\varepsilon(z)$ are suitable integral operators defined in~\eqref{def_ABC_operators} that are bounded between various $L^2$-spaces; cf. Proposition~\ref{prop_ABC_operators}. 
This is a standard approach that has been applied in other 
approximation problems for differential operators with singular potentials, see, e.g., \cite{AGHH05, BEHL17, MP18, S89}. 
Typically, the convergence of $A_\varepsilon(z), B_\varepsilon(z), C_\varepsilon(z)$ is analyzed in the corresponding $L^2$-setting. In the situation of Dirac operators with singular potentials, $B_\varepsilon(z)$ converges only strongly in $L^2$ and hence, via this approach only the strong resolvent convergence of $H_\varepsilon$ can be shown; cf. \cite{MP18}. Contrary to that, we investigate the convergence of $A_\varepsilon(z), B_\varepsilon(z), C_\varepsilon(z)$ between Sobolev type spaces. For this, a certain 
shift operator, which allows to compare the values of functions in $x_\Sigma \in \Sigma$ and in $x_\Sigma + \varepsilon s \nu(x_\Sigma)$, 
plays a crucial role; cf. Proposition~\ref{prop_shift_operator}. 
The condition \eqref{cond123} in Theorem~\ref{THEO_MAIN} is needed to ensure that $(I + B_\varepsilon( z) Vq)^{-1}$ is uniformly bounded in $\varepsilon$.
Finally, using the limiting behavior of $A_\varepsilon(z), B_\varepsilon(z), C_\varepsilon(z)$ we compute in Section~\ref{sec_main} the limit of $(H_\varepsilon - z)^{-1}$ for $\varepsilon \to 0+$ and show that it indeed coincides with the resolvent of $H_{\widetilde{V}}$.
In Appendices~\ref{sec_appendix_iota} and~\ref{sec_appendix_diff_B} we provide some technical calculations that are necessary for the definition of the tubular neighbourhood of $\Sigma$ and  the convergence analysis.

\subsection*{Notations}\label{sec_notations}

By $\theta \in \{ 2, 3 \}$ we denote the space dimension and we set $N=2$ for $\theta=2$ and $N=4$ for $\theta = 3$. Recall that the Pauli spin matrices are 
	\begin{equation*}
		\sigma_1 = \begin{pmatrix} 0 & 1\\ 1 & 0 \end{pmatrix}, \quad\sigma_2 = \begin{pmatrix} 0 & -i\\ i & 0 \end{pmatrix}, \quad \text{and} \quad \sigma_3 =\begin{pmatrix} 1 & 0\\ 0 & -1 \end{pmatrix}.
	\end{equation*} 
	With their help the Dirac matrices $\alpha_1, \dots, \alpha_\theta, \beta \in \mathbb{C}^{N \times N}$ are defined for $\theta=2$ by
	\begin{equation} \label{def_Dirac_matrices_2d}
		\alpha_1 := \sigma_1, \quad \alpha_2 := \sigma_2, \quad \text{and} \quad \beta := \sigma_3,
	\end{equation}
	and  for $\theta = 3$ by
	\begin{equation} \label{def_Dirac_matrices_3d}
		\alpha_j := \begin{pmatrix} 0 &
			\sigma_j \\ \sigma_j & 0
		\end{pmatrix}\text{ for } j=1,2,3  \quad \text{and} \quad \beta := \begin{pmatrix}
			I_2 & 0 \\ 0 & -I_2
		\end{pmatrix},
	\end{equation}
	where $I_2$ is the $2 \times 2$-identity matrix.
	We will often make use of the notations
	\begin{equation*}
		\alpha \cdot \nabla := \sum_{j = 1}^\theta \alpha_j \partial_j \quad \text{and} \quad \alpha \cdot x := \sum_{j = 1}^\theta \alpha_j x_j, \quad x = (x_1, \dots, x_\theta) \in \C^\theta.
	\end{equation*}
	The letter $C>0$ always denotes a constant which may change in-between lines and does not depend on the space variables or on $\varepsilon$. 
The symbol $| \cdot|$ is used for the absolute value, the Euclidean vector norm, or the Frobenius norm of a number, a vector, or a matrix, respectively.
We write $\spf{\cdot}{\cdot}$ for the Euclidean scalar product  in $\C^n$, $n \in \N$, which is anti-linear in the second argument. Eventually, the application of an analytic function to a matrix $A$ is defined via the associated power series, whenever it converges. This implies, in particular, for two holomorphic functions $f, g$ that $f(A) g(A) = (f g)(A)$. We will also make use of the equivalent definition of $f(A)$ via the Riesz-Dunford calculus 
\begin{equation} \label{Riesz_Dunford}
  f(A) = - \frac{1}{2 \pi i} \int_{\partial \Omega} f(z) (A-z)^{-1} d z,
\end{equation}
where $\Omega \subset \mathbb{C}$ such that all eigenvalues of $A$ belong to $\Omega$, $f$ is holomorphic in a neighborhood of $\Omega$, and the integral is understood as a complex line integral; cf. \cite[Chapter~VII.~\S4]{C90} and \cite[Chapter~VIII.3.1]{DS58}.

Next, let $\mathcal{H}, \mathcal{G}$ be Hilbert spaces and let $A$ be a linear operator from $\mathcal{H}$ to $\mathcal{G}$. The domain, kernel, and range of $A$ are denoted by $\dom A$, $\ker A$, and $\ran A$, respectively. If $A$ is bounded and everywhere defined, then we write $\| A \|_{\mathcal{H} \rightarrow \mathcal{G}}$ for its operator norm. If $\mathcal{H} = \mathcal{G}$ and $A$ is closed, then the resolvent set and the spectrum $A$ are denoted by $\rho(A)$ and $\sigma(A)$, respectively.

Following \cite[Appendix~B]{M00}, we call  $(\mathcal{H}_0,\mathcal{H}_1)$ a compatible pair, if $\mathcal{H}_0$ and $\mathcal{H}_1$ are two Hilbert spaces which are continuously embedded in a bigger Hausdorff topological vector space. In this situation, one can construct with the K-method (or various other methods, see \cite{CHM15,CHM22,HNVW16,M00}, which yield the same spaces with equivalent norms) a family of Hilbert spaces
$[\mathcal{H}_0,\mathcal{H}_1]_{\tau}$, $\tau \in (0,1)$, such that $\mathcal{H}_0 \cap \mathcal{H}_1	\subset [\mathcal{H}_0,\mathcal{H}_1]_{\tau} \subset \mathcal{H}_0 + \mathcal{H}_1$ for all $\tau \in (0,1)$. Assume that $(\mathcal{G}_0,\mathcal{G}_1)$ is another compatible pair of Hilbert spaces. Recall that for two bounded operators $A_0: \mathcal{H}_0 \to \mathcal{G}_0$ and $A_1: \mathcal{H}_1 \to \mathcal{G}_1$ such that 
$A_0 u = A_1 u$ for all $u \in \mathcal{H}_0 \cap \mathcal{H}_1$, there exists by \cite[Theorem B.2]{M00} a unique bounded linear operator $A_\tau: [\mathcal{H}_0,\mathcal{H}_1]_{\tau} \to [\mathcal{G}_0,\mathcal{G}_1]_{\tau}$ such that $	A_0 u = A_1 u = A_\tau u $ for all  $u \in  \mathcal{H}_0 \cap \mathcal{H}_1$  and 
\begin{equation}\label{eq_interpolation_inequality}
	\norm{A_\tau}_{[\mathcal{H}_0,\mathcal{H}_1]_{\tau} \to [\mathcal{G}_0,\mathcal{G}_1]_{\tau}} \leq \norm{A_0}_{\mathcal{H}_0 \to \mathcal{G}_0}^{1-\tau} \norm{A_1}_{\mathcal{H}_1 \to \mathcal{G}_1}^{\tau}.
\end{equation}

Finally, if $k,n \in \N$, $U \subset \R^{n}$ is an open set and $\mathcal{V}$ denotes the space of real or complex scalars, vectors, or matrices, then we write $C_b^k(U;\mathcal{V})$ for the space of all $f \in C^k(U;\mathcal{V})$ such that $f$ and all partial derivatives of $f$ up to order $k$ are bounded. Moreover, $\mathcal{D}(U;\mathcal{V})$ is the set of all $f \in  C^\infty(U;\mathcal{V})$ with  compact support. If $f \in C^1(U; \mathbb{C}^k)$, then the symbol $D f$ is used for the Jacobi matrix of $f$.
The usual Bochner Lebesgue space of $\mathcal{H}$-valued
functions are denoted by $L^2((-1,1);\mathcal{H})$, cf. Section \ref{sec_Bochner}. For $\mathcal{H}=H^r(\Sigma;\C^N)$ we use the symbol
$\norm{\cdot}_r$ for the norm in $L^2((-1,1);H^r(\Sigma;\C^N))$. In a similar way, we write
$\norm{\cdot}_{r \to r'} :=\norm{\cdot}_{L^2((-1,1);H^{r}(\Sigma;\C^N)) \to L^2((-1,1);H^{r'}(\Sigma;\C^N))}$,	$\norm{\cdot}_{r \to \mathcal{H}} := \norm{\cdot}_{L^2((-1,1);H^r(\Sigma;\C^N)) \to \mathcal{H}}$ as well as $\norm{\cdot}_{\mathcal{H} \to r'} := \norm{\cdot}_{\mathcal{H} \to L^2((-1,1);H^{r'}(\Sigma;\C^N)) }$.

% If $c \in \C\setminus \R_+$, we choose the square root such that $\textup{Im } \sqrt{c} >0$.

% % If a densely defined  continuous function has a continuous extension, then we denote its extension with the same expression.

% The letter $ z$ denotes a fixed complex number with $\textup{Im }  z \neq 0$.

\section{Preliminaries}\label{sec_preliminaries}
In this section we provide some preliminary material that is needed in the main part of this paper to prove Theorem~\ref{THEO_MAIN}. First, in Section~\ref{sec_geometry} we formulate suitable assumptions on the hypersurface $\Sigma$ that are convenient for our approximation procedure, then in  Section~\ref{sec_Bochner} we provide some definitions and fundamental results for Bochner Lebesgue spaces that are used throughout this paper, and in Section~\ref{sec_free_Dirac} we introduce the free Dirac operator and a family of associated potential and boundary integral operators.

\subsection[Description of the boundary ${\Sigma}$ and its tubular neighbourhood]{Description of the boundary $\boldsymbol{\Sigma}$ and its tubular neighbourhood}\label{sec_geometry}

In this section we formulate assumptions on the geometry of $\Sigma$ which will allow us to construct the tubular neighbourhood of $\Sigma$ in~\eqref{def_Omega_eps} in a similar manner as in \cite{BEHL17, MP18}. The presentation here differs slightly from \cite{BEHL17}, as in the present paper we need better properties of the trace map and the extension operator. The following assumption will be made throughout the paper:

\begin{hypothesis} \label{hypothesis_Sigma}
  Assume that $\Omega_+ \subset \mathbb{R}^\theta$, $\theta \in \{ 2, 3 \}$, is an open set with boundary $\Sigma := \partial \Omega_+$ and that there exist open sets $W_1, \dots ,W_p \subset \mathbb{R}^\theta$, mappings $\zeta_1,\dots,\zeta_p \in C^{2}_b(\R^{\theta-1}; \mathbb{R})$, rotation matrices $\kappa_1, \dots ,\kappa_p \in \R^{\theta \times \theta}$, and $\varepsilon_0 >0$ such that
\begin{itemize}
	\item[(i)] $\Sigma \subset \bigcup_{l=1}^p W_l$;
 	\item[(ii)] if $x \in \partial \Omega_+ = \Sigma$, then there exists $l \in \{1, \dots ,p\}$ such that $B(x,\varepsilon_0) \subset W_l$;
	\item[(iii)] $W_l \cap \Omega_+
	= W_l \cap \Omega_l$, where $\Omega_l = \{ \kappa_l (x', x_\theta): x_\theta < \zeta(x'), \, (x',x_\theta) \in \R^\theta \}$, for $l \in \{1,\dots ,p\}$.
\end{itemize}
Furthermore, we set $\Sigma_l := \partial \Omega_l = \{ \kappa_l (x', \zeta_l(x')): x' \in \mathbb{R}^{\theta-1} \}$,  $\Omega_- := \R^\theta \setminus \overline{\Omega_+}$, and denote the unit normal vector field at $\Sigma$ that is pointing outwards of $\Omega_+$ by $\nu$. 
\end{hypothesis}

We note that  $C_b^2$-hypographs and the boundaries of compact $C^2$-domains satisfy Hypothesis~\ref{hypothesis_Sigma}.
Moreover, the sets $\Sigma_l$ are a cover of $\Sigma$, but in general one has $\Sigma_l \not\subset \Sigma$.
In the following we fix notations regarding points and functions on $\Sigma$ satisfying Hypothesis~\ref{hypothesis_Sigma} that will be used in the entire paper and recall the definition of Sobolev spaces on $\Sigma$ and a convenient variant of the trace theorem; cf. \cite{BMMM14,M87, M00}.
First, if $U \subset \mathbb{R}^{\theta-1}$ is open, then for $r \in [0,2]$ the space $H^r(U; \mathbb{C}^N)$ and also $W^1_\infty(U; \mathcal{V})$ with $\mathcal{V} \in \{\C,\C^{N \times N}\}$ are defined as in \cite{M00}. To transfer these definitions to $\Sigma$,
we choose a partition of unity $\varphi_1, \dots, \varphi_p \in C^{\infty}(\R^\theta)$ subordinate to $W_1,  \dots, W_p$ such that each $\varphi_l$, $l \in \{ 1, \dots, p\}$, has uniformly bounded derivatives (see Lemma~\ref{lem_part_of_unity}). Next, we define
\begin{equation} \label{def_x_Sigma_l}
  x_{\Sigma_l}: \mathbb{R}^{\theta-1} \rightarrow \Sigma_l , \qquad x_{\Sigma_l}(x') := \kappa_l (x', \zeta_l(x')),
\end{equation}
and for $\psi \in L^2(\Sigma;\C^N)$ we write
\begin{equation*}
	\psi_{\Sigma_l}(x') := 
	\begin{cases}
	\varphi_l(x_{\Sigma_l}(x'))\psi(x_{\Sigma_l}(x')), & \text{if } x_{\Sigma_l}(x')  \in \Sigma, \\
	0, &\text{if } x_{\Sigma_l}(x')  \notin \Sigma.
	\end{cases}
\end{equation*} 
Then,  $ \psi_{\Sigma_l} \in L^2(\R^{\theta-1};\C^N)$ and $\psi(x_\Sigma)=\sum_{l \in \{1,\dots,p\}, x_\Sigma \in \Sigma_l} \psi_{\Sigma_l}(x_{\Sigma_l}^{-1}(x_\Sigma))$ for  $x_\Sigma \in \Sigma $. As usual, we introduce for $r \in [0,2]$
\begin{equation*}
  H^{r}(\Sigma;\C^N) := \big\{ \psi \in L^2(\Sigma; \mathbb{C}^N): \psi_{\Sigma_l} \in H^{r}(\R^{\theta-1};\C^N) \text{ for all } l = 1, \dots, p \big\}
\end{equation*}
and endow this space with the  inner product 
\begin{equation*}
	\spv{\phi}{\psi}_{H^{r}(\Sigma;\C^N)} = \sum_{l=1}^{p} \spv{\phi_{\Sigma_l}}{\psi_{\Sigma_l}}_{H^{r}(\R^{\theta-1};\C^N)}, \qquad \phi, \psi \in H^{r}(\Sigma;\C^N).
	\end{equation*}
Sobolev spaces $H^r(\Sigma; \mathbb{C}^N)$ with $r \in [-2,0]$  are defined, as usual, by duality. One can prove in the same manner as in \cite[Theorem B.11]{M00} that for $r_1, r_2 \in [-2,2]$ and $r = (1-\tau) r_1 + \tau r_2$, $0<\tau<1$, the interpolation property
\begin{equation}\label{eq_interpolation}
	H^{r}(\Sigma;\C^N) = \left[H^{r_1}(\Sigma;\C^N),H^{r_2}(\Sigma;\C^N) \right]_{\tau}
	\end{equation}
holds with equivalent norms. Eventually, we set $U_l := x_{\Sigma_l}^{-1}(\Sigma \cap W_l)$ and define for $\mathcal{V} \in \{\C;\C^{N \times N}\}$
\begin{equation*}
	 W^1_\infty(\Sigma;\mathcal{V}) := \big\{ F \in L^\infty(\Sigma;\mathcal{V}):(F \circ x_{\Sigma_l}) \upharpoonright U_l \in W^1_\infty(U_l;\mathcal{V})\text{ for all } l = 1,\dots,p \big\}
\end{equation*}
equipped with the norm
\begin{equation*}
	\norm{F}_{W^1_\infty(\Sigma;\mathcal{V})} := \max_{l \in \{1,\dots,p\}} \norm{(F \circ x_{\Sigma_l}) \upharpoonright U_l}_{W^1_\infty(U_l;\mathcal{V})}, \quad F \in W^1_\infty(\Sigma;\mathcal{V}).
\end{equation*}
Note that if $F \in W^1_\infty(\Sigma;\mathcal{V})$ and $r \in [0,1]$, then by direct calculation ($r \in \{0,1\}$) and interpolation ($r \in (0,1)$) one obtains for all $\phi \in H^r(\Sigma;\C^N)$
\begin{equation} \label{W_1_infty_mult_op}
	F \phi \in H^r(\Sigma;\C^N) \quad \textup{and} \quad  	\norm{F \phi}_{H^r(\Sigma;\C^N)} \leq C \norm{F}_{W^1_\infty(\Sigma;\mathcal{V})} \norm{\phi}_{H^r(\Sigma;\C^N)},
\end{equation}
 where $C>0$ does not depend on $F$. In particular, $\phi \mapsto F\phi$ induces a bounded operator in $H^r(\Sigma;\C^N)$ and the operator norm is bounded by $C\norm{F}_{W^1_\infty(\Sigma;\mathcal{V})} $. We identify $F$ with this induced operator.

In the next proposition, we state a variant of the trace theorem suitable for our situation; cf. \cite[Theorem~2]{M87} and \cite[Theorem~8.7]{BMMM14}. 

\begin{proposition} \label{proposition_trace_theorem}
	Assume that $\Omega_\pm$ and $\Sigma$ satisfy Hypothesis~\ref{hypothesis_Sigma} and let $r \in (\tfrac{1}{2}, 1]$.
	Then, there exists a unique bounded and surjective operator $$\tr^\pm: H^r(\Omega_\pm; \mathbb{C}^N) \rightarrow H^{r-1/2}(\Sigma; \mathbb{C}^N)$$ such that $\tr^\pm u = u|_\Sigma$ for all $u \in H^r(\Omega_\pm; \mathbb{C}^N) \cap C(\overline{\Omega_\pm}; \mathbb{C}^N)$. Furthermore, there
	exists a unique bounded and surjective operator $$\tr: H^r(\mathbb{R}^\theta; \mathbb{C}^N) \rightarrow H^{r-1/2}(\Sigma; \mathbb{C}^N)$$ such that $\tr u = u|_\Sigma$ for all $u \in H^r(\mathbb{R}^\theta; \mathbb{C}^N) \cap C(\mathbb{R}^\theta; \mathbb{C}^N)$.
\end{proposition}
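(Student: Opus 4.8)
The plan is to reduce the statement on $\Sigma$ to the standard trace theorem on the model hypersurfaces $\Sigma_l$, which are rotated graphs of the $C^2_b$-functions $\zeta_l$, using the partition of unity $\varphi_1,\dots,\varphi_p$ and the definition of $H^r(\Sigma;\C^N)$ in terms of the local charts $x_{\Sigma_l}$. First I would recall that for a graph $\Sigma_l = \{\kappa_l(x',\zeta_l(x')): x' \in \R^{\theta-1}\}$ with $\zeta_l \in C^2_b$, and for $r \in (\tfrac12,1]$, the change of variables $x' \mapsto \kappa_l(x',\zeta_l(x'))$ together with the rotation $\kappa_l$ turns the half-space $\Omega_l$ into $\R^{\theta-1} \times (-\infty,0)$ up to a bi-Lipschitz, $C^1$-diffeomorphism of the closure that preserves the Sobolev scales $H^r$ for $r \in [0,2]$; this is precisely the situation covered by the classical half-space trace theorem and its Lipschitz-graph generalization in \cite{M87, BMMM14, M00}. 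Hence there is a bounded surjective trace map $H^r(\Omega_l;\C^N) \to H^{r-1/2}(\Sigma_l;\C^N)$ agreeing with restriction on continuous functions.

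Next I would globalize. Given $u \in H^r(\Omega_+;\C^N)$, for each $l$ the function $\varphi_l u$, extended by zero, lies in $H^r(\Omega_l;\C^N)$ by Hypothesis~\ref{hypothesis_Sigma}(iii) (which guarantees $W_l \cap \Omega_+ = W_l \cap \Omega_l$, so the two domains coincide on a neighbourhood of $\supp \varphi_l \cap \Sigma$) and because multiplication by $\varphi_l$, which has uniformly bounded derivatives, is bounded on $H^r$ for $r \in [0,1]$; here I use $r \le 1$ crucially, so only first derivatives of $\varphi_l$ enter. Applying the local trace map and pulling back via $x_{\Sigma_l}$ one obtains $\big(\tr^+ u\big)_{\Sigma_l} := \varphi_l \cdot (u|_{\Sigma_l})$ pulled back to $\R^{\theta-1}$, which lies in $H^{r-1/2}(\R^{\theta-1};\C^N)$; summing the local pieces as in the definition of $H^r(\Sigma;\C^N)$ and using $r-\tfrac12 \in (0,\tfrac12] \subset [0,2]$ defines $\tr^+ u \in H^{r-1/2}(\Sigma;\C^N)$, with the norm estimate following from the local boundedness and the finiteness of the cover. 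Consistency with restriction on $H^r(\Omega_+;\C^N) \cap C(\overline{\Omega_+};\C^N)$ is immediate since the local trace maps have that property and the partition of unity sums to $1$ near $\Sigma$. Uniqueness follows from density of $H^r \cap C(\overline{\Omega_+})$ in $H^r(\Omega_+)$. For surjectivity, given $g \in H^{r-1/2}(\Sigma;\C^N)$, decompose $g = \sum_l \varphi_l g$, lift each $\varphi_l g$ (transported to $\R^{\theta-1}$) by the local right inverse of the half-space trace, transport back into $\Omega_l$, cut off with a function equal to $1$ on a neighbourhood of $\supp\varphi_l \cap \Sigma$ and supported in $W_l$, and sum; the result lies in $H^r(\Omega_+;\C^N)$ and has trace $g$. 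The case of $\tr: H^r(\R^\theta;\C^N) \to H^{r-1/2}(\Sigma;\C^N)$ is entirely analogous, working with the full model domains $\R^{\theta-1}\times\R$ instead of half-spaces, or simply by noting $\tr u = \tr^+(u|_{\Omega_+})$ and invoking the bounded extension $H^r(\Omega_+) \hookrightarrow H^r(\R^\theta)$.

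The main obstacle I anticipate is not conceptual but bookkeeping: one must track uniformity of all constants across the (possibly infinitely many, in the unbounded case) charts $W_l$. However, Hypothesis~\ref{hypothesis_Sigma} provides only \emph{finitely many} $\zeta_l,\kappa_l,W_l$ — the unboundedness of $\Sigma$ is accommodated by allowing the $W_l$ themselves to be unbounded — so the cover is finite and there is no uniformity issue beyond the uniform $C^2_b$-bounds on the $\zeta_l$ and the uniformly bounded derivatives of the $\varphi_l$, which are exactly what the graph trace theorem on $\R^{\theta-1}\times(-\infty,0)$ requires. The one point needing a little care is that the pull-back of $H^r(\Omega_l)$ under the $C^2$-graph map is genuinely $H^r$-equivalent for all $r \in [0,2]$ (not merely $r \le 1$), which is needed so that the construction is internally consistent with the space $H^r(\Sigma)$ defined via these same charts for $r$ up to $2$; this uses that a $C^2$-diffeomorphism with bounded derivatives up to order $2$ preserves $H^r$ for $r \in [0,2]$, and then interpolation for non-integer $r$. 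Since the statement itself only asserts the trace result for $r \in (\tfrac12,1]$, even this is more than strictly needed, and the proof is a routine localization argument; I would present it as such, citing \cite{M87,BMMM14,M00} for the model half-space case.
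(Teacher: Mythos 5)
The paper does not prove Proposition~\ref{proposition_trace_theorem}; it simply points to \cite[Theorem~2]{M87} and \cite[Theorem~8.7]{BMMM14} and moves on. Your localization argument is correct and is exactly the standard proof these references carry out: flatten each graph piece $\Sigma_l$ to a half-space via $\kappa_l^{-1}$ followed by $(x',x_\theta)\mapsto(x',x_\theta-\zeta_l(x'))$, apply the classical half-space trace theorem there, and patch the resulting local trace maps with the finite partition of unity $(\varphi_l)_{l=1}^p$ from Lemma~\ref{lem_part_of_unity}. The crucial globalization step you correctly pin down is Hypothesis~\ref{hypothesis_Sigma}\,(iii): since $W_l\cap\Omega_+=W_l\cap\Omega_l$ and $\supp\varphi_l\subset W_l$, the function $\varphi_l u$ extended by zero belongs to $H^r(\Omega_l)$, so the graph trace theorem applies to it. You are also right that there is no uniformity concern here: the cover is \emph{finite} by Hypothesis~\ref{hypothesis_Sigma}, with unboundedness of $\Sigma$ absorbed into each single chart, and all constants are controlled by the $C^2_b$-norms of the $\zeta_l$ and the $C^\infty_b$-bounds on the $\varphi_l$ that Lemma~\ref{lem_part_of_unity} provides. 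The one small imprecision — initially calling the flattening map a $C^1$-diffeomorphism before later correcting to $C^2$ with bounded derivatives — is immaterial for the range $r\in(\tfrac12,1]$ actually asserted, where a bi-Lipschitz $C^1_b$-diffeomorphism already preserves $H^r$; you flag this yourself. Surjectivity via local right inverses and cutoffs, and uniqueness via density of $C(\overline{\Omega_\pm})\cap H^r(\Omega_\pm)$, are likewise standard and correctly invoked.
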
 

In the rest of this subsection we summarize some statements on tubular neighbourhoods of $\Sigma$ given by~\eqref{def_Omega_eps} by making use of the results in \cite[Section~2.1]{BEHL17}. Recall that the embedding $\iota$ is defined by~\eqref{eq_def_iota} and that  the set $\Omega_\varepsilon$ for $\varepsilon > 0$ is introduced in~\eqref{def_Omega_eps}. In the following definition we introduce the Weingarten map (or shape operator) on $\Sigma$:
\begin{definition}\label{def_Weingarten}
	Let $\Omega_\pm, \Sigma$
 satisfy Hypothesis~\ref{hypothesis_Sigma} and denote for $x_\Sigma = x_{\Sigma_l}(x') \in \Sigma $, $l \in \{ 1, \dots, p \}$, the tangent hyperplane of $\Sigma$ in the point $x_\Sigma$ by the symbol $T_{x_\Sigma} := \textup{span} \, \{  \partial_j x_{\Sigma_l}(x'): j = 1, \dots, \theta-1 \}$. The Weingarten map is the linear operator $W(x_\Sigma): T_{x_\Sigma} \to T_{x_\Sigma}$ defined by
	\begin{equation*}
	W(x_\Sigma) \partial_j x_{\Sigma_l}(x') =  - \partial_j \nu(x_{\Sigma_l}(x')), \qquad j \in \{1,\dots, \theta-1\}.
	\end{equation*}
\end{definition}

As mentioned in \cite[Definition 2.2]{BEHL17} and the text below, $W(x_\Sigma)$ is well-defined (i.e. it is independent of the parametrization of $\Sigma$ and the index $l$). Furthermore, in the following we denote the matrix representation of $W(x_\Sigma)$ corresponding to the  basis $\{ \partial_j x_{\Sigma_l}(x'): j=1, \dots, \theta-1 \}$ of $T_{x_\Sigma}$ by $L_l(x')$. 
Moreover, note that if $x_\Sigma = x_{\Sigma_l}(x') = x_{\Sigma_{k}}(y') \in \Sigma$ with $l,k \in \{1,\dots,p\}$ and $x',y' \in \R^{\theta-1}$, then the eigenvalues of $L_l(x')$ and $L_{k}(y')$ coincide, see \cite[Definition 2.2]{BEHL17} and the text below. Therefore, the expression $\det(I-tW(x_\Sigma)) := \det(I_{\theta-1} - tL_l(x'))$ for $t \in \R$ is well-defined.
In the next proposition we state important properties of $\iota$ and $W$, 
and identify $L^1(\Omega_\varepsilon)$ with $L^1(\Sigma \times (-\varepsilon,\varepsilon))$.

\begin{proposition}\label{prop_tubular_neighbourhood} 
	Let $\Omega_\pm,\Sigma \subset \mathbb{R}^\theta$, $\theta \in \{ 2, 3 \}$, satisfy Hypothesis~\ref{hypothesis_Sigma}. Then, there exists $\varepsilon_1 >0$ such that the following is true:
	\begin{itemize}
	\item[$\textup{(i)}$] $\iota \vert_{\Sigma \times (-\varepsilon_1,\varepsilon_1)}$ is injective.
	\item[$\textup{(ii)}$] There exists $C>0$ such that $\abs{1-\det(I-\varepsilon W(x_\Sigma))} \leq C \varepsilon <1/2$ for all $x_\Sigma \in \Sigma$ and $\varepsilon \in (-\varepsilon_1,\varepsilon_1)$.
	\item[$\textup{(iii)}$] For $\varepsilon \in (-\varepsilon_1,\varepsilon_1)$ one has $u \in L^1(\Omega_\varepsilon)$ if and only if  $u \circ \iota \vert_{\Sigma \times (-\varepsilon,\varepsilon)} \in L^1(\Sigma\times(-\varepsilon,\varepsilon))$  and in this case
	\begin{equation*}
		\int_{\Omega_{\varepsilon}} u(y) \,dy = \int_{-\varepsilon}^{\varepsilon} \int_\Sigma u(y_\Sigma +s\nu(y_\Sigma))  \det(I- sW(y_\Sigma))\,d\sigma(y_\Sigma)\, ds.
	\end{equation*}
	\end{itemize}
\end{proposition}
\begin{proof}
	Let $\varepsilon_A$  be the number specified in Proposition~\ref{prop_iota} and let $\varepsilon_1 \in (0, \varepsilon_{A})$. Then, by Proposition~\ref{prop_iota}~(ii)  there exists $C_{A,2}> 0$ such that
	\begin{equation*}
		\abs{\iota(x_\Sigma,t) -  \iota(y_\Sigma,s)} \geq C_{A,2}^{-1}  (\abs{x_\Sigma - y_\Sigma} + \abs{t-s}), \quad  (x_\Sigma,t),(y_\Sigma,s) \in \Sigma \times (-\varepsilon_1,\varepsilon_1).
	\end{equation*} 
	Hence, $\iota \vert_{\Sigma \times(-\varepsilon_1,\varepsilon_1)}$ is injective, i.e. item~(i) is true. 
	
	Next, we show assertion~(ii). For this, we  introduce for $l \in \{1, \dots, p\}$ and $x' \in \mathbb{R}^{\theta-1}$ such that $x_{\Sigma_l}(x')  \in \Sigma$ the coordinate matrices $M_l(x')$ and $H_l(x')$ of the first and second fundamental form, respectively. Their entries are given by 
	\begin{equation*}
			M_l(x')[j,k] := \langle \partial_j x_{\Sigma_l}(x'), \partial_k x_{\Sigma_l}(x')\rangle
			\end{equation*}
			and
			\begin{equation*}
			H_l(x')[j,k] :=  \langle \partial_j x_{\Sigma_l}(x'), -\partial_k \nu(x_{\Sigma_l}(x')) \rangle
	\end{equation*}
	for $j,k \in \{1,\dots,\theta-1\}$.
	It is well-known, see for instance \cite[Chapter 3B]{K02}, that these matrices are related to the coordinate matrix of the Weingarten map at $x_{\Sigma_l}(x') =x_\Sigma \in \Sigma$  via the formula
	\begin{equation*}
		\begin{aligned}
			H_l(x')[j,k] &= \langle   \partial_j x_{\Sigma_l}(x') , W(x_\Sigma) \partial_k x_{\Sigma_l}(x')\rangle \\
			&= \sum_{n=1}^{\theta-1}  \langle  \partial_j x_{\Sigma_l}(x'),  L_l(x')[n,k] \partial_n x_{\Sigma_l}(x')\rangle \\
			&= \sum_{n = 1}^{\theta -1} M_l(x')[j,n] L_l(x')[n,k] \\
			&= (M_l(x') L_l(x'))[j,k], \qquad j,k \in \{1,\dots,\theta-1\}. 
		\end{aligned} 
	\end{equation*}
	Furthermore, using the definition of $x_{\Sigma_l}(x')$ one concludes  
	\begin{equation*}
	  M_l(x') = I_{\theta-1} +  \nabla \zeta_l (x') (\nabla \zeta_l(x'))^T.
	\end{equation*}
	The inverse of $M_l(x')$ is given by $ I_{\theta-1} - (1+ |\nabla \zeta(x')|^2)^{-1}  \nabla \zeta_l (x') (\nabla \zeta_l(x'))^T$. Hence,
	\begin{equation}\label{eq_L_l}
		L_l(x') = \bigl( I_{\theta-1} - (1+ |\nabla \zeta_l(x')|^2)^{-1}  \nabla \zeta_l (x') (\nabla \zeta_l(x'))^T \bigr)H_l(x').
	\end{equation}
	Since $\zeta_l \in C^2_b(\R^{\theta-1};\R)$ by  Hypothesis~\ref{hypothesis_Sigma}, equation~\eqref{eq_L_l} and the definition of $H_l(x')$ imply  $\sup_{l \in \{1,\dots,p\}, x' \in x_{\Sigma_l}^{-1}(\Sigma)} \abs{L_l(x')} < \infty.$ Now, recall that $\det(I-\varepsilon W(x_\Sigma)) = \det(I_{\theta-1} -\varepsilon L_l(x'))$  for  $x_\Sigma = x_{\Sigma_l}(x') \in \Sigma $.  Moreover, expressing the determinant as the product of the eigenvalues one verifies the equation
	$1-\det(I_{\theta-1} -\varepsilon L_l(x')) = \varepsilon P_l(\varepsilon)$, where $P_l$ is a polynomial in $\varepsilon$ with coefficients depending continuously on the entries of $L_l(x')$.  This shows that (ii) holds if $\varepsilon_{1}>0$ is chosen sufficiently small.

	Finally, the claim in~(iii) follows from  \cite[Proposition 2.6]{BEHL17}, since (i), (ii), and Proposition \ref{prop_iota} (i) show that  $\Sigma$ satisfying Hypothesis~\ref{hypothesis_Sigma} also fulfils  \cite[Hypothesis~2.3]{BEHL17}.
\end{proof}

\subsection{Bochner Lebesgue spaces} \label{sec_Bochner}

In this subsection we summarize some results on Bochner Lebesgue spaces that will be used throughout this paper; for details we refer to \cite[Chapter~1]{HNVW16}. We always assume that $\mathcal{H}$ and $\mathcal{G}$ are separable Hilbert spaces, $\mathcal{O}, \mathcal{O}_1,\mathcal{O}_2$ are Borel sets in  $\R^{n}$, $n \in \N$, and $\mathcal{L}(\mathcal{H},\mathcal{G})$ is the set of bounded linear operators from $\mathcal{H}$ to $\mathcal{G}$.

\begin{definition}\label{def_Bochner_measurable}
	We call $f : \mathcal{O} \to \mathcal{H}$ (weakly) measurable, if for all $\varphi \in \mathcal{H}$ the mapping $\mathcal{O} \ni t \mapsto \spv{f(t)}{\varphi}_{\mathcal{H}}$ is measurable with respect to the Lebesgue measure on $\mathcal{O}$. Furthermore, we call $F:\mathcal{O} \to \mathcal{L}(\mathcal{H},\mathcal{G})$ measurable, if $\mathcal{O} \ni t \mapsto F(t) h$ is measurable for all $h \in \mathcal{H}$. 
\end{definition}

We also recall that a function $f: \mathcal{O} \to \mathcal{H}$ is (strongly) measurable, if $f$ is the pointwise limit of simple functions,
and that in the present situation both notions of measurability coincide due to Pettis theorem, see \cite[Theorem~1.1.20]{HNVW16}.  Moreover, if $f : \mathcal{O} \to \mathcal{H}$ and  $F: \mathcal{O} \to \mathcal{L}(\mathcal{H}, \mathcal{G})$ are measurable, then the function $\mathcal{O} \ni t \mapsto F(t)f(t) \in \mathcal{G}$ is measurable, see \cite[Proposition~1.1.28]{HNVW16}.

For a measurable function $f : \mathcal{O} \to \mathcal{H}$ such that $t \mapsto \norm{f(t)}_{\mathcal H}$ is integrable, the Bochner integral $\int_\mathcal{O} f(t) \,dt \in \mathcal{H}$ is defined in the standard way, see \cite[Definition~1.2.1 and Proposition~1.2.2]{HNVW16}. Many standard results for 
(usual) Lebesgue integrals admit natural generalizations to Bochner integrals. In particular, we will make use of Fubini's theorem for Bochner integrals, see \cite[Proposition 1.2.7]{HNVW16}: If $f:\mathcal{O}_1 \times \mathcal{O}_2 \to \mathcal{H}$ is measurable and the integral $\int_{\mathcal{O}_1 \times \mathcal{O}_2} \norm{f(t,s)}_{\mathcal{H}} \,dt ds$ is finite, then the Bochner integral $\int_{\mathcal{O}_2} f(t,s) \,ds$ exists for a.e. $t \in \mathcal{O}_1$ and the function $\mathcal{O}_1 \ni t \mapsto \int_{\mathcal{O}_2} f(t,s) \,ds$ is measurable and Bochner integrable. 

 Next, we introduce Bochner Lebesgue spaces.

\begin{definition}
	We define $L^2(\mathcal{O};\mathcal{H})$ as the space which contains all (equivalence classes of) measurable functions $f:\mathcal{O} \to \mathcal{H}$ such that 
	\begin{equation*}
		\int_\mathcal{O} \norm{f(t)}_{\mathcal{H}}^2 \,dt < \infty.
	\end{equation*}
	Furthermore, we equip this space with the scalar product
	\begin{equation*}
		\int_\mathcal{O} \spv{f(t)}{g(t)}_{\mathcal{H}} \,dt,  \qquad f,g \in L^2(\mathcal{O};\mathcal{H}).
	\end{equation*}
\end{definition}

It is not difficult to show that $L^2(\mathcal{O};\mathcal{H})$ is a Hilbert space; cf. the comments below \cite[Definition 1.2.15]{HNVW16}. Next, duality and interpolation properties of Bochner Lebesgue spaces are discussed. According to  \cite[Corollary 1.3.13 and Theorem 1.3.21]{HNVW16} the  identification of $f \in L^2(\mathcal{O};\mathcal{H}^*)$ with the functional defined by
\begin{equation*}
	L^2(\mathcal{O};\mathcal{H}) \ni g \mapsto \int_{\mathcal{O}} {_{\mathcal{H}^*}}\spf{f(t)}{g(t)}_{\mathcal{H}} \, dt,
\end{equation*} 
where ${_{\mathcal{H}^*}}\spf{\cdot}{\cdot}_{\mathcal{H}}$ denotes the bilinear dual pairing on $\mathcal{H}^* \times \mathcal{H}$,
induces an isometric isomorphism between $ L^2(\mathcal{O};\mathcal{H}^*)$ and the dual  space of  $L^2(\mathcal{O};\mathcal{H})$, i.e. 
\begin{equation} \label{BL_dual_space}
  L^2(\mathcal{O};\mathcal{H})^* \simeq L^2(\mathcal{O};\mathcal{H}^*).
\end{equation}
If $\mathcal{G}$ is a Hilbert space such that $(\mathcal{H},\mathcal{G})$ is a compatible pair (cf. the notations section), then also $(L^2(\mathcal{O}; \mathcal{H}), L^2(\mathcal{O}; \mathcal{G}))$ is a compatible pair and
	\begin{equation} \label{BL_interpolation}
			L^2(\mathcal{O}; [\mathcal{H},\mathcal{G}]_\tau)=\bigl[L^2(\mathcal{O}; \mathcal{H}),L^2(\mathcal O; \mathcal{G})\bigr]_\tau,  \quad \tau \in (0,1),
	\end{equation}
with equivalent norms, see \cite[Theorem 2.2.6 and Corollary C.4.2]{HNVW16}.
 
In this paper we are particularly interested in the Bochner Lebesgue spaces $L^2((-1,1);H^r(\Sigma;\C^N))$, $r \in [-2,2]$, where $\Sigma \subset \mathbb{R}^\theta$, $\theta \in \{ 2, 3 \}$, is a hypersurface satisfying Hypothesis~\ref{hypothesis_Sigma} and $H^r(\Sigma;\C^N)$ is defined as in Section~\ref{sec_geometry}.  We summarize important properties of these spaces in  Proposition~\ref{prop_Bochner}, where  (i) and (ii) follow from \eqref{BL_interpolation} and~\eqref{BL_dual_space} and the properties of $H^{r}(\Sigma;\C^N)$ in Section~\ref{sec_geometry} and  \eqref{eq_interpolation}. Assertion~(iii) follows from \cite[Proposition~1.2.24 and Proposition~1.2.25]{HNVW16}.
\begin{proposition}\label{prop_Bochner}
		Let $\Omega_\pm,\Sigma \subset \mathbb{R}^\theta$, $\theta \in \{ 2, 3 \}$, satisfy Hypothesis~\ref{hypothesis_Sigma}. Then, the following is true:
	\begin{itemize}
		\item[$\textup{(i)}$]
		If $\tau \in (0,1)$, $r_1,r_2 \in [-2,2]$ and $r = (1-\tau)r_1 + \tau r_2$, then 
		\begin{equation*}%\label{eq_bochner interpolation}
			\begin{split}
			&L^2((-1,1);H^{r}(\Sigma;\C^N) ) =  \bigl[L^2((-1,1);H^{r_1}(\Sigma;\C^N) ),L^2((-1,1);H^{r_2}(\Sigma;\C^N) )\bigr]_\tau 
			\end{split}
		\end{equation*}
		and the corresponding norms are equivalent.
		\item[$\textup{(ii)}$]
		For $r \in [0,2]$ there exists an isometric isomorphism between the dual space of $L^2((-1,1);H^r(\Sigma;\C^N))$ and $L^2((-1,1);H^{-r}(\Sigma;\C^N))$, i.e.
		\begin{equation*}%\label{eq_Bochner_dual}
			L^2((-1,1);H^r(\Sigma;\C^N))^* \simeq L^2((-1,1);H^{-r}(\Sigma;\C^N)).
		\end{equation*} 
		\item[$\textup{(iii)}$] For any $a > 0$ the identification of $F \in L^2(\Sigma \times (-a,a);\C^N)$ with the function $f: t \mapsto F(\cdot,t)$  induces an isometric isomorphism between the spaces $L^2(\Sigma \times (-a,a);\C^N)$ and $L^2((-a,a);L^2(\Sigma;\C^N))$ and
		\begin{equation*}
			\bigg(\int_{-a}^{a} f(t) \,dt \bigg)(x_\Sigma) = \int_{-a}^a f(t)(x_\Sigma) \,dt \quad \textup{ for } \sigma\textup{-a.e. } x_\Sigma \in \Sigma.
		\end{equation*} 
	\end{itemize}
\end{proposition}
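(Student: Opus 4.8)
The plan is to derive all three assertions by specializing the general facts on Bochner Lebesgue spaces recorded above to $\mathcal{O} = (-1,1)$ (respectively $(-a,a)$) and $\mathcal{H} = H^r(\Sigma;\mathbb{C}^N)$, combined with the interpolation and duality properties of the scale $H^r(\Sigma;\mathbb{C}^N)$ from Section~\ref{sec_geometry}.

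For (i), I would first note that the scale $\{ H^r(\Sigma;\mathbb{C}^N) \}_{r \in [-2,2]}$ is nested, so for $r_1, r_2 \in [-2,2]$ both $H^{r_1}(\Sigma;\mathbb{C}^N)$ and $H^{r_2}(\Sigma;\mathbb{C}^N)$ embed continuously into $H^{-2}(\Sigma;\mathbb{C}^N)$; hence $(H^{r_1}(\Sigma;\mathbb{C}^N),H^{r_2}(\Sigma;\mathbb{C}^N))$ is a compatible pair, and so is $(L^2((-1,1);H^{r_1}(\Sigma;\mathbb{C}^N)),L^2((-1,1);H^{r_2}(\Sigma;\mathbb{C}^N)))$, so that \eqref{BL_interpolation} applies. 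Then I would combine the interpolation identity \eqref{eq_interpolation} for $H^r(\Sigma;\mathbb{C}^N)$ — which reads $H^r(\Sigma;\mathbb{C}^N) = [H^{r_1}(\Sigma;\mathbb{C}^N),H^{r_2}(\Sigma;\mathbb{C}^N)]_\tau$ with equivalent norms — with \eqref{BL_interpolation}: the former gives $L^2((-1,1);H^r(\Sigma;\mathbb{C}^N)) = L^2((-1,1);[H^{r_1}(\Sigma;\mathbb{C}^N),H^{r_2}(\Sigma;\mathbb{C}^N)]_\tau)$, and the latter rewrites the right-hand side as $[L^2((-1,1);H^{r_1}(\Sigma;\mathbb{C}^N)),L^2((-1,1);H^{r_2}(\Sigma;\mathbb{C}^N))]_\tau$. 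Since \eqref{eq_interpolation} only holds up to equivalence of norms and the interpolation functor respects norm equivalence, the resulting identity holds with equivalent norms, which is (i).

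For (ii), I would apply \eqref{BL_dual_space} with $\mathcal{H} = H^r(\Sigma;\mathbb{C}^N)$, $r \in [0,2]$, to obtain $L^2((-1,1);H^r(\Sigma;\mathbb{C}^N))^* \simeq L^2((-1,1);H^r(\Sigma;\mathbb{C}^N)^*)$; since the negative-order spaces were defined in Section~\ref{sec_geometry} precisely as duals, $H^r(\Sigma;\mathbb{C}^N)^* \simeq H^{-r}(\Sigma;\mathbb{C}^N)$ (with the bilinear pairing extending the $L^2$-inner product, consistent with the convention used in \eqref{BL_dual_space}), and substituting gives (ii). For (iii), the key points are that the measure space $(\Sigma,\sigma)$ is $\sigma$-finite — it is covered by the finitely many graph pieces $\Sigma_l$ — and that $L^2(\Sigma;\mathbb{C}^N)$ is separable; then \cite[Proposition~1.2.24]{HNVW16} yields that $F \mapsto (t \mapsto F(\cdot,t))$ is an isometric isomorphism of $L^2(\Sigma\times(-a,a);\mathbb{C}^N)$ onto $L^2((-a,a);L^2(\Sigma;\mathbb{C}^N))$, and \cite[Proposition~1.2.25]{HNVW16}, which states that evaluation functionals commute with the Bochner integral, gives the displayed pointwise formula for $\int_{-a}^a f(t)\,dt$.

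I do not expect a real obstacle here; the proof is essentially a careful matching of abstract hypotheses to the concrete situation. The only points needing attention are verifying that the relevant pairs are compatible, that $(\Sigma,\sigma)$ is $\sigma$-finite and $L^2(\Sigma;\mathbb{C}^N)$ separable, and keeping in mind that in (i) one can only claim equivalence — not equality — of norms, inherited from \eqref{eq_interpolation}.
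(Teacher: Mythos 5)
Your proposal is correct and follows exactly the route the paper indicates: the paper gives no written proof, stating only that (i) and (ii) follow from \eqref{BL_interpolation}, \eqref{BL_dual_space}, \eqref{eq_interpolation}, and the properties of $H^r(\Sigma;\C^N)$ from Section~\ref{sec_geometry}, while (iii) follows from \cite[Propositions~1.2.24 and~1.2.25]{HNVW16}. You have spelled out the same chain of reductions in full, including the small but necessary checks of compatibility of the pairs, $\sigma$-finiteness of $(\Sigma,\sigma)$, separability of $L^2(\Sigma;\C^N)$, and that only norm equivalence (not equality) is inherited in (i).
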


The following identification turns out to be useful in our considerations: For 
$\mathcal{Q} \in L^\infty((-1,1);\C)$ and a bounded operator $\mathcal{A}$ in $H^r(\Sigma;\C^N)$, $r \in [-2,2]$, we identify 
\begin{equation*}
	\mathcal{M}_{\mathcal{Q}}: L^2((-1,1);H^r(\Sigma;\C^N)) \to L^2((-1,1);H^r(\Sigma;\C^N)),\quad 
	(\mathcal{M}_{\mathcal{Q}}f)(t) := \mathcal{Q}(t) f(t), 
\end{equation*}
and 
\begin{equation*}
	\mathcal{M}_{\mathcal{A}} : L^2((-1,1);H^r(\Sigma;\C^N)) \to L^2((-1,1);H^r(\Sigma;\C^N)),\quad
	(\mathcal{M}_{\mathcal{A}}f)(t) :=  \mathcal{A}(f(t)), 
\end{equation*}
with $\mathcal{Q}$ and $\mathcal{A}$, respectively. Note  that the norms  $\norm{\mathcal{M}_{\mathcal{Q}}}_{r \to r}$ and $\norm{\mathcal{M}_{\mathcal{A}}}_{r \to r}$ are bounded by $\norm{\mathcal{Q}}_{L^\infty((-1,1);\C)}$ and $\norm{\mathcal{A}}_{H^r(\Sigma;\C^N)\to H^r(\Sigma;\C^N)}$, respectively. 
We  will also  use  the bounded embedding
\begin{equation}\label{eq_def_embedding_op}
	\mathfrak{J}:H^r(\Sigma;\C^N) \to L^2((-1,1);H^r(\Sigma;\C^N)),\quad
	(\mathfrak{J}\varphi)(t) := \varphi,
\end{equation}
and its adjoint
\begin{equation*}
	\mathfrak{J}^* :L^2((-1,1);H^r(\Sigma;\C^N)) \to H^r(\Sigma;\C^N),\quad
	\mathfrak{J}^*f = \int_{-1}^1 f(t) \,dt.
\end{equation*}

\subsection{The free Dirac operator and associated integral operators}\label{sec_free_Dirac}

Let $m \in \mathbb{R}$ and recall that the Dirac matrices $\alpha_1, \dots, \alpha_\theta, \beta \in \mathbb{C}^{N \times N}$ are given by~\eqref{def_Dirac_matrices_2d}--\eqref{def_Dirac_matrices_3d}. Then, the free Dirac operator $H$ is the differential operator in $L^2(\mathbb{R}^\theta; \mathbb{C}^N)$ given by
	\begin{equation} \label{def_free_op}
		\begin{split}
		H u &:= - i (\alpha \cdot \nabla) u + m \beta u, \qquad \dom H := H^1(\R^\theta;\C^N).
		\end{split}
	\end{equation}
With the help of the Fourier transform one gets that $H$ is self-adjoint in $L^2(\R^\theta;\C^N)$ and $\sigma(H) = \left(-\infty, -\abs{m}\right] \cup \left[\abs{m}, \infty \right)$, see for instance \cite[Section~2]{BHT22b} for $\theta=2$ and \cite[Theorem 1.1]{T92} for $\theta =3$. For $z \notin \sigma(H)$ the resolvent $R_z$ is 
\begin{equation} \label{resolvent_free_op}
  R_z u(x) := (H-z)^{-1} u(x) = \int_{\mathbb{R}^\theta} G_z(x-y) u(y) \,d y, \qquad u \in L^2(\mathbb{R}^\theta; \mathbb{C}^N), ~x \in \mathbb{R}^\theta,
\end{equation}
where  $G_z$ is given for $\theta=2$ and $x \in \mathbb{R}^2 \setminus \{ 0 \}$ by
\begin{equation}\label{eq_G_z_2D}
\begin{split}
G_z(x) = \frac{\sqrt{ z^2-m^2}}{2\pi} &K_1\Bigl(-i \sqrt{ z^2-m^2}\abs{x}\Bigr)\frac{\alpha \cdot x}{\abs{x}} \\
		& \qquad +\frac{1}{2\pi} K_0\Bigl(-i \sqrt{ z^2-m^2}\abs{x}\Bigr)(m\beta +  z I_2)
		\end{split}
\end{equation}	
and for $\theta=3$ and $x \in \mathbb{R}^3 \setminus \{ 0 \}$ by
\begin{equation}\label{eq_G_z_3D}
	G_ z(x) = \biggl(  z I_4 + m \beta + i\Bigl( 1 - i \sqrt{ z^2 -m^2}\abs{x} \Bigr) \frac{ \alpha \cdot x }{\abs{x}^2}\biggr)\frac{e^{i\sqrt{ z^2 -m^2} \abs{x}}}{4 \pi \abs{x}};
\end{equation}
see, e.g., \cite{BHOP20, BHSS22, T92}.
Here, $K_0$ and $K_1$ denote the modified Bessel functions of the second kind of order zero and one, respectively, and the branch of the square root is fixed by $\text{Im}\, \sqrt{w} > 0$ for $w \in \C \setminus [0, \infty)$.
Note that $R_z$ is bounded in $L^2(\mathbb{R}^\theta; \mathbb{C}^N)$ and it can also be viewed as a bounded operator from $L^2(\R^\theta;\C^N)$ to $H^1(\R^\theta;\C^N)$. In fact, with the help of the Fourier transform, it is not difficult  to show that $R_z$ gives rise to a bounded operator from $H^s(\mathbb{R}^\theta; \mathbb{C}^N)$ to $H^{s+1}(\mathbb{R}^\theta; \mathbb{C}^N)$ for all $s \in \R$.

We move on to the discussion of potential and boundary integral operators associated with the free Dirac operator. In the following, let $z \in \rho(H) = \mathbb{C} \setminus ((-\infty, -|m|] \cup [|m|, \infty))$ be fixed and let $\Omega_\pm$ and $\Sigma \subset \mathbb{R}^\theta$ satisfy Hypothesis~\ref{hypothesis_Sigma}. First, we introduce the potential operator $\Phi_z: L^2(\Sigma; \mathbb{C}^N) \rightarrow L^2(\mathbb{R}^\theta; \mathbb{C}^N)$ by
\begin{equation} \label{def_Phi_z}
	\begin{aligned}
	\Phi_ z   \varphi(x) &:= \int_\Sigma G_ z (x-y_\Sigma) \varphi(y_\Sigma) \,d\sigma(y_\Sigma),  \qquad \varphi \in L^2(\Sigma; \mathbb{C}^N),~x \in \R^\theta.
	\end{aligned}
\end{equation}
We note that $\Phi_z$ is indeed well-defined and bounded, see \cite[Lemma 2.1]{AMV14}. 
Further properties of $\Phi_z$ are summarized in the following proposition. For compact hypersurfaces $\Sigma$, these results are well known, see, e.g., \cite[Theorem~4.3]{BHSS22}, but for unbounded $\Sigma$ they were not treated in the literature so far. For completeness, we give a proof of these results in Appendix~\ref{appendix_Phi_C}.

\begin{proposition}\label{prop_Phi_z}
	Let $z \in \rho(H) = \mathbb{C} \setminus ((-\infty, -|m|] \cup [|m|, \infty))$ and let $\Phi_z$ be given by~\eqref{def_Phi_z}. Then, the following is true:
	\begin{itemize}
	  \item[$\textup{(i)}$] For any $ r \in [0, \tfrac{1}{2}]$ the operator $\Phi_z$ gives rise to a bounded operator
	\begin{equation*}
	\begin{split}
	\Phi_z :H^r(\Sigma;\C^N) \to H^{r+1/2}(\Omega_+;\C^N) \oplus H^{r +1/2}(\Omega_-;\C^N).
	\end{split}
	\end{equation*}
	\item[$\textup{(ii)}$] For $\varphi \in H^{1/2}(\Sigma; \mathbb{C}^N)$ one has $[(-i (\alpha \cdot \nabla) + m \beta - zI_N) \Phi_z \varphi]_\pm = 0$.
	\item[$\textup{(iii)}$] The adjoint $\Phi_z^*: L^2(\mathbb{R}^\theta; \mathbb{C}^N) \to  L^2(\Sigma; \mathbb{C}^N)$ of $\Phi_z$ acts on $u \in L^2(\mathbb{R}^\theta; \mathbb{C}^N)$ as
	\begin{equation} \label{equation_Phi_star}
	  \Phi_{z}^* u(x_\Sigma) = \int_{\R^\theta} G_{\overline{ z}}(x_\Sigma -y) u(y) \, dy = \tr R_{\overline{z}} u(x_\Sigma), \qquad x_\Sigma \in \Sigma,
	\end{equation}
	and $\Phi_z^*$ gives rise to a bounded operator $\Phi_z^*: L^2(\mathbb{R}^\theta; \mathbb{C}^N) \rightarrow H^{1/2}(\Sigma; \mathbb{C}^N)$.
	\end{itemize}
\end{proposition}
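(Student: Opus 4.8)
The plan is to establish Proposition~\ref{prop_Phi_z} by splitting the argument into three essentially independent parts, exploiting heavily the localization-and-interpolation philosophy already set up in Section~\ref{sec_geometry}, and by reducing the unbounded case to local pieces via the partition of unity $\varphi_1,\dots,\varphi_p$.

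\textit{Part (ii).} This is the simplest statement and I would do it first. For $\varphi \in H^{1/2}(\Sigma;\C^N)$ the function $\Phi_z\varphi$ lies in $H^1(\Omega_+;\C^N) \oplus H^1(\Omega_-;\C^N)$ by (a special case of) part (i), so the restrictions $(\Phi_z\varphi)_\pm$ are genuine $H^1$-functions and it makes sense to apply the differential expression. Since $G_z(\cdot - y_\Sigma)$ is, for each fixed $y_\Sigma \in \Sigma$, a fundamental solution of $-i(\alpha\cdot\nabla) + m\beta - z$ away from $y_\Sigma$ (this is the defining property of the resolvent kernel in~\eqref{eq_G_z_2D}--\eqref{eq_G_z_3D}), differentiating under the integral sign on $\Omega_\pm$ — which is justified because, for $x$ in a compact subset of $\Omega_\pm$, the kernel and its derivatives are bounded and the integral over the (possibly unbounded) $\Sigma$ converges by the exponential decay of $G_z$ and its derivatives together with the fact that $\Sigma$ has locally finite, uniformly bounded surface measure — gives $[(-i(\alpha\cdot\nabla)+m\beta - z)\Phi_z\varphi]_\pm = 0$ pointwise, hence as $L^2$-functions.

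\textit{Part (iii).} The formula $\Phi_z^* u(x_\Sigma) = \int_{\R^\theta} G_{\overline z}(x_\Sigma - y) u(y)\,dy$ follows from a direct Fubini computation, using that $G_z(x-y)^* = G_{\overline z}(y-x)$, which one checks from the explicit kernels (the Bessel functions $K_0, K_1$ are real for the relevant arguments up to the square-root branch, and $(\alpha\cdot x)^* = \alpha\cdot x$, $\beta^* = \beta$); Fubini applies because $(x_\Sigma, y) \mapsto |G_{\overline z}(x_\Sigma - y)|\,|u(y)|$ is integrable on $\Sigma \times \R^\theta$ by the same decay estimate on $G_z$. The identity $\Phi_z^* u = \tr R_{\overline z} u$ is then immediate from~\eqref{resolvent_free_op} and Proposition~\ref{proposition_trace_theorem}. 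Finally, since $R_{\overline z}: L^2(\R^\theta;\C^N) \to H^1(\R^\theta;\C^N)$ is bounded and $\tr: H^1(\R^\theta;\C^N) \to H^{1/2}(\Sigma;\C^N)$ is bounded by Proposition~\ref{proposition_trace_theorem}, the composition $\Phi_z^* = \tr\, R_{\overline z}$ is bounded from $L^2(\R^\theta;\C^N)$ into $H^{1/2}(\Sigma;\C^N)$.

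\textit{Part (i).} This is the main obstacle, because it requires genuine elliptic-regularity/mapping estimates that are uniform in the (unbounded) geometry rather than a soft argument. The strategy: first prove the endpoint $r=0$, i.e. $\Phi_z : L^2(\Sigma;\C^N) \to H^{1/2}(\Omega_+;\C^N)\oplus H^{1/2}(\Omega_-;\C^N)$. One route is duality: $\Phi_z$ is the adjoint of $\tr R_{\overline z}$ (restricted appropriately), and by Proposition~\ref{prop_Phi_z}(iii) $\tr R_{\overline z}$ maps $L^2(\R^\theta;\C^N)$, hence also $H^{-1/2}(\Omega_\pm;\C^N)$-type spaces after extension, boundedly into Sobolev spaces on $\Sigma$; dualizing and using Proposition~\ref{prop_Bochner}(ii) and the duality $H^{1/2}(\Omega_\pm)^* \cong$ (a suitable negative-order space) yields the $r=0$ bound. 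For the endpoint $r = 1/2$ one uses that $\varphi \in H^{1/2}(\Sigma;\C^N)$ is a trace of some $w \in H^1(\R^\theta;\C^N)$ with controlled norm (surjectivity in Proposition~\ref{proposition_trace_theorem}), writes $\Phi_z\varphi$ via a single-layer-type representation, and applies the mapping properties of $R_z$ between Sobolev scales stated in Section~\ref{sec_free_Dirac} together with a jump/transmission identity; the key is that on $\Omega_\pm$ the function $\Phi_z\varphi$ solves $(-i(\alpha\cdot\nabla)+m\beta - z)\Phi_z\varphi = 0$ with $H^{1/2}(\Sigma)$-trace data, and interior elliptic regularity for the (squared) Dirac operator upgrades this to $H^1(\Omega_\pm;\C^N)$ — but one must localize using $\varphi_l$, flatten each chart via $\kappa_l$ and $\zeta_l$, and check the constants depend only on $\sup_l \|\zeta_l\|_{C^2_b}$ and $\varepsilon_0$, not on $p$ being finite in an unbounded-$\Sigma$ way. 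Once both endpoints $r=0$ and $r=1/2$ are in hand, the intermediate values $r\in(0,1/2)$ follow by interpolation: by~\eqref{eq_interpolation} (and its $\Omega_\pm$-analogue) the Sobolev scales on $\Sigma$ and on $\Omega_\pm$ are complex interpolation scales, so~\eqref{eq_interpolation_inequality} applied to the pair of bounded operators $\Phi_z : H^0 \to H^{1/2}$ and $\Phi_z : H^{1/2}\to H^1$ — which agree on the dense intersection — produces the bounded operator $\Phi_z : H^r(\Sigma;\C^N) \to H^{r+1/2}(\Omega_+;\C^N)\oplus H^{r+1/2}(\Omega_-;\C^N)$ for every $r\in[0,1/2]$. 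I expect the technical heart to be the uniform-in-geometry elliptic estimate on each localized chart and the verification that the trace/extension constants in Proposition~\ref{proposition_trace_theorem} are compatible with these chart estimates; everything else is bookkeeping with the interpolation and duality machinery already assembled in the notations section and Section~\ref{sec_Bochner}.
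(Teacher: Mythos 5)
Your parts (ii) and (iii) are fine: part (iii) matches the paper essentially verbatim (Fubini for the formula, then $\Phi_z^*=\tr R_{\overline z}$ composed from bounded maps), and part (ii) is a valid alternative to the paper's argument — you differentiate under the integral and invoke the fundamental-solution property, whereas the paper obtains (ii) for free because it identifies $\Phi_z\upharpoonright H^{1/2}(\Sigma;\C^N)$ with $\widehat\Phi_z=(\Gamma_0\upharpoonright\ker(T-z))^{-1}$, which maps into $\ker(T-z)$ by construction.

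Part (i) is where you and the paper diverge substantially, and where your sketch has both a gap and a missed shortcut. First, the gap: you want to interpolate between $r=0$ and $r=1/2$, but your route to the $r=0$ endpoint ($L^2(\Sigma)\to H^{1/2}(\Omega_\pm)$) is not established by the duality you invoke — dualizing $\tr R_{\overline z}: L^2(\R^\theta)\to H^{1/2}(\Sigma)$ yields the extension $\widetilde\Phi_z: H^{-1/2}(\Sigma)\to L^2(\R^\theta)$, which is a \emph{weaker} statement than the $r=0$ bound, and the "suitable negative-order space on $\Omega_\pm$" step is left unresolved. The paper never proves $r=0$ directly: it interpolates between $r=-1/2$ (from the duality extension, exactly what you can prove cleanly) and $r=1/2$, and the $r\in[0,\tfrac12]$ range then falls out. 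Second, the missed shortcut: you expect the $r=1/2$ endpoint to require localized, uniform-in-chart elliptic estimates for the (squared) Dirac operator, which would indeed be a serious undertaking on an unbounded $\Sigma$. The paper sidesteps \emph{all} of that with a closed-graph argument: $\widehat\Phi_z$ is a well-defined linear map $H^{1/2}(\Sigma;\C^N)\to H^1(\R^\theta\setminus\Sigma;\C^N)$ by the direct-sum decomposition $\dom T=\ker\Gamma_0\,\dot+\,\ker(T-z)$ and surjectivity of $\Gamma_0$; it is a restriction of the $L^2$-bounded $\Phi_z$, hence closed between the Sobolev spaces (which embed continuously in the $L^2$-spaces), hence bounded. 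No chart flattening, no elliptic regularity, no tracking of constants. You should replace your elliptic-regularity plan with this soft argument, and replace the $r=0$ anchor with the $r=-1/2$ anchor before interpolating.
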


Finally, we introduce a family of boundary integral operators. Let $z \in \rho(H) = \mathbb{C} \setminus ((-\infty, -|m|] \cup [|m|, \infty))$. Then, we define the map $\mathcal{C}_z: H^{1/2}(\Sigma; \mathbb{C}^N) \rightarrow H^{1/2}(\Sigma; \mathbb{C}^N)$ by
\begin{equation} \label{def_C_z}
  \mathcal{C}_z \varphi := \frac{1}{2} ( \tr^+ (\Phi_z \varphi)_+ + \tr^- (\Phi_z \varphi)_- ), \qquad \varphi \in H^{1/2}(\Sigma; \mathbb{C}^N).
\end{equation}
We remark that the operator $\mathcal{C}_z$ can be represented as a strongly singular boundary integral operator, see for instance \cite[equation (4.5) and Proposition 4.4 (ii)]{BHSS22} for the case that $\Omega_+$ is bounded. However, for our purposes the representation in \eqref{def_C_z} is more convenient.
The basic properties of $\mathcal{C}_z$ are stated in the following proposition. Again, for compact hypersurfaces $\Sigma$ they are well-known, see, e.g., \cite[Theorem~4.3, Proposition~4.4, and Corollary~4.5]{BHSS22}. For general (also unbounded) hypersurfaces $\Sigma$ satisfying Hypothesis~\ref{hypothesis_Sigma} we give a proof in Appendix~\ref{appendix_Phi_C}.
 
\begin{proposition} \label{proposition_C_z}
  Let $z \in \rho(H) = \mathbb{C} \setminus ((-\infty, -|m|] \cup [|m|, \infty))$ and let $\mathcal{C}_z$ be given by~\eqref{def_C_z}. Then, the following is true:
  \begin{itemize}
	\item[$\textup{(i)}$] For any $r \in [-\frac{1}{2},\frac{1}{2}]$ the map $\mathcal{C}_z$ has a bounded extension $\mathcal{C}_z: H^r(\Sigma;\C^N) \rightarrow H^r(\Sigma;\C^N)$.
	\item[$\textup{(ii)}$] For any $r \in(0,\frac{1}{2}]$ and $\varphi \in H^r(\Sigma;\C^N)$ one has
	\begin{equation*}%\label{eq_C_z_formula}
	\mathcal{C}_ z \varphi =  \pm \frac{i}{2}  (\alpha \cdot \nu) \varphi +  \tr^\pm (	\Phi_ z   \varphi)_\pm. 
	\end{equation*}
  \end{itemize}
\end{proposition}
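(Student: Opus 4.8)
\textbf{Proof strategy for Proposition~\ref{proposition_C_z}.}

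The plan is to reduce everything about $\mathcal{C}_z$ to the mapping properties of the potential operator $\Phi_z$ from Proposition~\ref{prop_Phi_z} together with the trace theorem in Proposition~\ref{proposition_trace_theorem}, and then to prove the jump formula in~(ii) first on a dense class of nice densities and on one side of $\Sigma$, deducing the two-sided identity by subtraction. First I would treat~(ii). For $r \in (\tfrac12,1]$ and $\varphi \in H^{r-1/2}(\Sigma;\C^N) \subset H^{1/2}(\Sigma;\C^N)$ — or, more directly, for $\varphi$ in a dense subset such as restrictions of $\mathcal{D}(\R^\theta;\C^N)$ — one uses that $(\Phi_z\varphi)_\pm \in H^{r+1/2}(\Omega_\pm;\C^N)$ by Proposition~\ref{prop_Phi_z}~(i), so that $\tr^\pm(\Phi_z\varphi)_\pm$ makes sense in $H^{r}(\Sigma;\C^N)$. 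The classical jump relation for the Dirac single-layer potential,
\begin{equation*}
  \tr^\pm(\Phi_z\varphi)_\pm = \mp \tfrac{i}{2}(\alpha\cdot\nu)\varphi + \mathcal{C}_z\varphi,
\end{equation*}
follows exactly as in the compact case (cf.\ \cite[Proposition~4.4]{BHSS22}): one writes $G_z(x) = (-i(\alpha\cdot\nabla_x)+m\beta+z)\Phi^{\mathrm{Helm}}_z(x)$ where $\Phi^{\mathrm{Helm}}_z$ is (a matrix multiple of) the Helmholtz fundamental solution with parameter $\sqrt{z^2-m^2}$, and then the singular part of the trace comes only from differentiating the weakly-singular scalar kernel across $\Sigma$, producing the $\pm\tfrac{i}{2}(\alpha\cdot\nu)$ term; the remaining (principal-value) part is symmetric in the two sides and equals $\mathcal{C}_z\varphi$ by definition~\eqref{def_C_z}. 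Adding the two signs recovers~\eqref{def_C_z} consistently, which is the needed sanity check. For $\varphi \in H^r(\Sigma;\C^N)$ with general $r \in (0,\tfrac12]$ one extends by density once the boundedness in~(i) is established.

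For~(i) I would argue as follows. On the basic space $r = \tfrac12$ the operator $\mathcal{C}_z:H^{1/2}(\Sigma;\C^N)\to H^{1/2}(\Sigma;\C^N)$ is bounded because $\Phi_z$ maps $H^{1/2}(\Sigma;\C^N)$ boundedly into $H^1(\Omega_\pm;\C^N)$ (Proposition~\ref{prop_Phi_z}~(i) with $r=\tfrac12$) and $\tr^\pm:H^1(\Omega_\pm;\C^N)\to H^{1/2}(\Sigma;\C^N)$ is bounded. For $r=-\tfrac12$ one uses a duality argument: since $\Phi_{\bar z}^*$ is bounded from $L^2(\R^\theta;\C^N)$ into $H^{1/2}(\Sigma;\C^N)$ and $\Phi_{\bar z}$ is bounded from $H^{1/2}(\Sigma;\C^N)$ into $H^1(\Omega_\pm;\C^N)\subset L^2(\R^\theta;\C^N)$, one identifies $\mathcal{C}_z^{\,\prime}$ (the dual with respect to the pairing of $H^{-1/2}(\Sigma;\C^N)$ with $H^{1/2}(\Sigma;\C^N)$) with $\mathcal{C}_{\bar z}$ up to a bounded symmetry, e.g.\ using $\Phi_z^* = \tr R_{\bar z}$ from~\eqref{equation_Phi_star} and a Green-type identity; hence $\mathcal{C}_z$ extends boundedly to $H^{-1/2}(\Sigma;\C^N)$. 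Finally, interpolating between $r=-\tfrac12$ and $r=\tfrac12$ via~\eqref{eq_interpolation} and the interpolation inequality~\eqref{eq_interpolation_inequality} yields boundedness on $H^r(\Sigma;\C^N)$ for all $r\in[-\tfrac12,\tfrac12]$. An alternative, if one prefers to avoid duality, is to directly estimate the (principal-value) singular integral kernel representation of $\mathcal{C}_z$ on the local graph pieces $\Sigma_l$, using the $C_b^2$-regularity from Hypothesis~\ref{hypothesis_Sigma}, and invoke Calder\'on–Zygmund / pseudodifferential boundedness on $H^r$; but for an unbounded $\Sigma$ the duality-plus-interpolation route is cleaner and piggybacks on the already-proven Appendix estimates for $\Phi_z$.

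The main obstacle I anticipate is not the jump relation itself but establishing~(i) uniformly for \emph{unbounded} $\Sigma$: on a compact surface $\mathcal{C}_z$ is a classical pseudodifferential operator of order $0$ and boundedness on $H^r$ is standard, whereas here one must control the behavior of the kernel $G_z(x_\Sigma-y_\Sigma)$ at large distances on $\Sigma$ and patch the local graph charts $\{W_l,\zeta_l,\kappa_l\}$ with uniform constants. This is precisely why the statement is deferred to Appendix~\ref{appendix_Phi_C}: one needs the uniform-in-chart mapping bounds for $\Phi_z$ proved there, together with the observation that the singular (diagonal) part of the kernel is compactly supported near the diagonal — so only finitely many overlapping charts contribute to any given point — while the far-field part defines a globally bounded integral operator by the exponential/decay estimates on $G_z$ (visible in~\eqref{eq_G_z_2D}–\eqref{eq_G_z_3D} through the decay of $K_0,K_1$ and of $e^{i\sqrt{z^2-m^2}|x|}$ since $\operatorname{Im}\sqrt{z^2-m^2}>0$ for $z\in\rho(H)$). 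Once these uniform bounds are in hand, the duality and interpolation steps are routine.
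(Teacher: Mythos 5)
Your treatment of~(i) is in essence the paper's: boundedness on $H^{1/2}(\Sigma;\C^N)$ comes for free from Proposition~\ref{prop_Phi_z}~(i) and the trace theorem, the $H^{-1/2}$ bound follows by identifying $\mathcal{C}_z$ with the anti-dual $\mathcal{C}_{\overline z}'$ via a Green-type identity (the paper uses~\eqref{eq_Greens_formula} with $u=\widehat\Phi_z\varphi$, $v=\widehat\Phi_{\overline z}\psi$, both lying in the respective defect spaces, so the left-hand side is zero and one reads off $\langle\mathcal{C}_z\varphi,\psi\rangle=\langle\varphi,\mathcal{C}_{\overline z}\psi\rangle$), and interpolation fills in the intermediate $r$.

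For~(ii), however, you take a genuinely different and considerably heavier route than the paper. You propose to re-derive the classical Plemelj--Sokhotski jump relation for the Dirac single-layer potential by writing $G_z$ in terms of a Helmholtz fundamental solution, isolating the weakly singular part, and running a singular-integral analysis \`a la \cite[Proposition~4.4]{BHSS22}; you correctly identify that doing this uniformly across the charts of an unbounded $\Sigma$ is the main technical obstacle. But the paper sidesteps all of this: the jump formula in~(ii) is obtained purely algebraically from the boundary-triple structure. Having shown in the proof of Proposition~\ref{prop_Phi_z} that $\widehat\Phi_z := (\Gamma_0\upharpoonright\ker(T-z))^{-1}$ coincides with $\Phi_z$ on $H^{1/2}(\Sigma;\C^N)$, one has $\Gamma_0\widehat\Phi_z\varphi=\varphi$ by definition, and then
\begin{equation*}
  \mathcal{C}_z\varphi=\tfrac12\tr^+(\Phi_z\varphi)_++\tfrac12\tr^-(\Phi_z\varphi)_-
  =\mp\tfrac12\bigl(\tr^+(\widehat\Phi_z\varphi)_+-\tr^-(\widehat\Phi_z\varphi)_-\bigr)+\tr^\pm(\Phi_z\varphi)_\pm
  =\pm\tfrac{i}{2}(\alpha\cdot\nu)\varphi+\tr^\pm(\Phi_z\varphi)_\pm
\end{equation*}
is a one-line add-and-subtract using $i(\alpha\cdot\nu)(\tr^+-\tr^-)\widehat\Phi_z\varphi=\Gamma_0\widehat\Phi_z\varphi=\varphi$. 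No decomposition of $G_z$, no principal-value analysis, no uniformity issues over charts --- the obstacle you flag simply does not arise. (One small inaccuracy in your write-up: you describe the principal-value part of the trace as ``equal to $\mathcal{C}_z\varphi$ by definition~\eqref{def_C_z},'' but \eqref{def_C_z} defines $\mathcal{C}_z$ as the average of the two one-sided traces, not as a principal-value integral; the coincidence of these two descriptions is a theorem in the compact case, not a definition. Your algebraic ``sanity check'' by adding the two signs is essentially the paper's proof, so if you turn that around and take it as the \emph{derivation}, you recover the paper's cleaner argument.) Both routes are in principle correct, but the paper's algebraic one buys brevity and --- crucially for this paper --- applicability to unbounded $\Sigma$ without any additional kernel estimates beyond what Appendix~\ref{appendix_Phi_C} already provides for $\Phi_z$.
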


	We note that item~(ii) of the previous proposition is a version of the well-known Plemelj-Sokhotski formula, see for instance \cite[Lemma 3.3 (i)]{AMV14}, where $\mathcal{C}_z$ is represented as a singular boundary integral operator.

\section{Resolvent formula for $H_\varepsilon$ and analysis of the associated integral operators}\label{sec_shifted_operators}

Throughout this section let $\varepsilon_1$ be the number specified in Proposition~\ref{prop_tubular_neighbourhood}, so that $\iota$ acts  as a bijective map from $\Sigma \times (-\varepsilon_1,\varepsilon_1)$ to $\Omega_{\varepsilon_1}$. Moreover, let $\varepsilon \in (0, \varepsilon_1)$, $z \in \rho(H) = \mathbb{C} \setminus ((-\infty, -|m|] \cup[|m|, \infty))$,  $G_z$ be the function defined in~\eqref{eq_G_z_2D}--\eqref{eq_G_z_3D}, and $\Sigma$ be a hypersurface satisfying Hypothesis~\ref{hypothesis_Sigma} with associated Weingarten map $W$ (see Definition~\ref{def_Weingarten}). This section is devoted to the study of the integral operators which formally act on $f \in L^2((-1,1);L^2(\Sigma;\C^N))$ and $u \in L^2(\R^\theta;\C^N)$ as
\begin{subequations} \label{eq_ABC_integral_rep} 
\begin{align}
  \label{def_A_eps}
		A_\varepsilon( z)f(x) &= \int_{-1}^1\int_{\Sigma} G_ z(x-y_\Sigma -\varepsilon s\nu(y_\Sigma))   f(s)(y_\Sigma)\det(I-\varepsilon sW(y_\Sigma)) \, d\sigma(y_\Sigma) \,  ds,\\
		\label{def_B_eps}
		B_\varepsilon( z)f(t)(x_\Sigma) &=   \int_{-1}^1 \int_{\Sigma} G_ z(x_\Sigma +\varepsilon t\nu(x_\Sigma) -y_\Sigma - \varepsilon s\nu(y_\Sigma))   f(s)(y_\Sigma) \\
		&\qquad \qquad \qquad \qquad \qquad \qquad \cdot \det(I- \varepsilon sW(y_\Sigma)) \, d\sigma(y_\Sigma) \, ds, \notag 		\\
		\label{def_C_eps}
		C_\varepsilon( z)u(t)(x_\Sigma) &= \int_{\R^\theta} G_ z(x_\Sigma +  \varepsilon t\nu(x_\Sigma) -y) u(y)\, dy,
\end{align}
\end{subequations}
	for a.e. $x \in \R^\theta$,  a.e. $t \in (-1,1)$, and $\sigma$-a.e. $x_\Sigma \in \Sigma$. First, in Section~\ref{section_def_ABC} we define these operators rigorously and show their relation to the resolvent of the operator $H_\varepsilon$ in~\eqref{eq_H_eps}. Then, in Section~\ref{sec_shift_operator} we introduce and investigate a shift operator which plays an important role in the convergence analysis of $A_\varepsilon(z), B_\varepsilon(z)$, and $C_\varepsilon(z)$ in Section~\ref{sec_conv_analyis}. Finally, in Section~\ref{sec_inverse} we discuss the convergence of $(I + {B_\varepsilon( z)}Vq)^{-1}$.

\subsection{Definition and elementary results on $A_\varepsilon(z), B_\varepsilon(z)$, and $C_\varepsilon(z)$} \label{section_def_ABC}
 
First, we rigorously define the operators $A_\varepsilon(z), B_\varepsilon(z)$, and $C_\varepsilon(z)$ formally given by~\eqref{eq_ABC_integral_rep}. Recall that $\Omega_\varepsilon$ was defined in~\eqref{def_Omega_eps} and introduce the mappings  
\begin{equation}\label{eq_I_eps}
	\begin{aligned}
		&\mathcal{I}_\varepsilon : L^2((-\varepsilon,\varepsilon);L^2(\Sigma;\C^N))  \to L^2(\Omega_\varepsilon;\C^N), &
		&\mathcal{I}_\varepsilon f (x_\Sigma+t \nu(x_\Sigma)) := f(t)(x_\Sigma),\\
		&\mathcal{I}_\varepsilon^{-1} : L^2(\Omega_\varepsilon;\C^N) \to L^2((-\varepsilon,\varepsilon);L^2(\Sigma;\C^N)) , &
		&\mathcal{I}_\varepsilon^{-1} u (t)(x_\Sigma) := u(x_\Sigma+t \nu(x_\Sigma)),
	\end{aligned}
\end{equation}
 and 
\begin{equation}\label{eq_S_eps}
	\begin{aligned}
		&\mathcal{S}_\varepsilon : L^2((-1,1);L^2(\Sigma;\C^N)) \to L^2((-\varepsilon,\varepsilon);L^2(\Sigma;\C^N)), &&
		\mathcal{S}_\varepsilon g (t) := \frac{1}{\sqrt{\varepsilon}}g\Bigl(\frac{t}{\varepsilon}\Bigr),\\
		&\mathcal{S}_\varepsilon^{-1} : L^2((-\varepsilon,\varepsilon);L^2(\Sigma;\C^N)) \to L^2((-1,1);L^2(\Sigma;\C^N)), &&
		\mathcal{S}_\varepsilon^{-1} g (t) :=\sqrt{\varepsilon}g(\varepsilon t).
	\end{aligned}
\end{equation}
According to Proposition~\ref{prop_tubular_neighbourhood} and Proposition~\ref{prop_Bochner}~(iii) for any $\varepsilon \in (0,\varepsilon_1)$ these mappings are well-defined, bounded, invertible, and their inverses have the claimed form, see also  \cite[equations~(3.6) and~(3.7)]{BEHL17}.
Moreover, set $u_\varepsilon:= \frac{\chi_{\Omega_\varepsilon}}{\sqrt{\varepsilon}}$, where $\chi_{\Omega_\varepsilon}$ is the characteristic function for $\Omega_\varepsilon$, and define the operators 
\begin{equation*}
U_\varepsilon: L^2(\mathbb{R}^\theta; \mathbb{C}^N) \to L^2(\Omega_\varepsilon; \mathbb{C}^N)\quad\text{and}\quad 
U_\varepsilon^*: L^2(\Omega_\varepsilon; \mathbb{C}^N) \to L^2(\mathbb{R}^\theta; \mathbb{C}^N)
\end{equation*}
acting on $u \in L^2(\mathbb{R}^\theta; \mathbb{C}^N)$ and $v \in L^2(\Omega_\varepsilon; \mathbb{C}^N)$ as
\begin{equation*}
  U_\varepsilon u = (u_\varepsilon u)\upharpoonright \Omega_\varepsilon \quad \text{and} \quad U_\varepsilon^* v = \begin{cases} u_\varepsilon v &\text{ in } \Omega_\varepsilon, \\ 0 & \text{ in } \mathbb{R}^\theta \setminus \Omega_\varepsilon. \end{cases}
\end{equation*}
Recall that we use the notation $R_z = (H-z)^{-1}$ for the resolvent of the free Dirac operator $H$ given by~\eqref{def_free_op}. Then, we define  
\begin{equation} \label{def_ABC_operators}
	\begin{aligned}	
		A_\varepsilon( z) :=R_ z U_\varepsilon^* \mathcal{I}_\varepsilon \mathcal{S}_\varepsilon&:   L^2((-1,1);L^2(\Sigma;\C^N)) \to L^2(\R^\theta;\C^N), \\
		B_\varepsilon( z) := \mathcal{S}_\varepsilon^{-1}\mathcal{I}_\varepsilon^{-1} U_\varepsilon R_ z U_\varepsilon^* \mathcal{I}_\varepsilon \mathcal{S}_\varepsilon&:  L^2((-1,1);L^2(\Sigma;\C^N)) \to L^2((-1,1);L^2(\Sigma;\C^N)),  \\
		C_\varepsilon( z) := \mathcal{S}_\varepsilon^{-1}\mathcal{I}_\varepsilon^{-1} U_\varepsilon R_ z&: L^2(\R^\theta;\C^N) \to L^2((-1,1);L^2(\Sigma;\C^N)). 
	\end{aligned}
\end{equation}
Note that by definition these operators are well-defined and bounded. In the next proposition we show that these operators coincide with the formal expressions in~\eqref{eq_ABC_integral_rep}.

\begin{proposition}\label{prop_ABC_operators}
	For the operators $A_\varepsilon(z)$, $B_\varepsilon(z)$, and $C_\varepsilon(z)$ defined in~\eqref{def_ABC_operators} the representations in~\eqref{eq_ABC_integral_rep} hold.
\end{proposition}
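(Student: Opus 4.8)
The plan is to unravel the definitions in~\eqref{def_ABC_operators} one factor at a time, inserting the explicit formulas for $\mathcal{S}_\varepsilon$, $\mathcal{I}_\varepsilon$, $U_\varepsilon$ from~\eqref{eq_I_eps}--\eqref{eq_S_eps} together with the convolution representation~\eqref{resolvent_free_op} of $R_z$, and then to convert the resulting volume integrals over $\Omega_\varepsilon$ into iterated integrals over $\Sigma\times(-\varepsilon,\varepsilon)$ by means of Proposition~\ref{prop_tubular_neighbourhood}~(iii), followed by the linear rescaling $s\mapsto\varepsilon s$ which produces a factor $\varepsilon$ and turns the Jacobian $\det(I-sW(\cdot))$ into $\det(I-\varepsilon sW(\cdot))$.

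I would begin with $C_\varepsilon(z)$, which is the simplest. For $u\in L^2(\R^\theta;\C^N)$ the function $R_z u$ has the representative $x\mapsto\int_{\R^\theta}G_z(x-y)u(y)\,dy$, the integral converging absolutely for a.e.\ $x$ since $G_z$ is the sum of an $L^1_{\mathrm{loc}}$-function with a weak singularity at the origin and a bounded, exponentially decaying term. Applying $U_\varepsilon$ multiplies this representative by $\varepsilon^{-1/2}\chi_{\Omega_\varepsilon}$, and $\mathcal{I}_\varepsilon^{-1}$ followed by $\mathcal{S}_\varepsilon^{-1}$ then amounts, by~\eqref{eq_I_eps} and~\eqref{eq_S_eps}, to evaluating it at $x=x_\Sigma+\varepsilon t\nu(x_\Sigma)$ and multiplying by $\sqrt{\varepsilon}\cdot\varepsilon^{-1/2}=1$. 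Since $(t,x_\Sigma)\mapsto x_\Sigma+\varepsilon t\nu(x_\Sigma)$ maps $(-1,1)\times\Sigma$ onto $\Omega_\varepsilon$ with Jacobian bounded away from $0$ and $\infty$ by Proposition~\ref{prop_tubular_neighbourhood}~(ii), this identity holds for a.e.\ $(t,x_\Sigma)$ and yields exactly~\eqref{def_C_eps}.

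For $A_\varepsilon(z)$ one reads off from~\eqref{eq_I_eps}--\eqref{eq_S_eps} that $U_\varepsilon^*\mathcal{I}_\varepsilon\mathcal{S}_\varepsilon f$ is the function $w_\varepsilon\in L^2(\R^\theta;\C^N)$ supported in $\Omega_\varepsilon$ with $w_\varepsilon(x_\Sigma+t\nu(x_\Sigma))=\varepsilon^{-1}f(t/\varepsilon)(x_\Sigma)$. Applying $R_z$, rewriting $\int_{\R^\theta}G_z(x-y)w_\varepsilon(y)\,dy=\int_{\Omega_\varepsilon}G_z(x-y)w_\varepsilon(y)\,dy$ via Proposition~\ref{prop_tubular_neighbourhood}~(iii), and then substituting $s=\varepsilon s'$ (the factor $\varepsilon$ cancels $\varepsilon^{-1}$) gives~\eqref{def_A_eps}. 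The formula for $B_\varepsilon(z)$ is obtained by combining the two computations: one first forms $w_\varepsilon$ exactly as for $A_\varepsilon(z)$, applies $R_z$ and rewrites the $y$-integral over $\Omega_\varepsilon$ via Proposition~\ref{prop_tubular_neighbourhood}~(iii) and the rescaling, and then, exactly as for $C_\varepsilon(z)$, evaluates the result at $x=x_\Sigma+\varepsilon t\nu(x_\Sigma)$ through $\mathcal{S}_\varepsilon^{-1}\mathcal{I}_\varepsilon^{-1}U_\varepsilon$, producing~\eqref{def_B_eps}.

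The routine but slightly delicate points are: (a) justifying the interchange of the $ds$- and $d\sigma$-integrations with the application of $R_z$ and the evaluation operators, which is done with Fubini's theorem for Bochner integrals (Section~\ref{sec_Bochner}) and the identification of $L^2(\Omega_\varepsilon;\C^N)$ with $L^2((-\varepsilon,\varepsilon);L^2(\Sigma;\C^N))$ from Proposition~\ref{prop_Bochner}~(iii); and (b) ensuring the pointwise identities hold for a.e.\ $x$, a.e.\ $t$, resp.\ $\sigma$-a.e.\ $x_\Sigma$, in spite of the weak singularity of $G_z$ on the diagonal. Point (b) is the only place that requires genuine care: because $G_z$ is locally integrable, all the integrals converge absolutely for a.e.\ argument, and if one prefers one may verify the identities first on a dense subspace (say, of continuous, compactly supported functions, where the integrands are continuous and classical Fubini applies) and then extend by the boundedness of the operators in~\eqref{def_ABC_operators}; the legitimacy of evaluating $R_z w$ on the lower-dimensional slices $\{x_\Sigma+\varepsilon t\nu(x_\Sigma):x_\Sigma\in\Sigma\}$ is the main obstacle, and everything else is a direct substitution.
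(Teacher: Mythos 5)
Your proposal is correct and follows essentially the same route as the paper: unravel the operator compositions in~\eqref{def_ABC_operators}, insert the convolution representation~\eqref{resolvent_free_op} of $R_z$, convert the $\Omega_\varepsilon$-volume integral into an iterated $\Sigma\times(-\varepsilon,\varepsilon)$ integral via Proposition~\ref{prop_tubular_neighbourhood}~(iii), and rescale $s\mapsto\varepsilon s$. The only differences are cosmetic (you start with $C_\varepsilon(z)$, the paper with $A_\varepsilon(z)$), and your worry in point (b) about ``lower-dimensional slices'' is somewhat overstated: the points $x_\Sigma+\varepsilon t\nu(x_\Sigma)$ for $(t,x_\Sigma)\in(-1,1)\times\Sigma$ fill all of $\Omega_\varepsilon$, and the identity in~\eqref{def_C_eps} is asserted only for a.e.\ $(t,x_\Sigma)$ with respect to the product measure, so $\mathcal{I}_\varepsilon^{-1}$ applied to $U_\varepsilon R_z v\in L^2(\Omega_\varepsilon;\C^N)$ already handles this automatically.
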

\begin{proof}
	First, we show the claim for $A_\varepsilon(z)$. Let $f \in L^2((-1,1);L^2(\Sigma;\C^N))$. Using~\eqref{resolvent_free_op}, Proposition~\ref{prop_tubular_neighbourhood}~(iii), \eqref{eq_I_eps}, and~\eqref{eq_S_eps} we find
	\begin{equation*}
		\begin{split}
		A_\varepsilon( z) f(x) &= (R_ z U_\varepsilon^* \mathcal{I}_\varepsilon \mathcal{S}_\varepsilon f)(x) = \int_{\R^\theta} G_ z(x-y)(U_\varepsilon^* \mathcal{I}_\varepsilon \mathcal{S}_\varepsilon f)(y) \,dy \\
		&=\int_{\Omega_\varepsilon} G_ z(x-y)  u_\varepsilon(y) (\mathcal{I}_\varepsilon\mathcal{S}_\varepsilon f)(y) \,dy\\
		&=\int_{-\varepsilon}^\varepsilon\int_{\Sigma} G_ z(x-(y_\Sigma +s\nu(y_\Sigma)))  \frac{1}{\sqrt{\varepsilon}}  (\mathcal{I}_\varepsilon\mathcal{S}_\varepsilon f)(y_\Sigma +s\nu(y_\Sigma))\\
		&\qquad \qquad \qquad \qquad \qquad \qquad \qquad \cdot \det(I-sW(y_\Sigma)) \, d\sigma(y_\Sigma) \, ds\\
		%&=\int_{-\varepsilon}^\varepsilon\int_{\Sigma} G_ z(x-y_\Sigma -s\nu(y_\Sigma))  \frac{1}{\sqrt{\varepsilon}} (\mathcal{S}_\varepsilon f)(s)(y_\Sigma)\det(I-sW(y_\Sigma)) \, d\sigma(y_\Sigma) \, ds\\
		&=\int_{-\varepsilon}^\varepsilon \int_{\Sigma} G_ z(x-y_\Sigma -s\nu(y_\Sigma))   \frac{1}{{\varepsilon}} f\Bigl(\frac{s}{\varepsilon}\Bigr)(y_\Sigma) \det(I-sW(y_\Sigma)) \, d\sigma(y_\Sigma) \, ds\\
		&=\int_{-1}^1\int_{\Sigma} G_ z(x-y_\Sigma -\varepsilon s\nu(y_\Sigma))   f(s)(y_\Sigma)\det(I-\varepsilon sW(y_\Sigma)) \, d\sigma(y_\Sigma)   \, ds 
		\end{split}
	\end{equation*}
	for a.e. $x \in \R^\theta$, which is the claimed identity.
	Next, to prove the claim for $C_\varepsilon(z)$ we note for $v \in L^2(\mathbb{R}^\theta; \mathbb{C}^N)$, a.e. $t \in (-1,1)$, and $\sigma$-a.e. $x_\Sigma \in \Sigma$ that
	\begin{equation*}
		\begin{split}
		C_\varepsilon( z) v(t)(x_\Sigma) &= (\mathcal{S}_\varepsilon^{-1}\mathcal{I}_\varepsilon^{-1} U_\varepsilon R_ z  v )(t)(x_\Sigma) \\
		&=   \sqrt{\varepsilon}(U_\varepsilon R_ z v)(x_\Sigma  + \varepsilon t\nu(x_\Sigma)) = \int_{\R^\theta}G_z(x_\Sigma + \varepsilon t\nu(x_\Sigma) -y) v(y)\, dy.
		\end{split}
	\end{equation*}
	The representation for $B_\varepsilon(z)$ follows by combining the last two calculations.
\end{proof}

Next, we show a resolvent formula for $H_\varepsilon$ involving the operators $A_\varepsilon(z), B_\varepsilon(z)$, and $C_\varepsilon(z)$ which will be useful for the convergence analysis. Using the identifications in the end of Section~\ref{sec_Bochner} we regard $Vq$ in the following as a 
multiplication operator in $L^2((-1,1);L^2(\Sigma;\C^N))$. A similar formula is shown in \cite[Lemma~3.1]{MP18}.

\begin{proposition}\label{prop_resolvent_formula}
	Let $A_\varepsilon(z)$, $B_\varepsilon(z)$, and $C_\varepsilon(z)$ be given by~\eqref{def_ABC_operators}, let $q$  and $V$ be given by  \eqref{eq_q} and \eqref{eq_V}, respectively, and let $H_\varepsilon$ be given by~\eqref{eq_H_eps}. If $-1 \in \rho(B_\varepsilon( z)Vq)$, then $z \in \rho(H_\varepsilon)$ and
	\begin{equation*}%\label{eq_H_eps_resolvent}
		(H_\varepsilon -  z)^{-1} = R_ z - A_\varepsilon( z) Vq(I + B_\varepsilon( z) Vq)^{-1}C_\varepsilon( z).
	\end{equation*}
\end{proposition}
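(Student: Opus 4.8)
The plan is to establish the resolvent formula via the classical Birman–Schwinger / Konno–Kuroda factorization, exploiting the structure $V_\varepsilon = U_\varepsilon^* \mathcal{I}_\varepsilon \mathcal{S}_\varepsilon \, Vq \, \mathcal{S}_\varepsilon^{-1} \mathcal{I}_\varepsilon^{-1} U_\varepsilon$ hidden in the definitions \eqref{eq_short_range_potential}, \eqref{eq_I_eps}, \eqref{eq_S_eps}, and the definitions of $U_\varepsilon, U_\varepsilon^*$. First I would verify this factorization of the multiplication operator $V_\varepsilon$ acting on $L^2(\R^\theta;\C^N)$: given $u$, the operator $\mathcal{S}_\varepsilon^{-1}\mathcal{I}_\varepsilon^{-1} U_\varepsilon$ transports $u$ to a function of $(t,x_\Sigma) \in (-1,1) \times \Sigma$ (namely $\sqrt{\varepsilon}\, u(x_\Sigma + \varepsilon t \nu(x_\Sigma))$ up to the square-root normalization), multiplication by $Vq$ acts as $V(x_\Sigma) q(t)$ on this representation, and $U_\varepsilon^* \mathcal{I}_\varepsilon \mathcal{S}_\varepsilon$ transports back; chasing the normalizing factors $\tfrac{1}{\sqrt\varepsilon}$ and $u_\varepsilon = \chi_{\Omega_\varepsilon}/\sqrt\varepsilon$ through \eqref{eq_I_eps}–\eqref{eq_S_eps} and comparing with \eqref{eq_short_range_potential}, one recovers exactly $V_\varepsilon u$. (Here one uses that $\iota|_{\Sigma\times(-\varepsilon_1,\varepsilon_1)}$ is a bijection onto $\Omega_{\varepsilon_1}$, so the change of variables is legitimate; the Jacobian $\det(I - tW)$ does not intervene in $V_\varepsilon$ itself because $U_\varepsilon, U_\varepsilon^*$ are defined directly on $L^2$ with Lebesgue measure, and it is $\mathcal{I}_\varepsilon, \mathcal{I}_\varepsilon^{-1}$ that absorb it.)

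Once $V_\varepsilon = U_\varepsilon^* \mathcal{I}_\varepsilon \mathcal{S}_\varepsilon (Vq) \mathcal{S}_\varepsilon^{-1}\mathcal{I}_\varepsilon^{-1} U_\varepsilon$ is established, I would write $H_\varepsilon - z = (H - z) + V_\varepsilon$ on $\dom H_\varepsilon = H^1(\R^\theta;\C^N) = \dom H$, factor $H_\varepsilon - z = (H-z)\big(I + R_z V_\varepsilon\big)$, and apply the standard inversion lemma: if $I + B_\varepsilon(z) Vq$ is boundedly invertible in $L^2((-1,1);L^2(\Sigma;\C^N))$, then $I + R_z V_\varepsilon$ is boundedly invertible in $L^2(\R^\theta;\C^N)$ with inverse $I - R_z U_\varepsilon^*\mathcal{I}_\varepsilon\mathcal{S}_\varepsilon \, Vq \, (I + B_\varepsilon(z) Vq)^{-1}\, \mathcal{S}_\varepsilon^{-1}\mathcal{I}_\varepsilon^{-1} U_\varepsilon R_z$, by the algebraic identity $(I + XY)^{-1} = I - X(I + YX)^{-1}Y$ applied with $X = R_z U_\varepsilon^*\mathcal{I}_\varepsilon\mathcal{S}_\varepsilon$ and $Y = (Vq)\mathcal{S}_\varepsilon^{-1}\mathcal{I}_\varepsilon^{-1}U_\varepsilon$. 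Recognizing $YX = Vq \cdot \mathcal{S}_\varepsilon^{-1}\mathcal{I}_\varepsilon^{-1}U_\varepsilon R_z U_\varepsilon^*\mathcal{I}_\varepsilon\mathcal{S}_\varepsilon = Vq \cdot B_\varepsilon(z)$ — so $I + YX = I + Vq\,B_\varepsilon(z)$, which is invertible precisely when $I + B_\varepsilon(z)Vq$ is, since $\sigma(AB)\setminus\{0\} = \sigma(BA)\setminus\{0\}$ — I obtain that $z \in \rho(H_\varepsilon)$ and
\begin{equation*}
(H_\varepsilon - z)^{-1} = (I + R_z V_\varepsilon)^{-1} R_z = R_z - R_z U_\varepsilon^*\mathcal{I}_\varepsilon\mathcal{S}_\varepsilon \, Vq\, (I + B_\varepsilon(z)Vq)^{-1}\, \mathcal{S}_\varepsilon^{-1}\mathcal{I}_\varepsilon^{-1}U_\varepsilon R_z,
\end{equation*}
which, after invoking the definitions \eqref{def_ABC_operators} of $A_\varepsilon(z)$ and $C_\varepsilon(z)$ and the commutation of $Vq$ past $(I+B_\varepsilon(z)Vq)^{-1}$ (one writes $Vq\,(I+B_\varepsilon(z)Vq)^{-1} = (I + Vq\,B_\varepsilon(z))^{-1}Vq$, but in fact the stated form keeps $Vq$ on the left of the inverse, matching the proposition), is exactly the asserted formula.

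The main obstacle I anticipate is the bookkeeping in the first step: one must be careful that the operators $U_\varepsilon, U_\varepsilon^*, \mathcal{I}_\varepsilon, \mathcal{S}_\varepsilon$ are only defined for $\varepsilon < \varepsilon_1$ (so that $\iota$ is a bijection onto $\Omega_\varepsilon$), and that the square-root normalizations in $U_\varepsilon$ ($u_\varepsilon = \chi_{\Omega_\varepsilon}/\sqrt\varepsilon$) and in $\mathcal{S}_\varepsilon$ (the $1/\sqrt\varepsilon$ and the dilation) combine to give the correct $\tfrac{1}{\varepsilon}$ scaling in $V_\varepsilon(x) = \tfrac{1}{\varepsilon}V(x_\Sigma)q(t/\varepsilon)$; the factors $\sqrt\varepsilon$ from $U_\varepsilon^*$ and $\mathcal{S}_\varepsilon$ on the left must cancel the $1/\sqrt\varepsilon$ from $U_\varepsilon$ and $\mathcal{S}_\varepsilon^{-1}$ on the right, leaving a single $1/\varepsilon$, and the change of variable $s = \varepsilon t$ converts $\mathcal{S}_\varepsilon^{-1}$'s dilation correctly. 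Everything else is routine operator algebra; the key non-trivial input is Proposition~\ref{prop_ABC_operators}, which identifies $B_\varepsilon(z)$ with the middle factor, and the fact (already noted in the text) that $R_z$ maps $L^2$ into $H^1 = \dom H_\varepsilon$, so that the Neumann-type manipulation stays within the operator domains.
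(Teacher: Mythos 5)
Your proposal is correct and follows essentially the same Birman--Schwinger / Konno--Kuroda route as the paper: the paper factors $V_\varepsilon = v_\varepsilon u_\varepsilon$ with $v_\varepsilon = \sqrt{\varepsilon}V_\varepsilon$, $u_\varepsilon = \chi_{\Omega_\varepsilon}/\sqrt{\varepsilon}$, records the key identity $v_\varepsilon \mathcal{I}_\varepsilon\mathcal{S}_\varepsilon = u_\varepsilon\mathcal{I}_\varepsilon\mathcal{S}_\varepsilon Vq$, and then verifies directly that $(H_\varepsilon - z)$ times the candidate resolvent equals $I$, which is exactly what your abstract identity $(I+XY)^{-1}=I-X(I+YX)^{-1}Y$ with $X=A_\varepsilon(z)$, $Y=Vq\,\mathcal{S}_\varepsilon^{-1}\mathcal{I}_\varepsilon^{-1}U_\varepsilon$ encapsulates after the factorization $V_\varepsilon=U_\varepsilon^*\mathcal{I}_\varepsilon\mathcal{S}_\varepsilon\,Vq\,\mathcal{S}_\varepsilon^{-1}\mathcal{I}_\varepsilon^{-1}U_\varepsilon$. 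The passage from invertibility of $I+B_\varepsilon(z)Vq$ to that of $I+Vq\,B_\varepsilon(z)$ and the final relabeling $Vq(I+B_\varepsilon(z)Vq)^{-1}=(I+Vq\,B_\varepsilon(z))^{-1}Vq$ are handled correctly, so the argument is sound.
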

\begin{proof}
	Let $V_\varepsilon$ be given by \eqref{eq_short_range_potential} and set $v_\varepsilon:= \sqrt{\varepsilon} V_\varepsilon$. Then, $H_\varepsilon = H + v_\varepsilon u_\varepsilon$ and $v_\varepsilon \mathcal{I}_\varepsilon \mathcal{S}_\varepsilon = u_\varepsilon \mathcal{I}_\varepsilon \mathcal{S}_\varepsilon Vq$. Hence, due to the  invertibility of  $I + B_\varepsilon(z)Vq$ and~\eqref{def_ABC_operators} we obtain
	\begin{equation*}
		\begin{split}
		(&H_\varepsilon -  z)( R_ z - A_\varepsilon(z) Vq(I + B_\varepsilon(z)Vq )^{-1} C_\varepsilon(z)) \\
		&= (H-  z+   v_\varepsilon u_\varepsilon)(R_ z - R_ z v_\varepsilon(I + u_\varepsilon R_ z v_\varepsilon)^{-1}u_\varepsilon R_ z)\\
		& = I + v_\varepsilon u_\varepsilon R_ z - v_\varepsilon(I + u_\varepsilon R_ z v_\varepsilon)^{-1}u_\varepsilon R_ z
		-v_\varepsilon u_\varepsilon R_ z v_\varepsilon(I + u_\varepsilon R_ z v_\varepsilon)^{-1}u_\varepsilon R_ z \\
		& = I + v_\varepsilon u_\varepsilon R_ z - v_\varepsilon(I +u_\varepsilon R_ z v_\varepsilon)^{-1}u_\varepsilon R_ z
		+v_\varepsilon (I + u_\varepsilon R_ z v_\varepsilon)^{-1}u_\varepsilon R_ z -v_\varepsilon u_\varepsilon R_ z\\
		&=I.
		\end{split}
	\end{equation*}
    A similar calculation shows that
	$	( R_ z - A_\varepsilon(z) Vq(I + B_\varepsilon(z)Vq )^{-1} C_\varepsilon(z)) (H_\varepsilon -  z) =I$
	is true. The latter two equations imply the claim of this proposition.
\end{proof}

\subsection{The shift operator}\label{sec_shift_operator}
In this subsection we introduce and study a shift operator which turns out to be useful in the convergence analysis of the maps $A_\varepsilon(z), B_\varepsilon(z)$, and $C_\varepsilon(z)$ in~\eqref{def_ABC_operators}. For that, we first fix a $C^1_b$-extension of the normal vector field $\nu$ to $\R^{\theta}$, which we also denote by $\nu$. In the following, we show how one possible choice of this extension can be constructed, which also proves that such an extension exists. Choose  $\varphi_\nu \in C^1(\R;\R)$ with $\varphi_\nu(0) = 1$ and compact support in $(-\varepsilon_1, \varepsilon_1)$, where $\varepsilon_1$ is the number specified in Proposition~\ref{prop_tubular_neighbourhood}. Since $\Sigma$ is assumed to satisfy Hypothesis~\ref{hypothesis_Sigma}, it is not difficult to show that the vector field defined by
\begin{equation*}
	\R^{\theta} \ni x \mapsto 
	\begin{cases}
	\nu(x_\Sigma)\varphi_\nu(t), & \text{if } x = x_\Sigma + t \nu(x_\Sigma) \in \Omega_{\varepsilon_1} \text{ with } (x_\Sigma,t) \in \Sigma \times (-\varepsilon_1,\varepsilon_1),\\
	0,& \text{if } x \notin \Omega_{\varepsilon_1},
	\end{cases}
\end{equation*}
is a $C^1_b$-extension of $\nu$ which is supported in $\Omega_{\varepsilon_1}$.  Next, we define for $\delta \in \mathbb{R}$ the shift operator $\tau_\delta : L^2(\R^\theta;\C^N) \to L^2(\R^\theta;\C^N)$ by
	\begin{equation} \label{def_tau_delta}
		\tau_\delta u(x) : = u\left(x + \delta \nu(x)\right), \quad  x \in \R^\theta.
	\end{equation}

\begin{proposition}\label{prop_shift_operator} 
	Let $D \nu$ be the Jacobi matrix of $\nu$ and $\delta_0 \in (0, \norm{D \nu}_{L^\infty(\R^\theta;\R^{\theta\times \theta})}^{-1})$. Then, for any $r \in  [0,1]$ the operators $\tau_{\delta}$, $\delta \in [-\delta_0,\delta_0]$, are uniformly bounded in $H^{r}(\R^\theta;\C^N)$ and for $r' \in [0,r]$ 
	\begin{equation} \label{estimate_tau_delta}
		\norm{\tau_{\delta} - I}_{H^r(\R^\theta;\C^N) \to H^{r'}(\R^\theta;\C^N)} \leq C \abs{\delta}^{r-r'}   
	\end{equation}
	holds for all $\delta \in [-\delta_0,\delta_0]$, where $C>0$ is independent of $\delta$.
\end{proposition}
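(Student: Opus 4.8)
The plan is to first treat the two endpoint cases $r' = r$ (uniform boundedness) and $r' = 0$ (the crudest bound on $\tau_\delta - I$), and then interpolate. For the uniform boundedness statement, I would note that the map $x \mapsto \Psi_\delta(x) := x + \delta\nu(x)$ is, for $|\delta| \le \delta_0 < \|D\nu\|_\infty^{-1}$, a $C^1_b$-diffeomorphism of $\R^\theta$ onto itself: its Jacobian $I + \delta D\nu(x)$ is invertible by a Neumann series argument (since $\|\delta D\nu\|_\infty < 1$), with inverse uniformly bounded in $x$ and in $\delta \in [-\delta_0,\delta_0]$, and $\Psi_\delta$ is proper and a local diffeomorphism, hence a global diffeomorphism. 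Consequently $\tau_\delta u = u \circ \Psi_\delta$ is, by the standard change-of-variables estimate, bounded on $L^2(\R^\theta;\C^N) = H^0$ with norm controlled by $\|(\det D\Psi_\delta)^{-1}\|_\infty^{1/2} \le C$ uniformly in $\delta$, and bounded on $H^1(\R^\theta;\C^N)$ because by the chain rule $\nabla(\tau_\delta u)(x) = (D\Psi_\delta(x))^T (\nabla u)(\Psi_\delta(x))$ and $\|D\Psi_\delta\|_\infty \le 1 + \delta_0\|D\nu\|_\infty \le C$. By interpolation between $H^0$ and $H^1$ (the pair $(H^0,H^1)$ is compatible and $[H^0,H^1]_r = H^r$) we get uniform boundedness of $\tau_\delta$ on $H^r$ for all $r \in [0,1]$.

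For the quantitative estimate I would first establish the case $r = 1$, $r' = 0$, i.e. $\|\tau_\delta - I\|_{H^1 \to L^2} \le C|\delta|$. For smooth compactly supported $u$ this follows from the fundamental theorem of calculus: $u(x + \delta\nu(x)) - u(x) = \int_0^1 \nabla u(x + s\delta\nu(x))\cdot \delta\nu(x)\,ds$, so $\|\tau_\delta u - u\|_{L^2} \le |\delta|\,\|\nu\|_\infty \int_0^1 \|(\nabla u)\circ \Psi_{s\delta}\|_{L^2}\,ds \le C|\delta|\,\|u\|_{H^1}$, using the uniform $L^2 \to L^2$ bound on $\tau_{s\delta}$ from the previous paragraph; the general case follows by density. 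Combining this with the uniform $H^1 \to H^1$ bound (so $\|\tau_\delta - I\|_{H^1 \to H^1} \le C$) and interpolating in the target index gives $\|\tau_\delta - I\|_{H^1 \to H^{r'}} \le \|\tau_\delta - I\|_{H^1\to H^1}^{r'}\|\tau_\delta - I\|_{H^1\to L^2}^{1-r'} \le C|\delta|^{1-r'}$ for $r' \in [0,1]$, which is \eqref{estimate_tau_delta} for $r = 1$.

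Finally, to obtain the general case $r \in [0,1]$, $r' \in [0,r]$, I would interpolate in the \emph{domain} index: $\tau_\delta - I$ maps $H^1 \to H^{r'}$ with norm $\le C|\delta|^{1-r'}$ (just shown) and maps $H^{r'} \to H^{r'}$ with norm $\le C$ (uniform boundedness plus the identity), so for $r = (1-\tau)r' + \tau\cdot 1$, i.e. $\tau = (r - r')/(1-r')$, interpolation yields $\|\tau_\delta - I\|_{H^r \to H^{r'}} \le C^{1-\tau}(C|\delta|^{1-r'})^{\tau} = C|\delta|^{\tau(1-r')} = C|\delta|^{r-r'}$, as claimed. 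The main obstacle I anticipate is the careful justification that $\Psi_\delta$ is a global diffeomorphism with all the relevant bounds uniform in $\delta$ — in particular that the $C^1_b$ character of $\nu$ (its boundedness together with bounded derivatives, which holds by construction of the extension) is genuinely what is needed, and that the change-of-variables formula applies to $H^1$ functions, not just smooth ones; the interpolation steps themselves are then routine given \eqref{eq_interpolation_inequality}.
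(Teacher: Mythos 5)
Your proposal is correct, and the base estimates — uniform $L^2$ and $H^1$ bounds via change of variables and the chain rule (using the Neumann-series invertibility of $I_\theta+\delta D\nu$), interpolation to get uniform boundedness on $H^r$, and the $H^1\to L^2$ estimate via the fundamental theorem of calculus — coincide with the paper's. The only genuine difference lies in how you dispose of the intermediate indices $0\le r'<r\le 1$ with $(r',r)\ne(0,1)$: you interpolate twice, first in the \emph{target} with the domain fixed at $H^1$ (obtaining $\|\tau_\delta-I\|_{H^1\to H^{r'}}\le C|\delta|^{1-r'}$ from the endpoints $H^1\to L^2$ and $H^1\to H^1$), and then in the \emph{domain} with the target fixed at $H^{r'}$. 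The paper instead performs a single two-sided interpolation of the pair $(H^\upsilon,H^\upsilon)$ against $(H^1,L^2)$ with the cleverly chosen parameters $\mu=r-r'$, $\upsilon=r'/(1-(r-r'))$, which hits $H^r\to H^{r'}$ in one shot. Both routes rest on exactly the same two endpoint estimates and produce the same power $|\delta|^{r-r'}$; your sequential version is perhaps easier to motivate, while the paper's version saves one appeal to \eqref{eq_interpolation_inequality}. One small remark: the careful justification of the global diffeomorphism property that you flag as a possible obstacle is not really needed — for the change-of-variables bound one only needs $\Psi_\delta$ to be an injective $C^1$ map with Jacobian uniformly bounded away from singularity, which follows directly from $|\delta|\,\|D\nu\|_\infty<1$; surjectivity onto $\R^\theta$ is true but not required for the inequality $\int_{\Psi_\delta(\R^\theta)}|u|^2\le\int_{\R^\theta}|u|^2$.
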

\begin{proof}
	Fix $\delta\in [-\delta_0,\delta_0]$ and observe first that $I_\theta + \delta D \nu(x)$ is invertible for all $x\in \R^\theta$ and 
	the norm of the inverse is bounded by $(1- \abs{\delta_0}\norm{D \nu}_{L^\infty(\R^\theta;\R^{\theta\times \theta})})^{-1}$. 
	The same bound holds for the modulus of the eigenvalues of $(I_\theta + \delta D \nu(x))^{-1}$ and hence we conclude
	\begin{equation}\label{detdet}
		\abs{\det((I_\theta + \delta D \nu(x))^{-1})} \leq \frac{1}{(1- \abs{\delta_0}\norm{D \nu}_{L^\infty(\R^\theta;\R^{\theta\times \theta})} )^{\theta}},\quad x\in \R^\theta.
	\end{equation}
	We start by showing the uniform boundedness of $\tau_\delta$ for $r=0$. Let $u \in L^2(\R^\theta;\C^N)$. Then, a change of variables and \eqref{detdet} 
	lead to
	\begin{equation}\label{abc1}
		\begin{split}
			\int_{\R^\theta} \abs{\tau_\delta u(x)}^2\,dx &= \int_{\R^\theta} \abs{u(x + \delta\nu(x))}^2\,dx \\
%			\\&= \int_{\R^\theta} \abs{u(x + \delta\nu(x))}^2 \frac{\abs{ \det\left( I_\theta + \delta D \nu(x) \right)}}{\abs{ \det\left( I_\theta + \delta	D \nu(x)\right)}}\,dx \\
			&= \int_{\R^\theta} \abs{u(x + \delta\nu(x))}^2 \abs{ \det( I_\theta + \delta D \nu(x) )\det(( I_\theta + \delta	D \nu(x))^{-1})}\,dx\\
			& \leq\frac{1}{{(1- \abs{\delta_0}\norm{D \nu}_{L^\infty(\R^\theta;\R^{\theta\times \theta})})^\theta}} \int_{\R^\theta}  \abs{u(x)}^2\,dx,
		\end{split}
	\end{equation}
	and it follows that the operators $\tau_{\delta}$, $\delta \in [-\delta_0,\delta_0]$, are uniformly bounded
	 in $L^2(\mathbb{R}^\theta; \mathbb{C}^N)$. To see the uniform boundedness of the operators $\tau_\delta$ in $H^1(\mathbb{R}^\theta; \mathbb{C}^N)$, let $u \in \mathcal{D}(\R^{\theta};\C^N)$ and compute in a similar way as above 
	\begin{equation}\label{abc2}
	\begin{split}
			\int_{\R^\theta} \abs{D (\tau_\delta u)(x)}^2\,dx &= \int_{\R^\theta} \abs{(D u)(x + \delta\nu(x))(I_\theta +\delta D \nu (x))}^2\,dx \\
			&\leq \frac{(1+\delta_0\norm{D \nu}_{L^\infty(\R^\theta;\R^{\theta\times \theta})})^2}{{(1- \abs{\delta_0}\norm{D \nu}_{L^\infty(\R^\theta;\R^{\theta\times \theta})})^\theta}} \int_{\R^\theta}  \abs{D u(x)}^2\,dx.
			\end{split}
	\end{equation}
	By density this estimate remains valid for $u \in H^1(\R^{\theta};\C^N)$.
	Therefore, the uniform boundedness of the operators $\tau_\delta$ in $H^1(\mathbb{R}^\theta; \mathbb{C}^N)$ follows from \eqref{abc1} and \eqref{abc2}. Eventually, using interpolation one concludes that $\tau_\delta$ is uniformly bounded in $H^r(\mathbb{R}^\theta; \mathbb{C}^N)$ for any $r \in [0, 1]$.
	
	It remains to prove~\eqref{estimate_tau_delta}. Since we already have shown that $\tau_\delta$ is uniformly bounded in $H^r(\mathbb{R}^\theta; \mathbb{C}^N)$, the claim in~\eqref{estimate_tau_delta} holds for $r=r' \in [0,1]$. Next, we show~\eqref{estimate_tau_delta} for  $r'=0$ and $r=1$. With the main theorem of calculus and the chain rule we find for  $u \in \mathcal{D}(\R^{\theta};\C^N)$
	\begin{equation*}
		\begin{split}
			\int_{\R^\theta} \abs{\tau_\delta u(x)-u(x)}^2\,dx 
			%&= \int_{\R^\theta} \Bigl|\int_{0}^\delta \frac{d}{d t} u(x +t\nu(x)) dt \Bigr|^2\,dx 
			&= \int_{\R^\theta} \biggl|\int_{0}^\delta D u(x +t\nu(x)) \nu(x) dt \biggr|^2\,dx \\
			&\leq \int_{\R^\theta} \biggl( \int_{0}^\delta \abs{(\tau_{t} D u)(x)}^2 \,dt\biggr) \biggl(\int_0^\delta \abs{\nu(x)}^2 \,dt\biggr)\, dx\\
			&\leq \vert\delta\vert \norm{\nu}_{L^\infty(\R^\theta;\R^{\theta})}^2 \int_{0}^\delta \norm{\tau_t D u}_{L^2(\R^\theta;\C^{N\times\theta})}^2 \,dt \\
			&\leq   C\vert\delta\vert  \int_{0}^\delta \norm{ D u}_{L^2(\R^\theta;\C^{N\times\theta})}^2 \,dt \leq C\abs{\delta }^2\norm{u}_{H^1(\R^\theta;\C^N)}^2,
		\end{split}
	\end{equation*}
	where $\tau_{t} D u$ is understood column-wise. By density this estimate remains valid for $u \in H^1(\R^{\theta};\C^N)$ and hence
	$\norm{\tau_{\delta} - I}_{H^1(\R^\theta;\C^N) \to L^2(\R^\theta;\C^N)} \leq C \abs{\delta}   $.
	It remains to prove the claim in the case  $0\leq r'< r \leq1$ with $(r',r) \neq (0,1)$. We set $\mu = r-r' \in (0,1)$ and $\upsilon =\frac{r'}{1-(r-r')} \in [0,1]$. Then,  
	\begin{equation*}
		r' = (1-\mu)\upsilon + \mu 0 \quad\text{and}\quad r = (1-\mu)\upsilon + \mu 1
	\end{equation*}
	and consequently \cite[Theorem B.7]{M00} implies 
	\begin{equation*}
			H^{r'}(\R^\theta;\C^N) = [H^\upsilon(\R^\theta;\C^N),H^0(\R^\theta;\C^N)]_{\mu} =[H^\upsilon(\R^\theta;\C^N),L^2(\R^\theta;\C^N)]_{\mu}
	\end{equation*}
	and
	\begin{equation*}
		H^{r}(\R^\theta;\C^N) = [H^\upsilon(\R^\theta;\C^N),H^1(\R^\theta;\C^N)]_{\mu}.
	\end{equation*}
	Applying~\eqref{eq_interpolation_inequality} yields
	\begin{equation*}
		\begin{split}
			&\norm{I - \tau_{\delta}}_{H^r(\R^\theta;\C^N) \to H^{r'}(\R^\theta;\C^N)} \\
			&\qquad\leq C \norm{I - \tau_{\delta}}_{[H^\upsilon(\R^\theta;\C^N),H^1(\R^\theta;\C^N)]_{\mu} \to [H^\upsilon(\R^\theta;\C^N),L^2(\R^\theta;\C^N)]_{\mu}}\\
			&\qquad\leq C 	\norm{I - \tau_{\delta}}_{H^\upsilon(\R^\theta;\C^N) \to H^{\upsilon}(\R^\theta;\C^N)}^{1-\mu} \norm{I - \tau_{\delta}}_{H^1(\R^\theta;\C^N) \to L^2(\R^\theta;\C^N)}^\mu\\
			&\qquad =C \abs{\delta}^{r-r'},
		\end{split}
	\end{equation*}
	which is exactly~\eqref{estimate_tau_delta}. This finishes the proof of this proposition.
\end{proof}

We will also need a variant of the shift operator $\tau_\delta$ that acts on functions defined on $\Omega_\pm$. 
Since $\Omega_\pm$ satisfy Hypothesis~\ref{hypothesis_Sigma}
we can make use of Stein's extension operator $E: L^2(\Omega_\pm; \mathbb{C}^N) \rightarrow L^2(\mathbb{R}^\theta; \mathbb{C}^N)$ which has the properties $(E f)_\pm = f$ for $f \in L^2(\Omega_\pm; \mathbb{C}^N)$ and which has a continuous restriction $E: H^r(\Omega_\pm; \mathbb{C}^N) \rightarrow H^r(\mathbb{R}^\theta; \mathbb{C}^N)$ for any $r \geq 0$, 
see \cite[Chapter~6, Section~3, Theorem~5]{S70}. 
We then define the shift operator for functions on $\Omega_\pm$ by
	\begin{equation} \label{def_tau_delta_pm}
			\tau_\delta^{\Omega_\pm} := (\tau_{\delta} E(\cdot))_\pm :L^2(\Omega_\pm;\C^N) \to L^2(\Omega_\pm;\C^N).
	\end{equation}
	The following properties of $\tau_\delta^{\Omega_\pm}$ follow immediately from the properties of $E$ and Proposition~\ref{prop_shift_operator}.

\begin{corollary}\label{cor_shift_operator}
    Let $D \nu$ be the Jacobi matrix of $\nu$ and $\delta_0 \in (0, \norm{D \nu}_{L^\infty(\R^\theta;\R^{\theta\times \theta})}^{-1})$. Then, for any $r \in  [0,1]$ the operators $\tau_{\delta}^{\Omega_\pm}$, $\delta \in [-\delta_0,\delta_0]$, are uniformly bounded in $H^{r}(\Omega_\pm;\C^N)$ and for $r' \in [0,r]$ 
	\begin{equation*} 
		\norm{\tau_{\delta}^{\Omega_\pm} - I}_{H^r(\Omega_\pm;\C^N) \to H^{r'}(\Omega_\pm;\C^N)} \leq C \abs{\delta}^{r-r'}   
	\end{equation*}
	holds for all $\delta \in [-\delta_0,\delta_0]$, where $C>0$ is independent of $\delta$.
\end{corollary}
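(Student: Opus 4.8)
The plan is to deduce Corollary~\ref{cor_shift_operator} directly from Proposition~\ref{prop_shift_operator} by sandwiching the restricted shift operator between Stein's extension operator $E$ and the restriction to $\Omega_\pm$, exactly as the definition~\eqref{def_tau_delta_pm} suggests. The two operators $E: H^r(\Omega_\pm;\C^N) \to H^r(\R^\theta;\C^N)$ and the restriction map $H^r(\R^\theta;\C^N) \to H^r(\Omega_\pm;\C^N)$, $u \mapsto u_\pm$, are both bounded with norms independent of $\delta$ (and of $r$, or at least uniformly bounded on $[0,1]$, since $E$ is a fixed operator and restriction is a contraction between the relevant spaces). Hence the estimates for $\tau_\delta^{\Omega_\pm}$ will follow by composing these bounds with the corresponding estimates for $\tau_\delta$ on $\R^\theta$.

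Concretely, I would first record uniform boundedness: for $u \in H^r(\Omega_\pm;\C^N)$ with $r \in [0,1]$ and $\delta \in [-\delta_0,\delta_0]$,
\begin{equation*}
  \norm{\tau_\delta^{\Omega_\pm} u}_{H^r(\Omega_\pm;\C^N)} = \norm{(\tau_\delta Eu)_\pm}_{H^r(\Omega_\pm;\C^N)} \leq \norm{\tau_\delta Eu}_{H^r(\R^\theta;\C^N)} \leq C \norm{Eu}_{H^r(\R^\theta;\C^N)} \leq C \norm{u}_{H^r(\Omega_\pm;\C^N)},
\end{equation*}
where the middle inequality is the uniform boundedness of $\tau_\delta$ from Proposition~\ref{prop_shift_operator} and $C$ is independent of $\delta$. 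Next, for the convergence estimate with $r' \in [0,r]$, I would use that $E$ is a right inverse of the restriction, i.e.\ $(Eu)_\pm = u$, to write $\tau_\delta^{\Omega_\pm} u - u = (\tau_\delta Eu)_\pm - (Eu)_\pm = ((\tau_\delta - I)Eu)_\pm$, so that
\begin{equation*}
  \norm{(\tau_\delta^{\Omega_\pm} - I) u}_{H^{r'}(\Omega_\pm;\C^N)} \leq \norm{(\tau_\delta - I) Eu}_{H^{r'}(\R^\theta;\C^N)} \leq C \abs{\delta}^{r-r'} \norm{Eu}_{H^r(\R^\theta;\C^N)} \leq C \abs{\delta}^{r-r'} \norm{u}_{H^r(\Omega_\pm;\C^N)},
\end{equation*}
invoking~\eqref{estimate_tau_delta} in the second step and the boundedness of $E: H^r(\Omega_\pm;\C^N) \to H^r(\R^\theta;\C^N)$ in the last. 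Taking the supremum over $u$ in the unit ball gives the claimed operator-norm bound.

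Since this is essentially a formal corollary, I do not anticipate a genuine obstacle; the only point requiring a moment's care is that the constant in the bound for $E$ can be taken uniform in $r$ over the compact range $[0,1]$ (or one simply notes that only finitely many, in fact two, endpoint values together with interpolation are ever used, and Stein's operator is bounded on every $H^r$), and likewise that the restriction map is a norm-nonincreasing (hence uniformly bounded) operator between the Sobolev spaces in question. Everything else is a direct chain of the inequalities above, which is why the statement is phrased as a corollary rather than a proposition.
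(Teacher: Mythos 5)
Your proof is correct and is exactly the argument the paper intends: the paper states the corollary ``follows immediately from the properties of $E$ and Proposition~\ref{prop_shift_operator}'', and your chain of inequalities (restriction is bounded, $\tau_\delta$ obeys the estimates of Proposition~\ref{prop_shift_operator}, $E$ is bounded, and $(Eu)_\pm = u$ so the identity passes through) is precisely that one-line deduction spelled out in full.
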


Eventually, we show that the map $t \mapsto \tau_{t \delta }u$ has a useful continuity property.

\begin{proposition}\label{prop_continous_shift}
	Let $D \nu$ be the Jacobi matrix of $\nu$, $\delta_0 \in (0, \norm{D \nu}_{L^\infty(\R^\theta;\R^{\theta\times \theta})}^{-1})$, $\delta \in [-\delta_0,\delta_0]$, $r \in [0,1]$, $u \in H^r(\R^\theta;\C^N)$ and $v \in H^r(\Omega_\pm;\C^N)$. Then, the functions
	\begin{equation*}
			f_u : (-1,1) \to H^r(\R^\theta;\C^N), \quad
			t \mapsto \tau_{t \delta } u, 
	\end{equation*}
	and
	\begin{equation*}
			f_{v}^{\pm}: (-1,1) \to H^r(\Omega_\pm\;\C^N), \quad 
			t \mapsto \tau_{t \delta }^{\Omega_\pm} v,
	\end{equation*} 
	are continuous.
\end{proposition}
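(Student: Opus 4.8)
The plan is to establish continuity of $f_u$ first and then deduce the statement for $f_v^\pm$ by composing with Stein's extension operator $E$. For $f_u$, I would fix $t_0 \in (-1,1)$ and write, for $t$ near $t_0$, the difference $f_u(t) - f_u(t_0) = \tau_{t\delta}u - \tau_{t_0 \delta}u = \tau_{t_0\delta}(\tau_{(t-t_0)\delta}u - u) + (\tau_{t_0\delta}-I)(\text{lower order})$, but cleaner is to use the semigroup-like identity only where it genuinely holds; since $\nu$ is a nonlinear vector field, $\tau_\delta$ is not an additive group, so instead I would argue directly. The key device is a density argument: for $u \in H^r(\R^\theta;\C^N)$ choose $u_n \in \mathcal{D}(\R^\theta;\C^N)$ with $u_n \to u$ in $H^r$. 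For the smooth compactly supported $u_n$, the map $t \mapsto \tau_{t\delta}u_n$ is continuous into $H^{r'}(\R^\theta;\C^N)$ for every $r' < r$ (indeed into every $H^{r''}$) by the explicit estimate~\eqref{estimate_tau_delta} applied to $u_n$, or more directly because $x \mapsto u_n(x + t\delta\nu(x))$ depends smoothly on $t$. To promote this to continuity into $H^r$ itself for the original $u$, I would use the uniform boundedness of $\{\tau_{t\delta} : t \in [-1,1]\}$ on $H^r(\R^\theta;\C^N)$ from Proposition~\ref{prop_shift_operator}: given $\epsilon > 0$, pick $n$ with $\norm{u - u_n}_{H^r} < \epsilon$; then
\begin{equation*}
\norm{\tau_{t\delta}u - \tau_{t_0\delta}u}_{H^r} \leq \norm{\tau_{t\delta}(u - u_n)}_{H^r} + \norm{\tau_{t\delta}u_n - \tau_{t_0\delta}u_n}_{H^r} + \norm{\tau_{t_0\delta}(u_n - u)}_{H^r},
\end{equation*}
and the outer two terms are bounded by $C\epsilon$ uniformly in $t$, while the middle term tends to $0$ as $t \to t_0$.

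The only gap in this scheme is that I need the middle term, $\norm{\tau_{t\delta}u_n - \tau_{t_0\delta}u_n}_{H^r} \to 0$, to hold for the full $H^r$-norm (not merely $H^{r'}$ with $r' < r$), since otherwise the $3\epsilon$ argument does not close. For $u_n \in \mathcal{D}(\R^\theta;\C^N)$ this is genuine: the function $(x,t) \mapsto u_n(x + t\delta\nu(x))$ and all its $x$-derivatives up to order $\lceil r \rceil$ are jointly continuous, compactly supported uniformly in $t$ from a bounded interval, hence $t \mapsto \tau_{t\delta}u_n$ is continuous into $H^k(\R^\theta;\C^N)$ for every integer $k$, and then by interpolation (using \eqref{eq_interpolation_inequality} together with $\norm{\tau_{t\delta}u_n - \tau_{t_0\delta}u_n}_{H^{r}} \leq C\norm{\tau_{t\delta}u_n - \tau_{t_0\delta}u_n}_{H^{\lfloor r\rfloor}}^{1-\{r\}}\norm{\tau_{t\delta}u_n - \tau_{t_0\delta}u_n}_{H^{\lceil r\rceil}}^{\{r\}}$ when $r\notin\Z$, and directly when $r \in \{0,1\}$) also into $H^r(\R^\theta;\C^N)$. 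This is the part requiring the most care, though it is still routine.

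Finally, for $f_v^\pm$ with $v \in H^r(\Omega_\pm;\C^N)$: by definition~\eqref{def_tau_delta_pm} one has $f_v^\pm(t) = (\tau_{t\delta}Ev)_\pm$, i.e. $f_v^\pm = (\cdot)_\pm \circ f_{Ev}$, the composition of the (bounded, hence continuous) restriction operator $H^r(\R^\theta;\C^N) \to H^r(\Omega_\pm;\C^N)$ with the map $f_{Ev}: (-1,1) \to H^r(\R^\theta;\C^N)$, $t \mapsto \tau_{t\delta}Ev$, which is continuous by the first part since $Ev \in H^r(\R^\theta;\C^N)$. A composition of continuous maps is continuous, so $f_v^\pm$ is continuous, which completes the proof. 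The main obstacle, as noted, is ensuring continuity in the full $H^r$-norm for smooth $u_n$ (to feed the density/uniform-boundedness argument); everything else is bookkeeping with the already-established uniform bounds and interpolation inequalities.
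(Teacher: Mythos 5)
Your plan follows the same density-plus-uniform-boundedness template as the paper: show continuity for $u_n \in \mathcal{D}(\R^\theta;\C^N)$, then use uniform boundedness of $\{\tau_{t\delta}\}_t$ on $H^r$ from Proposition~\ref{prop_shift_operator} to pass to general $u$, and finally obtain $f_v^\pm$ by composing $f_{Ev}$ with the bounded restriction operator. The last step and the approximation step are essentially identical to the paper's.

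Where you diverge is in the treatment of the smooth case, and here you manufacture a difficulty that is not there. You worry that \eqref{estimate_tau_delta} only gives convergence in $H^{r'}$ with $r' < r$ and then invoke interpolation with $H^{\lceil r \rceil}$ to upgrade. But the hypothesis is $r \in [0,1]$, so $\lceil r\rceil \leq 1$, and the clean way through is the one the paper takes: for $u_n \in \mathcal{D}(\R^\theta;\C^N)$, dominated convergence applied to $u_n((\cdot) + \delta t_n \nu)$ and its first-order partials gives $f_{u_n}(t_n) \to f_{u_n}(t)$ directly in $H^1(\R^\theta;\C^N)$, and the continuous embedding $H^1 \hookrightarrow H^r$ (valid precisely because $r \leq 1$) immediately yields continuity into $H^r$. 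No interpolation and no excursion into higher-order Sobolev spaces are needed. Incidentally, your parenthetical claim that $t \mapsto \tau_{t\delta}u_n$ is continuous into $H^k(\R^\theta;\C^N)$ for every integer $k$ is actually unjustified: the extension of the normal field $\nu$ constructed in Section~\ref{sec_shift_operator} is only $C^1_b$, so $x \mapsto u_n(x + t\delta\nu(x))$ is in general only $C^1$ in $x$ even for $u_n$ smooth. This does not harm your argument, since the restriction $r \leq 1$ means you never need anything beyond $H^1$, but it is worth noting that the stronger statement is false as phrased.
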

\begin{proof}
	First, consider $u \in \mathcal{D}(\R^{\theta};\C^N)$ and let $t_n, t \in (-1,1)$ such that $t_n \rightarrow t$ as $n \rightarrow \infty$.  Then, with dominated convergence one gets
	\begin{equation*}
		\lim_{n \to \infty}  f_u(t_n) = \lim_{ n \to \infty} u( (\cdot) + \delta t_n  \nu)  =   u( (\cdot) + \delta t  \nu) = f_u(t) \quad \textup{in } H^1(\R^\theta;\C^N).
	\end{equation*}
	Since $H^1(\mathbb{R}^\theta; \mathbb{C}^N)$ is continuously embedded in $H^r(\mathbb{R}^\theta; \mathbb{C}^N)$, the assertion follows for $u \in \mathcal{D}(\R^{\theta};\C^N)$. If $u \in H^r(\R^\theta;\C^N)$, then there exists a sequence $(u_n)_{n \in \N}\subset  \D(\R^\theta;\C^N)$ such that $u_n \to u$ in $H^r(\R^\theta;\C^N)$ as $n \to \infty$.  Applying Proposition \ref{prop_shift_operator} yields
	\begin{equation*}
		\begin{split}
			\norm{f_{u}(t) - f_{u_n}(t)}_{H^r(\R^\theta;\C^N)}  &=  \norm{ \tau_{\delta t   } (u- u_n)}_{H^r(\R^\theta;\C^N)} \leq C \norm{u- u_n}_{H^r(\R^\theta;\C^N)}
		\end{split}
	\end{equation*}
	for all $n \in \N$ and $t \in (-1,1)$. Hence, $f_{u_n}(t) \to f_{u}(t)$ uniformly with respect to $t$ in $H^r(\R^\theta;\C^N)$ as $n \to \infty$.  Thus, $f_{u}$ is also  continuous. 
	
	It remains to verify the claim for $f_v^\pm$. Let $t_n, t \in (-1,1)$ such that $t_n \rightarrow t$ as $n \rightarrow \infty$.  Using the properties of Stein's extension operator $E$ and the above observations, we get that $f_{Ev}(t_n) \rightarrow f_{Ev}(t)$ in $H^r(\R^\theta;\C^N)$. Moreover, the boundedness of the restriction mapping gives us that
	$f_v^\pm(t_n)  = (\tau_{\delta t_n} E v)_\pm = (f_{Ev}(t_n))_\pm$ converges  to $(f_{Ev}(t))_\pm = f_v^{\pm}(t)$ in $H^r(\Omega_\pm;\C^N)$. This shows the continuity of $f_v^\pm$.
\end{proof}

\subsection{Convergence of $A_\varepsilon(z), B_\varepsilon(z)$, and $C_\varepsilon(z)$}\label{sec_conv_analyis}

This section is devoted to the convergence analysis of the operators $A_\varepsilon(z), B_\varepsilon(z)$, and $C_\varepsilon(z)$ introduced in~\eqref{def_ABC_operators} for $\varepsilon \rightarrow 0+$. First, in Proposition~\ref{prop_C_eps} we study the convergence of $C_\varepsilon(z)$, which allows us with a duality argument to investigate the convergence of $A_\varepsilon(z)$ in Proposition~\ref{prop_A_eps}. Eventually, in Proposition~\ref{prop_B_conv} we consider the  convergence of $B_\varepsilon(z)$.

We define
\begin{equation}\label{eq_eps_2}
  \varepsilon_2 := \min \left\{\frac{\varepsilon_1}{2}, \frac{1}{2\norm{D \nu}_{L^{\infty}(\R^\theta;\R^{\theta \times \theta})}} \right\},
\end{equation}
where $\varepsilon_1$ is specified Proposition~\ref{prop_tubular_neighbourhood}. 
Let $W$ be the Weingarten map associated with $\Sigma$ introduced in Definition~\ref{def_Weingarten}. In our analysis, the operator $M_{\varepsilon} : L^2((-1,1); L^2(\Sigma;\C^N)) \to L^2((-1,1); L^2(\Sigma;\C^N))$ acting as 
	\begin{equation} \label{def_M_eps}
		\begin{split}
			M_\varepsilon f(t) = \det \left(I- t \varepsilon W \right)f(t)\qquad \text{ for a.e. } t \in(-1,1) 
		\end{split}
	\end{equation}
will be useful. In the following lemma, which is an immediate consequence of Proposition~\ref{prop_tubular_neighbourhood}~(ii), some relevant properties of $M_\varepsilon$ are stated.

\begin{lemma}\label{lem_M_operators}	
	For any $\varepsilon \in (0, \varepsilon_2)$ the operator $M_\varepsilon$ is boundedly invertible, and $\norm{M_{\varepsilon}}_{0\to0} \leq (1+\varepsilon C)$ and $\norm{M_{\varepsilon} - I}_{0 \to 0} \leq \varepsilon C.$
\end{lemma}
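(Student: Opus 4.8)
The plan is to recognise $M_\varepsilon$ as the operator of multiplication by the \emph{scalar}-valued function
\[
  m_\varepsilon(x_\Sigma,t) := \det\bigl(I - t\varepsilon W(x_\Sigma)\bigr)
\]
on $L^2((-1,1);L^2(\Sigma;\C^N)) \cong L^2(\Sigma\times(-1,1);\C^N)$, where the identification is the one from Proposition~\ref{prop_Bochner}~(iii). For such a multiplication operator the $L^2$-operator norm equals the $L^\infty$-norm of the multiplier, so $\norm{M_\varepsilon}_{0\to0} = \norm{m_\varepsilon}_{L^\infty(\Sigma\times(-1,1))}$ and likewise $\norm{M_\varepsilon - I}_{0\to0} = \norm{m_\varepsilon - 1}_{L^\infty(\Sigma\times(-1,1))}$. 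Here $m_\varepsilon$ is bounded and (jointly) measurable because, by~\eqref{eq_L_l}, on each chart $W_l$ it is a polynomial in $t$ whose coefficients depend continuously on the entries of $L_l(x')$, and these are uniformly bounded since $\zeta_l \in C^2_b(\R^{\theta-1};\R)$.

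Next I would apply Proposition~\ref{prop_tubular_neighbourhood}~(ii) with $\varepsilon$ replaced by $t\varepsilon$. Since $\varepsilon \in (0,\varepsilon_2)$ and $\varepsilon_2 \leq \varepsilon_1/2$ by~\eqref{eq_eps_2}, for every $t \in (-1,1)$ one has $\abs{t\varepsilon} < \varepsilon_2 \leq \varepsilon_1/2 < \varepsilon_1$, so Proposition~\ref{prop_tubular_neighbourhood}~(ii) is applicable at the parameter $t\varepsilon$ and yields
\[
  \bigl\lvert 1 - m_\varepsilon(x_\Sigma,t)\bigr\rvert = \bigl\lvert 1 - \det(I - t\varepsilon W(x_\Sigma))\bigr\rvert \leq C\abs{t\varepsilon} \leq C\varepsilon < \tfrac12
\]
for all $x_\Sigma \in \Sigma$ and $t \in (-1,1)$. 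Passing to the supremum gives at once $\norm{M_\varepsilon - I}_{0\to0} \leq C\varepsilon$ and $\norm{M_\varepsilon}_{0\to0} \leq 1 + \norm{M_\varepsilon - I}_{0\to0} \leq 1 + C\varepsilon$.

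Finally, bounded invertibility follows either directly — the estimate above shows $\lvert m_\varepsilon \rvert \geq \tfrac12$ pointwise, so $M_\varepsilon^{-1}$ is the multiplication operator associated with $1/m_\varepsilon \in L^\infty$, with $\norm{M_\varepsilon^{-1}}_{0\to0} \leq 2$ — or via the Neumann series, since $\norm{I - M_\varepsilon}_{0\to0} \leq \tfrac12 < 1$. There is no genuine obstacle in this lemma; the only point that requires a moment's care is that Proposition~\ref{prop_tubular_neighbourhood}~(ii) must be invoked at the \emph{scaled} parameter $t\varepsilon$, which is legitimate precisely because of the choice of $\varepsilon_2$ in~\eqref{eq_eps_2}.
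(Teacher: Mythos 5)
Your proof is correct and is exactly the argument the paper has in mind: the paper simply states that the lemma "is an immediate consequence of Proposition~\ref{prop_tubular_neighbourhood}~(ii)," and your write-up supplies precisely those immediate details (viewing $M_\varepsilon$ as multiplication by the scalar $\det(I - t\varepsilon W(x_\Sigma))$, invoking Proposition~\ref{prop_tubular_neighbourhood}~(ii) at the scaled parameter $t\varepsilon \in (-\varepsilon_1,\varepsilon_1)$, and reading off the $L^\infty$ bound and the Neumann-series invertibility). No discrepancy to flag.
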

% \begin{proof}
% 	The claims follow immediately from  the properties of $\det(I - t W(x_\Sigma)), (x_\Sigma,t) \in \Sigma \times (-\varepsilon_1,\varepsilon_1)$ given in  Proposition \ref{prop_tubular_neighbourhood} (ii).
% \end{proof}

To formulate the result about the convergence of $C_\varepsilon(z)$, recall that the embedding $\mathfrak{J}$ is defined in~\eqref{eq_def_embedding_op} and introduce the operator
	\begin{equation} \label{def_C_0}
		\begin{split}
			C_0( z):= \mathfrak{J} \Phi_{\overline{z}}^* :L^2(\R^\theta;\C^N) \to L^2((-1,1);L^2(\Sigma;\C^N)).
		\end{split}
	\end{equation}
In fact,  the properties of $\mathfrak{J}$ and $\Phi_{\overline{z}}^*$, see \eqref{eq_def_embedding_op} and Proposition \ref{prop_Phi_z}, imply that $C_0(z)$  gives rise also to a bounded operator from $L^2(\R^{\theta};\C^N)$ to $L^2((-1,1);H^{1/2}(\Sigma;\C^N))$.
\begin{proposition}\label{prop_C_eps}
	Let $z \in \rho(H)$, $R_ z = (H-  z)^{-1}$, $\tau_{(\cdot)}$ be the shift operator in~\eqref{def_tau_delta}, and $\varepsilon \in (0, \varepsilon_2)$ with $\varepsilon_2$ given by~\eqref{eq_eps_2}. Then, for any $u \in L^2(\R^{\theta};\C^N)$ the relation 
	\begin{equation}\label{eq_C_eps}
		C_\varepsilon( z)u(t) = \tr \tau_{\varepsilon t}R_ z u  \qquad \text{ for a.e. } t \in(-1,1)
	\end{equation} 
	holds in $L^2(\Sigma;\C^N)$ and $\ran C_\varepsilon( z) \subset L^2((-1,1);H^{1/2}(\Sigma;\C^N))$.  Moreover, the operators 
	$C_\varepsilon(z)$ are uniformly bounded from $L^2(\R^\theta;\C^N)$ to $L^2((-1,1);H^{1/2}(\Sigma;\C^N))$ and for any $r \in (0 ,\tfrac{1}{2})$ one has
	\begin{equation} \label{equation_convergence_C_eps}
		\norm{C_\varepsilon( z) -C_0( z) }_{L^2(\R^\theta;\C^N) \to 0} \leq C\varepsilon^{1/2 -r}.
	\end{equation}
\end{proposition}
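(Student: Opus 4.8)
The starting point is the definition $C_\varepsilon(z) = \mathcal{S}_\varepsilon^{-1}\mathcal{I}_\varepsilon^{-1} U_\varepsilon R_z$ together with the already-computed explicit formula from Proposition~\ref{prop_ABC_operators}, namely $C_\varepsilon(z)u(t)(x_\Sigma) = \int_{\R^\theta}G_z(x_\Sigma + \varepsilon t\nu(x_\Sigma) - y)u(y)\,dy$ for a.e.\ $t \in (-1,1)$ and $\sigma$-a.e.\ $x_\Sigma \in \Sigma$. First I would observe that for fixed $t$ this is exactly $(\Phi_{\overline z}^* R_z u)$ evaluated at the shifted point $x_\Sigma + \varepsilon t\nu(x_\Sigma)$; more precisely, comparing with \eqref{equation_Phi_star}, the right-hand side equals $\tr R_{\overline z}(\,\cdot\,)$ applied after a shift, i.e.\ one recognises that $C_\varepsilon(z)u(t) = \tr\,\tau_{\varepsilon t} R_z u$, using that $R_z u \in H^1(\R^\theta;\C^N)$ so the shift operator $\tau_{\varepsilon t}$ from Proposition~\ref{prop_shift_operator} and the trace theorem (Proposition~\ref{proposition_trace_theorem} with $r=1$) both make sense and that $\tr\,\tau_{\varepsilon t}R_z u$ means: shift $R_z u$ by $\varepsilon t \nu$, then take the Dirichlet trace on $\Sigma$. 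This establishes \eqref{eq_C_eps} and, since $\tr: H^1(\R^\theta;\C^N) \to H^{1/2}(\Sigma;\C^N)$ and $\tau_{\varepsilon t}$ maps $H^1$ to $H^1$, it shows $C_\varepsilon(z)u(t) \in H^{1/2}(\Sigma;\C^N)$ for a.e.\ $t$; the Bochner-measurability of $t \mapsto C_\varepsilon(z)u(t)$ as an $H^{1/2}$-valued map follows from the continuity statement in Proposition~\ref{prop_continous_shift} (composed with the bounded trace operator), which then gives $\ran C_\varepsilon(z) \subset L^2((-1,1);H^{1/2}(\Sigma;\C^N))$.

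\textbf{Uniform boundedness.} Next, for the uniform bound from $L^2(\R^\theta;\C^N)$ to $L^2((-1,1);H^{1/2}(\Sigma;\C^N))$, I would estimate
\[
  \norm{C_\varepsilon(z)u}_{1/2}^2 = \int_{-1}^1 \norm{\tr\,\tau_{\varepsilon t}R_z u}_{H^{1/2}(\Sigma;\C^N)}^2\,dt
  \leq C\int_{-1}^1 \norm{\tau_{\varepsilon t}R_z u}_{H^1(\R^\theta;\C^N)}^2\,dt,
\]
using boundedness of the trace map; then Proposition~\ref{prop_shift_operator} gives $\norm{\tau_{\varepsilon t}}_{H^1 \to H^1} \leq C$ uniformly for $\varepsilon t \in [-\varepsilon_2,\varepsilon_2] \subset [-\delta_0,\delta_0]$ (this is why $\varepsilon_2$ is defined with the factor $\tfrac{1}{2\norm{D\nu}_\infty}$), so the integrand is bounded by $C\norm{R_z u}_{H^1(\R^\theta;\C^N)}^2 \leq C\norm{u}_{L^2(\R^\theta;\C^N)}^2$, and the $t$-integral over $(-1,1)$ just contributes a factor $2$. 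This yields the uniform boundedness with a constant independent of $\varepsilon$.

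\textbf{Convergence estimate.} For \eqref{equation_convergence_C_eps} I would note that $C_0(z)u(t) = (\mathfrak{J}\Phi_{\overline z}^*u)(t) = \Phi_{\overline z}^*u = \tr R_z u$ (independent of $t$, by \eqref{equation_Phi_star}), which corresponds precisely to the "$\varepsilon = 0$" case $\tau_0 = I$. Hence
\[
  \norm{C_\varepsilon(z)u - C_0(z)u}_0^2 = \int_{-1}^1 \norm{\tr(\tau_{\varepsilon t} - I)R_z u}_{L^2(\Sigma;\C^N)}^2\,dt.
\]
Here I would use the trace estimate $\norm{\tr\,w}_{L^2(\Sigma;\C^N)} \leq C\norm{w}_{H^r(\R^\theta;\C^N)}$, valid for any $r \in (\tfrac12,1]$ (Proposition~\ref{proposition_trace_theorem}), to bound this by $C\int_{-1}^1 \norm{(\tau_{\varepsilon t}-I)R_z u}_{H^r(\R^\theta;\C^N)}^2\,dt$, and then apply the key interpolation estimate \eqref{estimate_tau_delta} of Proposition~\ref{prop_shift_operator} with the choice $r' = r \in (\tfrac12,1)$ and source space $H^1$: $\norm{\tau_{\varepsilon t} - I}_{H^1(\R^\theta;\C^N)\to H^r(\R^\theta;\C^N)} \leq C|\varepsilon t|^{1-r} \leq C\varepsilon^{1-r}$. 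This gives $\norm{C_\varepsilon(z)u - C_0(z)u}_0 \leq C\varepsilon^{1-r}\norm{R_z u}_{H^1} \leq C\varepsilon^{1-r}\norm{u}_{L^2(\R^\theta;\C^N)}$; relabelling the exponent (writing the exponent as $1/2 - \tilde r$ with $\tilde r = r - 1/2 \in (0,1/2)$) produces exactly the stated bound $C\varepsilon^{1/2-r}$ for any $r \in (0,1/2)$.

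\textbf{Main obstacle.} The calculations above are all routine once the representation \eqref{eq_C_eps} is in hand; the only point requiring genuine care is the first step — rigorously justifying the identity $C_\varepsilon(z)u(t) = \tr\,\tau_{\varepsilon t}R_z u$ as an equality of $H^{1/2}(\Sigma;\C^N)$-valued functions of $t$, i.e.\ checking that the pointwise-in-$x_\Sigma$ integral formula from Proposition~\ref{prop_ABC_operators} genuinely coincides with "shift then trace" at the level of Sobolev-space-valued maps (not merely $L^2(\Sigma)$-valued), and that this map is Bochner measurable into $H^{1/2}$. This is where Proposition~\ref{prop_continous_shift} and the mapping properties of $\tr$ and $R_z$ (as an operator $L^2 \to H^1$, and more generally $H^s \to H^{s+1}$) are used; the key structural input that makes everything work is that $R_z u$ already lives in $H^1(\R^\theta;\C^N)$, giving half a derivative to spare after taking the trace, which is precisely what the interpolation inequality \eqref{estimate_tau_delta} converts into the rate $\varepsilon^{1/2-r}$.
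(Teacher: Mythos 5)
Your proposal is correct and follows the same route as the paper: derive the representation $C_\varepsilon(z)u(t)=\tr\,\tau_{\varepsilon t}R_z u$, combine Proposition~\ref{prop_continous_shift} and the trace theorem to obtain Bochner measurability and uniform boundedness into $L^2((-1,1);H^{1/2}(\Sigma;\C^N))$, and then invoke the interpolation bound~\eqref{estimate_tau_delta}. Your convergence computation is the paper's with a relabelled index: the paper takes $r\in(0,\tfrac12)$ and uses $\norm{\tr w}_{L^2(\Sigma)}\le C\norm{w}_{H^{r+1/2}(\R^\theta)}$ together with $\norm{\tau_{\varepsilon t}-I}_{H^1\to H^{r+1/2}}\le C|\varepsilon t|^{1/2-r}$, which after your substitution $r\leftrightarrow r+\tfrac12$ is exactly what you wrote.

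The one place you under-argue is the identity~\eqref{eq_C_eps} itself. For generic $u\in L^2(\R^\theta;\C^N)$, $R_z u$ lies only in $H^1$, so the pointwise integral formula from Proposition~\ref{prop_ABC_operators} (defined a.e.\ as a measurable function) and the Sobolev-theoretic object ``$\tr\,\tau_{\varepsilon t}R_z u$'' do not coincide by inspection; some argument is needed to identify them. The paper first proves~\eqref{eq_C_eps} for $u\in\mathcal{D}(\R^\theta;\C^N)$, where $R_z u\in\bigcap_s H^s$ is continuous by the Sobolev embedding, so both sides are honest pointwise evaluations and agree trivially; it then passes to general $u$ by density, using that both $u\mapsto C_\varepsilon(z)u$ and $u\mapsto\tr\,\tau_{\varepsilon t}R_z u$ are bounded from $L^2(\R^\theta;\C^N)$ into $L^2((-1,1);L^2(\Sigma;\C^N))$ uniformly in $t$. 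Your ``main obstacle'' paragraph flags precisely this issue but does not supply the density argument; inserting it closes the only real gap. (Also a minor slip: you write ``$(\Phi_{\overline z}^* R_z u)$'' and ``$\tr R_{\overline z}(\,\cdot\,)$''; from~\eqref{equation_Phi_star} the relevant identity is $\Phi_{\overline z}^*u=\tr R_z u$, without the extra resolvent and with $z$ rather than $\overline z$.)
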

\begin{proof}
  First, we show~\eqref{eq_C_eps} for $u \in \mathcal{D}(\mathbb{R}^\theta; \mathbb{C}^N)$. By density and continuity, this implies~\eqref{eq_C_eps} for all $u \in L^2(\mathbb{R}^\theta; \mathbb{C}^N)$. Recall that $R_z: H^s(\mathbb{R}^\theta; \mathbb{C}^N) \to H^{s+1}(\mathbb{R}^\theta; \mathbb{C}^N)$ is bounded for $s \in \R$; cf. Section~\ref{sec_free_Dirac}. Hence, by the Sobolev embedding theorem $R_ z u$ is continuous for $u \in \mathcal{D}(\mathbb{R}^\theta; \mathbb{C}^N)$ and the same is true for $\tau_{\varepsilon t}R_ z u$.
  Furthermore,  as $\tau_{\varepsilon t}R_ z u\in H^1(\mathbb{R}^\theta; \mathbb{C}^N)$ we conclude with Proposition~\ref{prop_ABC_operators} for $t \in (-1,1)$ and $x_\Sigma \in \Sigma$ that
  \begin{equation*}
    \tr \tau_{\varepsilon t}R_ z u(x_\Sigma) = \tau_{\varepsilon t}R_ z u(x_\Sigma) = \int_{\mathbb{R}^\theta} G_z(x_\Sigma + \varepsilon t\nu(x_\Sigma) - y) u(y) dy = C_\varepsilon(z) u(t)(x_\Sigma).
  \end{equation*}
  Hence, \eqref{eq_C_eps} is true.

  Next, we show the inclusion $\ran C_\varepsilon( z) \subset L^2((-1,1);H^{1/2}(\Sigma;\C^N))$. Assume that $u \in L^2(\R^\theta;\C^N)$. Then, 
  by Proposition~\ref{prop_continous_shift} and the boundedness of the trace $\tr:H^1(\R^\theta;\C^N) \to H^{1/2}(\Sigma;\C^N)$ it follows that the function $\tr \tau_{(\cdot) \varepsilon}R_ z u$ is continuous as a mapping from $ (-1,1)$ to  $H^{1/2}(\Sigma;\C^N)$. In particular, $\tr \tau_{(\cdot) \varepsilon}R_ z u$ is  measurable as  a mapping from $ (-1,1)$ to  $H^{1/2}(\Sigma;\C^N)$. Using again the boundedness of $\tr:H^1(\R^\theta;\C^N) \to H^{1/2}(\Sigma;\C^N)$ and the uniform boundedness of the shift operator in $H^1(\R^\theta;\C^N)$, see Propositions~\ref{proposition_trace_theorem}  and~\ref{prop_shift_operator}, respectively, we conclude
	\begin{equation*}
		\int_{-1}^1 \norm{\tr \tau_{\varepsilon t}R_ z u}^2_{H^{1/2}(\Sigma;\C^N)} \, dt \leq \int_{-1}^1  C\norm{R_ z u}^2_{H^1(\R^\theta;\C^N)} \, dt \leq C \norm{u}_{L^2(\R^\theta;\C^N)}^2
	\end{equation*}
	and therefore $\tr \tau_{(\cdot) \varepsilon}R_ z u \in L^2((-1,1);H^{1/2}(\Sigma;\C^N))$. Moreover, this also shows that $C_\varepsilon(z)$ is uniformly bounded from $L^2(\R^\theta;\C^N)$ to $L^2((-1,1);H^{1/2}(\Sigma;\C^N))$. 
	
	Eventually, with Proposition~\ref{prop_shift_operator} and the identity $C_0( z) = \mathfrak{J} \Phi_{\overline{z}}^* = \mathfrak{J}\tr R_ z$, see Proposition~\ref{prop_Phi_z}~(iii), we have for $r \in (0,\tfrac{1}{2})$ and $u \in L^2(\R^\theta;\C^N)$ 
	\begin{equation*}
		\begin{split}
		\norm{C_\varepsilon( z)u -C_0( z)u}_{0}^2  &=\norm{\tr \tau_{\varepsilon (\cdot)}R_ z u -\mathfrak{J}\tr R_ zu}_{0}^2\\
		&=\int_{-1}^1 \norm{\tr(\tau_{\varepsilon t}- I)R_ z u }_{L^2(\Sigma;\C^N)}^2 \,dt \\
% 		&\leq \int_{-1}^1 \norm{\tr(\tau_{\varepsilon t}- I)R_ z u }_{H^r(\Sigma;\C^N)}^2 \,dt\\
		&\leq C\int_{-1}^1 \norm{(\tau_{\varepsilon t}-I)R_ z u }_{H^{r+1/2}(\R^\theta;\C^N) }^2\,dt\\
		&\leq C\int_{-1}^1 \abs{\varepsilon t}^{1-2r}\norm{R_ z u }_{H^1(\R^\theta;\C^N)}^2 \, dt \\
		&\leq C\int_{-1}^1 \varepsilon^{1-2r}\norm{u }_{L^2(\R^\theta;\C^N)}^2 \,dt \\
		&\leq C \varepsilon^{1-2r}\norm{u }_{L^2(\R^\theta;\C^N)}^2,
		\end{split}
	\end{equation*}
	which leads to~\eqref{equation_convergence_C_eps}. Therefore, all claims are shown.
\end{proof}

Using the convergence of $C_\varepsilon(z)$, it is not difficult to show the convergence of $A_\varepsilon(z)$. We define the natural candidate for the limit operator by
\begin{equation*} %\label{def_A_0}
  A_0( z):=  \Phi_z  \mathfrak{J}^* : L^2((-1,1);L^2(\Sigma;\C^N)) \to L^2(\R^\theta;\C^N).
\end{equation*} 

\begin{proposition}\label{prop_A_eps}
	Let $z \in \rho(H)$ and $\varepsilon \in (0, \varepsilon_2)$ with $\varepsilon_2$ given by~\eqref{eq_eps_2}. Then, for any 
	$r \in (0,\tfrac{1}{2})$ one has
	\begin{equation*}
	  \norm{A_\varepsilon( z) - A_0( z)}_{0\to L^2(\R^\theta;\C^N)} \leq C \varepsilon^{1/2-r}
	\end{equation*}
	and, in particular, the operators $A_\varepsilon(z): L^2((-1,1);L^2(\Sigma;\C^N)) \to L^2(\R^\theta;\C^N)$ are uniformly bounded.
\end{proposition}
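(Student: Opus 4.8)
The plan is to reduce the convergence of $A_\varepsilon(z)$ to that of $C_\varepsilon(z)$, which was already established in Proposition~\ref{prop_C_eps}, by a duality argument. First I would identify the adjoint of $A_\varepsilon(z)$. Recall from~\eqref{def_ABC_operators} that $A_\varepsilon(z) = R_z U_\varepsilon^* \mathcal{I}_\varepsilon \mathcal{S}_\varepsilon$ and $C_\varepsilon(z) = \mathcal{S}_\varepsilon^{-1} \mathcal{I}_\varepsilon^{-1} U_\varepsilon R_z$. Since $\mathcal{I}_\varepsilon, \mathcal{S}_\varepsilon, U_\varepsilon$ are related to their inverses/adjoints in a transparent way (they are built from changes of variables and multiplication by $u_\varepsilon$), one computes that the $L^2(\R^\theta;\C^N)$-adjoint of $A_\varepsilon(z)$ equals $\mathcal{S}_\varepsilon^* \mathcal{I}_\varepsilon^* U_\varepsilon^{**} R_z^* = C_\varepsilon(\overline z)$ up to the obvious bookkeeping (using $R_z^* = R_{\overline z}$ and $\mathcal{S}_\varepsilon^* = \mathcal{S}_\varepsilon^{-1}$ on the relevant unweighted spaces, $\mathcal{I}_\varepsilon^*$ versus $\mathcal{I}_\varepsilon^{-1}$ differing only by the Jacobian $\det(I - \varepsilon s W)$, i.e.\ by the operator $M_\varepsilon$ of~\eqref{def_M_eps}). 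More precisely, a short computation with the integral representations~\eqref{eq_ABC_integral_rep} gives $A_\varepsilon(z)^* = M_\varepsilon^{-1}$-type-corrected $C_\varepsilon(\overline z)$; and similarly $A_0(z) = \Phi_z \mathfrak{J}^*$ has adjoint $(\mathfrak{J}^*)^* \Phi_z^* = \mathfrak{J}\, \Phi_{\overline{\overline z}}^*$... wait — one must be careful: $\Phi_z^* = \Phi_{\overline z}^*$ acts as in~\eqref{equation_Phi_star}, so $A_0(z)^* = \mathfrak{J}\Phi_z^*$, and by Proposition~\ref{prop_Phi_z}(iii) this is exactly $C_0(\overline z)$ when we track conjugations. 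The cleanest route is: $A_\varepsilon(z)^* = C_\varepsilon(\overline z) M_\varepsilon^{-1}$ (or $M_\varepsilon$ on the other side — to be pinned down from~\eqref{eq_ABC_integral_rep}) as operators from $L^2(\R^\theta;\C^N)$ into $L^2((-1,1);L^2(\Sigma;\C^N))$, and $A_0(z)^* = C_0(\overline z)$.

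Next I would write
\begin{equation*}
  A_\varepsilon(z) - A_0(z) = \bigl( A_\varepsilon(z)^* - A_0(z)^* \bigr)^* = \bigl( C_\varepsilon(\overline z) M_\varepsilon^{-1} - C_0(\overline z) \bigr)^*,
\end{equation*}
and insert and subtract $C_0(\overline z) M_\varepsilon^{-1}$ and then $C_0(\overline z)$ to get the triangle-inequality split
\begin{equation*}
  \norm{A_\varepsilon(z)^* - A_0(z)^*} \leq \norm{C_\varepsilon(\overline z) - C_0(\overline z)}\,\norm{M_\varepsilon^{-1}} + \norm{C_0(\overline z)}\,\norm{M_\varepsilon^{-1} - I}.
\end{equation*}
Now Lemma~\ref{lem_M_operators} gives $\norm{M_\varepsilon^{-1}}_{0\to 0} \leq C$ uniformly in $\varepsilon \in (0,\varepsilon_2)$ and $\norm{M_\varepsilon^{-1} - I}_{0\to 0} \leq C\varepsilon$ (the estimate for $M_\varepsilon^{-1}-I$ follows from the one for $M_\varepsilon - I$ together with the uniform bound on $M_\varepsilon^{-1}$ via $M_\varepsilon^{-1} - I = M_\varepsilon^{-1}(I - M_\varepsilon)$). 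Proposition~\ref{prop_C_eps} applied with $\overline z$ in place of $z$ gives $\norm{C_\varepsilon(\overline z) - C_0(\overline z)}_{L^2(\R^\theta;\C^N)\to 0} \leq C\varepsilon^{1/2-r}$, and $C_0(\overline z)$ is bounded. Hence the first term is $O(\varepsilon^{1/2-r})$ and the second is $O(\varepsilon)$, so altogether $\norm{A_\varepsilon(z)^* - A_0(z)^*}_{L^2(\R^\theta;\C^N)\to 0} \leq C\varepsilon^{1/2-r}$. Taking adjoints preserves the operator norm, which yields the claimed bound $\norm{A_\varepsilon(z) - A_0(z)}_{0\to L^2(\R^\theta;\C^N)} \leq C\varepsilon^{1/2-r}$; the uniform boundedness of $A_\varepsilon(z)$ is then immediate since $A_0(z)$ is bounded.

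The main obstacle I anticipate is not any single estimate but rather the careful identification of $A_\varepsilon(z)^*$ and $A_0(z)^*$ — in particular keeping track of where the Jacobian factor $\det(I-\varepsilon s W)$ sits (it appears in the representation~\eqref{def_A_eps} of $A_\varepsilon$ but not symmetrically in $C_\varepsilon$, so it re-emerges as a factor of $M_\varepsilon$ on one side of the adjoint), and of the interplay between $\mathcal{I}_\varepsilon^{-1}$ and the genuine Hilbert-space adjoint $\mathcal{I}_\varepsilon^*$, which again differ precisely by this Jacobian. Once the bookkeeping is done correctly, everything else is a direct application of Proposition~\ref{prop_C_eps} and Lemma~\ref{lem_M_operators}. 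An alternative, if one prefers to avoid adjoints of $\mathcal{I}_\varepsilon, \mathcal{S}_\varepsilon, U_\varepsilon$ individually, is to test $A_\varepsilon(z) f$ against $u \in L^2(\R^\theta;\C^N)$, use Fubini (Proposition~\ref{prop_Bochner}) to move the $\R^\theta$-integration inside, recognize $\int_{\R^\theta} \overline{G_z(x - y_\Sigma - \varepsilon s \nu(y_\Sigma))} u(x)\, dx = \overline{(\tr \tau_{\varepsilon s} R_{\overline z} u)(y_\Sigma)}$ via~\eqref{eq_C_eps}, and thereby read off $\langle A_\varepsilon(z) f, u\rangle = \langle M_\varepsilon f, C_\varepsilon(\overline z) u\rangle_{0}$ directly; this makes the adjoint identity $A_\varepsilon(z)^* = C_\varepsilon(\overline z)^{*\!*}$... i.e.\ $A_\varepsilon(z) = (C_\varepsilon(\overline z))^* M_\varepsilon$ transparent without decomposing the building blocks.
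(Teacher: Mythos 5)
Your approach is essentially the same as the paper's: reduce the convergence of $A_\varepsilon(z)$ to that of $C_\varepsilon(\overline z)$ via duality, using the identity $(\mathcal I_\varepsilon\mathcal S_\varepsilon)^* = M_\varepsilon \mathcal S_\varepsilon^{-1}\mathcal I_\varepsilon^{-1}$ together with Proposition~\ref{prop_C_eps} and Lemma~\ref{lem_M_operators}. One bookkeeping point you left unresolved should be fixed: the central display $A_\varepsilon(z)^* = C_\varepsilon(\overline z)M_\varepsilon^{-1}$ puts $M_\varepsilon$ on the wrong side and with the wrong power, and in fact does not even typecheck, since $M_\varepsilon^{-1}$ acts on $L^2((-1,1);L^2(\Sigma;\mathbb C^N))$ whereas the input to $A_\varepsilon(z)^*$ lives in $L^2(\R^\theta;\mathbb C^N)$. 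The correct identity, which your own alternative computation at the end of the proposal yields, is $A_\varepsilon(z)^* = M_\varepsilon C_\varepsilon(\overline z)$, equivalently $(A_\varepsilon(z)M_\varepsilon^{-1})^* = C_\varepsilon(\overline z)$ -- exactly the form the paper uses. With this corrected the split becomes
\begin{equation*}
A_\varepsilon(z)^* - A_0(z)^* = (M_\varepsilon - I)\,C_\varepsilon(\overline z) + \bigl(C_\varepsilon(\overline z) - C_0(\overline z)\bigr),
\end{equation*}
and the rest of your argument (uniform boundedness of $C_\varepsilon(\overline z)$ from Proposition~\ref{prop_C_eps}, $\norm{M_\varepsilon - I}_{0\to 0}\le C\varepsilon$ from Lemma~\ref{lem_M_operators}, and the $\varepsilon^{1/2-r}$ rate from Proposition~\ref{prop_C_eps}) goes through verbatim; the paper carries out the equivalent estimate on the non-adjoint side via $A_\varepsilon(z) - A_0(z) = A_\varepsilon(z)M_\varepsilon^{-1}(M_\varepsilon - I) + (A_\varepsilon(z)M_\varepsilon^{-1} - A_0(z))$.
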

\begin{proof}
  Let $\mathcal{I}_\varepsilon$, $S_\varepsilon$, and $M_\varepsilon$ be the operators given by~\eqref{eq_I_eps},~\eqref{eq_S_eps}, and~\eqref{def_M_eps}, respectively. One verifies by a direct calculation using Proposition~\ref{prop_tubular_neighbourhood} (iii), \eqref{eq_I_eps}, and  \eqref{eq_S_eps}  that $(\mathcal{I}_\varepsilon \mathcal{S}_\varepsilon)^* = M_\varepsilon \mathcal{S}_\varepsilon^{-1} \mathcal{I}_\varepsilon^{-1}$. Using this relation we conclude from~\eqref{def_ABC_operators} that 
	\begin{equation*}	
		\begin{split}
		(A_\varepsilon(z) M_\varepsilon^{-1})^* &= M_\varepsilon^{-1} (A_\varepsilon(z))^* = M_\varepsilon^{-1} (R_z U_\varepsilon^* \mathcal{I}_\varepsilon \mathcal{S}_\varepsilon  )^* \\
		&= M_\varepsilon^{-1} M_\varepsilon \mathcal{S}_\varepsilon^{-1} \mathcal{I}_\varepsilon^{-1} U_\varepsilon R_{\overline{z}}  = \mathcal{S}_\varepsilon^{-1} \mathcal{I}_\varepsilon^{-1} U_\varepsilon R_{\overline{z}} = C_\varepsilon(\overline{z}).
		\end{split}
	\end{equation*}
	Moreover,  $(A_0(z))^* = (\Phi_z \mathfrak{J}^*)^* = \mathfrak{J}\Phi_z^* = C_0(\overline{z})$. Hence, Lemma~\ref{lem_M_operators} and Proposition~\ref{prop_C_eps} yield 
	\begin{equation*}
		\begin{split}
		&\norm{A_\varepsilon(z) - A_0(z)}_{0\to L^2(\R^\theta;\C^N)} \\
		&\qquad=  \norm{A_\varepsilon(z)M_\varepsilon^{-1}(M_\varepsilon - I)+A_\varepsilon(z)M_\varepsilon^{-1}- A_0(z) }_{0 \to L^2(\R^\theta;\C^N)} \\
		&\qquad\leq C \varepsilon \norm{C_\varepsilon(\overline{z}) }_{L^2(\R^\theta;\C^N) \to 0}  + \norm{C_\varepsilon(\overline{z})- C_0(\overline{z}) }_{L^2(\R^\theta;\C^N) \to 0} \\
		&\qquad\leq  C \varepsilon^{1/2 -r},
		\end{split}
	\end{equation*}
	which is the claimed estimate.
\end{proof}

\begin{remark}
For completeness we note that one can follow the proof of Proposition~\ref{prop_C_eps} to show 
	\begin{equation*} 
		\norm{C_\varepsilon( z) -C_0( z) }_{L^2(\R^\theta;\C^N) \to r} \leq C\varepsilon^{1/2 -r}
	\end{equation*}
	for $r \in (0 ,\tfrac{1}{2})$.
	Moreover, if $\Sigma$ is $C^3$ smooth, one can extend the result of Lemma~\ref{lem_M_operators} to spaces $L^2((-1,1);H^t(\Sigma;\C^N))$, $t \in [-1,1]$, and use this to verify
	\begin{equation*}
	  \norm{A_\varepsilon( z) - A_0( z)}_{-r\to L^2(\R^\theta;\C^N)} \leq C \varepsilon^{1/2-r}
	\end{equation*}
	for $r \in (0,\tfrac{1}{2})$ in a similar way as in the proof of Proposition~\ref{prop_A_eps}. 
\end{remark}

Next, we study the convergence of the operators $B_\varepsilon(z)$. Define the limit operator by \begin{equation*}
	B_0( z): L^2((-1,1);L^2(\Sigma;\C^N)) \to L^2((-1,1);L^2(\Sigma;\C^N))
\end{equation*} 
which acts on $f \in L^2((-1,1);L^2(\Sigma;\C^N))$ evaluated for a.e. $t \in (-1,1)$ as 
	\begin{equation}\label{eq_B_0}
		\begin{split}
		B_0( z) f(t)&:= \frac{i}{2}(\alpha \cdot \nu)\int_{-1}^1 \sign(t-s)f(s) \,ds + \mathcal{C}_z \int_{-1}^1  f(s) \, ds,
		\end{split}
	\end{equation} 
where $\mathcal{C}_z: L^2(\Sigma;\C^N) \to L^2(\Sigma;\C^N)$ is the extension of the operator defined in \eqref{def_C_z} from Proposition \ref{proposition_C_z}. Using the mapping properties of $\mathcal{C}_z$ in Proposition \ref{proposition_C_z}~(i) it follows that 
$B_0( z)$ can also be regarded as an operator in $L^2((-1,1);H^r(\Sigma;\C^N))$ for any $r\in [-\frac{1}{2},\frac{1}{2}]$.
In the following proposition we show that $B_\varepsilon(z)$ converges to $B_0(z)$. The proof of this result is more complicated as the proofs of Proposition~\ref{prop_C_eps} and Proposition~\ref{prop_A_eps}, and therefore some of the more technical calculations are shifted to Appendix~\ref{sec_appendix_diff_B}.

\begin{proposition}\label{prop_B_conv} 
Let $z \in \rho(H)$ and $\varepsilon \in (0, \varepsilon_2)$ with $\varepsilon_2$ given by~\eqref{eq_eps_2}. Then, 
the operators $B_\varepsilon(z)$ are uniformly bounded in $L^2((-1,1);L^2(\Sigma;\C^N))$ and
for any $r \in (0,\tfrac{1}{2})$ one has
\begin{equation*}
	\norm{B_\varepsilon( z) - B_0( z)}_{1/2 \to  0} \leq C \varepsilon^{1/2 -r}.
\end{equation*}
\end{proposition}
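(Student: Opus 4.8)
The plan is to interpolate between $B_\varepsilon(z)$ and $B_0(z)$ through one intermediate operator obtained by ``freezing'' the normal directions. Fix $r\in(0,\tfrac12)$ and, for $\varepsilon\in(0,\varepsilon_2)$ and $t,s\in(-1,1)$, put $\delta_{t,s}:=\varepsilon(t-s)$, so $\abs{\delta_{t,s}}\le 2\varepsilon<\varepsilon_1$. By Proposition~\ref{prop_ABC_operators} the operator $B_\varepsilon(z)$ acts through the kernel in~\eqref{def_B_eps}, and since $\nu$ is $C^1_b$ the elementary identity
\begin{equation*}
  \iota(x_\Sigma,\varepsilon t)-\iota(y_\Sigma,\varepsilon s)=\bigl(x_\Sigma+\delta_{t,s}\nu(x_\Sigma)\bigr)-y_\Sigma+\varepsilon s\bigl(\nu(x_\Sigma)-\nu(y_\Sigma)\bigr),\qquad \abs{\nu(x_\Sigma)-\nu(y_\Sigma)}\le C\abs{x_\Sigma-y_\Sigma},
\end{equation*}
together with $\det(I-\varepsilon sW(y_\Sigma))=1+O(\varepsilon)$ (Proposition~\ref{prop_tubular_neighbourhood}~(ii)) shows that $B_\varepsilon(z)$ differs from
\begin{equation*}
  \widehat B_\varepsilon(z)f(t)(x_\Sigma):=\int_{-1}^{1}\bigl[\Phi_z f(s)\bigr]\bigl(x_\Sigma+\delta_{t,s}\nu(x_\Sigma)\bigr)\,ds
\end{equation*}
only by two integral operators: one whose kernel is a difference of free Green functions evaluated at two points at distance $O(\varepsilon\abs{x_\Sigma-y_\Sigma})$, and one carrying the scalar factor $\det(I-\varepsilon sW(y_\Sigma))-1=O(\varepsilon)$.

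The first genuinely analytic input I would establish is that $B_\varepsilon(z)$ and the operator with kernel $G_z(x_\Sigma+\varepsilon(t-s)\nu(x_\Sigma)-y_\Sigma)$ are bounded in $L^2((-1,1);L^2(\Sigma;\C^N))$ uniformly in $\varepsilon$. After the substitutions~\eqref{eq_I_eps}--\eqref{eq_S_eps} and the lower bound $\abs{\iota(x_\Sigma,\varepsilon t)-\iota(y_\Sigma,\varepsilon s)}\ge c(\abs{x_\Sigma-y_\Sigma}+\varepsilon\abs{t-s})$ (Proposition~\ref{prop_tubular_neighbourhood}), the leading parts of these kernels are of Riesz-transform type on the thin tube $\Omega_\varepsilon$, and the bounds follow from Calder\'on--Zygmund/Schur-type estimates, carried out in Appendix~\ref{sec_appendix_diff_B}. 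The same arguments, applied to the kernel \emph{difference} above and combined with the shift estimate of Proposition~\ref{prop_shift_operator} to trade tangential smoothness for an $\varepsilon$-gain, yield $\norm{B_\varepsilon(z)-\widehat B_\varepsilon(z)}_{1/2\to0}\le C\varepsilon^{1/2-r}$; the piece carrying $\det(I-\varepsilon sW)-1$ is then controlled by the uniform $L^2$-bound and Lemma~\ref{lem_M_operators}, and the uniform $L^2$-boundedness of $B_\varepsilon(z)$ claimed in the proposition is precisely the first of the above estimates.

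It remains to prove $\norm{\widehat B_\varepsilon(z)-B_0(z)}_{1/2\to0}\le C\varepsilon^{1/2-r}$. For $f\in L^2((-1,1);H^{1/2}(\Sigma;\C^N))$ and a.e.\ $s$, Proposition~\ref{prop_Phi_z}~(i)--(ii) gives $(\Phi_z f(s))_\pm\in H^1(\Omega_\pm;\C^N)$ and shows $\Phi_z f(s)$ is smooth away from $\Sigma$; hence, for $\varepsilon$ small, $x_\Sigma+\delta_{t,s}\nu(x_\Sigma)$ lies in $\Omega_-$ if $t>s$ and in $\Omega_+$ if $t<s$, and — shifting by $\delta_{t,s}$ pushes the evaluation into the region of smoothness — Proposition~\ref{prop_continous_shift} shows that in $L^2(\Sigma;\C^N)$
\begin{equation*}
  \bigl[\Phi_z f(s)\bigr]\bigl(x_\Sigma+\delta_{t,s}\nu(x_\Sigma)\bigr)=\tr^{\pm}\bigl[\tau_{\delta_{t,s}}^{\Omega_\pm}(\Phi_z f(s))_\pm\bigr](x_\Sigma),\qquad \pm=\sign(s-t).
\end{equation*}
Splitting $\int_{-1}^{1}ds$ at $s=t$, using the Plemelj--Sokhotski formula $\tr^\pm(\Phi_z f(s))_\pm=\mathcal C_z f(s)\mp\tfrac i2(\alpha\cdot\nu)f(s)$ from Proposition~\ref{proposition_C_z}~(ii), and the identity $\int_{-1}^{t}(\cdot)\,ds-\int_{t}^{1}(\cdot)\,ds=\int_{-1}^{1}\sign(t-s)(\cdot)\,ds$, one sees that the principal part of $\widehat B_\varepsilon(z)f(t)$ is exactly $B_0(z)f(t)$ as in~\eqref{eq_B_0}, with error
\begin{equation*}
  E_\varepsilon f(t)=\int_{-1}^{t}\tr^-\bigl[(\tau_{\delta_{t,s}}^{\Omega_-}-I)(\Phi_z f(s))_-\bigr]ds+\int_{t}^{1}\tr^+\bigl[(\tau_{\delta_{t,s}}^{\Omega_+}-I)(\Phi_z f(s))_+\bigr]ds.
\end{equation*}
By the trace theorem (Proposition~\ref{proposition_trace_theorem}) with intermediate index $\tfrac12+r$, Corollary~\ref{cor_shift_operator}, and Proposition~\ref{prop_Phi_z}~(i),
\begin{equation*}
  \norm{\tr^\pm[(\tau_{\delta_{t,s}}^{\Omega_\pm}-I)(\Phi_z f(s))_\pm]}_{L^2(\Sigma;\C^N)}\le C\abs{\delta_{t,s}}^{1/2-r}\norm{f(s)}_{H^{1/2}(\Sigma;\C^N)}\le C\varepsilon^{1/2-r}\norm{f(s)}_{H^{1/2}(\Sigma;\C^N)},
\end{equation*}
so integrating in $s$, then in $t$, and applying the Cauchy--Schwarz inequality gives $\norm{E_\varepsilon}_{1/2\to0}\le C\varepsilon^{1/2-r}$; the measurability and Bochner integrability of the $s$-integrands follow from Proposition~\ref{prop_continous_shift}, the uniform bounds, and Pettis' theorem. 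The proposition then follows by the triangle inequality.

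The main obstacle is the pair of uniform singular-integral estimates in the second paragraph — the $\varepsilon$-independent $L^2$-boundedness, and the $\varepsilon^{1/2-r}$ bound for the kernel-difference operator measured in the Sobolev-scale norm $\norm{\cdot}_{1/2\to0}$ — which is where the mere $C^2$-regularity and the possible unboundedness of $\Sigma$ make things delicate and where the shift operator of Proposition~\ref{prop_shift_operator} is the key new tool; once these are in place, the passage $\widehat B_\varepsilon(z)\to B_0(z)$ is essentially bookkeeping with the shift operators, the trace theorem, and the Plemelj--Sokhotski formula.
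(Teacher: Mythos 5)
Your overall plan matches the paper's: you introduce the same intermediate operator (your $\widehat{B}_\varepsilon(z)$ is exactly the paper's $\overline{B}_\varepsilon(z)$ from \eqref{def_B_bar}, via the alternative representation \eqref{eq_B_bar}), you peel off the Jacobian factor $\det(I-\varepsilon s W)$ via $M_\varepsilon$ and Lemma~\ref{lem_M_operators}, and the passage $\widehat{B}_\varepsilon(z)\to B_0(z)$ by splitting $\int_{-1}^1 ds$ at $s=t$, using $\tau^{\Omega_\pm}_{\delta_{t,s}}$, the trace theorem, and the Plemelj--Sokhotski identity is precisely \textit{Step~1} of the paper's proof (estimates \eqref{convergence_B_bar}--\eqref{eq_convergence_B_bar}). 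The kernel-difference estimate for $B_\varepsilon(z)-\widehat{B}_\varepsilon(z)$ is what Appendix~\ref{sec_appendix_diff_B} supplies, and the student's reference to it is appropriate.

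There is, however, a genuine gap in how you claim to obtain the uniform $L^2$-boundedness of $B_\varepsilon(z)$ (and $\widehat{B}_\varepsilon(z)$). You assert that the bounds ``follow from Calder\'on--Zygmund/Schur-type estimates, carried out in Appendix~\ref{sec_appendix_diff_B},'' and that ``the uniform $L^2$-boundedness of $B_\varepsilon(z)$ claimed in the proposition is precisely the first of the above estimates.'' Neither is correct: Appendix~\ref{sec_appendix_diff_B} establishes only the kernel-\emph{difference} bound $\|\widetilde{B}_\varepsilon(z)-\overline{B}_\varepsilon(z)\|_{0\to 1/2}\lesssim (\varepsilon+\varepsilon|\log\varepsilon|)^{1/2}$, not $L^2$-boundedness of $B_\varepsilon(z)$ itself, and a direct Schur test on the kernel of $B_\varepsilon(z)$ fails — the Green's function singularity $|x-y|^{1-\theta}$ gives $\sup_{t,x_\Sigma}\int |K_\varepsilon|\,ds\,d\sigma(y_\Sigma)\sim|\log\varepsilon|$, while the naive ``thin-tube'' argument $\|\chi_{\Omega_\varepsilon}R_z\chi_{\Omega_\varepsilon}\|_{L^2\to L^2}\lesssim\sqrt{\varepsilon}\cdot 1$ only gives $\|B_\varepsilon(z)\|_{0\to 0}\lesssim\varepsilon^{-1/2}$. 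To gain the second $\sqrt{\varepsilon}$ one must use half-order regularity on \emph{both} sides of $R_z$, which is precisely the paper's \textit{Step~3}: $\overline{B}_\varepsilon(z)$ is easily uniformly bounded on $L^2((-1,1);H^{1/2}(\Sigma;\C^N))$ (trace $\circ$ shift $\circ$ $\Phi_z$), hence so is $\widetilde{B}_\varepsilon(z)=B_\varepsilon(z)M_\varepsilon^{-1}$ by the Appendix~B estimate, and then the anti-dual $(\widetilde{B}_\varepsilon(\overline{z}))'$ extends $\widetilde{B}_\varepsilon(z)$ to a uniformly bounded operator on $L^2((-1,1);H^{-1/2}(\Sigma;\C^N))$; interpolation between $H^{1/2}$ and $H^{-1/2}$ finally yields the uniform $L^2$-bound. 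Without this duality/interpolation step your argument leaves the first (and most substantial) conclusion of the proposition unproved.
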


\begin{proof}
  The proof is split into several steps. Let $\Phi_z$ be as in~\eqref{def_Phi_z} and let $\tau_{(\cdot)}^{\Omega_\pm}$ be defined by~\eqref{def_tau_delta_pm}. We introduce the auxiliary operators 
  \begin{equation}\label{wtwtb}
  \widetilde{B}_\varepsilon(z):= B_\varepsilon(z) M_\varepsilon^{-1}: L^2((-1,1);L^2(\Sigma;\C^N)) \to L^2((-1,1);L^2(\Sigma;\C^N)),
  \end{equation}
  which are, due to the properties of $B_\varepsilon(z)$ and $M_\varepsilon$ in~\eqref{def_ABC_operators} and~\eqref{def_M_eps}, bounded and
  act on $ f \in L^2((-1,1);L^2(\Sigma;\C^N))$ evaluated at $t \in (-1,1)$ and $x_\Sigma \in \Sigma$ as
  \begin{equation} \label{def_B_tilde}
    \widetilde B_\varepsilon( z)f(t)(x_\Sigma) =  \int_{-1}^1 \int_{\Sigma} G_ z(x_\Sigma +\varepsilon t\nu(x_\Sigma) -y_\Sigma - \varepsilon s\nu(y_\Sigma))   f(s)(y_\Sigma) \, d\sigma(y_\Sigma) \, ds.
  \end{equation}  
  Moreover, we define 
  $$\overline{B}_\varepsilon( z): L^2((-1,1);H^{1/2}(\Sigma;\C^N)) \to  L^2((-1,1);H^{1/2}(\Sigma;\C^N))$$ 
  acting on $ f \in L^2((-1,1);H^{1/2}(\Sigma;\C^N))$ for a.e. $t \in (-1,1)$ as
  \begin{equation}\label{def_B_bar}
		\overline{B}_\varepsilon( z)f(t) =  \int_{-1}^t \tr^- \tau_{\varepsilon (t-s)  }^{\Omega_-}(\Phi_zf(s))_- ds + \int_t^1 \tr^+ \tau_{\varepsilon(t-s) }^{\Omega_+}(\Phi_zf(s))_+ \,ds.
	\end{equation} 
  First, in \textit{Step~1} we show that $\overline{B}_\varepsilon(z)$ is bounded and converges to $B_0(z)$, then in \textit{Step~2} we verify an alternative representation of $\overline{B}_\varepsilon(z)$. In \textit{Step~3} we use Appendix~\ref{sec_appendix_diff_B} to compare $\overline{B}_\varepsilon(z)$ and $\widetilde{B}_\varepsilon(z)$, and show that $\widetilde{B}_\varepsilon(z)$ is uniformly bounded in~$\varepsilon$. In \textit{Step~4} we combine the results from \textit{Step~1} to \textit{Step~3} to conclude the claim of this proposition.

  \textit{Step~1.} First, we note that, due to Definition~\ref{def_Bochner_measurable} and Proposition~\ref{prop_continous_shift}, the function 
  \begin{equation*}
  	(-1,1)^2 \ni (t,s) \mapsto \Theta(\mp(t - s)) \tau_{\varepsilon(t-s)}^{\Omega_\pm} \in \mathcal{L}(H^1(\Omega_\pm; \mathbb{C}^N),H^1(\Omega_\pm; \mathbb{C}^N))
  \end{equation*} is measurable, where $\Theta$ is the Heaviside function. Hence, it follows with Definition~\ref{def_Bochner_measurable} that the integrands in \eqref{def_B_bar} are  measurable with respect to $(t,s) \in (-1,1)^2$. Moreover, by the mapping properties of $\tr^\pm$, $\tau_{(\cdot)}^{\Omega_\pm}$, and  $\Phi_z$ in Proposition~\ref{proposition_trace_theorem}, Corollary~\ref{cor_shift_operator},  and Proposition~\ref{prop_Phi_z}, respectively, the  integrands are bounded by $C \norm{f(s)}_{H^{1/2}(\Sigma;\C^N)}$ for $(t,s) \in (-1,1)^2$. In particular, we conclude  for $f \in L^2((-1,1);H^{1/2}(\Sigma;\C^N))$ that
  \begin{equation*}
    \int_{-1}^1 \int_{-1}^1 \| \Theta(t-s) \tr^- \tau_{\varepsilon (t-s)  }^{\Omega_-}(\Phi_zf(s))_- + \Theta(s-t) \tr^+ \tau_{\varepsilon(t-s) }^{\Omega_+}(\Phi_zf(s))_+ \|^2_{H^{1/2}(\Sigma; \mathbb{C}^N)} dt ds 
  \end{equation*}
  is finite.
  Thus, Fubini's theorem for Bochner integrals, cf. Section~\ref{sec_Bochner}, yields the integrability of the integrands in \eqref{def_B_bar} with respect to $s \in (-1,1)$ and  the measurability of $t \mapsto \overline{B}_\varepsilon(z)f(t)$. Furthermore, the bound for the integrands implies also the inequality $\norm{\overline{B}_\varepsilon(z) f}_{1/2} \leq C \norm{f}_{1/2}$ for $f \in L^2((-1,1);H^{1/2}(\Sigma;\C^N))$. Hence, $\overline{B}_\varepsilon(z)$ is well-defined and uniformly bounded in $L^2((-1,1);H^{1/2}(\Sigma;\C^N))$. We claim that
  \begin{equation} \label{convergence_B_bar}
		\norm{\overline{B}_\varepsilon( z) - {B}_{0}( z)}_{1/2 \to 0} \leq C {\varepsilon}^{1/2-r}.
	\end{equation}
	To see this we remark that with Proposition~\ref{proposition_C_z}~(ii) we have the pointwise representation
	\begin{equation} \label{representation_B_0}
		 B_0( z)f(t) = \int_{-1}^t  \tr^- (\Phi_ z  f(s))_- \,ds +  \int_{t}^1  \tr^+ (\Phi_ z  f(s))_+ \,ds 
	\end{equation}
	for a.e. $t \in (-1,1)$ and $f \in L^2((-1,1);H^{1/2}(\Sigma;\C^N))$. 
	Thus, $r \in (0,\tfrac{1}{2})$ and direct estimates show
	\begin{equation}\label{eq_convergence_B_bar}
		\begin{split}
		&\norm{\overline{B}_\varepsilon( z)f - {B}_0( z)f}_{0}^2 = \int_{-1}^1 \biggl\| \int_{-1}^t \tr^- (\tau_{\varepsilon (t-s)  }^{\Omega_-}- I)(\Phi_z f(s))_- ds \\
		&\qquad \qquad \qquad \qquad \qquad \qquad +  \int_t^1 \tr^+ (\tau_{\varepsilon (t-s) }^{\Omega_+}-I)( \Phi_z f(s))_+ ds\, \biggr\|_{L^2(\Sigma;\C^N)}^2 \, dt \\
		&\qquad \qquad \qquad\leq \int_{-1}^1 \biggl(\int_{-1}^t \norm{\tr^- (\tau_{\varepsilon (t-s) }^{\Omega_-}- I)( \Phi_z f(s))_-}_{H^{r}(\Sigma;\C^N)} ds  \\
		&\qquad \qquad \qquad\qquad \qquad \qquad+ \int_t^1 \norm{\tr^+ (\tau_{\varepsilon (t-s)  }^{\Omega_+}-I)(\Phi_z f(s))_+}_{H^{r}(\Sigma;\C^N)} ds\biggr)^2 \,dt. 
		\end{split}
	\end{equation}
	Employing  Proposition \ref{proposition_trace_theorem}, Corollary~\ref{cor_shift_operator}, and Proposition~\ref{prop_Phi_z} yields for all $s,t \in (-1,1)$
	\begin{equation*}
		\norm{\tr^\pm (\tau_{\varepsilon (t-s) }^{\Omega_\pm}- I)( \Phi_z f(s))_\pm}_{H^{r}(\Sigma;\C^N)} \leq  C \varepsilon^{1/2-r}\norm{f(s)}_{H^{1/2}(\Sigma;\C^N)} .
	\end{equation*}
	Plugging this into \eqref{eq_convergence_B_bar} we obtain
	\begin{equation*}
	\begin{split}
		\|\overline{B}_\varepsilon&( z)f - {B}_0( z)f\|_{0}^2 \\
		& 
		\leq C \varepsilon^{2(1/2-r)} \int_{-1}^1 \biggl(\int_{-1}^t \norm{f(s)}_{H^{1/2}(\Sigma;\C^N)}\, ds + \int_t^1 \norm{f(s)}_{H^{1/2}(\Sigma;\C^N)} \,ds\biggr)^2 \,dt\\
		&\leq C\varepsilon^{2(1/2-r)} \int_{-1}^{1} \norm{f(s)}_{H^{1/2}(\Sigma;\C^N)}^2\,ds = C \varepsilon^{2(1/2-r)}\norm{f}_{1/2}^2,
		\end{split}
	\end{equation*}
	which implies~\eqref{convergence_B_bar}.

  \textit{Step~2.} We show that the operator $\overline{B}_\varepsilon(z)$ in~\eqref{def_B_bar} has the alternative representation  
    \begin{equation} \label{eq_B_bar}
    \overline B_\varepsilon( z)f(t)(x_\Sigma) =  \int_{-1}^1 \int_{\Sigma} G_ z(x_\Sigma +\varepsilon (t-s)  \nu(x_\Sigma) -y_\Sigma)   f(s)(y_\Sigma) \, d\sigma(y_\Sigma) \, ds
  \end{equation}
  for $f \in L^2((-1,1);H^{1/2}(\Sigma;\C^N))$, a.e. $t \in (-1,1)$, and $\sigma$-a.e. $x_\Sigma \in \Sigma$.
  Let $f \in L^2((-1,1);H^{1/2}(\Sigma;\C^N))$ and $t,s \in (-1,1)$  be fixed such that $t > s $. Note that the choice of $\varepsilon_1$ in Proposition~\ref{prop_tubular_neighbourhood} implies $\varepsilon_2 \leq \frac{\varepsilon_1}{2} < \frac{\varepsilon_A}{2}$, cf. also Proposition~\ref{prop_iota}. Hence, by Corollary~\ref{cor_eps_A} we have   
  $x_\Sigma + \varepsilon (t-s) \nu(x_\Sigma) \in \Omega_-$ for all $x_\Sigma \in \Sigma$. Moreover, we conclude from the representation of $\Phi_z$ given in \eqref{def_Phi_z} and the form of the integral kernel $G_z$, see~\eqref{eq_G_z_2D}--\eqref{eq_G_z_3D}, that $\Phi_z  f(s)$ is continuous away from $\Sigma$. Thus, we have for $\sigma$-a.e. $x_\Sigma \in \Sigma$
	\begin{equation*}
	\begin{split}
		\tr^- \tau_{\varepsilon (t-s) }^{\Omega_-}(\Phi_zf(s))_-(x_\Sigma) &= (\Phi_ z  f(s))(x_\Sigma + \varepsilon (t-s)  \nu(x_\Sigma)) \\
		&= \int_\Sigma G_ z(x_\Sigma + \varepsilon (t-s) \nu(x_\Sigma) -y_\Sigma) f(s)(y_\Sigma)\,d\sigma(y_\Sigma).
		\end{split}
	\end{equation*}
	Analogously, for $t <s$ and $\sigma$-a.e. $x_\Sigma \in \Sigma$
	\begin{equation*}
	\begin{split}
			\tr^+ \tau_{\varepsilon (t-s) }^{\Omega_+}(\Phi_zf(s))_+(x_\Sigma) &= (\Phi_ z  f(s))(x_\Sigma + \varepsilon (t-s)  \nu(x_\Sigma))\\
			&= \int_\Sigma G_ z(x_\Sigma + \varepsilon (t-s) \nu(x_\Sigma) -y_\Sigma) f(s)(y_\Sigma) \,d\sigma(y_\Sigma).
			\end{split}
	\end{equation*}
	 Combining the previous two equations yields 
	\begin{equation}\label{eq_B_bar_pointwise}
	\begin{split}
		 \int_{-1}^t &\tr^- \tau_{\varepsilon (t-s) }^{\Omega_-}(\Phi_zf(s))_-(x_\Sigma) \, ds + \int_t^1 \tr^+ \tau_{\varepsilon (t-s) }^{\Omega_+}(\Phi_zf(s))_+(x_\Sigma) \, ds \\
		 &= \int_{-1}^1 \int_{\Sigma} G_ z(x_\Sigma +\varepsilon (t-s) \nu(x_\Sigma) -y_\Sigma)   f(s)(y_\Sigma) \, d\sigma(y_\Sigma) \, ds.
		 \end{split}
	\end{equation}
	Moreover, as  the integrands on the right hand side in~\eqref{def_B_bar} are Bochner integrable (cf. \textit{Step~1}), Proposition~\ref{prop_Bochner}~(iii) shows  that the pointwise evaluation of the Bochner integrals in the definition of $\overline{B}_\varepsilon(z)$ in \eqref{def_B_bar} coincides with \eqref{eq_B_bar_pointwise}, i.e.
	\begin{equation*}
	\begin{split}
	&\bigg(\int_{-1}^t  \tr^- \tau_{\varepsilon (t-s) }^{\Omega_-}(\Phi_ z  f(s))_- \,ds +  \int_{t}^1  \tr^+ \tau_{\varepsilon (t-s) }^{\Omega_+}(\Phi_ z  f(s))_+ \,ds \bigg)(x_\Sigma)  \\
	&\qquad = \int_{-1}^t \tr^- \tau_{\varepsilon (t-s) }^{\Omega_-}(\Phi_zf(s))_-(x_\Sigma) \, ds + \int_t^1 \tr^+ \tau_{\varepsilon (t-s) }^{\Omega_+}(\Phi_zf(s))_+(x_\Sigma) \, ds.
	\end{split}
	\end{equation*} 
	This is exactly the claimed formula in \eqref{eq_B_bar}.

% 	Next, we claim that $\overline{B}_\varepsilon(z)$ has a bounded extension in $L^2((-1,1);L^2(\Sigma;\C^N))$ which is uniformly bounded in $\varepsilon$. Indeed, by duality and formal symmetry of $G_z$ in   

  \textit{Step~3.} By the results in Appendix~\ref{sec_appendix_diff_B} the map 
  $\widetilde{B}_\varepsilon( z) - \overline{B}_\varepsilon(z)$ admits an extension to a bounded operator from $L^2((-1,1);L^2(\Sigma;\C^N))$ to $L^2((-1,1);H^{1/2}(\Sigma;\C^N))$ and
  \begin{equation} \label{ESTIMATE_B_BAR_B_TILDE}
			\|\widetilde{B}_\varepsilon( z) - \overline{B}_\varepsilon(z)\|_{0 \to 0} \leq   \|\widetilde{B}_\varepsilon( z) - \overline{B}_\varepsilon(z)\|_{0 \to 1/2} \leq C (\varepsilon + \varepsilon \abs{\log(\varepsilon)})^{1/2}.
	\end{equation}
	Moreover, we claim that $\widetilde{B}_\varepsilon(z)$ is uniformly bounded in $L^2((-1,1);L^2(\Sigma;\C^N))$. To see this, observe first that 
	\begin{equation}\label{klkl}
	\begin{split}
	  \|\widetilde{B}_\varepsilon( z)\|_{1/2 \to 1/2} &\leq 
	\|\widetilde{B}_\varepsilon( z) - \overline{B}_\varepsilon(z)\|_{1/2 \to 1/2}  + 
	\|\overline{B}_\varepsilon(z)\|_{1/2 \to 1/2} \\
	&\leq 
	\|\widetilde{B}_\varepsilon( z) - \overline{B}_\varepsilon(z)\|_{0 \to 1/2}  + 
	\|\overline{B}_\varepsilon(z)\|_{1/2 \to 1/2}.
	\end{split}
	\end{equation}
   Therefore, the estimate \eqref{ESTIMATE_B_BAR_B_TILDE} and the uniform boundedness of the operators $\overline{B}_\varepsilon(z)$ in $L^2((-1,1);H^{1/2}(\Sigma;\C^N))$
	shown in \textit{Step~1} imply that $\widetilde{B}_\varepsilon(z)$ is also uniformly bounded in $L^2((-1,1); H^{1/2}(\Sigma;\C^N))$. 
	The same is true for $\widetilde{B}_\varepsilon(\overline z)$ and hence also the anti-dual 
	\begin{equation*}%\label{anitb}
	(\widetilde{B}_\varepsilon(\overline z))': L^2((-1,1);H^{-1/2}(\Sigma;\C^N)) \to L^2((-1,1);H^{-1/2}(\Sigma;\C^N))
	\end{equation*}
	is uniformly bounded. We claim that $(\widetilde{B}_\varepsilon(\overline{z}))'$  
	is an extension of $\widetilde{B}_\varepsilon(z)$, that is, 
	\begin{equation}\label{extb}
	\widetilde{B}_\varepsilon(z)f=(\widetilde{B}_\varepsilon(\overline z))'f,\qquad f\in L^2((-1, 1); L^2(\Sigma; \mathbb{C}^N)).
	\end{equation} 
	The identity  $(\mathcal{I}_\varepsilon \mathcal{S}_\varepsilon)^* = M_\varepsilon \mathcal{S}_\varepsilon^{-1} \mathcal{I}_\varepsilon^{-1}$ and \eqref{def_ABC_operators} yield for the adjoint of  $B_\varepsilon(\overline z)$  in $L^2((-1,1);L^2(\Sigma;\C^N))$
	\begin{equation}\label{eq_B_adjoint}
		(B_\varepsilon( \overline z))^* =    (\mathcal{S}_\varepsilon^{-1}\mathcal{I}_\varepsilon^{-1} U_\varepsilon R_{\overline z} U_\varepsilon^* \mathcal{I}_\varepsilon \mathcal{S}_\varepsilon )^* = M_\varepsilon \mathcal{S}_\varepsilon^{-1}\mathcal{I}_\varepsilon^{-1} U_\varepsilon R_{ z} U_\varepsilon^* \mathcal{I}_\varepsilon \mathcal{S}_\varepsilon M_\varepsilon^{-1} = M_\varepsilon {B}_\varepsilon( z)M_\varepsilon^{-1}.
	\end{equation}
	In turn, $M_\varepsilon = (M_\varepsilon)^*$ and $\widetilde{B}_\varepsilon(z) = B_\varepsilon(z) M_\varepsilon^{-1}$ give us 
	\begin{equation*}
		(\widetilde{B}_\varepsilon(\overline z))^* =   	(B_\varepsilon(\overline z)M_\varepsilon^{-1})^* = M_{\varepsilon}^{-1}(B_\varepsilon( \overline z))^* =  M_{\varepsilon}^{-1} M_\varepsilon {B}_\varepsilon( z)M_\varepsilon^{-1} = \widetilde{B}_{\varepsilon}(z)
	\end{equation*}
	%With this, it follows that the anti-dual $(\widetilde{B}_\varepsilon(z))': L^2((-1,1);H^{-1/2}(\Sigma;\C^N)) \to L^2((-1,1);H^{-1/2}(\Sigma;\C^N))$ of the operator $\widetilde{B}_\varepsilon(z): L^2((-1,1);H^{1/2}(\Sigma;\C^N)) \to L^2((-1,1);H^{1/2}(\Sigma;\C^N))$ is an extension of $\widetilde{B}_\varepsilon(\overline{z})$. 
	and hence 
	Proposition~\ref{prop_Bochner}~(ii) implies for $f \in L^2((-1, 1); L^2(\Sigma; \mathbb{C}^N))$ and $g \in L^2((-1, 1); H^{1/2}(\Sigma; \mathbb{C}^N))$
	\begin{equation*}
	  \begin{split}
	    &\langle (\widetilde{B}_\varepsilon(\overline z))' f, g \rangle_{L^2((-1, 1); H^{-1/2}(\Sigma; \mathbb{C}^N)) \times L^2((-1, 1); H^{1/2}(\Sigma; \mathbb{C}^N))} \\
	    &\quad= \langle  f, \widetilde{B}_\varepsilon(\overline z) g \rangle_{L^2((-1, 1); H^{-1/2}(\Sigma; \mathbb{C}^N)) \times L^2((-1, 1); H^{1/2}(\Sigma; \mathbb{C}^N))} \\
	    &\quad= (  f, \widetilde{B}_\varepsilon(\overline z) g )_{L^2((-1, 1); L^2(\Sigma; \mathbb{C}^N)) } = ( \widetilde{B}_\varepsilon(z) f,  g )_{L^2((-1, 1); L^2(\Sigma; \mathbb{C}^N)) } \\
	    &\quad = \langle \widetilde{B}_\varepsilon(z) f, g \rangle_{L^2((-1, 1); H^{-1/2}(\Sigma; \mathbb{C}^N)) \times L^2((-1, 1); H^{1/2}(\Sigma; \mathbb{C}^N))},
	  \end{split}
	\end{equation*}	
	where $\langle \cdot, \cdot\rangle_{L^2((-1, 1); H^{-1/2}(\Sigma; \mathbb{C}^N)) \times L^2((-1, 1); H^{1/2}(\Sigma; \mathbb{C}^N))}$ denotes the sesquilinear duality product, which is anti-linear in the second argument.
	This implies \eqref{extb} and since $(\widetilde{B}_\varepsilon(\overline z))'$ and $\widetilde{B}_\varepsilon(z)$ are both uniformly bounded
	in $L^2((-1,1); H^{-1/2}(\Sigma;\C^N))$ and $L^2((-1,1); H^{1/2}(\Sigma;\C^N))$, respectively, an interpolation argument leads to the uniform boundedness of $\widetilde{B}_\varepsilon(z)$ in $L^2((-1,1);L^2(\Sigma;\C^N))$.

	\textit{Step~4.} Using the results from \textit{Step~1} to \textit{Step~3} we will now complete the proof of Proposition~\ref{prop_B_conv}. Since $B_\varepsilon(z) = \widetilde{B}_{\varepsilon}(z)M_\varepsilon$, Lemma \ref{lem_M_operators} and  the uniform boundedness of $\widetilde{B}_\varepsilon(z)$ shown in \textit{Step~3} imply the uniform boundedness of $B_\varepsilon(z)$ in $L^2((-1,1);L^2(\Sigma;\C^N))$, proving the first claim of this proposition. Moreover, the uniform boundedness of $\widetilde{B}_\varepsilon(z)$, \eqref{ESTIMATE_B_BAR_B_TILDE} and Lemma~\ref{lem_M_operators} show that $\overline{B}_\varepsilon(z)$ also acts  as a uniformly bounded operator in $L^2((-1,1); L^2(\Sigma;\C^N))$ and
	\begin{equation}\label{eq_diff_B_B_bar}
	\begin{split}
		\norm{B_\varepsilon( z) - \overline{B}_\varepsilon(z)}_{0 \to 0} &\leq \norm{\widetilde{B}_\varepsilon( z) (M_\varepsilon - I)}_{0 \to 0} + \norm{\widetilde{B}_\varepsilon( z) - \overline{B}_\varepsilon(z)}_{0 \to 0} \\
		&\leq C\bigl(\varepsilon  + (\varepsilon+\varepsilon \abs{\log (\varepsilon)})^{1/2} \bigr) \leq C (\varepsilon + \varepsilon\abs{\log (\varepsilon)})^{1/2}.
		\end{split}
	\end{equation}
	Combining \eqref{eq_diff_B_B_bar} with \eqref{convergence_B_bar} yields 
	\begin{equation*}
		\begin{split}
		\norm{B_\varepsilon( z) - B_0( z)}_{1/2 \to  0}  & \leq  \norm{B_\varepsilon( z) - \overline{B}_\varepsilon(z)}_{1/2 \to 0} +\norm{ \overline{B}_\varepsilon(z) - B_0(z) }_{1/2 \to 0}  \\
		&\leq  \norm{ B_\varepsilon( z) - \overline{B}_\varepsilon(z)}_{0 \to 0} +\norm{\overline{B}_\varepsilon(z) - B_0(z) }_{1/2 \to 0}  \\
		&\leq C\bigl(  ( \varepsilon+\varepsilon\abs{\log(\varepsilon)})^{1/2} + \varepsilon^{1/2 -r}\bigr) \leq C \varepsilon^{1/2-r}.
		\end{split}
	\end{equation*}
	This is the claimed norm estimate and finishes the proof of this proposition.
\end{proof}

\begin{remark}
	Note that by combining \eqref{eq_B_adjoint} and Lemma~\ref{lem_M_operators} we obtain for $\varepsilon \in (0,\varepsilon_2)$ the  estimate
	\begin{equation*}
	\begin{split}
		\norm{B_\varepsilon(z) - (B_{\varepsilon}(\overline z))^*}_{0\to0} &= \norm{B_\varepsilon(z) - M_{\varepsilon}B_{\varepsilon}( z)M_{\varepsilon}^{-1}}_{0\to0} \\
		&\leq \norm{(I-M_\varepsilon)B_\varepsilon(z)}_{0\to0} + \norm{M_{\varepsilon}B_{\varepsilon}( z)M_{\varepsilon}^{-1}(M_{\varepsilon}-I)}_{0\to0} \\
		&\leq C \varepsilon.
		\end{split}
	\end{equation*}
\end{remark}

\subsection{Convergence of $(I + {B_\varepsilon( z)}Vq)^{-1}$} \label{sec_inverse}

Recall that $B_\varepsilon(z)$ is defined in~\eqref{def_ABC_operators} and let $q$ and $V$ be as in~\eqref{eq_q} and~\eqref{eq_V}, respectively. In this section we treat the convergence of $(I + {B_\varepsilon( z)}Vq)^{-1}$ under suitable assumptions on $q$ and $V$. 
The crucial assumption here, which is also needed in our main result, Theorem~\ref{THEO_MAIN}, is that $Vq$ is bounded by 
\begin{equation} \label{def_X_z}
X_z := \frac{1}{\max \{ \sup_{ \varepsilon \in (0,\varepsilon_2)} \norm{B_\varepsilon(z) }_{0 \to 0} , \norm{B_0(z) }_{1/2 \to 1/2}\}},
\end{equation}
where $\varepsilon_2$ is given by~\eqref{eq_eps_2} and $z \in \rho(H)$ is fixed. After the convergence result in the next proposition we 
provide more information on $X_z$ in Lemma~\ref{lemma_smallness_condition}.

\begin{proposition}\label{prop_I+B_conv}
Let $z \in \rho(H)$, $\varepsilon_3 \in (0, \varepsilon_2]$, and $\varepsilon_2>0$ be given by ~\eqref{eq_eps_2}, and let $q$ and $V$ be as in \eqref{eq_q} and \eqref{eq_V}, respectively, such that the following conditions are fulfilled:
	\begin{itemize}
		\item[(i)] The operators $I + B_\varepsilon(z)Vq$ are bijective in $L^2((-1,1);L^2(\Sigma;\C^N))$ for $\varepsilon \in (0,\varepsilon_3)$ and 
		their inverses are uniformly bounded. 
		\item[(ii)] The operator $I+ B_0(z)Vq$ is bijective in $L^2((-1,1);H^{1/2}(\Sigma;\C^N))$.
		% 		\item[(iii)] Since $L^2((-1,1);H^{1/2}(\Sigma;\C^N))$ is dense in $L^2((-1,1);L^2(\Sigma;\C^N))$, Proposition~\ref{prop_B_conv} implies that also $\norm{B_0(z) }_{0 \to 0} < \frac{1}{X_z}$ and hence, $I+ B_0(z)Vq$ is also bijective in $L^2((-1,1);L^2(\Sigma;\C^N))$.
	\end{itemize}
	Then, for any $r \in (0,\tfrac{1}{2})$ one has
	\begin{equation*}
		\norm{(I + B_{\varepsilon}( z)Vq)^{-1} -(I +B_{0}( z)Vq)^{-1}}_{1/2 \to 0} \leq C \varepsilon^{1/2-r}, \qquad  \varepsilon \in (0, \varepsilon_3).
	\end{equation*}
	In particular, this is true if  $\varepsilon_3 = \varepsilon_2$ and 
	\begin{equation}\label{eq_smallnes_assumption}
		\norm{V}_{W^1_\infty(\Sigma;\C^{N\times N})}  \norm{q}_{L^\infty(\R;\R)} < X_z.
	\end{equation}
\end{proposition}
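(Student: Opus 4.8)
The plan is to prove the statement in two steps. First, assuming the abstract conditions (i) and (ii), I would derive the quantitative bound by a second-resolvent-type identity combined with the norm convergence of $B_\varepsilon(z)$ from Proposition~\ref{prop_B_conv}. Second, I would show that (i) and (ii) hold with $\varepsilon_3 = \varepsilon_2$ whenever \eqref{eq_smallnes_assumption} is satisfied, using a Neumann series and the very definition of $X_z$ in \eqref{def_X_z}.

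For the first step, write $T_\varepsilon := I + B_\varepsilon(z)Vq$ and $T_0 := I + B_0(z)Vq$. Since $L^2((-1,1);H^{1/2}(\Sigma;\C^N))$ embeds continuously into $L^2((-1,1);L^2(\Sigma;\C^N))$ and the operators $B_\varepsilon(z)$, $B_0(z)$ and the multiplication operator $Vq$ act consistently on both scales, for $f \in L^2((-1,1);H^{1/2}(\Sigma;\C^N))$ one has $T_0^{-1}f \in L^2((-1,1);H^{1/2}(\Sigma;\C^N))$ and hence the identity
\begin{equation*}
 T_\varepsilon^{-1}f - T_0^{-1}f = T_\varepsilon^{-1}\bigl(T_0 - T_\varepsilon\bigr)T_0^{-1}f = T_\varepsilon^{-1}\bigl(B_0(z) - B_\varepsilon(z)\bigr)Vq\,T_0^{-1}f
\end{equation*}
holds in $L^2((-1,1);L^2(\Sigma;\C^N))$. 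I would then read the right-hand side from right to left and track the mapping properties of the four factors: $T_0^{-1}$ is bounded on $L^2((-1,1);H^{1/2}(\Sigma;\C^N))$ by condition (ii); $Vq$ is bounded on $L^2((-1,1);H^{1/2}(\Sigma;\C^N))$ since $V \in W^1_\infty(\Sigma;\C^{N\times N})$ acts as a multiplication operator, cf. \eqref{W_1_infty_mult_op}; $B_0(z) - B_\varepsilon(z)$ is bounded from $L^2((-1,1);H^{1/2}(\Sigma;\C^N))$ to $L^2((-1,1);L^2(\Sigma;\C^N))$ with norm $\leq C\varepsilon^{1/2-r}$ by Proposition~\ref{prop_B_conv}; and $T_\varepsilon^{-1}$ is bounded on $L^2((-1,1);L^2(\Sigma;\C^N))$ \emph{uniformly} in $\varepsilon \in (0,\varepsilon_3)$ by condition (i). Composing these four bounds gives $\norm{T_\varepsilon^{-1} - T_0^{-1}}_{1/2 \to 0} \leq C\varepsilon^{1/2-r}$, which is the assertion.

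For the second step I would first note that $X_z \in (0,\infty)$: by Proposition~\ref{prop_B_conv} both $\sup_{\varepsilon\in(0,\varepsilon_2)}\norm{B_\varepsilon(z)}_{0\to0}$ and $\norm{B_0(z)}_{1/2\to1/2}$ are finite and positive. Assume now \eqref{eq_smallnes_assumption}. Since $Vq$ acts on $L^2((-1,1);L^2(\Sigma;\C^N))$ with norm at most $\norm{V}_{L^\infty(\Sigma;\C^{N\times N})}\norm{q}_{L^\infty(\R;\R)} \leq \norm{V}_{W^1_\infty(\Sigma;\C^{N\times N})}\norm{q}_{L^\infty(\R;\R)}$, the definition of $X_z$ yields
\begin{equation*}
 \sup_{\varepsilon\in(0,\varepsilon_2)}\norm{B_\varepsilon(z)Vq}_{0\to0} \leq \Bigl(\sup_{\varepsilon\in(0,\varepsilon_2)}\norm{B_\varepsilon(z)}_{0\to0}\Bigr)\norm{V}_{W^1_\infty(\Sigma;\C^{N\times N})}\norm{q}_{L^\infty(\R;\R)} < 1.
\end{equation*}
Hence the Neumann series shows that $I + B_\varepsilon(z)Vq$ is bijective in $L^2((-1,1);L^2(\Sigma;\C^N))$ for every $\varepsilon \in (0,\varepsilon_2)$ with $\norm{(I + B_\varepsilon(z)Vq)^{-1}}_{0\to0}$ bounded uniformly in $\varepsilon$ by $\bigl(1 - \sup_{\varepsilon}\norm{B_\varepsilon(z)}_{0\to0}\norm{V}_{W^1_\infty(\Sigma;\C^{N\times N})}\norm{q}_{L^\infty(\R;\R)}\bigr)^{-1}$; this is condition (i) with $\varepsilon_3 = \varepsilon_2$. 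In the same way, using \eqref{W_1_infty_mult_op} to control the norm of $Vq$ on the $H^{1/2}$-scale, I would get $\norm{B_0(z)Vq}_{1/2\to1/2} \leq \norm{B_0(z)}_{1/2\to1/2}\norm{V}_{W^1_\infty(\Sigma;\C^{N\times N})}\norm{q}_{L^\infty(\R;\R)} < 1$, so $I + B_0(z)Vq$ is bijective in $L^2((-1,1);H^{1/2}(\Sigma;\C^N))$, which is condition (ii). The first step then produces the claimed estimate on $(0,\varepsilon_2)$.

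The main analytic difficulty is in fact absent from this proposition: the hard fact, the $L^2((-1,1);H^{1/2})\to L^2((-1,1);L^2)$ convergence rate $\norm{B_\varepsilon(z) - B_0(z)}_{1/2\to0}\leq C\varepsilon^{1/2-r}$, has already been established in Proposition~\ref{prop_B_conv}. What remains requires care rather than new ideas: one must keep precise track of the Bochner--Sobolev scale on which each operator acts and verify that $B_\varepsilon(z)$, $B_0(z)$ and $Vq$ are mutually compatible relative to the dense continuous embedding $L^2((-1,1);H^{1/2}(\Sigma;\C^N)) \hookrightarrow L^2((-1,1);L^2(\Sigma;\C^N))$, so that the factorization of $T_\varepsilon^{-1} - T_0^{-1}$ is legitimate. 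The only slightly delicate estimate is the bound for the multiplication operator $Vq$ on the $H^{1/2}$-scale used in the verification of condition (ii).
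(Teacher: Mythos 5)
Your proposal is correct and follows essentially the same route as the paper: the second resolvent identity $T_\varepsilon^{-1} - T_0^{-1} = T_\varepsilon^{-1}(B_0(z) - B_\varepsilon(z))Vq\,T_0^{-1}$, factored through the Sobolev scales so that hypothesis (i) controls $T_\varepsilon^{-1}$ on $L^2$, hypothesis (ii) controls $T_0^{-1}$ on $H^{1/2}$, and Proposition~\ref{prop_B_conv} delivers the rate, with the final part a Neumann series argument exactly as in the paper. The only cosmetic differences are a sign convention and whether $Vq$ is grouped with $B_\varepsilon(z) - B_0(z)$ or estimated as a separate factor on the $H^{1/2}$-scale, neither of which changes the argument.
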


%\begin{remark}\label{rem_smallnes_assumption_2}
%	Note that assumption \eqref{eq_smallnes_assumption} implies the following:  
%	\begin{itemize}
%		\item[(i)] The operators $I + B_\varepsilon(z)Vq$ are bijective in $L^2((-1,1);L^2(\Sigma;\C^N))$ for $\varepsilon \in (0,\varepsilon_2)$ and 
%		their inverses are uniformly bounded. 
%		\item[(ii)] The operator $I+ B_0(z)Vq$ is bijective in $L^2((-1,1);H^{1/2}(\Sigma;\C^N))$.
%% 		\item[(iii)] Since $L^2((-1,1);H^{1/2}(\Sigma;\C^N))$ is dense in $L^2((-1,1);L^2(\Sigma;\C^N))$, Proposition~\ref{prop_B_conv} implies that also $\norm{B_0(z) }_{0 \to 0} < \frac{1}{X_z}$ and hence, $I+ B_0(z)Vq$ is also bijective in $L^2((-1,1);L^2(\Sigma;\C^N))$.
%	\end{itemize}
%	The statement of Proposition~\ref{prop_I+B_conv} remains true, if~\eqref{eq_smallnes_assumption} is replaced by the weaker assumptions (i) and (ii).
%\end{remark}

\begin{proof}
	Since the conditions (i) and (ii) are fulfilled, we have 
	\begin{equation*}
	\begin{split}
		((I+B_0( z)Vq)^{-1} &- (I+B_\varepsilon( z)Vq)^{-1} ) f \\
		&=(I+B_\varepsilon( z)Vq)^{-1}(B_\varepsilon( z) -B_0( z))Vq(I+B_0( z)Vq)^{-1}f
		\end{split}
	\end{equation*}
	for $f \in L^2((-1,1);H^{1/2}(\Sigma;\C^N))$ and $\varepsilon \in (0,\varepsilon_3)$. 
	Thus, with  Proposition \ref{prop_B_conv} and the fact that $Vq$ 
	induces a bounded operator in  $L^2((-1,1);H^{1/2}(\Sigma;\C^N))$, see \eqref{W_1_infty_mult_op} and Section \ref{sec_Bochner}, we find
	\begin{equation*}
		\begin{split}
		&\norm{(I+B_0( z)Vq)^{-1} - (I+B_\varepsilon( z)Vq)^{-1}}_{1/2 \to 0} \\
		&\quad\leq\norm{(I+B_\varepsilon( z)Vq)^{-1}}_{0 \to 0} \norm{(B_\varepsilon( z) -B_0( z))Vq}_{1/2 \to 0}\norm{(I+B_0( z)Vq)^{-1}}_{1/2 \to 1/2}\\
		&\quad\leq C \norm{B_0( z) -B_\varepsilon( z)}_{1/2 \to 0} \leq C \varepsilon^{1/2-r}, \qquad  \varepsilon \in (0, \varepsilon_3),
		\end{split}
	\end{equation*}
	which is the claimed result. Finally, note that if \eqref{eq_smallnes_assumption} is true, then 
	\begin{equation*}
		\begin{split}
		\max \Bigl\{& \sup_{ \varepsilon \in (0,\varepsilon_2)} \norm{B_\varepsilon(z)Vq}_{0 \to 0} , \norm{B_0(z)Vq}_{1/2 \to 1/2} \Bigr\} \\
		&\leq  \max \Bigl\{ \sup_{ \varepsilon \in (0,\varepsilon_2)} \norm{B_\varepsilon(z)}_{0 \to 0} , \norm{B_0(z)}_{1/2 \to 1/2} \Bigr\} \norm{V}_{W^1_\infty(\Sigma;\C^{N\times N})}  \norm{q}_{L^\infty(\R;\R)} \\
		&= \frac{1}{X_z} \norm{V}_{W^1_\infty(\Sigma;\C^{N\times N})}  \norm{q}_{L^\infty(\R;\R)} <1
		\end{split}
	\end{equation*}
	shows that (i) and (ii) are fulfilled.
\end{proof}

In the following lemma we give more explicit estimates for the constant $X_z$ in~\eqref{def_X_z}. Recall that $E$ denotes Stein's extension operator. Moreover, we introduce $C_{1/2, -1/2} := \frac{C_2}{C_1} \geq  1$, 
where $C_2\geq C_1 >0$ are chosen such that 
\begin{equation*}
	C_1 \norm{f}_{0} \leq  \norm{f}_{[L^2((-1,1);H^{-1/2}(\Sigma;\C^N)),L^2((-1,1);H^{1/2}(\Sigma;\C^N))]_{1/2}} \leq C_2 \norm{f}_{0}
\end{equation*}
holds for $f \in L^2((-1,1);L^2(\Sigma;\C^N))$; note that according to Proposition~\ref{prop_Bochner}~(i) such a choice is possible. 
If $B$ is a bounded operator in $L^2((-1,1);L^2(\Sigma;\C^N))$ that admits a bounded extension to
$L^2((-1,1);H^{-1/2}(\Sigma;\C^N))$ and a bounded restriction to $L^2((-1,1);H^{1/2}(\Sigma;\C^N))$, then
Proposition~\ref{prop_Bochner}~(i) and \eqref{eq_interpolation_inequality} imply   
\begin{equation} \label{interpolation_constant}
  \| B \|_{0 \rightarrow 0} \leq C_{1/2, -1/2} \| B \|_{1/2 \rightarrow 1/2}^{1/2} \| B \|_{-1/2 \rightarrow -1/2}^{1/2}.
\end{equation}
\begin{lemma} \label{lemma_smallness_condition}
Let $z \in \rho(H)$ and let $X_z$ be as in~\eqref{def_X_z}. Then,
the following assertions hold:
  \begin{itemize}
    \item[$\textup{(i)}$] $X_z \geq \frac{1}{C_{\tr,E,z}} + \mathcal{O}((\varepsilon_2+\varepsilon_2\lvert \log(\varepsilon_2)\rvert)^{1/2})$, where the constant
    \begin{equation*}
    \begin{split}
    C_{\tr,E,z}= 2 C_{1/2,-1/2} &\max_{(\sign,\mu) \in \{+,-\} \times\{z,\overline{z}\}}\|\tr^\sign\|_{H^{1}(\Omega_\sign;\C^N) \to H^{1/2}(\Sigma;\C^N)} \\
    &\quad\cdot\norm{E}_{H^{1}(\Omega_\sign;\C^N) \to H^{1}(\mathbb{R}^\theta;\C^N)}\norm{\Phi_\mu}_{H^{1/2}(\Sigma;\C^N) \to H^{1}(\Omega_\sign;\C^N)}
    \end{split}
    \end{equation*} 
    depends only on the geometry of $\Sigma$ and $z \in \rho(H)$.
    \item[$\textup{(ii)}$] If $\Sigma$ is $C^\infty$-smooth and compact and $m > 0$, then $X_z \leq \frac{\pi}{4}$.
  \end{itemize}
  In particular, if $\norm{V}_{W^1_\infty(\Sigma;\C^{N\times N})}  \norm{q}_{L^\infty(\R;\R)} < \frac{1}{C_{\tr,E,z}}$ and $\varepsilon_2$ is chosen sufficiently small, then the claim in Proposition~\ref{prop_I+B_conv} is true.
\end{lemma}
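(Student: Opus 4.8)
Here is a plan for proving Lemma~\ref{lemma_smallness_condition}.

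The strategy is to estimate $1/X_z=\max\{\sup_{\varepsilon\in(0,\varepsilon_2)}\norm{B_\varepsilon(z)}_{0\to0},\,\norm{B_0(z)}_{1/2\to1/2}\}$ --- from above for~(i), from below for~(ii). For~(i) I would first bound $\norm{B_0(z)}_{1/2\to1/2}$: using the pointwise representation~\eqref{representation_B_0}, the mapping properties of $\tr^\pm$ and $\Phi_z$ from Propositions~\ref{proposition_trace_theorem} and~\ref{prop_Phi_z}, and Cauchy--Schwarz in the $s$-variable, one obtains $\norm{B_0(z)}_{1/2\to1/2}\le 2\max_{\pm}\norm{\tr^\pm}_{H^1(\Omega_\pm;\C^N)\to H^{1/2}(\Sigma;\C^N)}\norm{\Phi_z}_{H^{1/2}(\Sigma;\C^N)\to H^1(\Omega_\pm;\C^N)}$, which is $\le C_{\tr,E,z}$ because $C_{1/2,-1/2}\ge1$, $\norm E\ge1$ and the maximum defining $C_{\tr,E,z}$ also ranges over $\mu=\overline z$.

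For $\sup_\varepsilon\norm{B_\varepsilon(z)}_{0\to0}$ I would run the following chain: $B_\varepsilon(z)=\widetilde B_\varepsilon(z)M_\varepsilon$ with $\norm{M_\varepsilon}_{0\to0}\le 1+\mathcal O(\varepsilon)$ by Lemma~\ref{lem_M_operators}; the interpolation inequality~\eqref{interpolation_constant} gives $\norm{\widetilde B_\varepsilon(z)}_{0\to0}\le C_{1/2,-1/2}\norm{\widetilde B_\varepsilon(z)}_{1/2\to1/2}^{1/2}\norm{\widetilde B_\varepsilon(z)}_{-1/2\to-1/2}^{1/2}$; the identity $\widetilde B_\varepsilon(z)=(\widetilde B_\varepsilon(\overline z))'$ from Step~3 of the proof of Proposition~\ref{prop_B_conv} identifies $\norm{\widetilde B_\varepsilon(z)}_{-1/2\to-1/2}$ with $\norm{\widetilde B_\varepsilon(\overline z)}_{1/2\to1/2}$; and $\norm{\widetilde B_\varepsilon(z)}_{1/2\to1/2}\le\norm{\widetilde B_\varepsilon(z)-\overline B_\varepsilon(z)}_{0\to1/2}+\norm{\overline B_\varepsilon(z)}_{1/2\to1/2}$, where the first term is $\mathcal O((\varepsilon+\varepsilon\abs{\log\varepsilon})^{1/2})$ by~\eqref{ESTIMATE_B_BAR_B_TILDE} and the second satisfies $\norm{\overline B_\varepsilon(z)}_{1/2\to1/2}\le 2(1+\mathcal O(\varepsilon))\max_{(\sign,\mu)}\norm{\tr^\sign}\norm E\norm{\Phi_\mu}$ --- the latter obtained from the representation~\eqref{def_B_bar} of $\overline B_\varepsilon(z)$ and from $\norm{\tau_\delta^{\Omega_\pm}}_{H^1(\Omega_\pm)\to H^1(\Omega_\pm)}\le\norm E\,\norm{\tau_\delta}_{H^1(\R^\theta)\to H^1(\R^\theta)}$, the last factor being $1+\mathcal O(\varepsilon)$ for $\abs\delta\le 2\varepsilon$ by the explicit estimates~\eqref{abc1}--\eqref{abc2}. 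Putting these together yields $\norm{B_\varepsilon(z)}_{0\to0}\le C_{\tr,E,z}+\mathcal O\big((\varepsilon_2+\varepsilon_2\abs{\log\varepsilon_2})^{1/2}\big)$ uniformly in $\varepsilon\in(0,\varepsilon_2)$, hence $1/X_z\le C_{\tr,E,z}+\mathcal O\big((\varepsilon_2+\varepsilon_2\abs{\log\varepsilon_2})^{1/2}\big)$, and taking reciprocals proves~(i).

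For~(ii) the plan is to bound $\norm{B_0(z)}_{1/2\to1/2}$ from below by $4/\pi$. Since $\Sigma$ is a compact $C^\infty$-hypersurface it possesses a smooth point $x_0$, and I would test $B_0(z)$ on functions $f(s)(x_\Sigma)=\psi_R(x_\Sigma)g(s)$, where $\psi_R$ is a scalar wave packet concentrated near $x_0$ with tangential frequency $R$ (and limiting frequency direction $\hat\xi$, a unit tangent vector at $x_0$) and $g\in L^2((-1,1);\C^N)$ is a fixed profile. As $R\to\infty$ one has $(\alpha\cdot\nu)\psi_R=(\alpha\cdot\nu(x_0))\psi_R+o(1)$ and $\mathcal C_z\psi_R=\tfrac12(\alpha\cdot\hat\xi)\psi_R+o(1)$ in $H^{1/2}(\Sigma;\C^N)$ --- the latter because the principal symbol of the order-zero operator $\mathcal C_z$ is independent of $m$ and $z$ and equals $\tfrac12\,\alpha\cdot\hat\xi$. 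Using~\eqref{eq_B_0}, the quotient $\norm{B_0(z)f}_{1/2}/\norm f_{1/2}$ then converges to $\norm{M_{\hat\xi}g}_{L^2((-1,1);\C^N)}/\norm g_{L^2((-1,1);\C^N)}$, where
\[
M_{\hat\xi}g(t)=\tfrac{i}{2}(\alpha\cdot\nu(x_0))\int_{-1}^1\sign(t-s)g(s)\,ds+\tfrac12(\alpha\cdot\hat\xi)\int_{-1}^1 g(s)\,ds
\]
is a bounded self-adjoint operator on $L^2((-1,1);\C^N)$; self-adjointness holds since $f\mapsto\tfrac{i}{2}\int_{-1}^1\sign(t-s)f(s)\,ds$ is self-adjoint and $\alpha\cdot\nu(x_0),\alpha\cdot\hat\xi$ are self-adjoint involutions with $(\alpha\cdot\nu(x_0))(\alpha\cdot\hat\xi)+(\alpha\cdot\hat\xi)(\alpha\cdot\nu(x_0))=0$. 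Diagonalizing $\alpha\cdot\nu(x_0)$ into its $\pm1$-eigenspaces, the equation $M_{\hat\xi}g=\mu g$ reduces upon differentiating in $t$ to $g'=\pm\tfrac{i}{\mu}g$ on the two eigenspaces, and matching the boundary values of the underlying integral equation forces $\tan^2(1/\mu)=1$; hence the nonzero eigenvalues of $M_{\hat\xi}$ are $\mu=\tfrac{4}{\pi(2k+1)}$, $k\in\Z$, so $\norm{M_{\hat\xi}}=4/\pi$. Letting $R\to\infty$ gives $\norm{B_0(z)}_{1/2\to1/2}\ge 4/\pi$, i.e. $1/X_z\ge 4/\pi$ and $X_z\le\pi/4$. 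Finally, the closing assertion follows from the estimates in~(i): if $\norm V_{W^1_\infty(\Sigma;\C^{N\times N})}\norm q_{L^\infty(\R;\R)}<1/C_{\tr,E,z}$, then $\norm{B_0(z)Vq}_{1/2\to1/2}\le C_{\tr,E,z}\norm V\norm q<1$ so $I+B_0(z)Vq$ is boundedly invertible in $L^2((-1,1);H^{1/2}(\Sigma;\C^N))$, and once $\varepsilon_2$ is small enough $\sup_{\varepsilon\in(0,\varepsilon_2)}\norm{B_\varepsilon(z)Vq}_{0\to0}\le\big(C_{\tr,E,z}+\mathcal O((\varepsilon_2+\varepsilon_2\abs{\log\varepsilon_2})^{1/2})\big)\norm V\norm q<1$, so the operators $I+B_\varepsilon(z)Vq$ are uniformly boundedly invertible by Neumann series; thus conditions~(i) and~(ii) of Proposition~\ref{prop_I+B_conv} hold with $\varepsilon_3=\varepsilon_2$ and its conclusion applies. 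The routine part is the operator-norm bookkeeping in~(i); the main obstacle is~(ii), where one must rigorously justify the high-frequency reduction of $B_0(z)$ near a smooth point to the model $M_{\hat\xi}$ --- in particular the behaviour of $\mathcal C_z$ in the $H^{1/2}$-topology and the control of all localization errors --- and then carry out the elementary but somewhat delicate spectral analysis of $M_{\hat\xi}$ producing the constant $4/\pi$.
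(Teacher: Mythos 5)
Your plan for part (i) follows the paper's own proof essentially step by step: bound $\norm{B_0(z)}_{1/2 \to 1/2}$ by $C_{\tr,E,z}$ via the pointwise representation \eqref{representation_B_0}, then control $\sup_{\varepsilon}\norm{B_\varepsilon(z)}_{0\to0}$ by passing to $\widetilde{B}_\varepsilon(z)$, comparing it to $\overline{B}_\varepsilon(z)$ via \eqref{ESTIMATE_B_BAR_B_TILDE}, bounding $\overline{B}_\varepsilon(z)$ in the $H^{1/2}$ operator norm using the shift operators $\tau_\delta^{\Omega_\pm}$ and the explicit shift bounds \eqref{abc1}--\eqref{abc2}, and finally passing from $\norm{\cdot}_{\pm1/2\to\pm1/2}$ to $\norm{\cdot}_{0\to0}$ via the anti-dual $(\widetilde{B}_\varepsilon(\overline z))'$ and the interpolation inequality \eqref{interpolation_constant}. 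The closing ``in particular'' assertion is also handled the same way. So part (i) is correct and matches the paper.

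Part (ii) is where you genuinely diverge, and where there is a real gap. The paper proves $\norm{B_0(z)}_{1/2\to1/2}\ge 4/\pi$ by taking a sequence $\varphi_n\in H^{1/2}(\Sigma;\C^N)$ with $\norm{\varphi_n}_{H^{1/2}}=1$ and $\norm{(-\tfrac12 I_N+\mathcal{C}_z)\varphi_n}_{H^{1/2}}\to0$ --- a nontrivial input imported from \cite[Corollaries~3.8 and~4.12]{H22}, where the hypotheses $m>0$, $\Sigma$ smooth and compact (and, for $z\in\C\setminus\R$, the compactness of $\mathcal{C}_z-\mathcal{C}_0$) enter --- and then testing $B_0(z)$ on the ansatz $f_n(t)=\exp(i(\alpha\cdot\nu)\pi t/4)\varphi_n$, which diagonalizes the $T(\alpha\cdot\nu)$ part and produces the factor $4/\pi$ by an explicit computation. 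Your plan replaces the citation of \cite{H22} by a high-frequency localization near a point $x_0\in\Sigma$: a wave packet $\psi_R$ with tangential frequency $R$ in direction $\hat\xi$, the assertion that $\mathcal{C}_z\psi_R=\tfrac12(\alpha\cdot\hat\xi)\psi_R+o(1)$ in $H^{1/2}$, and a reduction of the $H^{1/2}$ quotient to the $L^2$ quotient of a model operator $M_{\hat\xi}$ on $(-1,1)$ whose nonzero eigenvalues you correctly compute to be $\tfrac{4}{(2k+1)\pi}$. Your spectral analysis of $M_{\hat\xi}$ is sound, and the heuristic about the principal symbol of $\mathcal{C}_z$ is consistent with the flat model; but the passage from the $H^{1/2}$ operator-norm quotient for $B_0(z)$ to the $L^2$ quotient for $M_{\hat\xi}$ --- in particular the replacement of $\mathcal{C}_z$ by its symbol in the $H^{1/2}$ topology, the commutation of multiplication by $\alpha\cdot\nu$ with the localization, and the control of all error terms uniformly in $R$ --- is exactly the microlocal analysis you flag as the main obstacle, and it is not carried out. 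As written, this is a genuine gap: you would effectively have to reprove the approximate-point-spectrum result of \cite{H22} by hand. Note also that the $m>0$ hypothesis in the statement plays no role in your symbol computation, which is a signal that the short symbol argument does not quite capture what the paper's citation of \cite{H22} provides; the paper's route is both cleaner and the one that matches the hypotheses of the lemma.
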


\begin{proof}
	(i) First, by the definition of $\overline{B}_\varepsilon(z)$ and $\tau_\delta^{\Omega_\pm}$ in \eqref{def_B_bar} and \eqref{def_tau_delta_pm}, respectively, and the Cauchy-Schwarz inequality we have for $f \in L^2((-1,1);H^{1/2}(\Sigma;\C^N))$ and $\varepsilon \in (0,\varepsilon_2)$ 
	\begin{equation*}
	\begin{split}
	&\norm{\overline{B}_\varepsilon( z)f}_{1/2}^2 \leq \int_{-1}^1 \biggl(\int_{-1}^t \norm{\tr^- \tau_{\varepsilon (t-s) }^{\Omega_-}( \Phi_z f(s))_-}_{H^{1/2}(\Sigma;\C^N)} ds \\
	&\qquad + \int_t^1 \norm{\tr^+ \tau_{\varepsilon (t-s)  }^{\Omega_+}(\Phi_z f(s))_+}_{H^{1/2}(\Sigma;\C^N)} ds\biggr)^2 \,dt\\
	&\leq \! \int_{-1}^{1}\! \biggl(\int_{-1}^{1} \frac{C_{\tr,E,z}}{2 C_{1/2, -1/2}}\! \sup_{\delta \in (-2\varepsilon_2,2\varepsilon_2)} \!\!\norm{\tau_\delta}_{H^{1}(\R^\theta;\C^N) \to H^{1}(\R^\theta;\C^N)} \norm{f(s)}_{H^{1/2}(\Sigma;\C^N)} ds \biggr)^2 \!dt \\
	& \leq \biggl( \frac{C_{\tr,E,z}}{C_{1/2, -1/2}} \sup_{\delta \in (-2\varepsilon_2,2\varepsilon_2)}\norm{\tau_\delta}_{H^{1}(\R^\theta;\C^N) \to H^{1}(\R^\theta;\C^N)} \norm{f}_{1/2} \biggr)^2.
	\end{split}
	\end{equation*}
	Hence,
	\begin{equation*}
	\sup_{\varepsilon \in (0,\varepsilon_2)} \norm{ \overline{B}_\varepsilon(z)}_{1/2 \to 1/2}  \leq \frac{C_{\tr,E,z}}{C_{1/2, -1/2}} \sup_{\delta \in (-2\varepsilon_2,2\varepsilon_2)}\norm{\tau_\delta}_{H^{1}(\R^\theta;\C^N) \to H^{1}(\R^\theta;\C^N)}
	\end{equation*}
	 and a similar calculation using~\eqref{representation_B_0}, the estimate $\norm{E}_{H^{1}(\Omega_\sign;\C^N) \to H^{1}(\mathbb{R}^\theta;\C^N)} \geq 1$, and $C_{1/2, -1/2} \geq 1$ leads to
	\begin{equation} \label{estimate_B_0}
	   \norm{ \overline{B}_0(z)}_{1/2 \to 1/2}  \leq C_{\tr,E,z}.
	\end{equation}
	Let us proceed with the estimate for $\overline{B}_\varepsilon(z)$. Since by \eqref{abc1} and~\eqref{abc2}
	\begin{equation*}
	\sup_{\delta \in (-2\varepsilon_2,2\varepsilon_2)}\norm{\tau_\delta}_{H^{1}(\R^\theta;\C^N) \to H^{1}(\R^\theta;\C^N)} \leq \frac{1+2\varepsilon_2\norm{D \nu}_{L^\infty(\R^\theta;\R^{\theta\times \theta})}}{{(1- 2\varepsilon_2\norm{D \nu}_{L^\infty(\R^\theta;\R^{\theta\times \theta})})^{\theta/2}}} = 1 +\mathcal{O}(\varepsilon_2),
	\end{equation*} 
	the results from~\eqref{ESTIMATE_B_BAR_B_TILDE}--\eqref{klkl} and Appendix~\ref{sec_appendix_diff_B} imply 
	\begin{equation}\label{supbbb}
	  \sup_{\varepsilon \in (0,\varepsilon_2)} \|\widetilde{B}_\varepsilon(z)\|_{1/2 \to 1/2} \leq \frac{C_{\tr,E,z}}{C_{1/2,-1/2}}+ \mathcal{O}((\varepsilon_2+\varepsilon_2\lvert \log(\varepsilon_2)\rvert)^{1/2}).
    \end{equation}
    It is clear that the estimates \eqref{estimate_B_0} and \eqref{supbbb} hold also with $z$ replaced by $\overline z$.
    As in \textit{Step~3} of the proof of Proposition \ref{prop_B_conv} we use again that the anti-dual $(\widetilde{B}_\varepsilon (\overline z))'$ of $\widetilde{B}_\varepsilon (\overline z)$ 
    in $L^2((-1,1)H^{-1/2}(\Sigma;\C^N))$ is an extension of 
    $\widetilde{B}_\varepsilon (z)$ that is bounded by $\|\widetilde{B}_\varepsilon(\overline{z})\|_{1/2 \to 1/2}$. Hence, with~\eqref{interpolation_constant} one gets
	\begin{equation*}
	\begin{split}
	  \sup_{\varepsilon \in (0,\varepsilon_2)} \| \widetilde{B}_\varepsilon(z)\|_{0 \to 0} 
	  & \leq \sup_{\varepsilon \in (0,\varepsilon_2)}
	   C_{1/2,-1/2}\|\widetilde{B}_\varepsilon(z)\|_{1/2 \to 1/2}^{1/2} \|(\widetilde{B}_\varepsilon(\overline z))'\|_{-1/2 \to -1/2}^{1/2}\\
	   &= \sup_{\varepsilon \in (0,\varepsilon_2)}
	   C_{1/2,-1/2}\|\widetilde{B}_\varepsilon(z)\|_{1/2 \to 1/2}^{1/2} \|\widetilde{B}_\varepsilon(\overline z)\|_{1/2 \to 1/2}^{1/2} \\
	 &\leq C_{\tr,E,z}
	+ \mathcal{O}((\varepsilon_2+\varepsilon_2\lvert \log(\varepsilon_2)\rvert)^{1/2}).
	\end{split}
    \end{equation*}
	Thus, it follows from the definition of $\widetilde B_\varepsilon(z)$ in \eqref{wtwtb} and Lemma \ref{lem_M_operators} that  
	$$\sup_{ \varepsilon \in (0,\varepsilon_2) } \norm{ B_\varepsilon(z)}_{0 \to 0} \leq C_{\tr,E,z}
	+ \mathcal{O}((\varepsilon_2+\varepsilon_2\lvert \log(\varepsilon_2)\rvert)^{1/2}).$$  
	Together with~\eqref{estimate_B_0} this yields the claim in~(i).
%	 It remains to find an estimate for the operator norm of  $B_0(z)$. \mh{Note that $C_{1/2,-1/2} \geq 1$ and $\| E \|_{H^{1}(\Omega_\sign;\C^N) \to H^{1}(\mathbb{R}^\theta;\C^N)} \geq 1$}. Therefore, Proposition \ref{proposition_C_z} and \eqref{eq_B_0} yield \mh{
%	\begin{equation*}
%	\begin{split}
%	\|B_0&(z)\|_{1/2\to1/2} \\
%	&\leq 2 \max_{\sign \in \{+,-\}}\big\|\tr^\sign\big\|_{H^{1}(\Omega_\sign;\C^N) \to H^{1/2}(\Sigma;\C^N)}\norm{\Phi_z}_{H^{1/2}(\Sigma;\C^N) \to H^{1}(\Omega_\sign;\C^N)}\\
%	&\leq 2  \max_{\sign \in \{+,-\}}\| E \|_{H^{1}(\Omega_\sign;\C^N) \to H^{1}(\mathbb{R}^\theta;\C^N)} \\
%	&\qquad \quad \cdot\big\|\tr^\sign\big\|_{H^{1}(\Omega_\sign;\C^N) \to H^{1/2}(\Sigma;\C^N)}\norm{\Phi_z}_{H^{1/2}(\Sigma;\C^N) \to H^{1}(\Omega_\sign;\C^N)}\\
%	&\leq \frac{C_{\tr,E,z}}{C_{1/2,-1/2}}.
%	\end{split}
%	\end{equation*} }
%	Using  again duality and symmetry one obtains  $\norm{B_0(z)}_{0\to0} \leq C_{\tr,E,z}.$
	
	(ii) First, note that there exist $\varphi_n \in H^{1/2}(\Sigma; \mathbb{C}^N)$ such that 
	\begin{equation} \label{equation_phi_n}
	  \norm{\varphi_n}_{H^{1/2}(\Sigma;\C^N)} = 1 \quad \text{and} \quad \left\| \left( -\frac{1}{2} I_N + \mathcal{C}_z \right) \varphi_n \right\|_{H^{1/2}(\Sigma; \mathbb{C}^N)}  \rightarrow 0, \quad \text{as } n \rightarrow \infty;
	\end{equation}
	for $z \in (-m, m)$ this follows from \cite[Corollaries~3.8 and~4.12]{H22}, 
	while for $z \in \mathbb{C} \setminus \mathbb{R}$ one may additionally use that \cite[Proposition~4.4~(iv)]{BHSS22} implies that $\mathcal{C}_z - \mathcal{C}_0$ is compact in $H^{1/2}(\Sigma;\C^N)$.
	Define the functions $f_n \in L^2((-1,1); H^{1/2}(\Sigma; \mathbb{C}^N))$ by $f_n(t) := \exp( i (\alpha \cdot \nu) \pi t /4 ) \varphi_n$ and compute with Proposition~\ref{prop_Bochner}~(iii)
	\begin{equation*} 
	  \begin{split}
	    \frac{i}{2} (\alpha \cdot \nu) & \int_{-1}^1 \sign(t-s) f_n(s) ds\\
	    & = \frac{2}{\pi} \biggl( \int_{-1}^t \frac{d}{ds} \exp \bigl( i (\alpha \cdot \nu) \tfrac{\pi}{4} s \bigr) \varphi_n ds - \int_t^1 \frac{d}{ds} \exp \bigl( i (\alpha \cdot \nu) \tfrac{\pi}{4} s \bigr) \varphi_n ds \biggr) \\
	    &= \frac{4}{\pi} \bigl( \exp\bigl( i (\alpha \cdot \nu) \tfrac{\pi}{4} t \bigr) - \cos\bigl( (\alpha \cdot \nu) \tfrac{\pi}{4}  \bigr) \varphi_n = \frac{4}{\pi} \biggl( f_n(t) - \frac{\sqrt{2}}{2} I_N \varphi_n\Biggr),
	  \end{split}
	\end{equation*}
	where in the last step the identity $\cos( (\alpha \cdot \nu) \pi/ 4) = \cos(\pi/ 4) I_N = \frac{\sqrt{2}}{2} I_N$, which can be shown via the power series representation of  cosine  and $(\alpha \cdot \nu)^2=I_N$, was used. With a similar argument, one verifies that $(\alpha \cdot \nu) \sin( (\alpha \cdot \nu) \pi/ 4) = \sin(\pi/ 4) I_N = \frac{\sqrt{2}}{2} I_N$, which yields
	\begin{equation*} 
	  \begin{split}
	    \int_{-1}^1 f_n(s) ds 
	    &= \int_{-1}^1 \exp \bigl( i (\alpha \cdot \nu) \tfrac{\pi}{4} s \bigr) \varphi_n ds 
	    = \frac{8}{\pi} (\alpha \cdot \nu) \sin\bigl( (\alpha \cdot \nu) \tfrac{\pi}{4} \bigr) \varphi_n 
	    = \frac{4 \sqrt{2}}{\pi} \varphi_n.
	  \end{split}
	\end{equation*}
	Combining the last two displayed formulas with the representation of $B_0(z)$ in~\eqref{eq_B_0}, we conclude
	\begin{equation*}
	\begin{split}
  	  B_0 (z) f_n(t) &= \frac{i}{2}(\alpha \cdot \nu)\int_{-1}^1 \sign(t-s)f(s) \,ds + \mathcal{C}_z \int_{-1}^1  f(s) \, ds \\
  	  &= \frac{4}{\pi}\Bigl(f_n(t) + \sqrt{2} \Bigl(-\frac{1}{2}I_N +  \mathcal{C}_z \Bigr)\varphi_n \Bigr).
  	  \end{split}
	\end{equation*}
	Hence, using~\eqref{equation_phi_n}, we find that
	\begin{equation*}
	  \|B_0(z) \|_{1/2 \to 1/2} \geq \lim_{n \rightarrow \infty} \frac{\| B_0 (z) f_n \|_{1/2}}{\| f_n \|_{1/2}} = \frac{4}{\pi}
	\end{equation*}
	and thus $X_z \leq \tfrac{\pi}{4}$.
\end{proof}

\section{Proof of the main results}\label{sec_main}

In this section we prove the main results of this paper. For this we show in Proposition~\ref{prop_main_conv} 
that the resolvent $(H_{\varepsilon}-z)^{-1}$ of the self-adjoint operator $H_\varepsilon$ in \eqref{eq_H_eps} converges to the limit 
operator 
\begin{equation}\label{lz}
 \mathcal{L}(z) := R_ z  - A_0( z)Vq(I +B_{0}( z)Vq)^{-1}C_0( z)
\end{equation}
whenever $q$ and $V$ in \eqref{eq_q} and \eqref{eq_V}, respectively,  satisfy the condition \eqref{eq_smallnes_assumption} for some $z\in\rho(H) = \mathbb{C} \setminus ((-\infty, -|m|] \cup [|m|, \infty))$.
Lemma~\ref{lem_I+T} and Lemma~\ref{lem_BtoC_z} collect some auxiliary considerations that are needed in Proposition~\ref{prop_main_limit},
where it is shown  that
$\mathcal{L}(z) = (H_{\widetilde{V}} - z)^{-1}$ with $H_{\widetilde{V}}$ defined by~\eqref{def_H_Vtilde}. After these preparations 
we complete the proofs of Theorem~\ref{THEO_MAIN}, Corollary~\ref{cor_main}, and Corollary~\ref{cor_main_inv}.

\begin{proposition}\label{prop_main_conv}
Let $z \in \rho(H)$ and $\varepsilon \in (0, \varepsilon_2)$ with $\varepsilon_2>0$ given by~\eqref{eq_eps_2}, and let $q$ and $V$ be 
as in \eqref{eq_q} and \eqref{eq_V}, respectively, such that \eqref{eq_smallnes_assumption} holds.
Then, $z \in \rho(H_\varepsilon)$ and for any $r \in (0,\tfrac{1}{2})$ one has
	  \begin{equation*}
		  \begin{split}
		  & \|(H_{\varepsilon}-  z)^{-1} - \mathcal{L}(z)  \|_{L^2(\R^\theta;\C^N) \to L^2(\R^\theta;\C^N)} 
		  \leq C \varepsilon^{1/2-r}, \quad \varepsilon \in (0, \varepsilon_2),
		  \end{split}
	  \end{equation*}
	  where the operator $\mathcal{L}(z)$ is given by \eqref{lz}.
	  In particular, $(H_{\varepsilon}-  z)^{-1}$ converges to $\mathcal{L}(z)$ in the operator norm as $\varepsilon \to 0+$.
\end{proposition}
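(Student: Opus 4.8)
The plan is to combine the resolvent formula from Proposition~\ref{prop_resolvent_formula} with the convergence statements for $A_\varepsilon(z)$, $C_\varepsilon(z)$ and for the inverses $(I+B_\varepsilon(z)Vq)^{-1}$ obtained in Section~\ref{sec_conv_analyis}, and then to estimate the difference of the two resolvents by a short telescoping argument.

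First, since \eqref{eq_smallnes_assumption} holds, the proof of Proposition~\ref{prop_I+B_conv} shows that conditions~(i) and~(ii) there are satisfied: the operators $I+B_\varepsilon(z)Vq$ are bijective in $L^2((-1,1);L^2(\Sigma;\C^N))$ with uniformly bounded inverses for all $\varepsilon\in(0,\varepsilon_2)$, and $I+B_0(z)Vq$ is bijective in $L^2((-1,1);H^{1/2}(\Sigma;\C^N))$. In particular $-1\in\rho(B_\varepsilon(z)Vq)$, so Proposition~\ref{prop_resolvent_formula} gives $z\in\rho(H_\varepsilon)$ together with
\[
 (H_\varepsilon-z)^{-1}=R_z-A_\varepsilon(z)Vq\,(I+B_\varepsilon(z)Vq)^{-1}C_\varepsilon(z),
\]
and the bijectivity of $I+B_0(z)Vq$, the mapping properties of $A_0(z)$ and $C_0(z)$ (see the text after~\eqref{def_C_0}) and the boundedness of the multiplication operator $Vq$ in~\eqref{W_1_infty_mult_op} ensure that $\mathcal{L}(z)$ in~\eqref{lz} is a well-defined bounded operator in $L^2(\R^\theta;\C^N)$.

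Writing $K_\varepsilon:=(I+B_\varepsilon(z)Vq)^{-1}$ and $K_0:=(I+B_0(z)Vq)^{-1}$, one has $(H_\varepsilon-z)^{-1}-\mathcal{L}(z)=-\bigl(A_\varepsilon(z)Vq\,K_\varepsilon C_\varepsilon(z)-A_0(z)Vq\,K_0C_0(z)\bigr)$, and I would split the bracket as
\[
 (A_\varepsilon(z)-A_0(z))Vq\,K_\varepsilon C_\varepsilon(z)
 +A_0(z)Vq\,K_\varepsilon\bigl(C_\varepsilon(z)-C_0(z)\bigr)
 +A_0(z)Vq\,(K_\varepsilon-K_0)C_0(z).
\]
The first summand is estimated in $\|\cdot\|_{L^2(\R^\theta;\C^N)\to L^2(\R^\theta;\C^N)}$ using $\|A_\varepsilon(z)-A_0(z)\|_{0\to L^2(\R^\theta;\C^N)}\le C\varepsilon^{1/2-r}$ (Proposition~\ref{prop_A_eps}), the boundedness of $Vq$ in $L^2((-1,1);L^2(\Sigma;\C^N))$, the uniform boundedness of $K_\varepsilon$ in $L^2((-1,1);L^2(\Sigma;\C^N))$ (Proposition~\ref{prop_I+B_conv}(i)), and the uniform boundedness of $C_\varepsilon(z):L^2(\R^\theta;\C^N)\to L^2((-1,1);L^2(\Sigma;\C^N))$ (Proposition~\ref{prop_C_eps}); the second summand is handled the same way, now invoking $\|C_\varepsilon(z)-C_0(z)\|_{L^2(\R^\theta;\C^N)\to0}\le C\varepsilon^{1/2-r}$ from~\eqref{equation_convergence_C_eps}; and the third summand is bounded by combining $\|K_\varepsilon-K_0\|_{1/2\to0}\le C\varepsilon^{1/2-r}$ (Proposition~\ref{prop_I+B_conv}) with the boundedness of $C_0(z):L^2(\R^\theta;\C^N)\to L^2((-1,1);H^{1/2}(\Sigma;\C^N))$ and of $Vq$ and $A_0(z)$ in $L^2((-1,1);L^2(\Sigma;\C^N))$. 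Summing the three bounds yields $\|(H_\varepsilon-z)^{-1}-\mathcal{L}(z)\|_{L^2(\R^\theta;\C^N)\to L^2(\R^\theta;\C^N)}\le C\varepsilon^{1/2-r}$ for $\varepsilon\in(0,\varepsilon_2)$, and letting $\varepsilon\to0+$ gives the asserted norm convergence.

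All the analytic content is already contained in Section~\ref{sec_conv_analyis}; the only point requiring care here is the bookkeeping of the Sobolev-type spaces in the telescoping, i.e. choosing the splitting so that the merely $1/2\to0$ convergence of $K_\varepsilon-K_0$ is always composed on its right with an operator ($C_0(z)$) mapping into $L^2((-1,1);H^{1/2}(\Sigma;\C^N))$, while the factors multiplying $A_\varepsilon(z)-A_0(z)$ and $C_\varepsilon(z)-C_0(z)$ are used only with $L^2$-boundedness, which the uniformly bounded inverses $K_\varepsilon$ supply. I expect this to be the main (and rather mild) obstacle; with this splitting no estimates beyond those of Section~\ref{sec_conv_analyis} are needed.
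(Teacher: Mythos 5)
Your proof is correct and follows essentially the same approach as the paper: a telescoping decomposition of the resolvent difference fed by Propositions~\ref{prop_A_eps},~\ref{prop_C_eps}, and~\ref{prop_I+B_conv}, with the middle factor's $1/2\to 0$ convergence placed against the $H^{1/2}$-valued $C_0(z)$. The only difference is a cosmetic rearrangement of the telescoping (the paper keeps $A_\varepsilon(z)$ and $K_\varepsilon$ on the first two summands and isolates $A_\varepsilon-A_0$ with $K_0C_0$, whereas you keep $A_0(z)$ on the last two summands and isolate $A_\varepsilon-A_0$ with $K_\varepsilon C_\varepsilon$), which changes nothing since both $A_\varepsilon(z)$ and $C_\varepsilon(z)$, and their limits, are uniformly bounded in the relevant norms.
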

\begin{proof}
	 According to Proposition~\ref{prop_resolvent_formula} and \eqref{eq_smallnes_assumption}, see also Proposition~\ref{prop_I+B_conv},  we have $z \in \rho(H_\varepsilon)$ and
	\begin{equation} \label{equation_resolvent_difference}
	\begin{split}
	(H_{\varepsilon}&- z)^{-1} - \mathcal{L}(z) \\
	%=(H_{\varepsilon} -  z)^{-1} -	R_ z  + A_0( z)Vq(I + B_{0}(z)Vq)^{-1}C_0( z)  \\
		&= - A_\varepsilon( z)Vq(I + B_{\varepsilon}( z)Vq)^{-1}C_\varepsilon( z) +A_0( z)Vq(I + B_{0}( z)Vq)^{-1}C_0( z) \\
		&=-A_\varepsilon( z)Vq(I + B_{\varepsilon}( z)Vq)^{-1}(C_\varepsilon( z) - C_0(z)) - A_\varepsilon( z)Vq((I + B_{\varepsilon}( z)Vq)^{-1} \\
		&- (I + B_{0}( z)Vq)^{-1})C_0( z)- (A_\varepsilon (z) - A_0( z))Vq(I + B_{0}( z)Vq)^{-1}C_0( z).
	\end{split}
	\end{equation}
	 We note that by Propositions~\ref{prop_A_eps} and~\ref{prop_I+B_conv} the  maps  $A_\varepsilon(z): L^2((-1,1);L^2(\Sigma;\C^N)) \to L^2(\R^\theta;\C^N)$ and ${(I+B_{\varepsilon}( z)Vq)^{-1}: L^2((-1,1);L^2(\Sigma;\C^N)) \to L^2((-1,1);L^2(\Sigma;\C^N))}$ are uniformly bounded.
	Employing this and Proposition~\ref{prop_C_eps} we see that 
	\begin{equation} \label{equation_estimate1}
	\begin{split}
		\|A_\varepsilon( z)Vq(I + B_{\varepsilon}( z)Vq)^{-1}(C_\varepsilon( z)- C_0(z) )\|_{{L^2(\R^\theta;\C^N) \to L^2(\R^\theta;\C^N)}}& \\
		\leq C \norm{C_\varepsilon( z)-C_0( z)}_{L^2(\R^\theta;\C^N) \to 0}  \leq C \varepsilon^{1/2 - r}&. 
		\end{split}
	\end{equation} 
	Since $C_0(z): L^2(\R^\theta;\C^N) \to L^2((-1,1);H^{1/2}(\Sigma;\C^N))$ in \eqref{def_C_0} is bounded,  Proposition~\ref{prop_I+B_conv} yields
	\begin{equation} \label{equation_estimate2}
		\begin{split}
		&\| A_\varepsilon( z)Vq((I + B_{\varepsilon}( z)Vq)^{-1} -(I + B_{0}( z)Vq)^{-1})C_0( z) \|_{L^2(\R^\theta;\C^N)\to L^2(\R^\theta;\C^N)}\\
		&\quad\leq C \norm{ (I + B_{\varepsilon}( z)Vq)^{-1} -(I + B_{0}( z)Vq)^{-1}}_{1/2 \to 0}\leq C \varepsilon^{1/2-r}.
	\end{split}
	\end{equation}
	Eventually, in a similar way as in~\eqref{equation_estimate1} we find with Proposition~\ref{prop_A_eps} that
	\begin{equation} \label{equation_estimate3}
		\begin{split}
		&\norm{(A_\varepsilon(z)-A_0(z))Vq(I + B_{0}( z)Vq)^{-1} C_0( z) }_{L^2(\R^\theta;\C^N)\to L^2(\R^\theta;\C^N)} \\
		&\qquad\leq C \norm{A_\varepsilon( z)-A_0( z)}_{0 \to L^2(\R^\theta;\C^N)}\leq C \varepsilon^{1/2 - r}.
		\end{split}
	\end{equation} 
	Combining~\eqref{equation_estimate1}--\eqref{equation_estimate3} with~\eqref{equation_resolvent_difference} shows the claim of this proposition.
\end{proof}

The next goal is to show that the limit operator $\mathcal{L}(z)$ in \eqref{lz} 
is the resolvent of $H_{\widetilde{V}}$ defined in~\eqref{def_H_Vtilde}. This requires some technical preparations and we 
first introduce the  operator
\begin{equation} \label{def_TF} 
\begin{split}
 		T &:L^2((-1,1);L^2(\Sigma;\C^N)) \to L^2((-1,1);L^2(\Sigma;\C^N)), \\ 
 		Tf(t) &:= \frac{i}{2}  \int_{-1}^1 \textup{sign}(t-s)f(s) \,ds,
 		\end{split}
 	\end{equation}
and the function
\begin{equation} \label{def_Q}
  Q(t) := -\frac{1}{2} + \int_{-1}^t q (s) d s, \quad t \in [-1,1].
\end{equation}
Note that $Q$ satisfies $Q' = q$, $Q(-1) = -\frac{1}{2}$, and by~\eqref{eq_q} also $Q(1) = \frac{1}{2}$. Moreover, for $r \in [0, \frac{1}{2}]$ the map $T$ gives rise to a bounded operator in $L^2((-1, 1); H^r(\Sigma; \mathbb{C}^N))$.

\begin{lemma}\label{lem_I+T}
	Let  $q$ and $V$ be as in~\eqref{eq_q} and~\eqref{eq_V}, respectively, let $r \in [0,\frac{1}{2}]$, and assume that $\cos\bigl(\tfrac{1}{2}(\alpha \cdot \nu)V\bigr)^{-1} \in  W^1_\infty (\Sigma;\C^{N \times N})$. Then, the following is true:
	\begin{itemize}
		\item[$\textup{(i)}$] $I +  T(\alpha \cdot \nu) Vq$ is boundedly invertible in   $L^2((-1,1);H^r(\Sigma;\C^N))$ and its inverse is given by the operator $O$ in~\eqref{eq_O}.
		\item[$\textup{(ii)}$] If $f \in \ran \mathfrak{J}$ (that is, $f$ is independent of $t \in (-1,1)$), then 
		\begin{equation}\label{eq_Tqinv_const}
		(I +  T(\alpha \cdot \nu) Vq)^{-1}f(t) = \cos\bigl(\tfrac{1}{2}(\alpha \cdot \nu)V\bigr)^{-1}\exp(-i(\alpha \cdot \nu) V Q(t))f(t)
		\end{equation}
		holds for a.e. $t \in (-1,1)$.
    \end{itemize}
\end{lemma}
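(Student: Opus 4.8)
The plan is to prove (i) by \emph{explicitly solving} the equation $(I+T(\alpha\cdot\nu)Vq)g=f$, which simultaneously exhibits the inverse $O$ of~\eqref{eq_O} and shows it is bounded, and then to deduce (ii) by verifying directly that the function on the right-hand side of~\eqref{eq_Tqinv_const} is mapped by $I+T(\alpha\cdot\nu)Vq$ to a $t$-independent function. For (i), fix $x_\Sigma\in\Sigma$ and write $M:=(\alpha\cdot\nu)(x_\Sigma)V(x_\Sigma)\in\C^{N\times N}$; the hypothesis $\cos\bigl(\tfrac12(\alpha\cdot\nu)V\bigr)^{-1}\in W^1_\infty(\Sigma;\C^{N\times N})$ says precisely that $\cos\bigl(\tfrac12 M\bigr)$ is invertible with essentially bounded inverse, uniformly in $x_\Sigma$. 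Reading the equation pointwise in $x_\Sigma$ and recalling~\eqref{def_TF}, it becomes $g(t)+\tfrac{i}{2}M\int_{-1}^1\sign(t-s)q(s)g(s)\,ds=f(t)$. Setting $h(t):=\int_{-1}^1\sign(t-s)q(s)g(s)\,ds$, one has $g=f-\tfrac{i}{2}Mh$ and $h$ is absolutely continuous with $h'=2qg=2qf-iqMh$ and $h(1)=-h(-1)$; with the integrating factor $e^{iMQ(t)}$ (here $Q$ is as in~\eqref{def_Q}, so $Q'=q$ and $Q(\mp1)=\mp\tfrac12$) this gives $h(t)=e^{-iMQ(t)}\bigl(e^{-iM/2}h(-1)+2\int_{-1}^t e^{iMQ(s)}q(s)f(s)\,ds\bigr)$, and evaluating at $t=1$, the relation $h(1)=-h(-1)$ together with $I+e^{-iM}=2e^{-iM/2}\cos\bigl(\tfrac12 M\bigr)$ and the invertibility of $\cos\bigl(\tfrac12 M\bigr)$ pins down $h(-1)=-\cos\bigl(\tfrac12 M\bigr)^{-1}\int_{-1}^1 e^{iMQ(s)}q(s)f(s)\,ds$. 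Substituting back, $g=f-\tfrac{i}{2}Mh$ is an explicit expression in $f$ — this is the operator $O$ in~\eqref{eq_O}.

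Next I would check that $O$ is bounded on $L^2((-1,1);H^r(\Sigma;\C^N))$ for $r\in[0,\tfrac12]$ and is a genuine two-sided inverse. Boundedness follows because $O$ is assembled from multiplication operators on $H^r(\Sigma;\C^N)$ by the matrix functions $\cos\bigl(\tfrac12(\alpha\cdot\nu)V\bigr)^{-1}$, $(\alpha\cdot\nu)V$ and $e^{\pm i(\alpha\cdot\nu)VQ(t)}$, all lying in $W^1_\infty(\Sigma;\C^{N\times N})$ — the first by hypothesis, the last with $W^1_\infty$-norm bounded uniformly in $t\in[-1,1]$ since $(\alpha\cdot\nu)V\in W^1_\infty(\Sigma;\C^{N\times N})$, $W^1_\infty(\Sigma;\C^{N\times N})$ is a Banach algebra, and $\lvert Q(t)\rvert\le\tfrac12$ — hence they induce bounded operators by~\eqref{W_1_infty_mult_op}; these are composed with the bounded operator $T$ and bounded Volterra-type integral operators in the $t$-variable (using $q\in L^\infty((-1,1);\R)$ and boundedness of $T$ on $L^2((-1,1);H^r(\Sigma;\C^N))$). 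The derivation above shows that every solution $g$ of $(I+T(\alpha\cdot\nu)Vq)g=f$ equals $Of$, so $I+T(\alpha\cdot\nu)Vq$ is injective and $O(I+T(\alpha\cdot\nu)Vq)=I$; conversely, for $g:=Of=f-\tfrac{i}{2}Mh$ the function $\tilde h(t):=\int_{-1}^1\sign(t-s)q(s)g(s)\,ds$ satisfies $\tilde h'=2qg=h'$ and $\tilde h(1)=-\tilde h(-1)$ while $h(1)=-h(-1)$, so $\tilde h-h$ is a constant that must vanish, whence $(I+T(\alpha\cdot\nu)Vq)Of=f-\tfrac{i}{2}Mh+\tfrac{i}{2}M\tilde h=f$. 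This proves (i).

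For (ii), by (i) it suffices to show that for $f\equiv\varphi\in H^r(\Sigma;\C^N)$ independent of $t$, the function $g(t):=\cos\bigl(\tfrac12(\alpha\cdot\nu)V\bigr)^{-1}\exp\bigl(-i(\alpha\cdot\nu)VQ(t)\bigr)\varphi$ — which lies in $L^2((-1,1);H^r(\Sigma;\C^N))$ by the multiplication bounds just used — satisfies $(I+T(\alpha\cdot\nu)Vq)g=\varphi$. Pointwise in $x_\Sigma$ (with $M$ as above) $g$ is absolutely continuous in $t$ with $g'=-iMqg$, so $\tfrac{i}{2}Mq(s)g(s)=-\tfrac12 g'(s)$ and therefore $\tfrac{i}{2}M\int_{-1}^1\sign(t-s)q(s)g(s)\,ds=-\tfrac12\int_{-1}^1\sign(t-s)g'(s)\,ds=-g(t)+\tfrac12\bigl(g(-1)+g(1)\bigr)$. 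Using~\eqref{def_TF} and $Q(\mp1)=\mp\tfrac12$ this yields $(I+T(\alpha\cdot\nu)Vq)g(t)=\tfrac12\bigl(g(-1)+g(1)\bigr)=\cos\bigl(\tfrac12 M\bigr)^{-1}\tfrac12\bigl(e^{iM/2}+e^{-iM/2}\bigr)\varphi=\varphi$, so $g=(I+T(\alpha\cdot\nu)Vq)^{-1}\varphi$, which is exactly~\eqref{eq_Tqinv_const}.

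I expect the main obstacle to be purely the matrix functional calculus bookkeeping: one must use freely that $M$, $\cos\bigl(\tfrac12 M\bigr)$, $\cos\bigl(\tfrac12 M\bigr)^{-1}$ and $e^{iMQ(t)}$ all commute (being functions of the single matrix $M$) and that $\cos\bigl(\tfrac12(\alpha\cdot\nu)V\bigr)$ is invertible with essentially bounded inverse — which is precisely the standing hypothesis and is where that assumption is used. The remaining point, namely upgrading the pointwise-in-$x_\Sigma$ ODE identities to operator identities on $L^2((-1,1);H^r(\Sigma;\C^N))$ for $r\in(0,\tfrac12]$, is routine: first treat $r=0$, where $L^2((-1,1);L^2(\Sigma;\C^N))\cong L^2((-1,1)\times\Sigma;\C^N)$ and the pointwise reduction is justified by Fubini (cf. Proposition~\ref{prop_Bochner}~(iii)) and Fubini for Bochner integrals, and then extend to $r\in(0,\tfrac12]$ using the density and multiplication bounds from~\eqref{W_1_infty_mult_op} together with interpolation in $r$.
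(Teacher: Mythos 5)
Your proposal is correct, and the underlying computation — reduce pointwise in $x_\Sigma$ to a first-order linear $t$-ODE with anti-periodic boundary data $h(1)=-h(-1)$, use the integrating factor $e^{iMQ(t)}$ with $M=(\alpha\cdot\nu)V$, and exploit invertibility of $\cos(\tfrac12 M)$ via $I+e^{-iM}=2e^{-iM/2}\cos(\tfrac12 M)$ — is exactly the one driving the paper's proof. The organization differs in a way worth noting. The paper writes down $\Xi$ and $O$ as in~\eqref{eq_Xi}--\eqref{eq_O} without motivation, proves boundedness, verifies $(I+T(\alpha\cdot\nu)Vq)Of=f$ by a direct integration-by-parts computation, and then asserts the other composition is proved ``in a very similar way''. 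You instead \emph{derive} the formula: from $(I+T(\alpha\cdot\nu)Vq)g=f$ you extract $h'=2qf-iqMh$ with $h(1)=-h(-1)$, solve it, and recover exactly $Of$; this simultaneously yields $O(I+T(\alpha\cdot\nu)Vq)=I$ from uniqueness, and you get $(I+T(\alpha\cdot\nu)Vq)O=I$ via the clean observation that $\widetilde h-h$ is constant yet anti-symmetric under $t\mapsto -t$ at the endpoints, hence zero — a genuine alternative to repeating the paper's computation. Your treatment of (ii) is the same as the paper's, differing only in whether one writes $\tfrac{i}{2}Mq(s)g(s)=-\tfrac12 g'(s)$ or $Aq(s)e^{-iAQ(s)}=i\tfrac{d}{ds}e^{-iAQ(s)}$ before integrating. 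Two minor remarks: the $W^1_\infty$ multiplication bounds~\eqref{W_1_infty_mult_op} already cover $r\in[0,1]$ directly, so once the identity $(I+T(\alpha\cdot\nu)Vq)O=I$ is established in $L^2((-1,1);L^2(\Sigma;\C^N))$ it restricts automatically to $L^2((-1,1);H^r(\Sigma;\C^N))$ for $r\in(0,\tfrac12]$ because $H^r\subset L^2$ — the interpolation step you flag at the end is not actually needed for the inverse identity; and the Bochner-measurability of the $s$-integrands (needed to apply Proposition~\ref{prop_Bochner}~(iii) and justify the pointwise reduction) is checked in the paper by approximating $\exp(iMQ(s))$ by partial sums, a point you gesture at but do not spell out.
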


\begin{proof}%[Proof of Lemma~\ref{lem_I+T}]
	(i) It will be shown that the operator defined in \eqref{eq_O} below is the inverse of $I +  T(\alpha \cdot \nu) Vq$.
	Fix $r \in [0, \frac{1}{2} ]$ and define the operators
	\begin{equation}\label{eq_Xi}
	\begin{split}
			\Xi&: L^2((-1,1);H^r(\Sigma;\C^N)) \rightarrow H^r(\Sigma;\C^N), \\
			\Xi f &=  \frac{1}{2} \cos\bigl(\tfrac{1}{2}(\alpha \cdot \nu)V\bigr)^{-1} i (\alpha \cdot \nu) V  \int_{-1}^1  \exp\bigl( i (\alpha \cdot \nu) V \bigl(Q(s)-\tfrac{1}{2}\bigr) \bigr) q(s) f(s)\,ds,
			\end{split}
	\end{equation}
	and 
	\begin{equation}\label{eq_O}
		\begin{split}
		O &: L^2((-1,1); H^r(\Sigma;\C^N))  \to L^2((-1,1);H^r(\Sigma;\C^N))\\
		Of(t)&:=f(t)+   \exp(-i (\alpha \cdot \nu) V Q(t))\Xi f\\
		&\qquad - i (\alpha \cdot \nu) V \int_{-1}^t  \exp( i(\alpha \cdot \nu) V (Q(s)-Q(t))) q(s) f(s) \,ds.
		\end{split}
	\end{equation}
	We will show that $\Xi$ and $O$ are bounded and that $O = (I +  T(\alpha \cdot \nu) Vq)^{-1}$. First, we verify that $\Xi$ is well-defined and bounded. Let
	$f \in L^2((-1,1);H^r(\Sigma;\C^N))$. Then, the integrand in \eqref{eq_Xi} is measurable as a function from $(-1,1)$ to $H^r(\Sigma;\C^N)$
	since $\bigl\langle\exp\bigl( i (\alpha \cdot \nu) V \bigl(Q(\cdot)-\tfrac{1}{2}\bigr) \bigr) q(\cdot) f(\cdot),\psi\bigl\rangle_{H^r(\Sigma;\C^N)}$ is measurable for all $\psi \in H^r(\Sigma;\C^N)$, see Definition \ref{def_Bochner_measurable}. In fact, the latter function is the pointwise limit of the sequence of measurable functions
	\begin{equation*}
		\begin{split}
		&t \mapsto \sum_{k=0}^n\frac{\bigl(\bigl( i (\alpha \cdot \nu) V \bigl(Q(t)-\tfrac{1}{2}\bigr)\bigr)^k q(t) f(t), \psi\bigr)_{H^r(\Sigma;\C^N)}}{k!} \\
		&\qquad = \sum_{k=0}^n \frac{\bigl(Q(t)-\tfrac{1}{2}\bigr)^kq(t)\spv{(i (\alpha \cdot \nu) V )^kf(t)}{ \psi }_{H^r(\Sigma;\C^N)}}{k!}.
	 	\end{split}
	\end{equation*}
	Moreover, as $\cos\bigl(\tfrac{1}{2}(\alpha \cdot \nu)V\bigr)^{-1}, \alpha \cdot \nu, V \in W^1_\infty(\Sigma;\C^{N \times N})$ 
	it follows that
	\begin{equation*}
		\begin{split}
		 \|&\Xi f\|_{H^r(\Sigma;\C^N)}\\
		 &= \frac{1}{2} \biggl\|\cos\bigl(\tfrac{1}{2}(\alpha \cdot \nu)V\bigr)^{-1} i (\alpha \cdot \nu) V\!\! \! \int_{-1}^1  \exp\bigl( i (\alpha \cdot \nu) V \bigl(Q(s)-\tfrac{1}{2}\bigr) \bigr) q(s)f(s) ds\biggr\|_{H^r(\Sigma;\C^N)} \\
		 &\leq C \int_{-1}^1 \bigl\| \exp\bigl( i (\alpha \cdot \nu) V \bigl(Q(s)-\tfrac{1}{2}\bigr) \bigr)q(s) f(s)\bigr\|_{H^r(\Sigma;\C^N)} \,ds \\
		 \end{split}
	\end{equation*}	
	and $\alpha \cdot \nu, V \in W^1_\infty(\Sigma;\C^{N \times N})$ also imply 
	$\exp\bigl( i (\alpha \cdot \nu) V \bigl(Q(s)-\tfrac{1}{2}\bigr)\bigr)\in W^1_\infty(\Sigma;\C^{N \times N})$ via the 
	 power series of the exponential function. Using $q \in L^\infty((-1,1);\R)$ we conclude
	\begin{equation*}
	\begin{split}
		 \|&\Xi f\|_{H^r(\Sigma;\C^N)} \\
		 &\leq C \int_{-1}^1 \bigl\| \exp\bigl( (\alpha \cdot \nu) V \bigl(Q(s)-\tfrac{1}{2}\bigr) \bigr) \bigr\|_{W^1_\infty(\Sigma;\C^{N \times N})} \norm{q}_{L^\infty((-1,1);\R)}\norm{f(s)}_{H^r(\Sigma;\C^N)} \,ds\\
		 &\leq C\int_{-1}^1  \norm{f(s)}_{H^r(\Sigma;\C^N)} \\
		 &\leq 
		  C \norm{f}_r.
		  \end{split}
	\end{equation*}	
	This shows that $\Xi$ is well-defined and  bounded. Analogously one can check that $O$ is well-defined and bounded. Hence, in order to show (i) it suffices to prove
	\begin{equation}\label{eq_inv_id}
	(I+ T(\alpha \cdot \nu) Vq)O f= 	O(I+ T(\alpha \cdot \nu) Vq) f = f 
	\end{equation}
	for all $f \in L^2((-1,1);L^2(\Sigma;\C^N))$.
	By
	Proposition~\ref{prop_Bochner}~(iii) and \cite[Proposition~1.2.24]{HNVW16} this is true, if for $\sigma$-a.e. $x_\Sigma \in \Sigma$ the relation
	\begin{equation*}%\label{eq_inv_ptw}
		(I+ T(\alpha \cdot \nu) Vq)O f(\cdot)(x_\Sigma) = O(I+ T(\alpha \cdot \nu) Vq) f(\cdot)(x_\Sigma) =  f(\cdot)(x_\Sigma) 
	\end{equation*}
	holds a.e. on $(-1,1)$. Let $f \in L^2((-1,1);L^2(\Sigma;\C^N))$ and $x_\Sigma \in \Sigma$ be fixed such that
	$\varphi = \varphi(x_\Sigma) := f(\cdot)(x_\Sigma) \in L^2((-1,1);\C^N)$ and set $A = A(x_\Sigma) := (\alpha \cdot \nu(x_\Sigma)) V(x_\Sigma)$.  Then, we have for a.e. $ t \in (-1,1)$
	\begin{equation}\label{eq_right_inv_ptw}
		\begin{split}
		(I+ T(\alpha \cdot \nu) Vq)&O f(t)(x_\Sigma)= \varphi(t) + \exp(-iAQ(t))\Xi f(x_\Sigma) \\
		&- i A \int_{-1}^t \exp(iA(Q(s)-Q(t)))q(s) \varphi(s) \,ds\\
		&\phantom{=}+\frac{i}{2} \int_{-1}^1 \sign(t-s)  A q(s)\bigg(\varphi(s) + \exp(-iAQ(s))\Xi f(x_\Sigma) \\
		&\qquad - i A \int_{-1}^s \exp(iA(Q(r)-Q(s)))q(r) \varphi(r) \,dr \bigg)\,ds.
		\end{split}
	\end{equation}
	With a direct calculation we find that
	\begin{equation*}
	\begin{split}
			&  \frac{i}{2} \int_{-1}^{1}  \sign(t-s)A \exp(-i AQ(s)) q(s) \Xi f(x_\Sigma) \,ds \\
			&\quad=\frac{1}{2}\bigg(\int_{-1}^{t} i A\exp(-i AQ(s)) q(s)  \,ds  - \int_{t}^{1} i A\exp(-i AQ(s)) q(s) \,ds \bigg) \Xi f(x_\Sigma)\\
		&\quad = - \exp(-i AQ(t) ) \Xi f(x_\Sigma) + \frac{1}{2}\bigl(\exp\bigl(\tfrac{-i}{2} A\bigr) + \exp\bigl( \tfrac{i}{2}A\bigr)\bigr) \Xi f(x_\Sigma) \\
		&\quad=- \exp(-i AQ(t) ) \Xi f(x_\Sigma) +  \cos\bigl(\tfrac{1}{2} A\bigr)\Xi f(x_\Sigma).
	\end{split}
	\end{equation*}
	Furthermore, integration by parts gives us
	\begin{equation*}
	\begin{split}
		&-\frac{i}{2}\int_{-1}^1  \sign(t-s)Aq(s) i A \int_{-1}^s \exp(iA(Q(r)-Q(s)))q(r) \varphi(r) \,dr\,ds \\
		&\quad= 	\frac{i}{2} A\int_{-1}^t  \frac{d}{ds}(\exp(-iAQ(s))) \int_{-1}^s  \exp(iAQ(r)) q(r)\varphi(r) \,dr \, ds \\
		&\qquad -\frac{i}{2}A \int_{t}^1  \frac{d}{ds}(\exp(-iAQ(s))) \int_{-1}^s  \exp(iAQ(r)) q(r)\varphi(r) \,dr \, ds \\
		 &\quad= iA \exp(-iAQ(t))\int_{-1}^t  \exp(iAQ(r)) q(r)\varphi(r) \,dr\\
		 &\qquad  -\frac{i}{2} A\int_{-1}^1 \exp\bigl(iA\bigl(Q(r)- \tfrac{1}{2}\bigr)\bigr)q(r)\varphi(r) \, d r - \frac{i}{2}A \int_{-1}^1 \sign(t-s) q(s)\varphi(s) \,ds \\
		&\quad=iA \exp(-iAQ(t))\int_{-1}^t  \exp(iAQ(r)) q(r)\varphi(r) \,dr \\
		&\qquad - \frac{i}{2}A \int_{-1}^1 \sign(t-s) q(s)\varphi(s) \,ds  - \cos\bigl(\tfrac{1}{2}A\bigr)\Xi f(x_\Sigma).
	\end{split}
	\end{equation*}
	A combination of the last two calculations with~\eqref{eq_right_inv_ptw} yields
	\begin{equation*}
	  (I+ T(\alpha \cdot \nu) Vq)O f(t)(x_\Sigma) = \varphi(t).
	\end{equation*}
	One verifies in a very similar way that $O$ is also the left inverse of $I+ T(\alpha \cdot \nu) Vq$. Consequently, \eqref{eq_inv_id} is true. 

	(ii) Let $f \in \ran \mathfrak{J}$, that is, $f$ is independent of $t \in (-1,1)$. Instead of inserting $f$ in \eqref{eq_O} 
	we find it more convenient and easier to verify this claim directly by showing 
	\begin{equation*}
		 (I+ T(\alpha \cdot \nu) Vq)\cos\bigl(\tfrac{1}{2}(\alpha\cdot\nu)V \bigr)^{-1}\exp(-i(\alpha \cdot \nu) V Q)f = f.
	\end{equation*}
	Similar as above it suffices to prove 
	\begin{equation*}
	(I+ T(\alpha \cdot \nu) Vq)\cos\bigl(\tfrac{1}{2}(\alpha\cdot\nu)V \bigr)^{-1}\exp(-i(\alpha \cdot \nu) V Q)f(\cdot)(x_\Sigma) = f(\cdot)(x_\Sigma)
	\end{equation*}
	for $\sigma$-a.e. $x_\Sigma \in \Sigma$
	a.e. on $(-1,1)$. Thus, we again fix $x_\Sigma \in \Sigma$ and use the same abbreviations as in the proof of (i). Since here 
	$f$ is constant  with respect to $t$ also $\varphi = f(t)(x_\Sigma)$ is independent of $t$. We then compute
	\begin{equation*}
	\begin{split}
	(&I +  T(\alpha \cdot \nu) Vq) \cos\bigl(\tfrac{1}{2}(\alpha\cdot\nu)V \bigr)^{-1}\exp(-i(\alpha \cdot \nu) V Q)f(t)(x_\Sigma)\\
	&= \cos\bigl(\tfrac{1}{2} A \bigr)^{-1}\!\exp(-iA Q(t))\varphi+ \frac{i}{2}\!\int_{-1}^1\sign(t-s)A q(s)\cos\bigl(\tfrac{1}{2} A \bigr)^{-1}\exp(-iA Q(s)) \varphi ds \\
	&= \cos\bigl(\tfrac{1}{2} A\bigr)^{-1}\exp(-iA Q(t)) \varphi - \frac{1}{2}\cos\bigl(\tfrac{1}{2}A \bigr)^{-1} \int_{-1}^1\sign(t-s)\frac{d}{ds}\exp(-iA Q(s)) \, ds \varphi \\
	&=\cos\bigl(\tfrac{1}{2} A \bigr)^{-1}\exp(-iAQ(t))\varphi \\
	&\qquad -\frac{1}{2} \cos\bigl(\tfrac{1}{2} A \bigr)^{-1}  \Bigl( 2 \exp(-iA Q(t)) - \exp\bigl(\tfrac{i}{2}A\bigr) - \exp\bigl(\tfrac{-i}{2}A\bigr)  \Bigr)\varphi \\
	&=\varphi=f(t)(x_\Sigma)
	\end{split}
	\end{equation*}
	for a.e. $t \in (-1,1)$,  which shows $\eqref{eq_Tqinv_const}$.
\end{proof}

In the next lemma we study relations connecting the coefficient matrix $V$ defined in \eqref{eq_V} and the matrix $\widetilde{V} = V S$ with 
$$S = \textup{sinc}\bigl(\tfrac{1}{2}(\alpha \cdot \nu)V\bigr)\cos\bigl(\tfrac{1}{2}(\alpha\cdot\nu)V\bigr)^{-1}.$$  

\begin{lemma}\label{lem_BtoC_z}
	 	Let $z \in \rho(H)$, $q$ and $V$ be as in~\eqref{eq_q} and~\eqref{eq_V}, respectively, assume that $\cos\bigl(\tfrac{1}{2}(\alpha\cdot\nu)V\bigr)^{-1} \in  W^1_\infty(\Sigma;\C^{N \times N})$ and set $\widetilde{V} =VS$, where $S$ is as above; cf. \eqref{eq_scaling_matrix}.
	 	Then, the following is true:
	 	\begin{itemize}
            \item[$\textup{(i)}$] $S, \widetilde{V} \in W^1_\infty(\Sigma;\C^{N \times N})$ and, in particular, the multiplication by $\widetilde{V}$ gives rise to a bounded operator in $H^{1/2}(\Sigma;\C^N)$.
	 		\item[$\textup{(ii)}$]	$\mathfrak{J}^* q\cos\bigl(\tfrac{1}{2}(\alpha\cdot\nu)V\bigr)^{-1} \exp(-i(\alpha \cdot \nu) VQ) \mathfrak{J} = S $.
	 		\item[$\textup{(iii)}$] $(I+B_0(z)Vq) (I+ T(\alpha \cdot \nu) Vq)^{-1}  \mathfrak{J} = \mathfrak{J} (I + \mathcal{C}_z \widetilde{V}  )$.
	 		\item[$\textup{(iv)}$] If \eqref{eq_smallnes_assumption} holds, then $I+B_0(z)Vq$ and $I +\mathcal{C}_z \widetilde{V} $ are boundedly  invertible in the spaces $L^2((-1,1);H^{1/2}(\Sigma;\C^N))$ and $H^{1/2}(\Sigma;\C^N)$, respectively, and 
	 		\begin{equation*}
	 		(I+B_0(z)Vq)^{-1}\mathfrak{J}  = (I+ T(\alpha \cdot \nu) Vq)^{-1}  \mathfrak{J} (I + \mathcal{C}_z \widetilde{V}  )^{-1} .
	 		\end{equation*}
	 	\end{itemize}
\end{lemma}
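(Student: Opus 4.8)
The plan is to treat the four assertions in turn, using throughout that $A:=(\alpha\cdot\nu)V\in W^1_\infty(\Sigma;\C^{N\times N})$ (recall $\alpha\cdot\nu\in W^1_\infty(\Sigma;\C^{N\times N})$ and $V$ is as in~\eqref{eq_V}), that $W^1_\infty(\Sigma;\C^{N\times N})$ is a Banach algebra, and that a convergent power series evaluated at an element of $W^1_\infty(\Sigma;\C^{N\times N})$ again lies in $W^1_\infty(\Sigma;\C^{N\times N})$. For~(i): $\textup{sinc}(\tfrac12 A)$ is given by an everywhere convergent power series in $A$, hence belongs to $W^1_\infty(\Sigma;\C^{N\times N})$, and since $\cos(\tfrac12 A)^{-1}\in W^1_\infty(\Sigma;\C^{N\times N})$ by hypothesis, so do $S=\textup{sinc}(\tfrac12 A)\cos(\tfrac12 A)^{-1}$ and $\widetilde V=VS$; boundedness of multiplication by $\widetilde V$ in $H^{1/2}(\Sigma;\C^N)$ then follows from~\eqref{W_1_infty_mult_op}. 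For~(ii): by the definitions of $\mathfrak{J}$ and $\mathfrak{J}^*$ one has, for $\psi\in H^{1/2}(\Sigma;\C^N)$,
\[ \mathfrak{J}^* q\cos(\tfrac12 A)^{-1}\exp(-iAQ)\mathfrak{J}\psi=\cos(\tfrac12 A)^{-1}\left(\int_{-1}^1 q(t)\exp(-iAQ(t))\,dt\right)\psi, \]
where $\cos(\tfrac12 A)^{-1}$ may be pulled out of the integral as it is a function of $A$; since $Q'=q$ and $Q(\mp1)=\mp\tfrac12$ by~\eqref{def_Q} and~\eqref{eq_q}, term-by-term integration of the power series of $\exp$ (all odd powers of $A$ contributing zero) gives $\int_{-1}^1 q(t)\exp(-iAQ(t))\,dt=\int_{-1/2}^{1/2}\exp(-iAu)\,du=\textup{sinc}(\tfrac12 A)$, so the left-hand side equals $\cos(\tfrac12 A)^{-1}\textup{sinc}(\tfrac12 A)\psi=S\psi$.

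For~(iii), fix $\psi\in H^{1/2}(\Sigma;\C^N)$ and set $g:=(I+T(\alpha\cdot\nu)Vq)^{-1}\mathfrak{J}\psi$; by Lemma~\ref{lem_I+T}(ii) we have $g(t)=\cos(\tfrac12 A)^{-1}\exp(-iAQ(t))\psi\in L^2((-1,1);H^{1/2}(\Sigma;\C^N))$, and from this explicit form $g'(t)=-i(\alpha\cdot\nu)Vq(t)g(t)$ while $g(-1)+g(1)=2\psi$ because $\exp(\tfrac i2 A)+\exp(-\tfrac i2 A)=2\cos(\tfrac12 A)$. Writing $B_0(z)=\mathcal{M}_{\alpha\cdot\nu}T+\mathfrak{J}\mathcal{C}_z\mathfrak{J}^*$, where $\mathcal{M}_{\alpha\cdot\nu}$ denotes multiplication by $\alpha\cdot\nu$ and commutes with $T$, one computes $\mathcal{M}_{\alpha\cdot\nu}T(Vqg)(t)=\tfrac i2\int_{-1}^1\sign(t-s)\,ig'(s)\,ds=-\tfrac12\bigl(2g(t)-g(-1)-g(1)\bigr)=\psi-g(t)$ and, by~(ii), $\mathfrak{J}^*(Vqg)=V\,\mathfrak{J}^* q\cos(\tfrac12 A)^{-1}\exp(-iAQ)\mathfrak{J}\psi=VS\psi=\widetilde V\psi$. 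Adding these contributions to $g$ yields $(I+B_0(z)Vq)g(t)=(I+\mathcal{C}_z\widetilde V)\psi$ for a.e.\ $t$, that is $(I+B_0(z)Vq)(I+T(\alpha\cdot\nu)Vq)^{-1}\mathfrak{J}\psi=\mathfrak{J}(I+\mathcal{C}_z\widetilde V)\psi$.

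For~(iv), assume~\eqref{eq_smallnes_assumption}; then, by the concluding part of Proposition~\ref{prop_I+B_conv}, $I+B_0(z)Vq$ is bijective and hence boundedly invertible in $L^2((-1,1);H^{1/2}(\Sigma;\C^N))$, and $I+T(\alpha\cdot\nu)Vq$ is boundedly invertible there by Lemma~\ref{lem_I+T}(i). By~(iii), $I+\mathcal{C}_z\widetilde V$ is injective on $H^{1/2}(\Sigma;\C^N)$, since $(I+\mathcal{C}_z\widetilde V)\psi=0$ forces $(I+B_0(z)Vq)(I+T(\alpha\cdot\nu)Vq)^{-1}\mathfrak{J}\psi=0$, hence $\mathfrak{J}\psi=0$ and $\psi=0$. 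For surjectivity, given $\phi\in H^{1/2}(\Sigma;\C^N)$ set $h:=(I+B_0(z)Vq)^{-1}\mathfrak{J}\phi$ and $w:=Vqh$; inserting $B_0(z)=\mathcal{M}_{\alpha\cdot\nu}T+\mathfrak{J}\mathcal{C}_z\mathfrak{J}^*$ into $h+B_0(z)w=\mathfrak{J}\phi$ gives $h=\mathfrak{J}(\phi-\mathcal{C}_z\mathfrak{J}^*w)-\mathcal{M}_{\alpha\cdot\nu}Tw$, so $(I+T(\alpha\cdot\nu)Vq)h=h+\mathcal{M}_{\alpha\cdot\nu}Tw=\mathfrak{J}\psi$ with $\psi:=\phi-\mathcal{C}_z\mathfrak{J}^*w\in H^{1/2}(\Sigma;\C^N)$; then~(iii) and injectivity of $\mathfrak{J}$ give $(I+\mathcal{C}_z\widetilde V)\psi=\phi$. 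Thus $I+\mathcal{C}_z\widetilde V$ is a bounded bijection of $H^{1/2}(\Sigma;\C^N)$ and hence boundedly invertible, and multiplying the identity of~(iii) on the left by $(I+B_0(z)Vq)^{-1}$ and on the right by $(I+\mathcal{C}_z\widetilde V)^{-1}$ yields the asserted formula. The one nonroutine point is the invertibility of $I+\mathcal{C}_z\widetilde V$: rather than a Neumann/smallness argument on $\mathcal{C}_z\widetilde V$ itself, I obtain it by transporting the invertibility of $I+B_0(z)Vq$ through the intertwining relation~(iii), for which it is essential that both~(iii) and the splitting $B_0(z)=\mathcal{M}_{\alpha\cdot\nu}T+\mathfrak{J}\mathcal{C}_z\mathfrak{J}^*$ preserve the subspace $\ran\mathfrak{J}$ of $t$-independent functions; the remaining identities in~(ii) and~(iii) are routine algebra based on $Q'=q$ and $(\alpha\cdot\nu)^2=I_N$.
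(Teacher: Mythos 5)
Your proposal is correct and follows essentially the same route as the paper's proof: item (i) by power series plus the hypothesis, item (ii) by pulling out the constant matrix and reducing $\int_{-1}^1 q(t)\exp(-iAQ(t))\,dt$ to $\textup{sinc}(\tfrac12 A)$, item (iii) via the decomposition $B_0(z)=T(\alpha\cdot\nu)+\mathfrak{J}\mathcal{C}_z\mathfrak{J}^*$ together with Lemma~\ref{lem_I+T}(ii) and item (ii), and item (iv) by transporting invertibility through the intertwining relation of (iii). The only differences are presentational: in (iii) you rederive the cancellation $T(\alpha\cdot\nu)Vq\,g=\mathfrak{J}\psi-g$ by computing $g'$ rather than simply letting $(I+T(\alpha\cdot\nu)Vq)$ annihilate its inverse, and in (iv) you argue injectivity of $I+\mathcal{C}_z\widetilde V$ directly from injectivity of the three factors instead of exhibiting the explicit left inverse $\tfrac12\mathfrak{J}^*(I+T(\alpha\cdot\nu)Vq)(I+B_0(z)Vq)^{-1}\mathfrak{J}$ as the paper does.
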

\begin{proof}
    (i) From $\alpha \cdot \nu, V \in W^1_\infty(\Sigma;\C^{N \times N})$ we conclude $\textup{sinc}\bigl(\tfrac{1}{2}(\alpha\cdot\nu)V\bigr) \in  W^1_\infty(\Sigma;\C^{N \times N})$ using the power series of $\textup{sinc}$. The assumption $\cos\bigl(\tfrac{1}{2}(\alpha\cdot\nu)V\bigr)^{-1} \in  W^1_\infty(\Sigma;\C^{N \times N})$ implies $S  \in  W^1_\infty(\Sigma;\C^{N \times N})$ and hence also $\widetilde{V} = VS \in W^1_\infty(\Sigma;\C^{N \times N})$. Eventually, since the multiplication by any $B \in W^1_\infty(\Sigma;\C^{N \times N})$ gives rise to a bounded operator in $H^{1/2}(\Sigma;\C^N)$, the same is true for $\widetilde{V}$; cf.~\eqref{W_1_infty_mult_op}.
    
	 (ii) Recall that  $\mathfrak{J}$ is defined by~\eqref{eq_def_embedding_op} and that its adjoint acts as 
	 \begin{equation*}
	   \mathfrak{J}^*f = \int_{-1}^1 f(t) \,dt, \qquad f \in L^2((-1,1);H^{1/2}(\Sigma;\C^N)).
	 \end{equation*}
	 As in the proof of the previous lemma we use the abbreviation $A = (\alpha \cdot \nu)V$. Then, with Proposition~\ref{prop_Bochner}~(iii) we get for $\psi \in L^2(\Sigma;\C^N)$ 
	\begin{equation*}
	\begin{split}
	\mathfrak{J}^* q \cos\bigl(\tfrac{1}{2}A\bigr)^{-1} &\exp(-iAQ) \mathfrak{J}\psi   = \int_{-1}^1 \cos\bigl(\tfrac{1}{2}A\bigr)^{-1} \exp(-iAQ(s))q(s) (\mathfrak{J}\psi)(s) \,ds \\
% 	&= \int_{-1}^1 \cos\big(\tfrac{A}{2}\big)^{-1} \exp(-iAQ(s))q(s)\,ds \psi \\
	&=  \cos\bigl(\tfrac{1}{2}A\bigr)^{-1}\int_{-1/2}^{1/2}  \exp(-iAr) \,dr  \psi\\
	&= \cos\bigl(\tfrac{1}{2}A\bigr)^{-1}\int_{0}^{1/2} 2\cos(Ar) \,dr  \psi \\
	&=  \textup{sinc}\bigl(\tfrac{1}{2}A\bigr) \cos\bigl(\tfrac{1}{2}A\bigr)^{-1} \psi \\
	&=S\psi.   
	\end{split}
	\end{equation*}
%	Using the power series of $\textup{sinc}$ one verifies $\int_{0}^{1/2} 2\cos(Ar) \,dr  =\textup{sinc}(\tfrac{A}{2})$
%	and hence 
%	\begin{equation*}
%	\mathfrak{J}^* q \cos\big(\tfrac{A}{2}\big)^{-1} \exp(-iAQ) \mathfrak{J}\psi =  \textup{sinc}\big(\tfrac{A}{2}\big) \cos\big(\tfrac{A}{2}\big)^{-1} \psi 
%	=S\psi
%	\end{equation*}
%	for $\psi \in L^2(\Sigma;\C^N)$. 
This shows (ii).
	
	(iii) Using the definition of  $\mathfrak{J}$ and $\mathfrak{J}^*$, see \eqref{eq_def_embedding_op}, and the representation of $B_0(z)$ in \eqref{eq_B_0} one sees  
	\begin{equation}\label{eq_B_0_J}
	B_0(z) = T(\alpha \cdot \nu) + \mathfrak{J} \mathcal{C}_z\mathfrak{J}^*.
	\end{equation}
	 Hence, for $\psi \in L^2(\Sigma;\C^N)$ item~(ii) above and Lemma \ref{lem_I+T}~(ii) imply 
	\begin{equation*}
		\begin{split}
		(I+B_0(z)Vq)&(I+T (\alpha \cdot \nu) V q)^{-1} \mathfrak{J}\psi  = \mathfrak{J} \psi +   \mathfrak{J} \mathcal{C}_z\mathfrak{J}^*Vq (I+ T (\alpha \cdot \nu) Vq)^{-1} \mathfrak{J}\psi \\
		&= \mathfrak{J}\psi +  \mathfrak{J}\mathcal{C}_z \mathfrak{J}^* V  q \cos\bigl(\tfrac{1}{2}(\alpha\cdot\nu)V \bigr)^{-1} \exp(-i(\alpha \cdot \nu) VQ)\mathfrak{J} \psi \\
		& =  \mathfrak{J}\psi +  \mathfrak{J}\mathcal{C}_z V S\psi = \mathfrak{J}(I + \mathcal{C}_z \widetilde{V} )\psi.
		\end{split}
	\end{equation*}
	
	(iv) First, \eqref{eq_smallnes_assumption} and	Proposition~\ref{prop_I+B_conv} imply that  $I+B_0(z)Vq$ is boundedly  invertible in $L^2((-1,1);H^{1/2}(\Sigma;\C^N))$. Moreover, as $\cos\bigl(\tfrac{1}{2}(\alpha\cdot\nu)V\bigr)^{-1} \in  W^1_\infty(\Sigma;\C^{N \times N})$
	by assumption, the operator $I+ T (\alpha \cdot \nu)Vq$ is bijective in $L^2((-1,1);H^{1/2}(\Sigma;\C^N))$ according to Lemma~\ref{lem_I+T}~(i). Thus, it follows from (iii) and $\mathfrak{J}^* \mathfrak{J} = 2 I$ that $\frac{1}{2}\mathfrak{J}^*(I+  T(\alpha \cdot \nu)Vq)(I+B_0(z)Vq)^{-1} \mathfrak{J}$
	is the left inverse of $I + \mathcal{C}_z \widetilde{V}$ and, in particular, $I + \mathcal{C}_z \widetilde{V}$ is injective.  To show that $I + \mathcal{C}_z \widetilde{V}$ is surjective consider $\varphi \in H^{1/2}(\Sigma;\C^N)$. Then, there exists a unique $f \in  L^2((-1,1);H^{1/2}(\Sigma;\C^N))$ such that 
	\begin{equation} \label{equation_unique_f}
		(I+B_0(z)Vq)(I+T (\alpha \cdot \nu) V q)^{-1} f  = \mathfrak{J}\varphi.
	\end{equation}
 	Define
 	\begin{equation*}
 		\psi:=\varphi - \mathcal{C}_z \mathfrak{J}^*Vq(I+T (\alpha \cdot \nu) V q)^{-1} f.
 	\end{equation*}
 	Since $V \in W^1_\infty(\Sigma;\C^{N \times N})$ we conclude together with Proposition~\ref{proposition_C_z}~(i) that $\psi \in H^{1/2}(\Sigma;\C^N)$.
 	Using~\eqref{eq_B_0_J} and \eqref{equation_unique_f} we see that
 	\begin{equation*} 
 	\begin{split}
		\mathfrak{J} \psi &= \mathfrak{J} \varphi - \mathfrak{J} \mathcal{C}_z \mathfrak{J}^* Vq(I+T (\alpha \cdot \nu) V q)^{-1} f \\
		&= ( I+B_0(z)Vq - \mathfrak{J} \mathcal{C}_z \mathfrak{J}^* Vq ) (I+T (\alpha \cdot \nu) V q)^{-1} f \\
		&= f.
		\end{split}
	\end{equation*}
 	Thus, we conclude with~(iii)  that
 	\begin{equation*}
 	\mathfrak{J}(I + \mathcal{C}_z \widetilde{V} )\psi = (I+B_0(z)Vq)(I+T (\alpha \cdot \nu) V q)^{-1} \mathfrak{J}\psi = \mathfrak{J}\varphi
 	\end{equation*}
  	and therefore also $(I + \mathcal{C}_z \widetilde{V} )\psi = \varphi$. Hence, the operator $I + \mathcal{C}_z \widetilde{V}$ is surjective and 
  	thus bijective. Finally, the formula for the inverse follows directly by applying $(I+B_0(z)Vq)^{-1}$ from the left and $(I+ \mathcal{C}_z\widetilde{V})^{-1}$ from the right to the identity in (iii).
\end{proof}

In the next proposition we use the results from Lemma~\ref{lem_I+T} and Lemma~\ref{lem_BtoC_z} to show that under the assumption in~\eqref{eq_smallnes_assumption} the operator $H_{\widetilde{V}}$ in~\eqref{def_H_Vtilde} is self-adjoint and satisfies $(H_{\widetilde{V}}-z)^{-1} = \mathcal{L}(z)$, where $\mathcal L(z)$ is given by \eqref{lz}.

\begin{proposition}\label{prop_main_limit}
	Let $z \in \rho(H)$, $q$ and $V$ be as in~\eqref{eq_q} and~\eqref{eq_V}, respectively, assume that $\cos\bigl(\tfrac{1}{2}(\alpha\cdot\nu)V\bigr)^{-1} \in  W^1_\infty(\Sigma;\C^{N \times N})$ and set $\widetilde{V} =VS$, where $S$ is given by \eqref{eq_scaling_matrix}.
	If \eqref{eq_smallnes_assumption} holds, then the operator $H_{\widetilde{V}}$ in~\eqref{def_H_Vtilde} is self-adjoint in $L^2(\mathbb{R}^\theta; \mathbb{C}^N)$, $z \in \rho(H_{\widetilde{V}})$, and one has
	\begin{equation*}
  	  (H_{\widetilde{V}} - z)^{-1} = R_ z - A_0( z)Vq(I+ B_0( z)Vq)^{-1}C_0( z) = \mathcal{L}(z).
    \end{equation*}
\end{proposition}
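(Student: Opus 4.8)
The plan is to recognize $\mathcal L(z)$ as a Kre\u{\i}n-type resolvent formula for $H_{\widetilde V}$, check by hand that this formula provides a right inverse of $H_{\widetilde V}-z$, and then upgrade to self-adjointness using the norm resolvent convergence already established in Proposition~\ref{prop_main_conv}.

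\emph{Step 1 (algebraic reduction of $\mathcal L(z)$).} First I would rewrite $\mathcal L(z)$ in closed form. Recall $A_0(z)=\Phi_z\mathfrak{J}^*$, $C_0(z)=\mathfrak{J}\Phi_{\overline z}^*$, and $\ran C_0(z)\subset\ran\mathfrak{J}$. By Lemma~\ref{lem_BtoC_z}(iv), $(I+B_0(z)Vq)^{-1}C_0(z)=(I+T(\alpha\cdot\nu)Vq)^{-1}\mathfrak{J}(I+\mathcal{C}_z\widetilde V)^{-1}\Phi_{\overline z}^*$. Applying $Vq$ and then $\mathfrak{J}^*$, and using that $V$ is $t$-independent (so $\mathfrak{J}^*$ pulls it out of the $t$-integral), Lemma~\ref{lem_I+T}(ii) and Lemma~\ref{lem_BtoC_z}(ii) give $\mathfrak{J}^*Vq(I+T(\alpha\cdot\nu)Vq)^{-1}\mathfrak{J}=V\,\mathfrak{J}^* q\cos\bigl(\tfrac12(\alpha\cdot\nu)V\bigr)^{-1}\exp(-i(\alpha\cdot\nu)VQ)\mathfrak{J}=VS=\widetilde V$. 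Hence
\begin{equation*}
  \mathcal L(z)=R_z-\Phi_z\widetilde V(I+\mathcal{C}_z\widetilde V)^{-1}\Phi_{\overline z}^*,
\end{equation*}
where, by Lemma~\ref{lem_BtoC_z}(i) and (iv), $\widetilde V$ is a bounded multiplication operator in $H^{1/2}(\Sigma;\C^N)$ and $I+\mathcal{C}_z\widetilde V$ is boundedly invertible there.

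\emph{Step 2 ($\mathcal L(z)$ is a right inverse of $H_{\widetilde V}-z$).} Given $u\in L^2(\R^\theta;\C^N)$, set $\varphi:=(I+\mathcal{C}_z\widetilde V)^{-1}\Phi_{\overline z}^*u\in H^{1/2}(\Sigma;\C^N)$ (using Proposition~\ref{prop_Phi_z}(iii) and Lemma~\ref{lem_BtoC_z}(iv)) and $w:=\mathcal L(z)u=R_zu-\Phi_z(\widetilde V\varphi)$. Proposition~\ref{prop_Phi_z}(i) with $r=\tfrac12$ shows $\Phi_z(\widetilde V\varphi)\in H^1(\Omega_+;\C^N)\oplus H^1(\Omega_-;\C^N)$, so $w\in H^1(\Omega_+;\C^N)\oplus H^1(\Omega_-;\C^N)$. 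Using the Plemelj--Sokhotski formula of Proposition~\ref{proposition_C_z}(ii), the identity $\tr R_zu=\Phi_{\overline z}^*u$ from Proposition~\ref{prop_Phi_z}(iii), and $\Phi_{\overline z}^*u=(I+\mathcal{C}_z\widetilde V)\varphi$, one computes $\tr^+w_+-\tr^-w_-=i(\alpha\cdot\nu)\widetilde V\varphi$ and $\tr^+w_++\tr^-w_-=2\varphi$; together with $(\alpha\cdot\nu)^2=I_N$ this yields $i(\alpha\cdot\nu)(\tr^+w_+-\tr^-w_-)+\tfrac{\widetilde V}{2}(\tr^+w_++\tr^-w_-)=0$, so $w\in\dom H_{\widetilde V}$. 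Moreover $(-i(\alpha\cdot\nabla)+m\beta-z)w_\pm=u_\pm$ on $\Omega_\pm$ by $(H-z)R_z=I$ and Proposition~\ref{prop_Phi_z}(ii), hence $(H_{\widetilde V}-z)\mathcal L(z)=I$ on $L^2(\R^\theta;\C^N)$. In particular $\mathcal L(z)$ is injective.

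\emph{Step 3 (self-adjointness and identification).} The operator $H_{\widetilde V}$ is densely defined (it contains $\mathcal D(\Omega_+;\C^N)\oplus\mathcal D(\Omega_-;\C^N)$) and symmetric: for $u,v\in\dom H_{\widetilde V}$, Green's formula on $\Omega_\pm$ reduces $(H_{\widetilde V}u,v)-(u,H_{\widetilde V}v)$ to a boundary integral over $\Sigma$ which, upon inserting the transmission condition and using that $\alpha\cdot\nu$ and $\widetilde V$ are Hermitian with $(\alpha\cdot\nu)^2=I_N$, vanishes. By Proposition~\ref{prop_main_conv}, $\mathcal L(z)$ is the operator-norm limit of the resolvents $(H_\varepsilon-z)^{-1}$ of the self-adjoint operators $H_\varepsilon$; since $\mathcal L(z)$ is injective by Step~2, a standard result on norm resolvent convergence provides a self-adjoint operator $\widehat H$ with $z\in\rho(\widehat H)$ and $(\widehat H-z)^{-1}=\mathcal L(z)$ (if $z$ is real one first passes, via the resolvent identity, to a nearby non-real point in the common resolvent set before invoking it). Finally, $\dom\widehat H=\ran\mathcal L(z)\subset\dom H_{\widetilde V}$ by Step~2, and for $w=\mathcal L(z)u$ both $\widehat H$ and $H_{\widetilde V}$ send $w$ to $u+zw$, so $\widehat H\subset H_{\widetilde V}$; combining with symmetry, $\widehat H\subset H_{\widetilde V}\subset H_{\widetilde V}^*\subset\widehat H^*=\widehat H$, whence $H_{\widetilde V}=\widehat H$ is self-adjoint, $z\in\rho(H_{\widetilde V})$, and $(H_{\widetilde V}-z)^{-1}=\mathcal L(z)$.

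The main obstacle is the bookkeeping in Step~1 — tracking which spaces (the Bochner spaces over $(-1,1)$ versus $H^{1/2}(\Sigma;\C^N)$) the operators $\mathfrak{J},\mathfrak{J}^*,T,B_0(z),\mathcal{C}_z$ act between, and justifying that $V$ and $\mathfrak{J}^*$ may be interchanged — together with the careful verification of the transmission condition in Step~2; everything else is soft functional analysis. The one genuinely delicate point is producing $\widehat H$ from the norm resolvent limit: since the smallness assumption controls $I+\mathcal{C}_z\widetilde V$ only at the given $z$, self-adjointness cannot be read off from the Kre\u{\i}n formula at $z$ and $\overline z$ simultaneously and must instead be extracted from the limit of the self-adjoint approximants, which for a real parameter $z$ forces the preliminary detour to a non-real point.
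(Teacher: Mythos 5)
Your proposal is correct and follows essentially the same route as the paper: the same Kre\u{\i}n-type reduction of $\mathcal L(z)$ to $R_z-\Phi_z\widetilde V(I+\mathcal C_z\widetilde V)^{-1}\Phi_{\overline z}^*$ via Lemmas~\ref{lem_I+T} and \ref{lem_BtoC_z}, the same Plemelj--Sokhotski computation verifying the transmission condition, and the same symmetry-plus-containment argument to identify the self-adjoint limit with $H_{\widetilde V}$. The one point worth tightening is Step~3: the theorem you invoke to manufacture $\widehat H$ needs dense range of $\mathcal L(z)$ rather than injectivity alone (and for real $z$, self-adjointness of $\mathcal L(z)$ plus dense range is exactly what lets one avoid any detour to non-real spectral parameters); the paper gets dense range for free by observing that every $u\in\mathcal D(\R^\theta\setminus\Sigma;\C^N)$ satisfies $C_0(z)(H-z)u=0$ and hence $u=\mathcal L(z)(H-z)u$, which is cleaner than deducing it from your right-inverse argument via the resolvent identity $\mathcal L(z)-\mathcal L(z)^*=(z-\overline z)\mathcal L(z)\mathcal L(z)^*$.
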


\begin{proof}
	The proof of this proposition consists of two steps. In \textit{Step~1} we show that for the given $z \in \rho(H)$ satisfying \eqref{eq_smallnes_assumption} the operator 
	\begin{equation*}
	  \mathcal{L}(z) = R_ z - A_0( z)Vq(I+ B_0( z)Vq)^{-1}C_0( z)
    \end{equation*}
    fulfils $\mathcal{L}(z) = (H_0 - z)^{-1}$ for a self-adjoint operator $H_0$, and in \textit{Step~2} we prove $H_0 = H_{\widetilde V}$.

	 \textit{Step~1.} First, we check that $\ran \mathcal{L}(z)$ is dense in $L^2(\mathbb{R}^\theta; \mathbb{C}^N)$. Recall that $H$ is the free Dirac operator defined by~\eqref{def_free_op}, let $u \in \mathcal{D}(\R^\theta \setminus \Sigma; \mathbb{C}^N) \subset H^1(\mathbb{R}^\theta; \mathbb{C}^N)=\dom H$ and set $v := (H-z) u$, which is equivalent to $u = R_ z v$. Then, it follows with~\eqref{def_C_0} and Proposition~\ref{prop_Phi_z} that 
	 $$C_0(z) v = \mathfrak{J} \tr R_z v = \mathfrak{J} \tr u = 0$$ 
	 and hence,
	\begin{equation*}
		u = R_ z v - A_0( z)Vq(I+ B_0( z)Vq)^{-1}C_0( z)v = \mathcal{L}(z)v.
	\end{equation*}
	We conclude that $\mathcal{D}(\R^\theta \setminus \Sigma; \mathbb{C}^N) \subset \ran \mathcal{L}(z)$ and therefore $\ran \mathcal{L}(z)$ is dense.

	To show that $\mathcal{L}(z)$ is the resolvent of a self-adjoint operator, we first consider the case $z \in \mathbb{C} \setminus \mathbb{R}\subset\rho(H)$ such that \eqref{eq_smallnes_assumption} holds. Note that, by Proposition~\ref{prop_main_conv}, $\mathcal{L}(z)$ is the limit in the operator norm of $(H_\varepsilon -z)^{-1}$ as $\varepsilon \to 0+$, which are resolvents of self-adjoint operators. Moreover, $(H_\varepsilon - \overline{z})^{-1}$ converges to $\mathcal{L}(z)^*$ as $\varepsilon \to 0+$. Thus, as $\ran \mathcal{L}(z)$ is dense in $L^2(\mathbb{R}^\theta; \mathbb{C}^N)$, we conclude from \cite[Theorem~VIII.22]{RS72} that there exists a self-adjoint operator $H_0$ in $L^2(\R^{\theta};\C^N)$ such that 
	$z \in \rho(H_0)$ and $\mathcal{L}(z)=(H_0-z)^{-1}$. 
	
	Eventually, consider $z \in \R\cap\rho(H)$ such that \eqref{eq_smallnes_assumption} holds. Then, $\mathcal{L}(z)$ is self-adjoint, as it is the limit of the bounded and self-adjoint operators $(H_\varepsilon -z)^{-1}$. Moreover, since $\ran \mathcal{L}(z)$ is dense in $L^2(\R^{\theta};\C^N)$, we have
	$\ker \mathcal{L}(z) = (\ran \mathcal{L}(z))^\perp =\{0\}$, which shows that $\mathcal{L}(z)$ is injective. Hence, 
	\begin{equation*}
		H_0 := z + (\mathcal{L}(z))^{-1}
	\end{equation*}
	defines a  self-adjoint operator with $z \in \rho(H_0)$ and $(H_0-z)^{-1} = \mathcal{L}(z)$.
	
	\textit{Step~2.} We show that $H_{\widetilde V} = H_0$. Since $\mathcal{D}(\mathbb{R}^\theta \setminus \Sigma; \mathbb{C}^N) \subset \dom H_{\widetilde{V}}$, the operator $H_{\widetilde V}$ is densely defined. Moreover, as $\widetilde{V} = \widetilde{V}^*$
	we compute for $u, v \in \dom H_{\widetilde{V}}$ using
	integration by parts (see~\eqref{eq_Greens_formula}) 
	\begin{equation*}
	  \begin{split}
	    (H_{\widetilde{V}} u, v)_{L^2(\mathbb{R}^\theta; \mathbb{C}^N)}& - (u, H_{\widetilde{V}} v)_{L^2(\mathbb{R}^\theta; \mathbb{C}^N)} 
	    \\
	    &= \frac{1}{2} (\tr^+ u_+ + \tr^- u_-, i (\alpha \cdot \nu) (\tr^+ v_+ - \tr^- v_-) )_{L^2(\Sigma; \mathbb{C}^N)} \\
	    &\qquad- \frac{1}{2} (i (\alpha \cdot \nu) (\tr^+ u_+ - \tr^- u_-), \tr^+ v_+ + \tr^- v_- )_{L^2(\Sigma; \mathbb{C}^N)} \\
	    &= -\frac{1}{4} (\tr^+ u_+ + \tr^- u_-, \widetilde{V} (\tr^+ v_+ + \tr^- v_-) )_{L^2(\Sigma; \mathbb{C}^N)} \\
	    &\qquad + \frac{1}{4} (\widetilde{V} (\tr^+ u_+ + \tr^- u_-), \tr^+ v_+ + \tr^- v_- )_{L^2(\Sigma; \mathbb{C}^N)} \\
	    &= 0,
	  \end{split}
	\end{equation*}
	where the jump condition for $u, v \in \dom H_{\widetilde{V}}$ in \eqref{def_H_Vtilde} was used in the last step. Therefore, $H_{\widetilde{V}}$ is symmetric and to see $H_{\widetilde V} = H_0$ it suffices to prove $H_0 \subset H_{\widetilde V}$. 
	Let $z \in \rho(H)$ such that 
	 \eqref{eq_smallnes_assumption} holds and 
	let $u \in \dom H_0 = \ran \mathcal{L}(z)$.
	Then, there exists  $v \in L^2(\R^\theta;\C^N)$ such that 
	\begin{equation}\label{ujuj}
		u = (H_0-z)^{-1}v=\mathcal{L}(z) v  = R_ z v - A_0( z)Vq(I+ B_0( z)Vq)^{-1}C_0( z) v.
		\end{equation} 
	We show that $u \in H^1(\mathbb{R}^\theta \setminus \Sigma; \mathbb{C}^N)=H^1 (\Omega_+;\C^N) \oplus H^1(\Omega_-; \mathbb{C}^N)$. By~\eqref{def_C_0}, Lemma~\ref{lem_BtoC_z} (iv), and \eqref{eq_Tqinv_const} we get 
	\begin{equation*}
	\begin{split}
		(I+ B_0( z)Vq)^{-1}C_0( z) v &= (I + T (\alpha \cdot \nu) Vq)^{-1}\mathfrak{J}(I +\mathcal{C}_z\widetilde{V})^{-1}\Phi_{\overline{ z}}^* v\\
		&=\cos\bigl(\tfrac{1}{2}(\alpha \cdot \nu) V \bigr)^{-1} \exp(-i(\alpha \cdot \nu) VQ) \mathfrak{J}(I+ \mathcal{C}_z \widetilde{V})^{-1}\Phi_{\overline{ z}}^* v.
		\end{split}
	\end{equation*}
	With  $A_0( z) = \Phi_z \mathfrak{J}^* $, Lemma~\ref{lem_BtoC_z}~(ii),  and $V S= \widetilde{V}$ we conclude from this
	\begin{equation} \label{krein_type}
	\begin{split}
		A_0( z)Vq&(I+ B_0( z)Vq)^{-1}C_0( z) v \\
		&=  \Phi_z V \mathfrak{J}^* q \cos\bigl(\tfrac{1}{2}(\alpha\cdot\nu)V \bigr)^{-1} \exp(-i(\alpha \cdot \nu) VQ) \mathfrak{J}(I+ \mathcal{C}_z \widetilde{V})^{-1}\Phi_{\overline{ z}}^* v\\
		&= \Phi_z V S(I+\mathcal{C}_z \widetilde{V})^{-1}   \Phi_{\overline{ z}}^*v=\Phi_ z\widetilde{V}(I+\mathcal{C}_z \widetilde{V})^{-1}   \Phi_{\overline{ z}}^*v.
	\end{split}
	\end{equation}
	Proposition~\ref{prop_Phi_z} and Lemma~\ref{lem_BtoC_z}~(i), (iv) imply  
	$\Phi_ z\widetilde{V}(I+\mathcal{C}_z \widetilde{V})^{-1}   \Phi_{\overline{ z}}^*v\in H^1(\R^\theta \setminus \Sigma;\C^N)$, and since
	$R_ z v\in \dom H = H^1(\mathbb{R}^\theta; \mathbb{C}^N)$ we conclude from \eqref{ujuj} and \eqref{krein_type} that $u\in H^1(\mathbb{R}^\theta \setminus \Sigma; \mathbb{C}^N)$. Next, we show that $u$ satisfies the transmission condition in $\dom H_{\widetilde{V}}$. 
    Note that $R_z v \in \dom H = H^1(\mathbb{R}^\theta; \mathbb{C}^N)$. This,~\eqref{krein_type},~\eqref{equation_Phi_star}, \eqref{def_C_z}, and Proposition~\ref{proposition_C_z}~(ii) (applied to the function $\varphi = \widetilde{V}(I+\mathcal{C}_z \widetilde{V})^{-1}   \Phi_{\overline{ z}}^*v)$ yield
	\begin{equation*}
	\begin{split}
		\frac{\widetilde V}{2} (\tr^+ u_+ &+ \tr^- u_-) + i (\alpha \cdot \nu) (\tr^+ u_+ - \tr^- u_-) \\
		&=  \widetilde{V} \Phi_{\overline{ z}}^*v - \widetilde{V}  \mathcal{C}_z  \widetilde{V} (I+  \mathcal{C}_z \widetilde{V})^{-1} \Phi_{\overline{ z}}^* v - i (\alpha \cdot \nu) (-i( \alpha \cdot \nu)) \widetilde{V} (I+  \mathcal{C}_z \widetilde{V})^{-1} \Phi_{\overline{ z}}^* v \\
%		\\
%		&= \widetilde{V} \Phi_{\overline{ z}}^*v - \widetilde{V} ( I + \mathcal{C}_z  \widetilde{V} ) (I+  \mathcal{C}_z \widetilde{V})^{-1} \Phi_{\overline{ z}}^* v 
		&=0.
		\end{split}
	\end{equation*}
	Hence, $u \in \dom H_{\widetilde V}$.
	Finally, we get with $(-i (\alpha \cdot \nabla) + m \beta - z I_N)R_z v = v$ and Proposition~\ref{prop_Phi_z}~(ii) that
	\begin{equation*}
	\begin{split}
		[(H_{\widetilde{V}}-z)u]_\pm &= (-i (\alpha \cdot \nabla) + m \beta - z I_N) u_\pm  \\
		&= (-i (\alpha \cdot \nabla) + m \beta -  z I_N) (R_ z v)_\pm  \\
		&\qquad - (-i (\alpha \cdot \nabla) + m \beta -  z I_N) (\Phi_ z \widetilde{V} (I+  \mathcal{C}_z \widetilde{V})^{-1} \Phi_{\overline{ z}}^* v )_\pm \\
		&= v_\pm = [(H_0-z) u]_\pm.
	\end{split}
	\end{equation*}
	Therefore, $H_0 \subset H_{\widetilde V}$ and the proof is complete.
\end{proof}

\begin{proof}[Proof of Theorem~\ref{THEO_MAIN}]
  Combining the results from Proposition~\ref{prop_main_conv} and Proposition~\ref{prop_main_limit} we find that $H_{\widetilde V}$ is self-adjoint and for $r \in (0, \frac{1}{2})$ we have
  \begin{equation*}
    \begin{split}  
      \|(H_{\varepsilon}&-  z)^{-1} - (H_{\widetilde{V}} - z)^{-1} \|_{L^2(\R^\theta;\C^N) \to L^2(\R^\theta;\C^N)} \\
      &= \|(H_{\varepsilon}-  z)^{-1} - R_ z  + A_0( z)Vq(I +B_{0}( z)Vq)^{-1}C_0( z)\|_{L^2(\R^\theta;\C^N) \to L^2(\R^\theta;\C^N)}\\
      		  &\leq C \varepsilon^{1/2-r}.
    \end{split}
  \end{equation*}
  Hence, Theorem~\ref{THEO_MAIN} is proved
\end{proof}

\begin{remark}\label{remark_smallnes_assumption}
		We point out that Proposition~\ref{prop_main_conv}, Lemma~\ref{lem_BtoC_z}~(iv), and Proposition~\ref{prop_main_limit} remain
		valid if the condition \eqref{eq_smallnes_assumption} is replaced by assumptions (i) and (ii) in Proposition~\ref{prop_I+B_conv}. In particular, Theorem~\ref{THEO_MAIN} remains valid if one assumes that
		(i) and (ii) in Proposition~\ref{prop_I+B_conv} hold.
\end{remark}

Now we prove Corollary~\ref{cor_main}, where the special case  $V= \eta I_N  + \tau \beta + \lambda i(\alpha \cdot \nu) \beta$ with real-valued functions $\eta,\tau,\lambda \in W^1_\infty(\Sigma;\R)$ is considered.

\begin{proof}[Proof of Corollary~\ref{cor_main}]
    Let $d = \eta^2-\tau^2-\lambda^2$. Then, $((\alpha \cdot \nu)V)^2 = d I_N$ and the appearance of only even powers in the power series representations of $\cos$ and $\textup{sinc}$ yield
    \begin{equation*}
    \cos\bigl(\tfrac{1}{2}(\alpha \cdot \nu)V\bigr) 
   % = \sum_{j=0}^\infty (-1)^{j}\frac{((\alpha \cdot \nu)V/2)^{2j}}{(2j)!}  
    %= \sum_{j=0}^\infty (-1)^{j}\frac{(\sqrt{d}/2)^{2j}}{(2j)!} I_N 
    = \cos\Bigl(\tfrac{\sqrt{d}}{2}\Bigr) I_N \quad \textup{ and } \quad 
    \textup{sinc}\bigl(\tfrac{1}{2}(\alpha \cdot \nu)V\bigr) 
    %= \sum_{j=0}^\infty (-1)^{j}\frac{((\alpha \cdot \nu)V/2)^{2j}}{(2j+1)!} = \sum_{j=0}^\infty (-1)^{j}\frac{(\sqrt{d}/2)^{2j}}{(2j+1)!}I_N 
    = \textup{sinc}\Bigl(\tfrac{\sqrt{d}}{2}\Bigr)I_N.
    \end{equation*}
    Hence, the condition $\cos\bigl(\tfrac{1}{2}(\alpha \cdot \nu)V\bigr)^{-1} \in W^1_\infty(\Sigma;\C^{N\times N})$ in \eqref{cond234} reduces to
    \begin{equation*}
    \inf_{x_\Sigma \in \Sigma, k \in \N_0} \abs{(2k+1)^2\pi^2- d(x_\Sigma)}>0
    \end{equation*}
     and $S = \textup{sinc}\bigl(\tfrac{1}{2}(\alpha \cdot \nu)V\bigr)\cos\bigl(\tfrac{1}{2}(\alpha \cdot \nu)V\bigr) ^{-1}=\tfrac{2}{\sqrt{d}} \tan(\sqrt{d}/2)I_N$.
Therefore, Corollary~\ref{cor_main} follows immediately from Theorem~\ref{THEO_MAIN}.
     \end{proof}
 
% Finally, we prove Corollary~\ref{cor_main_inv}.

\begin{proof}[Proof of Corollary~\ref{cor_main_inv}]
  First, we mention some identities for functions of matrices that will be useful. For $A \in \C^{N \times N}$ with $\abs{A} < 1$ the   series 
	\begin{equation*}
	  \arctan(A) = \sum_{n=0}^\infty (-1)^{n} \frac{A^{2n+1}}{2n+1} 
    \end{equation*}
	converges absolutely with respect to the matrix norm (in our case the Frobenius norm), and with the help of~\eqref{Riesz_Dunford} (see also \cite[Chapter~VII.~\S4]{C90} and \cite[Chapter~VIII.3.1]{DS58}) one finds that $\cos(\arctan(A))$ is invertible, 
	\begin{equation}\label{eq_tan_id}
	\begin{split}
			\sin(\arctan(A)) \cos(\arctan(A))^{-1} &=  A, 
			\\
			\cos(\arctan(A))^{-1} &= \cos(\arctan(A))(I_N + A^2).
			\end{split}
	\end{equation}
 
    Now, set 
    $$V = 2(\alpha \cdot \nu) \arctan\bigl(\tfrac{1}{2}(\alpha \cdot \nu)\widetilde{V}\bigr).$$ It suffices to show that $V$ fulfils the assumptions of Theorem~\ref{THEO_MAIN}, i.e. \eqref{cond123} and \eqref{cond234}, and $\widetilde{V} = S V$, as then the claim follows from Theorem~\ref{THEO_MAIN}. The assumption \eqref{cond567} implies $\|(\alpha \cdot \nu) \widetilde{V}\|_{W_\infty^1(\Sigma;\C^N)} <2$. Therefore, the power series defining $\arctan\bigl(\tfrac{1}{2}(\alpha \cdot \nu)\widetilde{V}\bigr)$ converges in $W_\infty^1(\Sigma;\C^{N\times N})$ and hence also
    $V\in W^1_\infty(\Sigma;\C^{N \times N})$. Pointwise application of the results mentioned above and $\tfrac{1}{2}(\alpha\cdot\nu)V =  \arctan\bigl(\tfrac{1}{2}(\alpha \cdot \nu)\widetilde{V}\bigr)$ show 
    that $\cos\bigl(\tfrac{1}{2}(\alpha \cdot \nu)V\bigr) = \cos\bigl(\arctan\bigl(\tfrac{1}{2}(\alpha \cdot \nu)\widetilde{V}\bigr)\bigr)$ is invertible 
    $\sigma$-a.e. and  \eqref{eq_tan_id} yields
	\begin{equation*}
			\cos\bigl(\tfrac{1}{2}(\alpha \cdot \nu)V\bigr)^{-1} =\cos\bigl(\tfrac{1}{2}(\alpha \cdot \nu)V\bigr)\bigl(I_N + \tfrac{1}{4}((\alpha \cdot \nu) \widetilde{V})^2\bigr) \quad \sigma\textup{-a.e. on } \Sigma.
	\end{equation*}
	As $\alpha \cdot \nu , \widetilde{V}, V, \cos\bigl(\tfrac{1}{2}(\alpha \cdot \nu)V\bigr) \in W^1_\infty(\Sigma;\C^{N\times N})$, this implies that also $$\cos\bigl(\tfrac{1}{2}(\alpha \cdot \nu)V\bigr)^{-1} \in W^1_\infty(\Sigma;\C^{N\times N}).$$
	Similarly, we get with \eqref{eq_tan_id}
	\begin{equation*}
		\begin{split}
			V S  &= V \textup{sinc}\bigl(\tfrac{1}{2}(\alpha \cdot \nu)V\bigr) \cos\bigl(\tfrac{1}{2}(\alpha \cdot \nu)V\bigr)^{-1}  \\
			&= 2 (\alpha \cdot \nu) \frac{(\alpha \cdot \nu) V}{2} \textup{sinc}\bigl(\tfrac{1}{2}(\alpha \cdot \nu)V\bigr) \cos\bigl(\tfrac{1}{2}(\alpha \cdot \nu)V\bigr)^{-1} \\
			&= 2 (\alpha \cdot \nu) \sin\bigl(\arctan\bigl(\tfrac{1}{2}(\alpha \cdot \nu)\widetilde{V}\bigr)\bigr) \cos\bigl(\arctan\bigl(\tfrac{1}{2}(\alpha \cdot \nu)\widetilde{V}\bigr)\bigr)^{-1} \\
			&= \widetilde{V}.
		\end{split}
	\end{equation*}
	Finally, we verify that \eqref{cond123} is satisfied. By \eqref{cond567} and the definition of $V$ we have
		\begin{equation*}
        \begin{split}
		\frac{1}{2}\norm{V}_{W^1_\infty(\Sigma;\C^{N\times N})} &\leq
		\norm{\alpha \cdot \nu}_{W^1_\infty(\Sigma;\C^{N\times N})} \sum_{n = 0}^{\infty} \bigl\| \frac{((\alpha \cdot \nu) \widetilde{V})^{2n +1}}{2^{2n+1} (2n+1)}\bigr\|_{W^1_\infty(\Sigma;\C^{N\times N})}\\
		&=\norm{\alpha \cdot \nu}_{W^1_\infty(\Sigma;\C^{N\times N})} \textup{artanh}\Big( \tfrac{\|(\alpha \cdot \nu) \widetilde{V}\|_{W^1_\infty(\Sigma;\C^{N\times N})}}{2}\Big)\\
		&< X_z.
		\end{split}
	\end{equation*} 
 Hence, \eqref{cond123} is fulfilled with $ q = \tfrac{1}{2} \chi_{(-1,1)}$. This finishes the proof of Corollary~\ref{cor_main_inv}.
\end{proof}

In the following remark we comment on the condition $\norm{V}_{W^1_\infty(\Sigma;\C^N)} \norm{q}_{L^\infty(\R;\R)}  < X_z$ in \eqref{cond123}, which is the main restriction in Theorem \ref{THEO_MAIN}, and explain that it is sharp (in a certain sense). 

\begin{remark}\label{remark_kleinheit} 
    Consider the operator $H_{\widetilde{V}}$ in~\eqref{def_H_Vtilde} for a so-called critical interaction strength in 
    the purely electrostatic setting with $m>0$ and $\Sigma$ compact and smooth, that is, we set
    $\widetilde V=\pm  2 I_N$. In this case it is known that the operator $H_{\widetilde{V}}$
    defined on functions from $H^1 (\Omega_+;\C^N) \oplus H^1(\Omega_-; \mathbb{C}^N)$ that satisfy the transmission conditions in $\dom H_{\widetilde{V}}$
    is essentially self-adjoint, but not self-adjoint, and one has $\sigma_\textup{ess}(\overline{H_{\widetilde V}}) \cap (-m, m) \neq \emptyset$; cf. \cite{BHSS22, BP22} and the references therein for more details. 
    In particular, $H_{\widetilde{V}}$ or $\overline{H_{\widetilde V}}$ can not be the norm resolvent limit of self-adjoint operators $H_\varepsilon$, as $\sigma_\textup{ess}(H_\varepsilon) = (-\infty, -m] \cup [m, \infty)$ for all $\varepsilon \in (0, \varepsilon_2]$, see, e.g., \cite[Theorem~4.7 and~(4.53)]{T92} for the case $\theta = 3$.
    However, to construct 
    approximating operators with potentials $V_\varepsilon$ 
    Corollary~\ref{cor_main_inv} would suggest the specific choices
    $q = \tfrac{1}{2} \chi_{(-1,1)}$ and 
    $$
    V = 2 (\alpha \cdot \nu )\arctan\bigl(\tfrac{\pm1}{2}(\alpha \cdot \nu)2 I_N\bigr)=\pm 2 \arctan(1) I_N=\pm \frac{\pi}{2}I_N,
    $$
    and in this situation
      \begin{equation*}
      \norm{V}_{W^1_\infty(\Sigma;\C^{N\times N})}   \norm{q}_{L^\infty(\R;\R)} = \frac{\pi}{4},
    \end{equation*}
    which is exactly the upper bound for $X_z$ from Lemma~\ref{lemma_smallness_condition}~(ii). 
    Hence, this critical case can not be treated by Theorem~\ref{THEO_MAIN} and the upper bound for $X_z$ in Lemma~\ref{lemma_smallness_condition}~(ii) is sharp in the sense that it does not allow electrostatic interactions being larger (in absolute value) or equal to the critical values $\pm 2$.
\end{remark}

%\setcounter{section}{0}
%\renewcommand{\thesection}{\Alph{section}}
%\numberwithin{equation}{section}
%\setcounter{equation}{0}
\appendix{
\section{Properties of the map $\iota$} \label{sec_appendix_iota}

In this section we investigate the map 
\begin{equation}\label{iotaapp}
	\iota: \Sigma \times \R   \to \R^{\theta}, \qquad \iota (x_\Sigma,t) := x_\Sigma + t \nu(x_\Sigma),
\end{equation}
defined in \eqref{eq_def_iota}. The goal is to show that for some $\varepsilon_A>0$ the mapping
$\iota$ and their local counterparts $\iota_l$ defined below in \eqref{eq_def_iota_l} are bi-Lipschitz on $\Sigma \times (-\varepsilon_A,\varepsilon_A)$ and $\R^{\theta-1} \times (-\varepsilon_A,\varepsilon_A)$, respectively, provided that $\varepsilon_A$ is sufficiently small.
Recall that  $x_{\Sigma_l}(x') = \kappa_l (x',\zeta_l(x'))$ for $x' \in \R^{\theta-1}$ and $l \in \{1,\dots,p\}$, where $\kappa_l$ is a rotation matrix in $\R^{\theta \times \theta}$ and $\zeta_l \in C^2_b(\R^{\theta-1};\R)$; cf. Hypothesis~\ref{hypothesis_Sigma} and \eqref{def_x_Sigma_l}.  
Besides the map $\iota$ in \eqref{iotaapp} we shall also make use  of the maps
\begin{equation}\label{eq_def_iota_l}
	\iota_l: \R^{\theta-1} \times \R   \to \R^{\theta},\qquad \iota_l(x',t) := x_{\Sigma_l}(x') + t \nu_l(x'), 
\end{equation} 
where $l \in \{1,\dots,p\}$ and the unit normal vector field $\nu_l$ on $\Sigma_l$ is given by 
$$\nu_l(x')=\frac{\kappa_l(-\nabla \zeta_l(x'),1)}{\sqrt{1+ \abs{\nabla \zeta_l(x')}^2}},\qquad x' \in \R^{\theta-1}.$$ 
Note that if $x' \in \R^{\theta-1}$ is such that $x_{\Sigma_l}(x') = x_\Sigma \in \Sigma$, then $\nu(x_\Sigma) = \nu_l(x')$ and  $\iota_l(x',t) = \iota(x_\Sigma,t)$ for all $t \in \R$. 

First, we provide a variant of the mean value theorem for vector and matrix-valued functions, which will be used frequently in the following.
\begin{lemma}\label{lem_mean_value}
	Let $k,l,n \in \N$, $U \subset \R^{n}$ be an open set, and $A \in C^1_b(U;\C^{k\times l})$. If $x,y \in U$ and the line segment connecting $x$ and $y$ is contained in $U$, then
	\begin{equation*}
	\begin{split}
		\abs{A(x)- A(y)} &\leq \sup_{\mu \in [0,1]} \Big( \sum_{j = 1}^n \abs{ (\partial_j A)(x + \mu (y -x))}^2 \Big)^{1/2} \abs{x-y} \\
		&\leq \sqrt{n} \sup_{\mu \in [0,1],j\in \{1,\dots,n\}} \abs{(\partial_j A)(x+\mu (y-x))}\abs{x-y}.
		\end{split}
	\end{equation*}
	In particular, if $U = \R^{n}$ and $l=1$, then
	\begin{equation*}
	\abs{A(x)- A(y)} \leq    \norm{D A}_{L^\infty(\R^{n};\C^{k\times n}) } \abs{x-y}, \quad  x,y \in \R^{n}.
\end{equation*}
\end{lemma}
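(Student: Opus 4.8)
The plan is to reduce the statement to the scalar fundamental theorem of calculus applied along the line segment joining $x$ and $y$. First I would fix $x,y \in U$ whose connecting segment lies in $U$, and introduce the auxiliary curve $g : [0,1] \to \C^{k \times l}$, $g(\mu) := A(x + \mu(y-x))$. Since $A \in C^1_b(U;\C^{k\times l})$ and the segment is contained in $U$, the chain rule shows $g \in C^1([0,1];\C^{k\times l})$ with $g'(\mu) = \sum_{j=1}^n (\partial_j A)(x + \mu(y-x))(y_j - x_j)$, interpreted entrywise. Applying the usual fundamental theorem of calculus to each of the $kl$ scalar components of $g$ (equivalently, viewing the expression as a $\C^{k\times l}$-valued Bochner integral) gives $A(y) - A(x) = g(1) - g(0) = \int_0^1 g'(\mu)\,d\mu$.

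Next I would estimate norms. Taking the Frobenius norm and using the triangle inequality for integrals yields $\abs{A(y) - A(x)} \leq \int_0^1 \abs{g'(\mu)}\,d\mu$. For fixed $\mu \in [0,1]$, the Cauchy--Schwarz inequality applied to the finite sum over $j \in \{1,\dots,n\}$ gives
$\abs{g'(\mu)} = \bigl| \sum_{j=1}^n (\partial_j A)(x + \mu(y-x))(y_j - x_j) \bigr| \leq \bigl( \sum_{j=1}^n \abs{(\partial_j A)(x + \mu(y-x))}^2 \bigr)^{1/2} \bigl( \sum_{j=1}^n \abs{y_j - x_j}^2 \bigr)^{1/2}$,
and the last factor is exactly $\abs{x-y}$. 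Bounding the first factor by its supremum over $\mu \in [0,1]$ and integrating over $[0,1]$ produces the first claimed inequality. The second inequality is then immediate from $\sum_{j=1}^n \abs{\partial_j A}^2 \leq n \max_{j \in \{1,\dots,n\}} \abs{\partial_j A}^2$.

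For the final assertion, when $U = \R^n$ the connecting segment of any two points is automatically contained in $U$, so the geometric hypothesis is vacuous; and when $l = 1$ the Jacobi matrix $DA \in \C^{k \times n}$ has the column vectors $\partial_j A \in \C^k$ as its columns, so $\abs{DA}^2 = \sum_{j=1}^n \abs{\partial_j A}^2$ with respect to the Frobenius norm. Hence the first inequality specializes to $\abs{A(x) - A(y)} \leq \sup_{\mu \in [0,1]} \abs{DA(x + \mu(y-x))}\,\abs{x-y} \leq \norm{DA}_{L^\infty(\R^n;\C^{k\times n})}\abs{x-y}$. I do not expect any genuine obstacle here; the only points requiring minor care are the justification of the matrix-valued fundamental theorem of calculus (by passing to scalar components) and keeping careful track of which $\abs{\cdot}$ denotes the Euclidean norm on $\R^n$ and which the Frobenius norm on matrix space -- both routine bookkeeping.
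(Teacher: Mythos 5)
Your proposal is correct and follows essentially the same route as the paper: parametrize the segment, apply the fundamental theorem of calculus to the matrix-valued curve, estimate the resulting integral via the triangle inequality and Cauchy--Schwarz, then take the supremum over $\mu$. The final assertion is obtained in both cases as an immediate specialization of the general estimate.
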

\begin{proof}
	Recall that $| \cdot |$ denotes, depending on the argument, the absolute value, the Euclidean vector norm, or the  Frobenius matrix norm. 
	The fundamental theorem of calculus and the Cauchy-Schwarz inequality lead to 
	\begin{equation*}
	\begin{split}
		\abs{A(x)- A(y)} &= \bigg|\int_0^1 \sum_{j = 1}^n (\partial_j A)(x + \mu (y -x)) \, (x-y)_j \,d\mu\bigg| \\
		&\leq  \int_0^1  \sum_{j = 1}^n \abs{ (\partial_j A)(x + \mu (y -x))} \abs{(x-y)_j}  \, d\mu \\
		%\leq \sup_{\mu \in [0,1]} \sum_{j = 1}^n \abs{ (\partial_j A)(x + \mu (y -x))} \abs{(x-y)_j} 
		&\leq \sup_{\mu \in [0,1]}\Big( \sum_{j = 1}^n \abs{ (\partial_j A)(x + \mu (y -x))}^2 \Big)^{1/2} \abs{x-y} \\
		&\leq \sqrt{n} \sup_{\mu \in [0,1],j\in \{1,\dots,n\}} \abs{(\partial_j A)(x+\mu (y-x))} \abs{x-y}.
	\end{split}
	\end{equation*}
	The estimate for the special case $U = \R^{n}$ and $l=1$ is an immediate consequence of the above estimate.
\end{proof}

Now we turn to the properties of the mappings $\iota$ and $\iota_l$ in \eqref{iotaapp} and
\eqref{eq_def_iota_l}, respectively.

\begin{proposition} \label{prop_iota} 
  Let $\Omega_\pm, \Sigma \subset \mathbb{R}^\theta$, $\theta  \in \{ 2,3  \}$, satisfy Hypothesis~\ref{hypothesis_Sigma}, and let $\iota$ and $\iota_l$, 
  $l \in \{1,\dots,p\}$, be as in~\eqref{iotaapp} and~\eqref{eq_def_iota_l}. Then, there exists $\varepsilon_A > 0$ and  constants $C_{A,1},C_{A,2} >0$ such that the following holds:
  \begin{itemize}
	 \item[$\textup{(i)}$] For all $x',y'\in \R^{\theta-1}$ and $t,s\in (-\varepsilon_A, \varepsilon_A)$ we have for all  $l \in \{1,\dots ,p\}$
	 \begin{equation*}
	 	 C_{A,1}^{-1}(\abs{x'-y'} + \abs{t-s}) \leq \abs{\iota_l(x',t)- \iota_l(y',s)} \leq  C_{A,1}(\abs{x'-y'} + \abs{t-s}).
	 \end{equation*}
 	 \item[$\textup{(ii)}$] For all $x_\Sigma, y_\Sigma \in \Sigma$ and $t,s\in (-\varepsilon_A, \varepsilon_A)$ we have
 	  \begin{equation*}
 	 C_{A,2}^{-1}(\abs{x_\Sigma-y_\Sigma} + \abs{t-s}) \leq \abs{\iota(x_\Sigma,t)- \iota(y_\Sigma,s)} \leq  C_{A,2}(\abs{x_\Sigma - y_\Sigma} + \abs{t-s}). 
 	 \end{equation*}
  \end{itemize}
\end{proposition}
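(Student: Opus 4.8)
The plan is to first establish the local statement (i) for each $\iota_l$ on $\mathbb{R}^{\theta-1}\times(-\varepsilon_A,\varepsilon_A)$ and then deduce the global statement (ii) for $\iota$ on $\Sigma\times(-\varepsilon_A,\varepsilon_A)$ by a compactness-type argument using the finite covering $W_1,\dots,W_p$ from Hypothesis~\ref{hypothesis_Sigma} together with the uniform parameter $\varepsilon_0$. For (i), the upper bound is the easy half: since $\zeta_l\in C^2_b(\mathbb{R}^{\theta-1};\mathbb{R})$ and $\kappa_l$ is orthogonal, both $x_{\Sigma_l}$ and $\nu_l$ are Lipschitz on $\mathbb{R}^{\theta-1}$ (apply Lemma~\ref{lem_mean_value} to the components of $x_{\Sigma_l}$ and to $\nu_l$, whose derivatives are bounded because they are rational expressions in $\nabla\zeta_l$ and $D^2\zeta_l$ with denominator bounded below by $1$), so $|\iota_l(x',t)-\iota_l(y',s)|\le |x_{\Sigma_l}(x')-x_{\Sigma_l}(y')| + |t-s|\,|\nu_l(x')| + |s|\,|\nu_l(x')-\nu_l(y')|$, and each term is bounded by a constant times $|x'-y'|+|t-s|$ uniformly in $l$ and for $|t|,|s|<\varepsilon_A\le 1$.

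The lower bound in (i) is the heart of the matter. The natural approach is to show that the Jacobian $D\iota_l(x',t) = \big(\,Dx_{\Sigma_l}(x') + t\,D\nu_l(x')\ \big|\ \nu_l(x')\,\big)\in\mathbb{R}^{\theta\times\theta}$ is uniformly invertible for small $t$, with operator norm of the inverse bounded uniformly in $x'$ and $l$: indeed at $t=0$ the columns $\partial_1 x_{\Sigma_l}(x'),\dots,\partial_{\theta-1}x_{\Sigma_l}(x'),\nu_l(x')$ form a basis of $\mathbb{R}^\theta$ (the first $\theta-1$ span the tangent space, $\nu_l$ is the unit normal), and the Gram determinant is bounded below uniformly because $M_l(x') = I_{\theta-1} + \nabla\zeta_l(x')\nabla\zeta_l(x')^T$ has determinant $\ge 1$; adding $t\,D\nu_l(x')$ with $\|D\nu_l\|_\infty<\infty$ perturbs this invertibly once $|t|<\varepsilon_A$ is small enough. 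From uniform invertibility of $D\iota_l$ on $\mathbb{R}^{\theta-1}\times(-\varepsilon_A,\varepsilon_A)$ one gets a uniform \emph{local} bi-Lipschitz estimate, but to upgrade this to a \emph{global} injectivity/lower bound on the whole slab one must rule out that far-apart points $(x',t)$ and $(y',s)$ get mapped close together. Here I would use that $\zeta_l$ has bounded gradient: the first component (in $\kappa_l$-rotated coordinates) of $\iota_l$ is essentially the graph direction, so if $|x'-y'|$ is large then $|x_{\Sigma_l}(x')-x_{\Sigma_l}(y')|\ge c|x'-y'|$ by the elementary estimate $|(x'-y',\zeta_l(x')-\zeta_l(y'))|\ge|x'-y'|$, and the normal shifts of size $<\varepsilon_A$ cannot destroy this for $\varepsilon_A$ small; combining the large-separation estimate with the uniform local estimate (covering the range $|x'-y'|+|t-s|$ between a small threshold and the large regime by a chaining argument) yields the global lower bound with a uniform constant $C_{A,1}$.

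For (ii), I would choose $\varepsilon_A$ smaller than $\varepsilon_0/2$ (with $\varepsilon_0$ from Hypothesis~\ref{hypothesis_Sigma}) and argue as follows: given $x_\Sigma,y_\Sigma\in\Sigma$, the upper bound follows immediately from (i) after writing $x_\Sigma = x_{\Sigma_l}(x')$ in any chart containing $x_\Sigma$ and observing $\nu(x_\Sigma)=\nu_l(x')$, $\iota(x_\Sigma,t)=\iota_l(x',t)$. For the lower bound, distinguish two cases. If $|x_\Sigma-y_\Sigma|\ge\varepsilon_0/2$, then since $|t|,|s|<\varepsilon_A\le\varepsilon_0/4$ we get $|\iota(x_\Sigma,t)-\iota(y_\Sigma,s)|\ge|x_\Sigma-y_\Sigma|-|t|-|s|\ge \tfrac14|x_\Sigma-y_\Sigma|$, which together with $|t-s|<2\varepsilon_A$ gives a bound of the required form after adjusting the constant. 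If $|x_\Sigma-y_\Sigma|<\varepsilon_0/2$, then by Hypothesis~\ref{hypothesis_Sigma}(ii) there is an index $l$ with $B(x_\Sigma,\varepsilon_0)\subset W_l$, hence both $x_\Sigma$ and $y_\Sigma$ lie on $\Sigma_l\cap W_l$; writing $x_\Sigma=x_{\Sigma_l}(x')$, $y_\Sigma=x_{\Sigma_l}(y')$ and using the bi-Lipschitz property of $x_{\Sigma_l}$ (so $|x'-y'|$ is comparable to $|x_\Sigma-y_\Sigma|$) we reduce to the local estimate (i): $|\iota(x_\Sigma,t)-\iota(y_\Sigma,s)|=|\iota_l(x',t)-\iota_l(y',s)|\ge C_{A,1}^{-1}(|x'-y'|+|t-s|)\ge C_{A,2}^{-1}(|x_\Sigma-y_\Sigma|+|t-s|)$. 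Taking $C_{A,2}$ to be the larger of the constants from the two cases finishes the proof. The main obstacle is the global lower bound in part~(i), specifically the chaining/large-separation argument that promotes the uniform local invertibility of $D\iota_l$ to a genuine global bi-Lipschitz bound on the slab; the rest is routine once the uniform bounds on $\zeta_l$, $\nabla\zeta_l$, $D^2\zeta_l$, $\nu_l$, $D\nu_l$ coming from $\zeta_l\in C^2_b$ are in hand.
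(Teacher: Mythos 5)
Your part (ii) follows the same route as the paper: reduce to a single chart when $x_\Sigma$ and $y_\Sigma$ are close, and dispatch the case $|x_\Sigma - y_\Sigma| \gtrsim \varepsilon_0$ by the crude triangle inequality. For part (i), however, your strategy is genuinely different from the paper's. The paper proves the lower bound by expanding $|\iota_l(x',t)-\iota_l(y',s)|^2$ algebraically into the three terms $|x_{\Sigma_l}(x')-x_{\Sigma_l}(y')|^2$, the cross term $2\langle x_{\Sigma_l}(x')-x_{\Sigma_l}(y'),t\nu_l(x')-s\nu_l(y')\rangle$, and $|t\nu_l(x')-s\nu_l(y')|^2$, then estimates each one via Lemma~\ref{lem_mean_value} and the $C^2_b$ bounds on $\zeta_l$: the first dominates $|x'-y'|^2$, the third dominates $\tfrac{1}{2}|t-s|^2$ minus an $O(\varepsilon_A^2|x'-y'|^2)$ error, and the cross term is $O(\varepsilon_A|x'-y'|^2)$. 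For $\varepsilon_A$ small this single computation gives the lower bound on \emph{all} scales at once — there is no case split, no chaining, and the Jacobian of $\iota_l$ never appears.

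Your Jacobian-based route (uniform invertibility of $D\iota_l$, local bi-Lipschitz, scale decomposition with a large-separation estimate) can be made to work, but it has a subtlety you pass over. Uniform invertibility of $D\iota_l$ alone does \emph{not} yield a uniform local bi-Lipschitz estimate; one also needs a uniform bound on the oscillation of $D\iota_l$ on small balls, so that the averaged matrix $\int_0^1 D\iota_l((1-\mu)(x',t)+\mu(y',s))\,d\mu$ stays close to $D\iota_l(x',t)$ and hence remains invertible. Writing $D\iota_l(x',t) = \bigl(Dx_{\Sigma_l}(x')+t\,D\nu_l(x')\mid\nu_l(x')\bigr)$, the troublesome contribution is $t\,D\nu_l(x')$: under the $C^2_b$ hypothesis $D\nu_l$ is bounded but need not be uniformly continuous (that would require $\zeta_l\in C^3$ or $C^{2,\alpha}$), so $t\bigl(D\nu_l(x')-D\nu_l(y')\bigr)$ cannot be estimated in terms of $|x'-y'|$. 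The rescue is the small factor $|t|<\varepsilon_A$, which makes this term $O(\varepsilon_A)$ uniformly; you must add this observation explicitly, and then choose $\varepsilon_A$ small enough that the local threshold $\delta$ exceeds $6\varepsilon_A$ so that the small- and large-separation regimes overlap. With these additions your argument closes, but the paper's direct expansion of the squared norm sidesteps the issue entirely and is the shorter route here; your Jacobian argument is the more geometric and general-purpose one.
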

\begin{proof}
	(i) Let  $x', y' \in \mathbb{R}^{\theta-1}$ and $t,s \in (-\varepsilon_A, \varepsilon_A)$ be fixed, where $\varepsilon_A > 0$ is, at the moment, a fixed number. Using \eqref{eq_def_iota_l}, $x_{\Sigma_l}(x') = \kappa_l (x',\zeta_l(x')), x_{\Sigma_l}(y') = \kappa_l (y',\zeta_l(y'))$, 
	and Lemma~\ref{lem_mean_value} we find
	\begin{equation*} 
	\begin{split}
	|\iota_l(x',t)&- \iota_l(y',s)| \leq \abs{x_{\Sigma_l}(x') - x_{\Sigma_l}(y')} + \abs{t \nu_l(x') - s \nu_l(y')} \\
	&\leq \abs{x_{\Sigma_l}(x') - x_{\Sigma_l}(y')} + \abs{t}\abs{\nu_l(x') - \nu_l(y')} +\abs{t-s}\\
	&\leq \abs{x'-y'} + \abs{\zeta_l(x')-\zeta_l(y')} +\varepsilon_A\abs{\nu_l(x') - \nu_l(y')} +\abs{t-s}\\
	&\leq \abs{x'-y'} + \norm{\nabla \zeta_l}_{L^\infty(\R^{\theta-1};\R^{ \theta-1})}\abs{x'-y'} \\
	&\qquad +\varepsilon_A
	\norm{D \nu_l}_{L^\infty(\R^{\theta-1};\R^{\theta\times(\theta-1)})}\abs{x' - y'} +\abs{t-s}\\
	&\leq \bigl(1+\norm{\nabla \zeta_l}_{L^\infty(\R^{\theta-1};\R^{ \theta-1})} + \varepsilon_A \norm{D \nu_l}_{L^\infty(\R^{\theta-1};\R^{\theta\times(\theta-1)})} \bigr)( \abs{x'-y'} +\abs{t-s}).
	\end{split}
	\end{equation*}
	Now the second estimate in (i) follows if we
	fix  $0 < \varepsilon_A \leq 1$ and choose 
	$$  C_{A,1} \geq 1 + \max_{l \in \{1,\dots,p\}}\bigl(\norm{\nabla \zeta_l}_{L^\infty(\R^{\theta-1};\R^{\theta-1})} + \norm{D \nu_l}_{L^\infty(\R^{\theta-1};\R^{\theta \times (\theta-1)})}\bigr),$$
	which is finite since we assumed in Hypothesis~\ref{hypothesis_Sigma} that $\zeta_l \in C^{2}_b(\R^{\theta-1};\R)$ for all $l \in \{1,\dots,p\}$. 
	
	Next, we prove the first inequality in (i). We start by rewriting
	\begin{equation} \label{estimate1}
	\begin{split}
	|\iota_l(x',t)- \iota_l(y',s)|^2 &= |x_{\Sigma_l}(x') - x_{\Sigma_l}(y')|^2 + 2\spf{x_{\Sigma_l}(x') - x_{\Sigma_l}(y')}{t\nu_l(x')-s\nu_l(y')}\\
	&\qquad+ \abs{t\nu_l(x') - s\nu_l(y')}^2.
	\end{split}
	\end{equation}
	We are going to estimate all three terms on the right hand side separately. For the first one, we find with $x_{\Sigma_l}(x')=\kappa_l (x', \zeta_l(x'))$, $x_{\Sigma_l}(y') = \kappa_l (y', \zeta_l(y'))$, and as $\kappa_l$ is unitary that
	\begin{equation} \label{estimate2}
		|x_{\Sigma_l}(x') - x_{\Sigma_l}(y')|^2 = |x'-y'|^2 + |\zeta_l(x')-\zeta_l(y')|^2 \geq |x'-y'|^2.
	\end{equation}
	Next, we consider the second term on the right hand side of \eqref{estimate1}. We start by observing
	\begin{equation*}
	    \mathcal{I}:=|\langle x_{\Sigma_l}(x') - x_{\Sigma_l}(y'),t\nu_l(x')\rangle| 
	    %= \Bigg|\frac{1}{\sqrt{1+ \abs{\nabla \zeta_l(x')}^2 }} \spf{\kappa_l\begin{pmatrix}  x'-y' \\ \zeta_l(x') - \zeta_l(y')\end{pmatrix}}{t\kappa_l \begin{pmatrix} -\nabla \zeta_l (x') \\1 \end{pmatrix}}\Bigg| \\
	    =\Bigg| t\frac{\spf{x'-y'}{-\nabla\zeta_l(x')} +\zeta_l(x') - \zeta_l(y') }{\sqrt{1+ \abs{\nabla \zeta_l(x')}^2}}\Bigg|.
	\end{equation*}
	The mean value theorem shows $\zeta_l(x') - \zeta_l(y') = \spf{x'-y'}{\nabla \zeta_l (x'+ \mu(y'-x'))}$ for some $\mu \in [0,1]$.
	Using Lemma \ref{lem_mean_value} the above expression can be further estimated by 
	\begin{equation*}
	\begin{split}
	  \mathcal{I}  &\leq  \sup_{\mu  \in [0,1]} \Bigg|t\frac{\spf{y'-x'}{\nabla\zeta_l(x') - \nabla\zeta_l(x' + \mu(y'-x'))}}{\sqrt{1+ \abs{\nabla \zeta_l(x')}^2}} \Bigg|\\
	  &\leq \sup_{\mu \in [0,1]}  \varepsilon_A \abs{y'-x'}\abs{\mu(y'-x')}\norm{D \nabla \zeta_l}_{L^{\infty}(\R^{\theta-1};\R^{(\theta-1) \times (\theta -1)})}\\
	    &\leq \varepsilon_A \abs{x'-y'}^2\norm{D \nabla \zeta_l}_{L^{\infty}(\R^{\theta-1};\R^{(\theta-1)\times(\theta -1)})}.
	    \end{split}
	\end{equation*}
	 Similarly, one has
	\begin{equation*}
	\begin{split}
	|\spf{x_{\Sigma_l}(x') - x_{\Sigma_l}(y')}{s\nu_l(y')}|
	&\leq \varepsilon_A \abs{x'-y'}^2 \norm{D \nabla \zeta_l}_{L^{\infty}(\R^{\theta-1};\R^{(\theta-1) \times (\theta -1)})},
	\end{split}
 \end{equation*}
 and thus
 \begin{equation}\label{estimate3}
	2\spf{x_{\Sigma_l}(x') - x_{\Sigma_l}(y')}{t\nu_l(x')-s\nu_l(y')} \geq  -4\varepsilon_A \abs{x'-y'}^2 \norm{D \nabla \zeta_l}_{L^{\infty}(\R^{\theta-1};\R^{(\theta-1)\times (\theta -1)})}.
 \end{equation}
 Eventually, to estimate the third term on the right hand side in~\eqref{estimate1}, we use Lemma~\ref{lem_mean_value} as well as $(a-b)^2 \geq \tfrac{1}{2} a^2 - b^2$ for $ a,b >0$ and calculate
 \begin{equation} \label{estimate4}
   \begin{split}
     \abs{t\nu_l(x') - s\nu_l(y')}^2 &=\abs{(t-s)\nu_l(x') - s(\nu_l(y') - \nu_l(x'))}^2\\
     &\geq ( \abs{t-s} - \abs{s(\nu_l(y') - \nu_l(x'))})^2 \\
     &\geq \frac{1}{2}\abs{t-s}^2 - s^2 \abs{\nu_l(y')- \nu_l(x')}^2 \\
     &\geq \frac{1}{2}\abs{t-s}^2 -  \varepsilon_A^2 |x'-y'|^2 \norm{D\nu_l}_{L^{\infty}(\R^{\theta-1};\R^{\theta \times (\theta-1) })}^2.
   \end{split}
 \end{equation}
 By combining~\eqref{estimate2}--\eqref{estimate4} in~\eqref{estimate1} we obtain
	\begin{equation*}
	\begin{split}
		\abs{\iota_l(x',t)- \iota_l(y',s)}^2\geq \frac{1}{2}\abs{t-s}^2+ \abs{x'-y'}^2\bigl(1 -   4  \varepsilon_A\norm{D \nabla \zeta_l}_{L^{\infty}(\R^{\theta-1};\R^{(\theta-1) \times (\theta-1) })} &\\
		-\varepsilon_A^2\norm{D\nu_l}_{L^{\infty}(\R^{\theta-1};\R^{\theta\times (\theta-1) })}^2\bigr)&.
		\end{split}
	\end{equation*}
	As before we conclude from $\zeta_l \in C^2_b(\R^{\theta-1};\R)$ that for $\varepsilon_A>0$ sufficiently small and $C_{A,1}>0$ sufficiently large also the first inequality in (i) is fulfilled. 
 	
 	(ii) We fix  $x_\Sigma, y_\Sigma \in \Sigma$ and $t,s \in (-\varepsilon_A, \varepsilon_A)$. Let us first assume that $x_\Sigma,y_\Sigma \in  \Sigma_l$ for some $l \in \{1,\dots,p\} $. Then, there exist $x',y'\in \R^{\theta-1}$ such that $x_\Sigma = x_{\Sigma_l}(x')$ and $y_\Sigma = x_{\Sigma_l}(y')$, and therefore $\iota(x_\Sigma,t) = \iota_l(x',t)$ and $\iota(y_\Sigma,s) = \iota_l(y',s)$. In this case we see 
 	$$\abs{x_\Sigma - y_\Sigma } = \sqrt{\abs{x'-y'}^2 + \abs{\zeta_l(x') - \zeta_l(y')}^2}$$ and therefore combining 
 	\begin{equation*}
 	\abs{x'-y'}  \leq \abs{x_\Sigma -y_\Sigma} \leq \abs{x'-y'}\sqrt{1 + \norm{ \nabla \zeta_l}_{L^{\infty}(\R^{\theta-1};\R^{ \theta-1 })}^2}
 	\end{equation*}
 	with (i) yields (ii). It remains to consider the case where $x_\Sigma, y_\Sigma \in \Sigma$ and there is no $l \in\{1,\dots,p\}$ such that $x_\Sigma,y_\Sigma \in \Sigma _l$. Then, (ii) and (iii) from Hypothesis \ref{hypothesis_Sigma} imply $\abs{x_\Sigma - y_\Sigma} \geq \varepsilon_0$, where $\varepsilon_0$ is the number specified in Hypothesis \ref{hypothesis_Sigma}.  Choose $\varepsilon_A \leq \varepsilon_0/6$. Then, $\abs{x_\Sigma -y_\Sigma} \geq  6 \varepsilon_A$, $\abs{t\nu(x_\Sigma) - s \nu(y_\Sigma)} \leq 2 \varepsilon_A$, and $\abs{t-s} \leq 2\varepsilon_A$ yield
 	\begin{equation*}
 		\abs{\iota(x_\Sigma,t) - \iota(y_\Sigma,s)} \leq \abs{x_\Sigma-y_\Sigma} + 2\varepsilon_A \leq \frac{4}{3} \abs{x_\Sigma -y_\Sigma}  \leq \frac{4}{3} (\abs{x_\Sigma -y_\Sigma}  +\abs{t-s})
 	\end{equation*}
 	and
 	\begin{equation*}
 	\begin{split}
 		\frac{1}{2}\bigl(\abs{x_\Sigma -y_\Sigma} + \abs{t-s}\bigr) &\leq \frac{\abs{x_\Sigma -y_\Sigma}}{2} + \varepsilon_A =   \frac{\abs{x_\Sigma -y_\Sigma}}{2} +3\varepsilon_A -2 \varepsilon_A \\
 		&\leq \abs{x_\Sigma -y_\Sigma} -2\varepsilon_A \leq  \abs{\iota(x_\Sigma,t) - \iota(y_\Sigma,s)},
 		\end{split}
 	\end{equation*}
 	which imply (ii) also in this case.
\end{proof}

Eventually, we state a useful consequence of Proposition~\ref{prop_iota}.

\begin{corollary} \label{cor_eps_A}
  Assume that $\Sigma, \Omega_\pm \subset \mathbb{R}^\theta$, $\theta \in \{ 2, 3 \}$, satisfy Hypothesis~\ref{hypothesis_Sigma} and let $\varepsilon_A$ be as in Proposition~\ref{prop_iota}. Then, the following holds:
  \begin{itemize}
    \item[(i)] For any $x_\Sigma \in \Sigma$ and $t \in (0, \varepsilon_A)$ one has $x_\Sigma + t \nu(x_\Sigma) \in \Omega_-$.
    \item[(ii)] For any $x_\Sigma \in \Sigma$ and $t \in (-\varepsilon_A, 0)$ one has $x_\Sigma + t \nu(x_\Sigma) \in \Omega_+$.
  \end{itemize}
\end{corollary}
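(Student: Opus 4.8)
The plan is to combine the injectivity of $\iota$ on $\Sigma\times(-\varepsilon_A,\varepsilon_A)$, which is immediate from the lower bound in Proposition~\ref{prop_iota}~(ii), with a local computation in the graph coordinates of Hypothesis~\ref{hypothesis_Sigma} and a connectedness argument. First I would note that for $x_\Sigma\in\Sigma$ and $t\in(-\varepsilon_A,\varepsilon_A)\setminus\{0\}$ the point $x_\Sigma+t\nu(x_\Sigma)=\iota(x_\Sigma,t)$ cannot lie on $\Sigma$: otherwise it would equal $\iota(y_\Sigma,0)$ for some $y_\Sigma\in\Sigma$, and the injectivity of $\iota$ would force $t=0$, a contradiction. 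Since $\R^\theta\setminus\Sigma=\Omega_+\cup\Omega_-$ is a disjoint union of two open sets and $t\mapsto x_\Sigma+t\nu(x_\Sigma)$ is continuous, the image of each of the connected intervals $(0,\varepsilon_A)$ and $(-\varepsilon_A,0)$ under this map is contained entirely in $\Omega_+$ or entirely in $\Omega_-$; it therefore remains only to identify the correct alternative, which can be decided by examining small $|t|$.

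For the local step I would fix $x_\Sigma\in\Sigma$, choose $l$ with $B(x_\Sigma,\varepsilon_0)\subset W_l$ (Hypothesis~\ref{hypothesis_Sigma}~(ii)), and write $x_\Sigma=x_{\Sigma_l}(x')=\kappa_l(x',\zeta_l(x'))$, so that $\nu(x_\Sigma)=\nu_l(x')=c\,\kappa_l(-\nabla\zeta_l(x'),1)$ with $c:=(1+|\nabla\zeta_l(x')|^2)^{-1/2}$. Since $W_l$ is open, the identity $W_l\cap\Omega_+=W_l\cap\Omega_l$ yields $W_l\cap\overline{\Omega_+}=W_l\cap\overline{\Omega_l}$ and hence $W_l\cap\Omega_-=W_l\cap\{\kappa_l(y',y_\theta):y_\theta>\zeta_l(y')\}$. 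For $|t|<\min\{\varepsilon_0,\varepsilon_A\}$ one has $\iota(x_\Sigma,t)\in W_l$ and $\kappa_l^{-1}\iota(x_\Sigma,t)=\bigl(x'-tc\,\nabla\zeta_l(x'),\,\zeta_l(x')+tc\bigr)$, so $\iota(x_\Sigma,t)$ lies in $\Omega_-$ (resp.\ in $\Omega_+$, resp.\ on $\Sigma$) precisely when the function
\[
  h(t):=\zeta_l(x')+tc-\zeta_l\bigl(x'-tc\,\nabla\zeta_l(x')\bigr)
\]
is positive (resp.\ negative, resp.\ zero). One has $h(0)=0$, and by the chain rule (using $\zeta_l\in C^2_b$) $h$ is differentiable with $h'(0)=c\bigl(1+|\nabla\zeta_l(x')|^2\bigr)=\sqrt{1+|\nabla\zeta_l(x')|^2}>0$. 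Consequently $h(t)>0$ for small $t>0$ and $h(t)<0$ for small $t<0$, i.e.\ $\iota(x_\Sigma,t)\in\Omega_-$ for small $t>0$ and $\iota(x_\Sigma,t)\in\Omega_+$ for small $t<0$.

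Putting the pieces together, for any $x_\Sigma\in\Sigma$ the connected set $\{x_\Sigma+t\nu(x_\Sigma):t\in(0,\varepsilon_A)\}$ avoids $\Sigma$ and meets $\Omega_-$ (for small $t>0$), hence is contained in $\Omega_-$, which is (i); the same argument with $t<0$ gives (ii). The only point requiring care is the passage from the purely local statement, valid for $|t|$ small, to all $t\in(0,\varepsilon_A)$: this is exactly where the injectivity of $\iota$ on the tube $\Sigma\times(-\varepsilon_A,\varepsilon_A)$ from Proposition~\ref{prop_iota}~(ii) — guaranteeing that the normal segment never returns to $\Sigma$ — together with the connectedness of the parameter interval does the work, while the sign computation for $h$ is routine.
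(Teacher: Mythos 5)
Your proposal is correct and follows essentially the same route as the paper's proof: both use Proposition~\ref{prop_iota}~(ii) (equivalently, injectivity of $\iota$ on $\Sigma\times(-\varepsilon_A,\varepsilon_A)$) to rule out the normal segment returning to $\Sigma$ for $t\neq 0$, and then a continuity/connectedness argument combined with the small-$|t|$ behavior of $t\mapsto x_\Sigma+t\nu(x_\Sigma)$. The only cosmetic difference is that the paper phrases the final step as a proof by contradiction (IVT to produce a point of $\Sigma$, contradicting the lower bound), while you argue directly via connectedness of $\R^\theta\setminus\Sigma=\Omega_+\cup\Omega_-$; moreover you flesh out the assertion "for small $t>0$ the point lies in $\Omega_-$" with an explicit graph-coordinate computation, whereas the paper treats it as immediate from $\nu$ being the outward normal.
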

\begin{proof}
  We show item~(i), the proof of assertion~(ii) follows the same lines. The claim will be verified by an indirect proof. Assume that there are $x_\Sigma \in \Sigma$ and $t \in (0, \varepsilon_A)$ such that $x_\Sigma + t \nu(x_\Sigma) \notin \Omega_-$. Since $\nu$ is pointing outwards of $\Omega_+$, we have for small $\mu > 0$ that $x_\Sigma + \mu t \nu(x_\Sigma) \in \Omega_-$. By continuity, this implies that there exists $\mu_0 \in (0, 1]$ such that $x_\Sigma + \mu_0 t \nu(x_\Sigma) \in \Sigma$. However, we obtain  from  Proposition  \ref{prop_iota} for all $y_\Sigma \in \Sigma$ with a constant $C_{A,2} >0$ the inequality  $ \abs{x_\Sigma + \mu_0 t \nu(x_\Sigma)-y_\Sigma} \geq  C_{A,2}^{-1}\mu_0 t > 0$;
  this is a contradiction.
\end{proof}

\section{Proof of the estimate \eqref{ESTIMATE_B_BAR_B_TILDE}} 
\label{sec_appendix_diff_B}

Let $z \in \rho(H)$ and $\widetilde{B}_\varepsilon(z): L^2((-1,1);L^2(\Sigma;\C^N)) \to L^2((-1,1);L^2(\Sigma;\C^N))$ and $\overline{B}_\varepsilon(z): L^2((-1,1);H^{1/2}(\Sigma;\C^N)) \to L^2((-1,1);H^{1/2}(\Sigma;\C^N))$ be the operators defined by~\eqref{wtwtb} and~\eqref{def_B_bar}, respectively. In this appendix we show that $\widetilde{B}_{\varepsilon}(z) - \overline{B}_\varepsilon(z)$  can be extended to a bounded operator from $L^2((-1,1);L^2(\Sigma;\C^N))$ to $L^2((-1,1);H^{1/2}(\Sigma;\C^N))$ and that
\begin{equation}\label{ESTIMATE_B_BAR_B_TILDE_app}
\begin{split}
\norm{\widetilde{B}_{\varepsilon}(z) - \overline{B}_\varepsilon(z)}_{0 \to 1/2} \leq C (\varepsilon + \varepsilon\abs{\log(\varepsilon)})^{1/2}
\end{split}
\end{equation}
for some $C>0$, which is used in~\eqref{ESTIMATE_B_BAR_B_TILDE} in \textit{Step~3} in the proof of Proposition~\ref{prop_B_conv}.
With  \eqref{def_B_tilde} and \eqref{eq_B_bar}  one obtains for $ f\in L^2((-1,1);H^{1/2}(\Sigma;\C^N))$
\begin{equation}\label{eq_diff_B_tilde_bar}
	\begin{split}
	(\widetilde{B}_\varepsilon( z) - \overline{B}_\varepsilon(z))f(t)(x_\Sigma) = &\int_{-1}^1\int_\Sigma (G_ z(x_\Sigma-y_\Sigma +\varepsilon t\nu(x_\Sigma) - \varepsilon s\nu(y_\Sigma)) \\
	&- G_ z(x_\Sigma-y_\Sigma +\varepsilon (t-s)\nu(x_\Sigma))) f(s)(y_\Sigma) \,d\sigma(y_\Sigma) \,ds
	\end{split}
\end{equation}
for a.e. $t \in (-1,1)$ and for $\sigma$-a.e. $x_\Sigma \in \Sigma$, where $G_ z$ is the integral kernel of $R_z = (H-z)^{-1}$; 
cf. \eqref{eq_G_z_2D}--\eqref{eq_G_z_3D}. Thus, in order to show \eqref{ESTIMATE_B_BAR_B_TILDE_app}, we proceed as follows: We prove in Proposition~\ref{prop_diff_b} that for fixed $t \neq s \in (-1,1)$ the operator formally acting on $\psi \in L^2(\Sigma;\C^N)$  as
\begin{equation}\label{eq_def_b_t,s,eps}
		\begin{split}
	b_{t,s,\varepsilon}(z) \psi(x_\Sigma) =& \int_\Sigma (G_ z(x_\Sigma-y_\Sigma +\varepsilon t\nu(x_\Sigma) - \varepsilon s\nu(y_\Sigma))\\
	&- G_ z(x_\Sigma-y_\Sigma +\varepsilon (t-s)\nu(x_\Sigma))) \psi(y_\Sigma) \,d\sigma(y_\Sigma), \quad x_\Sigma \in \Sigma,
	\end{split}
\end{equation}
 gives rise to a bounded operator from $L^2(\Sigma;\C^N)$ to $H^{1/2}(\Sigma;\C^N)$ and we prove an  estimate for its operator norm. Then we show in Lemma~\ref{lem_Bochner_measuarble} that the map $(s, t) \mapsto b_{t,s,\varepsilon}(z)$ is measurable and use \eqref{eq_diff_B_tilde_bar} to transfer the results from $b_{t,s,\varepsilon}(z)$ to $ \widetilde{B}_\varepsilon( z) - \overline{B}_\varepsilon(z)$.

In the following, we always assume  $\varepsilon \in (0,\varepsilon_2)$ with $\varepsilon_2 > 0$ given by~\eqref{eq_eps_2}. Recall that $\varepsilon_1$ and $\varepsilon_A$ are the numbers that are specified in Propositions~\ref{prop_tubular_neighbourhood} and~\ref{prop_iota}, respectively.  Since $\varepsilon_1 < \varepsilon_A$, see the proof of  Proposition~\ref{prop_tubular_neighbourhood}, we conclude from \eqref{eq_eps_2} that $\varepsilon_2 < \tfrac{\varepsilon_A}{2}$. Define for $t \neq s \in (-1,1)$ and $x_\Sigma, y_\Sigma \in \Sigma$
\begin{equation}\label{eq_diff_G_z}
  \begin{split}
	\Delta G_ z (x_\Sigma,y_\Sigma,t,s) &:= G_ z(x_\Sigma-y_\Sigma +\varepsilon t\nu(x_\Sigma)  - \varepsilon s\nu(y_\Sigma)) - G_ z(x_\Sigma-y_\Sigma +\varepsilon (t-s)\nu(x_\Sigma)).
  \end{split}
\end{equation} 
Moreover, we introduce for $t \neq s \in (-1,1)$ and $x_\Sigma,y_\Sigma \in \Sigma$ the quantities
\begin{equation*}
	\begin{split}
	z_0(x_\Sigma,y_\Sigma,t,s) &:= {x_\Sigma-y_\Sigma +\varepsilon t\nu(x_\Sigma) - \varepsilon s\nu(y_\Sigma)} = \iota(x_\Sigma,\varepsilon t) - \iota(y_\Sigma,\varepsilon s),\\
	z_1(x_\Sigma,y_\Sigma,t,s) &:= {x_\Sigma-y_\Sigma +\varepsilon (t-s)\nu(x_\Sigma)} = \iota(x_\Sigma,\varepsilon (t-s)) - \iota(y_\Sigma,0) ,\\
	z_\mu(x_\Sigma,y_\Sigma,t,s) &:= \mu z_0(x_\Sigma,y_\Sigma,t,s) + (1-\mu)z_1(x_\Sigma,y_\Sigma,t,s)\\
	&=\iota(x_\Sigma,( \mu \varepsilon  t + (1-\mu)\varepsilon (t-s)) - \iota(y_\Sigma, \mu \varepsilon s)\quad \text{for } \mu \in (0,1),\\
	L(x_\Sigma,y_\Sigma,t,s) &:= \abs{x_\Sigma-y_\Sigma} +\abs{\varepsilon (t-s)}.\\
	\end{split}
\end{equation*}
Then, $\Delta G_z(x_\Sigma,y_\Sigma,t,s) = G_z(z_0(x_\Sigma,y_\Sigma,t,s))-G_z(z_1(x_\Sigma,y_\Sigma,t,s))$. 
It follows from Proposition~\ref{prop_iota}~(ii) that for $\mu \in [0,1]$ 
\begin{equation}\label{eq_estimate_L}
	C_{A,2}^{-1} L(x_\Sigma,y_\Sigma,t,s)\leq \abs{z_\mu(x_\Sigma,y_\Sigma,t,s)} \leq C_{A,2} L(x_\Sigma,y_\Sigma,t,s)
\end{equation} 
holds. To shorten notation we also set 
\begin{equation} \label{def_omega}
  \omega  := \textup{Im} \sqrt{ z^2 - m^2}C_{A,2}^{-1} > 0, \quad z \in \rho(H).
\end{equation}
Furthermore, until Lemma \ref{lem_Bochner_measuarble} we fix $t \neq s \in (-1,1)$ and hence omit the arguments $t,s$ in the functions $L$, $\Delta G_z$, $z_0$, $z_\mu$, and $z_1$.

\begin{lemma}\label{lem_diff_G_z}
	Let $G_ z$ be the integral kernel of $R_ z$ in \eqref{eq_G_z_2D}--\eqref{eq_G_z_3D}, $\Delta G_z$ as in \eqref{eq_diff_G_z}, $l \in \{1,\dots,p\}$, and $x_{\Sigma_l}$ as in \eqref{def_x_Sigma_l}. Then, the following is true:
	\begin{itemize}
		\item[$\textup{(i)}$] There exists $C>0$ which does not depend on $\varepsilon$, $t$, and $s$ such that 
		\begin{equation*}
		\abs{\Delta G_ z (x_\Sigma,y_\Sigma)} \leq C \varepsilon (L(x_\Sigma,y_\Sigma)+L(x_\Sigma,y_\Sigma)^{{1-\theta}}) e^{-\omega L(x_\Sigma,y_\Sigma)} 
		\end{equation*}
		for all $ x_\Sigma,y_\Sigma \in \Sigma$.
		\item[$\textup{(ii)}$] There exists $C>0$ which does not depend on $\varepsilon$, $t$, and $s$ such that 
		\begin{equation*}
		\bigl|\frac{d}{dx_k'}\Delta G_ z (x_{\Sigma_l}(x'),y_\Sigma)\bigr| \leq C \varepsilon (L(x_{\Sigma_l}(x'),y_\Sigma) +L(x_{\Sigma_l}(x'),y_\Sigma)^{{-\theta}} ) e^{-\omega L(x_{\Sigma_l}(x'),y_\Sigma)} 
		\end{equation*}
		for all $k \in \{1,\dots,\theta-1\}$, $ y_\Sigma \in \Sigma $, and $ x' \in x_{\Sigma_l}^{-1}(\Sigma)$.
	\end{itemize}
\end{lemma}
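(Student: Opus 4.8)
The plan is to reduce both estimates to pointwise bounds on the first and second derivatives of the free resolvent kernel $G_z$. The starting point is the elementary identity
\begin{equation*}
  z_0(x_\Sigma,y_\Sigma) - z_1(x_\Sigma,y_\Sigma) = \varepsilon s\bigl(\nu(x_\Sigma) - \nu(y_\Sigma)\bigr),
\end{equation*}
which, together with $\abs{s}<1$, the $C^1_b$-smoothness of (the extension of) $\nu$, and Lemma~\ref{lem_mean_value}, gives $\abs{z_0-z_1}\leq C\varepsilon\abs{x_\Sigma-y_\Sigma}\leq C\varepsilon L$. Since $\Delta G_z(x_\Sigma,y_\Sigma)=G_z(z_0)-G_z(z_1)$ and the segment $z_\mu=\mu z_0+(1-\mu)z_1$, $\mu\in[0,1]$, stays away from the origin by \eqref{eq_estimate_L} (recall that $t\neq s$, so $L>0$), the fundamental theorem of calculus yields
\begin{equation*}
  \Delta G_z(x_\Sigma,y_\Sigma)=\int_0^1 DG_z\bigl(z_\mu(x_\Sigma,y_\Sigma)\bigr)\,\bigl(z_0(x_\Sigma,y_\Sigma)-z_1(x_\Sigma,y_\Sigma)\bigr)\,d\mu .
\end{equation*}

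Next I would establish that, with $\varrho:=\textup{Im}\sqrt{z^2-m^2}>0$, the bounds
\begin{align*}
  \abs{DG_z(x)} &\leq C\bigl(1+\abs{x}^{-\theta}\bigr)e^{-\varrho\abs{x}},\\
  \abs{D^2G_z(x)} &\leq C\bigl(1+\abs{x}^{-\theta-1}\bigr)e^{-\varrho\abs{x}}
\end{align*}
hold for all $x\in\R^\theta\setminus\{0\}$. These follow by differentiating the explicit formulas \eqref{eq_G_z_2D}--\eqref{eq_G_z_3D}: for $\theta=3$ one only has to differentiate terms of the form $\abs{x}^{-k}e^{i\sqrt{z^2-m^2}\abs{x}}$, and each differentiation raises the order of the singularity at $0$ by one while the polynomial prefactors are absorbed by the exponential weight for $\abs{x}$ bounded away from $0$; for $\theta=2$ one argues in the same way using the standard small- and large-argument asymptotics of the modified Bessel functions $K_0$ and $K_1$, the logarithmic singularity of $K_0$ being harmless because it is controlled by $\abs{x}^{1-\theta}$ near the origin.

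Granting these bounds, assertion~(i) follows by inserting them into the above integral representation and using \eqref{eq_estimate_L}, which gives $C_{A,2}^{-1}L\leq\abs{z_\mu}\leq C_{A,2}L$ for $\mu\in[0,1]$, hence $\abs{z_\mu}^{-\theta}\leq C_{A,2}^{\theta}L^{-\theta}$ and $e^{-\varrho\abs{z_\mu}}\leq e^{-\omega L}$ with $\omega$ as in \eqref{def_omega}:
\begin{equation*}
  \abs{\Delta G_z(x_\Sigma,y_\Sigma)}\leq\abs{z_0-z_1}\sup_{\mu\in[0,1]}\abs{DG_z(z_\mu)}\leq C\varepsilon L\bigl(1+L^{-\theta}\bigr)e^{-\omega L}=C\varepsilon\bigl(L+L^{1-\theta}\bigr)e^{-\omega L}.
\end{equation*}
For assertion~(ii) I would differentiate $x'\mapsto G_z(z_0(x'))-G_z(z_1(x'))$ by the chain rule, abbreviating $z_j(x'):=z_j(x_{\Sigma_l}(x'),y_\Sigma)$ (this map is $C^1$ near every $x'$ with $x_{\Sigma_l}(x')\in\Sigma$, as then $z_0(x'),z_1(x')\neq0$), and using that $\partial_k z_0=\partial_k x_{\Sigma_l}+\varepsilon t\,\partial_k\nu_l$ and $\partial_k z_1=\partial_k x_{\Sigma_l}+\varepsilon(t-s)\,\partial_k\nu_l$ are bounded by Hypothesis~\ref{hypothesis_Sigma}, whereas $\partial_k z_0-\partial_k z_1=\varepsilon s\,\partial_k\nu_l=O(\varepsilon)$. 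Writing
\begin{equation*}
  \frac{d}{dx_k'}\Delta G_z=\bigl[DG_z(z_0)-DG_z(z_1)\bigr]\partial_k z_0+DG_z(z_1)\bigl[\partial_k z_0-\partial_k z_1\bigr],
\end{equation*}
estimating the first bracket by $\abs{z_0-z_1}\sup_{\mu}\abs{D^2G_z(z_\mu)}\leq C\varepsilon L(1+L^{-\theta-1})e^{-\omega L}$, the second summand by $C\varepsilon(1+L^{-\theta})e^{-\omega L}$, and absorbing $1+L^{-\theta}\leq 2(L+L^{-\theta})$, one obtains $\abs{\frac{d}{dx_k'}\Delta G_z}\leq C\varepsilon(L+L^{-\theta})e^{-\omega L}$, which is~(ii).

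The main obstacle is the second step: proving the pointwise bounds on $DG_z$ and $D^2G_z$ with the correct singularity orders $\abs{x}^{-\theta}$ and $\abs{x}^{-\theta-1}$ at the origin and with an exponential weight whose rate matches $\textup{Im}\sqrt{z^2-m^2}$, uniformly in $t,s\in(-1,1)$ and $\varepsilon\in(0,\varepsilon_2)$. Everything else is routine bookkeeping with Lemma~\ref{lem_mean_value}, \eqref{eq_estimate_L}, and the chain rule.
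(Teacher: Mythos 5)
Your proposal follows essentially the same route as the paper: the same estimate $\abs{z_0-z_1}\le C\varepsilon L$, the same mean-value argument along the segment $z_\mu$ against the pointwise bounds $\abs{DG_z(x)}\le C(1+\abs{x}^{-\theta})e^{-\varrho\abs{x}}$ and $\abs{D^2G_z(x)}\le C(1+\abs{x}^{-\theta-1})e^{-\varrho\abs{x}}$, the same chain-rule decomposition for (ii) (the paper's two sums are exactly your two brackets, since $\partial_k z_0-\partial_k z_1=\varepsilon s\,\partial_k\nu_l$), and the same absorption $1+a^{-\theta}\le 2(a+a^{-\theta})$. Your shortcut of deducing $\abs{\nu(x_\Sigma)-\nu(y_\Sigma)}\le C\abs{x_\Sigma-y_\Sigma}$ from the $C^1_b$-extension of $\nu$ is an acceptable alternative to the paper's invocation of Proposition~\ref{prop_iota}~(ii); the one step you flag as an obstacle (the explicit derivative bounds for $G_z$) is precisely the computation the paper carries out by writing $\partial_j G_z$ in closed form and invoking the small- and large-argument asymptotics of $K_0,K_1,K_2$ for $\theta=2$ and direct differentiation for $\theta=3$.
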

\begin{proof}
	Before we prove (i) and (ii) we show useful estimates of the difference $z_0(x_\Sigma,y_\Sigma) - z_1(x_\Sigma,y_\Sigma)$ and $\partial_j G_z$, $j \in \{1,\dots,\theta\}$.
	Since $\varepsilon_2 < \tfrac{\varepsilon_A}{2}$, it follows from Proposition  \ref{prop_iota} (ii)   that
	\begin{equation}\label{eq_diff_G_z_est_1}
	\begin{split}
	|z_0(x_\Sigma,y_\Sigma) - z_1(x_\Sigma,y_\Sigma)| &= |\varepsilon t\nu(x_\Sigma) - \varepsilon s\nu(y_\Sigma) - \varepsilon (t-s)\nu(x_\Sigma)|\\
	&= \varepsilon \abs{s} \abs{\nu(x_\Sigma) - \nu(y_\Sigma)} \\
	% \leq \frac{\varepsilon}{\varepsilon_2} \abs{ \varepsilon_2\nu(x_\Sigma) - \varepsilon_2 \nu(y_\Sigma) } \leq  \frac{\varepsilon}{\varepsilon_2} \big( \abs{x_\Sigma-y_\Sigma} + \big|x_\Sigma + \varepsilon_2\nu(x_\Sigma) - y_\Sigma - \varepsilon_2\nu(y_\Sigma) \big|\big)\\
	&\leq \frac{\varepsilon}{\varepsilon_2} ( \abs{x_\Sigma-y_\Sigma} + \abs{\iota(x_\Sigma,\varepsilon_2) - \iota(y_\Sigma,\varepsilon_2) })\\
	&\leq  \frac{1+C_{A,2}}{\varepsilon_2}  \varepsilon \abs{x_\Sigma-y_\Sigma}\\
	&\leq  \frac{1+C_{A,2}}{\varepsilon_2}  \varepsilon L(x_\Sigma,y_\Sigma)
	\end{split}
	\end{equation}
	for all $x_\Sigma,y_\Sigma \in \Sigma$. Next, we estimate $\partial_j G_z $, $j \in \{1,\dots,\theta\}$. For $\partial_j G_z$ 
	we obtain for  $\theta =2$ and $x\in \R^{2}\setminus\{0\}$ from \eqref{eq_G_z_2D}
	\begin{equation*}
	\begin{split}
	\partial_j G_z(x) =& \frac{\sqrt{ z^2-m^2}}{2\pi} K_1\Bigl(-i \sqrt{ z^2-m^2}\abs{x}\Bigr)\biggl(\frac{\alpha_j }{\abs{x}}- \frac{x_j(\alpha \cdot x)}{\abs{x}^3} \biggr) 
	\\
	&+i\frac{z^2-m^2}{4\pi} \Bigl(K_0\Bigl(-i \sqrt{ z^2-m^2}\abs{x}\Bigr) + K_2\Bigl(-i \sqrt{ z^2-m^2}\abs{x}\Bigr) \Bigr)\frac{x_j(\alpha \cdot x)}{\abs{x}^2} \\
	&+\frac{i \sqrt{ z^2-m^2}}{2\pi} K_1\Bigl(-i \sqrt{ z^2-m^2}\abs{x}\Bigr)\frac{x_j}{\abs{x}}(m\beta +  z I_2),
	\end{split}
	\end{equation*}
	where we used   $K_0' = - K_1$ and $K_1' = -\tfrac{1}{2}(K_0 + K_2)$; cf. \cite[\S 10.29(i)]{DLMF}. 
	For $\theta =3$ and $x \in \R^{3}\setminus\{0\}$ we obtain from \eqref{eq_G_z_3D}
	\begin{equation*}
	\begin{split}
	\partial_j G_z(x) &=  \Biggl(i\Bigl( 1 - i \sqrt{ z^2 -m^2}\abs{x} \Bigr)\biggl(\alpha_j - \frac{2x_j(\alpha \cdot x)}{\abs{x}^2} \biggr) + \sqrt{ z^2 -m^2}\frac{x_j(\alpha \cdot x)}{\abs{x}}\\
	&\phantom{=}+\biggl(  z I_4 + m \beta + i\Big( 1 - i \sqrt{ z^2 -m^2}\abs{x} \Big) \frac{ \alpha \cdot x }{\abs{x}^2}\biggr)\Bigl( i\sqrt{ z^2 -m^2}x_j \abs{x}
		-x_j\Bigr) \Biggr) \\
		&\qquad \qquad \qquad \qquad \qquad \qquad \qquad \qquad \qquad \qquad \qquad \qquad \qquad \cdot\frac{e^{i\sqrt{ z^2 -m^2}  \abs{x}}}{4 \pi \abs{x}^3}.
	\end{split}
	\end{equation*}
	By the well-known asymptotic expansions of the modified Bessel functions from \cite[\S10.25~(ii) \& \S10.30~(i)]{DLMF}, there exists $R > 0$ such that for all $x \in \mathbb{R}^\theta \setminus \{ 0 \}$ with $|x| \leq R$ one has
	\begin{equation*}
	  \Bigl| K_{n}\Bigl(-i\sqrt{z^2-m^2}\abs{x}\Bigr) \Bigr| \leq C \begin{cases} \abs{\log \abs{x}}, & n=0, \\ \abs{x}^{-n}, & n \in \{ 1,2 \}, \end{cases} 
	\end{equation*}
	and for all $x \in \mathbb{R}^\theta \setminus \{ 0 \}$ with $|x| > R$ and $n \in \{ 0, 1, 2 \}$
	\begin{equation*}
	  \Bigl| K_{n}\Bigl(-i\sqrt{z^2-m^2}\abs{x}\Bigr) \Bigr| \leq C e^{-\textup{Im}\sqrt{z^2-m^2}\abs{x}}.
	\end{equation*}
	Using this in the above formulas for $\partial_j G_z$, one concludes that
	\begin{equation}\label{eq_diff_G_z_est_2}
	\begin{split}
	\sup_{ j \in \{1,\dots \, ,\theta\} } \abs{\partial_j G_ z (x)} \leq C (1+\abs{x}^{-\theta} ) e^{-\textup{Im}\sqrt{ z^2-m^2}\abs{x}}, \quad  x \in \R^\theta \setminus \{0\}.
	\end{split}
	\end{equation}
	
	(i) Applying Lemma \ref{lem_mean_value}, \eqref{eq_diff_G_z_est_1}, \eqref{eq_diff_G_z_est_2}, and \eqref{eq_estimate_L} yields
	\begin{equation*}
	\begin{split}
	\abs{\Delta G_ z (x_\Sigma,y_\Sigma)} &=  \abs{ G_ z(z_0(x_\Sigma,y_\Sigma)) -  G_ z(z_1(x_\Sigma,y_\Sigma))}\\
	&\leq C\sup_{\mu \in [0,1], j \in \{1,\dots \, ,\theta\}  } \abs{\partial_j G_z (z_\mu(x_\Sigma,y_\Sigma))}  \abs{z_0(x_\Sigma,y_\Sigma) - z_1(x_\Sigma,y_\Sigma)} \\
	&\leq C\sup_{\mu \in [0,1], j \in \{1,\dots \, ,\theta\} } \abs{\partial_j G_z (z_\mu(x_\Sigma,y_\Sigma))} \varepsilon L(x_\Sigma,y_\Sigma) \\
	&\leq C\sup_{\mu \in [0,1]}(1+\abs{z_\mu(x_\Sigma,y_\Sigma)}^{-\theta} ) e^{-\textup{Im}\sqrt{ z^2-m^2}\abs{z_\mu(x_\Sigma,y_\Sigma)}} \varepsilon L(x_\Sigma,y_\Sigma) \\
	&\leq C \varepsilon (L(x_\Sigma,y_\Sigma)+L(x_\Sigma,y_\Sigma)^{{1-\theta}}) e^{-\omega L(x_\Sigma,y_\Sigma)}
	\end{split}
	\end{equation*}
	for all $ x_\Sigma,y_\Sigma \in \Sigma$,
	where $\omega$ is defined in~\eqref{def_omega} and $C>0$ is a constant which does not depend on $\varepsilon$, $t$, and $s$. Hence, the claim in~(i) is shown.
	
	(ii) For $k \in \{1,\dots,\theta-1\}$, $y_\Sigma \in \Sigma$, and $x' \in x_{\Sigma_l}^{-1}(\Sigma)$ we compute
	\begin{equation*}
	\begin{split}
	&\frac{d}{dx_k'}\Delta G_ z (x_{\Sigma_l}(x'),y_\Sigma)
	= \frac{d}{dx_k'} (G_ z(z_0(x_{\Sigma_l}(x'),y_\Sigma)) - G_ z(z_1(x_{\Sigma_l}(x'),y_\Sigma)) )\\
	&\quad= \sum_{j =1}^{\theta} (\partial_j G_ z)(z_0(x_{\Sigma_l}(x'),y_\Sigma)) \frac{d}{d x_k'}(z_0(x_{\Sigma_l}(x'),y_\Sigma))_j\\
	&\qquad \qquad -\sum_{j =1}^{\theta} (\partial_j G_ z)(z_1(x_{\Sigma_l}(x'),y_\Sigma)) \frac{d}{d x_k'}(z_1(x_{\Sigma_l}(x'),y_\Sigma))_j\\
	%&= \sum_{j =1}^{\theta} \big( (\partial_j G_ z)(z_0(x_{\Sigma_l}(x'),y_\Sigma)) - (\partial_j G_ z)(z_1(x_{\Sigma_l}(x'),y_\Sigma)) \big) \frac{d}{d x_k'}(z_0(x_{\Sigma_l}(x'),y_\Sigma) )_j\\
	%&\phantom{=}+\sum_{j =1}^{\theta} (\partial_j G_ z)(z_1(x_{\Sigma_l}(x'),y_\Sigma)) \frac{d}{d x_k'}\left(z_0(x_{\Sigma_l}(x'),y_\Sigma) - z_1(x_{\Sigma_l}(x'),y_\Sigma)\right)_j\\
	&\quad= \sum_{j =1}^{\theta} ( (\partial_j G_ z)(z_0(x_{\Sigma_l}(x'),y_\Sigma))  - (\partial_j G_ z)(z_1(x_{\Sigma_l}(x'),y_\Sigma)) )\frac{d}{d x_k'}(z_0(x_{\Sigma_l}(x'),y_\Sigma))_j\\
	&\qquad \qquad +\sum_{j =1}^{\theta} \varepsilon s(\partial_j G_ z)(z_1(x_{\Sigma_l}(x'),y_\Sigma)) \frac{d}{d x_k'}(\nu_l(x'))_j,
	\end{split}
	\end{equation*}
	where $\nu_l(x') = \nu(x_{\Sigma_l}(x'))$ was used in the last step.
	To estimate the second sum we use \eqref{eq_estimate_L}, \eqref{eq_diff_G_z_est_2}, $\zeta_l \in C^2_b(\R^{\theta-1};\C^N)$,  
	and $1 + a^{-\theta} \leq 2 (a+ a^{-\theta})$ for $a >0$ and obtain
	\begin{equation*}
	\begin{split}
	\biggl|\sum_{j =1}^{\theta} &\varepsilon s(\partial_j G_ z)(z_1(x_{\Sigma_l}(x'),y_\Sigma)) \frac{d}{d x_k'}(\nu_l(x'))_j\biggr |\\
	&\leq C \varepsilon \sup_{ j \in \{1,\dots ,\theta\}  } \abs{(\partial_j G_ z)(z_1(x_{\Sigma_l}(x'),y_\Sigma))} \norm{D \nu_l}_{L^\infty(\R^{\theta-1};\R^{\theta \times (\theta-1)})}\\
	&\leq C \varepsilon (1+L(x_{\Sigma_l}(x'),y_\Sigma)^{-\theta}) e^{-\omega L(x_{\Sigma_l}(x'),y_\Sigma)} \\
	&\leq C \varepsilon (L(x_{\Sigma_l}(x'),y_\Sigma)+L(x_{\Sigma_l}(x'),y_\Sigma)^{-\theta}) e^{-\omega L(x_{\Sigma_l}(x'),y_\Sigma)}.
	\end{split}
	\end{equation*}
	 For the remaining part given by
	\begin{equation}\label{eq_diff_G_z_est_3}
	\sum_{j =1}^{\theta} ( (\partial_j G_ z)(z_0(x_{\Sigma_l}(x'),y_\Sigma))  - (\partial_j G_ z)(z_1(x_{\Sigma_l}(x'),y_\Sigma)) )\frac{d}{d x_k'}(z_0(x_{\Sigma_l}(x'),y_\Sigma))_j
	\end{equation} 
	we proceed in the same way as in the proof of (i). Using  Lemma \ref{lem_mean_value} and $\zeta_l \in C^2_b(\R^{\theta-1};\R)$ one can show that the absolute value of the expression in \eqref{eq_diff_G_z_est_3} is bounded by the term 
 	\begin{equation*}
 	C \sup_{\mu \in [0,1], \,n,j \in \{1,\dots ,\theta\}} \abs{\partial_n \partial_j G_ z (z_\mu(x_{\Sigma_{l}}(x'),y_\Sigma))} \varepsilon L(x_{\Sigma_{l}}(x'),y_\Sigma).
 	\end{equation*}
 	Moreover, in a similar way as in~\eqref{eq_diff_G_z_est_2} one can prove
 	\begin{equation*}
 	\begin{split}
 	\sup_{ n,j \in \{1,\dots ,\theta\} } \abs{\partial_n \partial_j G_ z (x)} \leq C(1+\abs{x}^{-\theta-1} ) e^{-\textup{Im}\sqrt{ z^2-m^2}\abs{x}},\quad 
 	x \in \R^\theta \setminus \{0\}.
 	\end{split}
 	\end{equation*}
 	Combining these observations  with \eqref{eq_estimate_L} yield the claim in~(ii). 
\end{proof}

To estimate the operator norm of $b_{t,s,\varepsilon}(z)$ in Proposition \ref{prop_diff_b} below we will make use of a partition of unity 
for $\Sigma$ subordinate to $(W_l )_{l\in \{1,\dots,p\}}$ with the additional property that the derivatives are uniformly bounded; the existence of 
such a partition of unity (in the case that $\Sigma$ is unbounded) is shown in the next lemma.

\begin{lemma}\label{lem_part_of_unity} 
	Let $\Sigma \subset \mathbb{R}^\theta$, $\theta  \in \{ 2,3  \}$, satisfy Hypothesis~\ref{hypothesis_Sigma}. Then, there exists a partition of unity $(\varphi_l)_{l \in \{1,\dots,p \}}$ for $\Sigma$ subordinate to the open cover $(W_l)_{l \in \{1,\dots,p\}}$ of $\Sigma$ such that $ \varphi_l \in C^\infty_b(\R^{\theta};\R)$ for all $l \in \{1,\dots,p\}$. 
\end{lemma}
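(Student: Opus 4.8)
The statement is the existence of a smooth partition of unity $(\varphi_l)_{l\in\{1,\dots,p\}}$ for $\Sigma$ subordinate to $(W_l)_{l\in\{1,\dots,p\}}$ with all $\varphi_l\in C^\infty_b(\R^\theta;\R)$, i.e.\ with \emph{uniformly} bounded derivatives of all orders. The only subtlety compared with the standard partition-of-unity construction is this uniform boundedness in the (possibly) unbounded situation; the existence of a partition of unity as such is classical. The plan is to build, for each point of $\Sigma$, a fixed-size bump supported in one of the $W_l$, and to exploit the uniform geometric control furnished by Hypothesis~\ref{hypothesis_Sigma}, in particular the uniform radius $\varepsilon_0$ from item~(ii): every $x\in\Sigma$ has $B(x,\varepsilon_0)$ contained in some $W_l$.

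First I would fix a reference bump: a function $\chi\in\mathcal{D}(\R^\theta;\R)$ with $0\le\chi\le1$, $\chi\equiv1$ on $B(0,\varepsilon_0/4)$ and $\supp\chi\subset B(0,\varepsilon_0/2)$; then for $y\in\R^\theta$ set $\chi_y(x):=\chi(x-y)$, so that all the $\chi_y$ are translates of one fixed function and hence have derivatives bounded independently of $y$. Next I would pick a countable set of centers: using that $\R^\theta$ admits a locally finite cover by balls of radius $\varepsilon_0/4$ with centers $(y_k)_{k\in I}$ such that each point of $\R^\theta$ lies in at most $M=M(\theta)$ of the doubled balls $B(y_k,\varepsilon_0/2)$ (a standard lattice-type construction, uniform in $k$). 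Discard those $k$ for which $B(y_k,\varepsilon_0/4)\cap\Sigma=\emptyset$, and keep the rest, still denoting the index set $I$. For each remaining $k$ choose a point $x_k\in\Sigma$ with $|x_k-y_k|<\varepsilon_0/4$; then $B(y_k,\varepsilon_0/2)\subset B(x_k,\varepsilon_0)\subset W_{l(k)}$ for some $l(k)\in\{1,\dots,p\}$ by Hypothesis~\ref{hypothesis_Sigma}(ii). Thus $\supp\chi_{y_k}\subset W_{l(k)}$. On a neighbourhood of $\Sigma$ (namely the union of the $B(y_k,\varepsilon_0/4)$ over $k\in I$) the sum $\Psi:=\sum_{k\in I}\chi_{y_k}$ is a locally finite sum, satisfies $\Psi\ge1$ there (every point of $\Sigma$ lies in some $B(y_k,\varepsilon_0/4)$ on which the corresponding $\chi_{y_k}\equiv1$), and by the bounded-overlap property $M$ together with the $k$-independent bounds on the derivatives of $\chi_{y_k}$, all derivatives of $\Psi$ are bounded uniformly on that neighbourhood; moreover $\Psi$ is bounded below by $1$ there, so $1/\Psi$ and all its derivatives are bounded as well (quotient rule plus $\Psi\ge1$). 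Set $\psi_k:=\chi_{y_k}/\Psi$ on the neighbourhood of $\Sigma$ (extended by $0$ away from it, which is legitimate since $\chi_{y_k}$ already vanishes outside $B(y_k,\varepsilon_0/2)$, well inside the region where $\Psi\ge1$); then the $\psi_k\in C^\infty_b(\R^\theta;\R)$, $\supp\psi_k\subset W_{l(k)}$, $\sum_{k}\psi_k\equiv1$ on a neighbourhood of $\Sigma$, and the derivatives of the $\psi_k$ are bounded uniformly in $k$.

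Finally I would collapse the countable family to the finite index set $\{1,\dots,p\}$ by grouping: define
\begin{equation*}
  \varphi_l:=\sum_{k\in I:\,l(k)=l}\psi_k,\qquad l\in\{1,\dots,p\}.
\end{equation*}
Each such sum is locally finite and supported in $W_l$, and $\sum_{l=1}^p\varphi_l=\sum_{k\in I}\psi_k\equiv1$ on a neighbourhood of $\Sigma$, in particular on $\Sigma$. It remains to check $\varphi_l\in C^\infty_b(\R^\theta;\R)$: at any point $x$ only at most $M$ of the $\psi_k$ (hence at most $M$ summands of $\varphi_l$) are nonzero, so every derivative of $\varphi_l$ is a sum of at most $M$ terms each bounded by the $k$-uniform bound on the corresponding derivative of $\psi_k$; this gives a global bound on every derivative of $\varphi_l$, independent of $x$. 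Hence $(\varphi_l)_{l\in\{1,\dots,p\}}$ is the desired partition of unity.

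\textbf{Main obstacle.} The only real point requiring care is obtaining the \emph{uniform} bound on all derivatives: this is exactly where Hypothesis~\ref{hypothesis_Sigma}(ii) (the uniform interior-ball-of-radius-$\varepsilon_0$ property) enters, ensuring that all the bumps can be taken as translates of a single fixed function with a fixed support radius and that the overlap multiplicity $M$ is bounded independently of the point; without this one would only get a locally finite, locally bounded family. A secondary bookkeeping point is verifying $\Psi\ge1$ on a genuine open neighbourhood of $\Sigma$ (not merely on $\Sigma$), so that dividing by $\Psi$ is legitimate and produces $C^\infty_b$ functions; this follows because each $\chi_{y_k}$ equals $1$ on the full ball $B(y_k,\varepsilon_0/4)$, which is open, and these balls cover $\Sigma$.
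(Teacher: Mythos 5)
Your overall strategy (fixed bump translated over a uniformly-spaced net, bounded overlap via Hypothesis~\ref{hypothesis_Sigma}~(ii), filtering the centers by intersection with $\Sigma$, assigning each surviving ball to one $W_l$, then grouping into a finite family) is the same idea the paper uses; the difference is that the paper outsources the countable $\mathbb{R}^\theta$-level partition of unity with uniform derivative bounds to Shubin (Appendix~A, Lemmas~1.2 and~1.3 in [S92]) and then only filters and groups, whereas you rebuild it from scratch and renormalize by $\Psi$. That renormalization step is where your proof has a genuine gap.

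Concretely, you define $\Psi=\sum_{k\in I}\chi_{y_k}$ with $I$ the set of \emph{retained} indices (those for which $B(y_k,\varepsilon_0/4)\cap\Sigma\neq\emptyset$) and prove $\Psi\ge1$ on $N:=\bigcup_{k\in I}B(y_k,\varepsilon_0/4)$. You then set $\psi_k=\chi_{y_k}/\Psi$ on $N$, extended by $0$, and justify the extension by asserting that $\supp\chi_{y_k}\subset \overline{B(y_k,\varepsilon_0/2)}$ lies ``well inside the region where $\Psi\ge1$.'' This is false in general: $B(y_k,\varepsilon_0/2)$ is not contained in $N$, because a point $x\in B(y_k,\varepsilon_0/2)\setminus B(y_k,\varepsilon_0/4)$ is covered by \emph{some} ball $B(y_j,\varepsilon_0/4)$ from the full lattice, but there is no reason this $j$ should survive the filter — one only gets $\mathrm{dist}(y_j,\Sigma)<\varepsilon_0$ from the triangle inequality, not $<\varepsilon_0/4$. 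Hence $\Psi$ can tend to $0$ on $\partial(\supp\chi_{y_k})$ while $\chi_{y_k}$ is still nonzero nearby, and the quotient $\chi_{y_k}/\Psi$ need not extend to a bounded $C^\infty$ function, let alone one with uniform derivative bounds. Two clean fixes are available: (a) sum $\Psi$ over \emph{all} lattice indices (so $\Psi\ge1$ on all of $\mathbb{R}^\theta$), and retain $k$ iff $B(y_k,\varepsilon_0/2)\cap\Sigma\neq\emptyset$ (the support radius, not the cover radius), which still gives $B(y_k,\varepsilon_0/2)\subset B(x_k,\varepsilon_0)\subset W_{l(k)}$ and ensures $\sum_{k\ \mathrm{retained}}\psi_k=1$ on $\Sigma$ because any $\chi_{y_j}$ nonvanishing at a point of $\Sigma$ has a retained $j$; or (b) do what the paper does and invoke a ready-made uniform partition of unity for $\mathbb{R}^\theta$ (Shubin), which makes the quotient by $\Psi$ unnecessary and reduces the proof to the filter-and-group step.
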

\begin{proof}
	According to \cite[Appendix A, Lemmas~1.2 and~1.3]{S92} there exists a sequence $(x_n)_{n \in \N} \subset \mathbb{R}^\theta$, $M \in \N$, $0<\delta < \frac{\varepsilon_0}{2}$, and a sequence of real-valued $C^\infty$-functions $(\phi_n)_{n \in \N}$  such that  $(B(x_n,\delta))_{n \in \N}$ is an open cover of $\R^\theta$, $(\phi_n)_{n \in \N}$  is a partition of unity for $\mathbb{R}^\theta$ subordinate
	to this open cover, $\supp \phi_n \subset B(x_n,\delta)$ for all $n \in \N$, every point $x \in \R^\theta$ is contained in at most $M$ of the sets $B(x_n,\delta)$, and the derivatives of the functions $\phi_n$ are uniformly bounded.  Next, we define the set $Y := \{ x_n: B(x_n,\delta) \cap \Sigma \neq \emptyset \}$. Note that for all $x_n \in Y$ there exists $l \in \{1,\dots, p\}$ such that $B(x_n,\delta) \subset W_l $. In fact, since $B(x_n,\delta) \cap \Sigma \neq \emptyset$, there exists $y_\Sigma \in B(x_n,\delta) \cap \Sigma$ and thus, item~(ii) in Hypothesis~\ref{hypothesis_Sigma} implies $B(y_\Sigma,\varepsilon_0) \subset W_l$ for an $l \in \{1,\dots,p\}$. Hence, for any $ y \in B(x_n,\delta)$ one has
	\begin{equation*}
	\abs{y-y_\Sigma} \leq \abs{y -x_n} + \abs{x_n - y_\Sigma} < 2 \delta < \varepsilon_0, 
	\end{equation*}
	which shows $B(x_n,\delta) \subset W_l$. Next, we define  $I_1 := \{ n: x_n \in Y, B(x_n, \delta) \subset W_1 \}$ and for $l \in \{2,\dots,p\}$ we introduce $I_l := \{ n: x_n \in Y, B(x_n, \delta) \subset W_l, B(x_n, \delta) \not\subset  W_k, k \in \{1,\dots, l-1\} \}$. Then, it is not difficult to see that $\varphi_l = 
	\sum_{n \in I_l}\phi_n $
	is a partition of unity having the claimed properties. 
\end{proof}

\begin{proposition}\label{prop_diff_b}
	Let $t \neq s \in (-1,1)$ and $\varepsilon \in (0,\varepsilon_2)$. Then, the operator formally defined by \eqref{eq_def_b_t,s,eps}
	gives rise to a bounded operator $b_{t,s,\varepsilon}(z): L^2(\Sigma;\C^N) \to H^{1/2}(\Sigma;\C^N)$ and there 
	exists $C>0$ which does not depend on $\varepsilon$, $t$, and $s$ such that 
	\begin{equation}\label{aberja}
	\norm{b_{t,s,\varepsilon}( z)}_{L^2(\Sigma;\C^N) \to H^{1/2}(\Sigma;\C^N)} \leq C (\varepsilon + \varepsilon\abs{ \log( \varepsilon\abs{t-s})})^{1/2} \frac{1}{\abs{t-s}^{1/2}}.
	\end{equation}
\end{proposition}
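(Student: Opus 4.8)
The strategy is to exploit the pointwise kernel bounds from Lemma~\ref{lem_diff_G_z} together with a Schur-type (generalized Young) estimate applied in the local coordinates provided by Hypothesis~\ref{hypothesis_Sigma} and the partition of unity from Lemma~\ref{lem_part_of_unity}. Concretely, fix $t\neq s\in(-1,1)$ and $\varepsilon\in(0,\varepsilon_2)$ and set $a:=\varepsilon\lvert t-s\rvert>0$ so that $L(x_\Sigma,y_\Sigma)=\lvert x_\Sigma-y_\Sigma\rvert+a$. First I would prove the estimate
\begin{equation*}
  \norm{b_{t,s,\varepsilon}(z)}_{L^2(\Sigma;\C^N)\to L^2(\Sigma;\C^N)}\le C\bigl(\varepsilon+\varepsilon\lvert\log(\varepsilon\lvert t-s\rvert)\rvert\bigr),
\end{equation*}
which is the $H^0\to H^0$ part of \eqref{aberja} (without the $\lvert t-s\rvert^{-1/2}$ and before taking the square root; note $\varepsilon+\varepsilon\lvert\log a\rvert\le (\varepsilon+\varepsilon\lvert\log a\rvert)^{1/2}\lvert t-s\rvert^{-1/2}\cdot C$ is \emph{not} quite what is needed, so in fact the bound one proves is $\le C(\varepsilon+\varepsilon\lvert\log a\rvert)$ and then one also needs to keep track of the explicit $\varepsilon$-dependence of the kernel to recover the stated square-root form — see the obstacle paragraph). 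By Lemma~\ref{lem_diff_G_z}~(i) the kernel of $b_{t,s,\varepsilon}(z)$ is dominated in absolute value by $C\varepsilon\,(L+L^{1-\theta})e^{-\omega L}$ with $L=\lvert x_\Sigma-y_\Sigma\rvert+a$. Using the Schur test it therefore suffices to bound
\begin{equation*}
  \sup_{x_\Sigma\in\Sigma}\int_\Sigma \varepsilon\bigl(L(x_\Sigma,y_\Sigma)+L(x_\Sigma,y_\Sigma)^{1-\theta}\bigr)e^{-\omega L(x_\Sigma,y_\Sigma)}\,d\sigma(y_\Sigma)
\end{equation*}
and the symmetric integral in $x_\Sigma$. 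Passing to the charts $x_{\Sigma_l}$, inserting the partition of unity $(\varphi_l)$ from Lemma~\ref{lem_part_of_unity}, and using that $x_{\Sigma_l}$ is bi-Lipschitz (Hypothesis~\ref{hypothesis_Sigma}, $\zeta_l\in C_b^2$) so that $d\sigma(y_\Sigma)\simeq dy'$ and $\lvert x_\Sigma-y_\Sigma\rvert\simeq\lvert x'-y'\rvert$, the integral reduces to a standard $(\theta-1)$-dimensional computation $\int_{\R^{\theta-1}}\varepsilon(\lvert w\rvert+a+(\lvert w\rvert+a)^{1-\theta})e^{-\omega(\lvert w\rvert+a)}dw$. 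The term $(\lvert w\rvert+a)^{1-\theta}$ is the only singular one: in polar coordinates $\int_0^\infty (\rho+a)^{1-\theta}e^{-\omega(\rho+a)}\rho^{\theta-2}d\rho\le\int_0^\infty(\rho+a)^{-1}e^{-\omega\rho}d\rho\le C(1+\lvert\log a\rvert)$, which produces the logarithm; the non-singular terms contribute $O(1)$. Multiplying by the prefactor $\varepsilon$ gives $C(\varepsilon+\varepsilon\lvert\log a\rvert)$.

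Next I would prove the corresponding $H^1$-type bound for the derivative of the kernel in the source-point-free variable, i.e.
\begin{equation*}
  \norm{b_{t,s,\varepsilon}(z)}_{L^2(\Sigma;\C^N)\to H^1(\Sigma;\C^N)}\le C\bigl(\varepsilon+\varepsilon\lvert\log(\varepsilon\lvert t-s\rvert)\rvert\bigr)\frac{1}{\varepsilon\lvert t-s\rvert}.
\end{equation*}
Here one uses Lemma~\ref{lem_diff_G_z}~(ii): in the chart $x_{\Sigma_l}$, the derivative $\partial_{x_k'}\Delta G_z(x_{\Sigma_l}(x'),y_\Sigma)$ is dominated by $C\varepsilon(L+L^{-\theta})e^{-\omega L}$. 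The Schur test now requires controlling $\sup_{x'}\int_\Sigma\varepsilon(L+L^{-\theta})e^{-\omega L}d\sigma(y_\Sigma)$ (and symmetrically); the worst term $L^{-\theta}$ gives, after reduction to $\R^{\theta-1}$ and polar coordinates, $\int_0^\infty(\rho+a)^{-\theta}e^{-\omega\rho}\rho^{\theta-2}d\rho\le\int_0^\infty(\rho+a)^{-2}e^{-\omega\rho}d\rho\le Ca^{-1}$, whence the factor $\varepsilon\cdot a^{-1}=(\varepsilon\lvert t-s\rvert)^{-1}$; together with the milder logarithmic terms from the $L^{1-\theta}$-type contributions this yields the displayed $H^1$-bound. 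Since $\norm{\varphi\varphi_l}_{H^1}$ is controlled using the uniform boundedness of $\varphi_l$ and its derivatives (Lemma~\ref{lem_part_of_unity}) and the membership of the transition data in $C_b^2$, all constants are uniform in $t,s,\varepsilon$. Interpolating between the $H^0\to H^0$ and $H^0\to H^1$ bounds via \eqref{eq_interpolation} and \eqref{eq_interpolation_inequality} gives
\begin{equation*}
  \norm{b_{t,s,\varepsilon}(z)}_{0\to1/2}\le C\bigl(\varepsilon+\varepsilon\lvert\log(\varepsilon\lvert t-s\rvert)\rvert\bigr)^{1/2}\frac{1}{\varepsilon^{1/2}\lvert t-s\rvert^{1/2}},
\end{equation*}
and since $\varepsilon<\varepsilon_2$ is bounded this is exactly \eqref{aberja} after absorbing $\varepsilon^{-1/2}$ into a constant — more precisely one keeps the $\varepsilon^{1/2}$ from $(\varepsilon+\varepsilon\lvert\log a\rvert)^{1/2}$ and one $\varepsilon^{-1/2}$ cancels it against the $\varepsilon^{-1/2}$ from the $H^1$-bound, leaving $(\varepsilon+\varepsilon\lvert\log(\varepsilon\lvert t-s\rvert)\rvert)^{1/2}\lvert t-s\rvert^{-1/2}$ as claimed. (Strictly, to get the \emph{outer} $\varepsilon^{1/2}$ rather than $\varepsilon$ one interpolates with interpolation parameter $\tfrac12$, so the $\varepsilon$-power is $\varepsilon^{1/2}\cdot\varepsilon^{-1/2}=\varepsilon^0$ on the offending term — I would check the bookkeeping carefully against the precise form of \eqref{ESTIMATE_B_BAR_B_TILDE_app}.)

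The main obstacle is twofold. First, one must verify that the \emph{same} constant $C$ works for all $t\neq s$ and all $\varepsilon\in(0,\varepsilon_2)$: this is why one cannot simply cite the boundedness of the single-layer-type operators, and why the explicit kernel estimates of Lemma~\ref{lem_diff_G_z} with their uniform constants are indispensable; one has to track that the only places $t,s,\varepsilon$ enter are through $L=\lvert\cdot\rvert+\varepsilon\lvert t-s\rvert$ and the overall $\varepsilon$-prefactor, and that the bi-Lipschitz and $C_b^2$-bounds from Hypothesis~\ref{hypothesis_Sigma} and Lemma~\ref{lem_part_of_unity} are uniform over charts. Second, the bookkeeping of $\varepsilon$-powers in the interpolation step is delicate precisely because the $H^0$-bound carries an extra $\varepsilon$ relative to the $H^1$-bound's $\varepsilon/(\varepsilon\lvert t-s\rvert)$; one must interpolate with exponent $\tfrac12$ and confirm that the resulting power of $\varepsilon$ and of $\lvert t-s\rvert$ matches \eqref{aberja} exactly, and that the logarithm $\lvert\log(\varepsilon\lvert t-s\rvert)\rvert$ — rather than, say, $\lvert\log\varepsilon\rvert+\lvert\log\lvert t-s\rvert\rvert$ — is the form that survives. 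After \eqref{aberja} is established, the remaining step (relegated to Lemma~\ref{lem_Bochner_measuarble} and the text after it) is to check measurability of $(t,s)\mapsto b_{t,s,\varepsilon}(z)$ and to integrate \eqref{aberja} against $dt\,ds$ on $(-1,1)^2$; since $\int_{-1}^1\int_{-1}^1(\varepsilon+\varepsilon\lvert\log(\varepsilon\lvert t-s\rvert)\rvert)\lvert t-s\rvert^{-1}\,dt\,ds<\infty$ with the integral of order $\varepsilon+\varepsilon\lvert\log\varepsilon\rvert$, this yields \eqref{ESTIMATE_B_BAR_B_TILDE_app} via \eqref{eq_diff_B_tilde_bar} and the identification of Bochner $L^2$-spaces in Proposition~\ref{prop_Bochner}.
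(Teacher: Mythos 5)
Your overall plan coincides with the paper's: prove an $L^2 \to L^2$ estimate and an $L^2 \to H^1$ estimate by Cauchy--Schwarz (Schur test) on the kernel bounds from Lemma~\ref{lem_diff_G_z}, reduce to a $(\theta-1)$-dimensional radial integral via the charts $x_{\Sigma_l}$ and the partition of unity, and interpolate with exponent $\tfrac12$ using \eqref{eq_interpolation}. However, your $H^1$ bookkeeping is wrong and the error propagates to a final bound that is strictly weaker than \eqref{aberja}.

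Concretely, for the derivative piece the kernel is $\le C\varepsilon(L+L^{-\theta})e^{-\omega L}$, and \eqref{eq_diff_b_1} with $j=-\theta$ gives $\sup_{x_\Sigma}\int_\Sigma(L+L^{-\theta})e^{-\omega L}\,d\sigma \le C/(\varepsilon\lvert t-s\rvert)$, so the contribution to $\norm{b_{t,s,\varepsilon}(z)}_{L^2\to H^1}$ is $\varepsilon\cdot C/(\varepsilon\lvert t-s\rvert)=C/\lvert t-s\rvert$ — no residual $\varepsilon$ and no logarithm. You write this as "$\varepsilon\cdot a^{-1}=(\varepsilon\lvert t-s\rvert)^{-1}$", an arithmetic slip, and you announce the $H^1$ bound $C(\varepsilon+\varepsilon\lvert\log(\varepsilon\lvert t-s\rvert)\rvert)/(\varepsilon\lvert t-s\rvert)=C(1+\lvert\log(\varepsilon\lvert t-s\rvert)\rvert)/\lvert t-s\rvert$, carrying a spurious logarithm from the lower-order $L^{1-\theta}$-terms. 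The missing ingredient is the elementary inequality $1+\lvert\log b\rvert\le Cb^{-1}$ for $b=\varepsilon\lvert t-s\rvert\in(0,2\varepsilon_2)$, together with $a+a^{1-\theta}\le 2(a+a^{-\theta})$, which the paper uses in Step~3 to absorb the non-derivative contribution (the term $\lvert\varphi_l\, b_{t,s,\varepsilon}(z)\psi\rvert^2$) into the same $C/\lvert t-s\rvert$ scale. Without it, interpolating your two stated bounds (with exponent $\tfrac12$, via \eqref{eq_interpolation_inequality}) gives
\begin{equation*}
\bigl(\varepsilon(1+\lvert\log(\varepsilon\lvert t-s\rvert)\rvert)\bigr)^{1/2}\cdot\Bigl(\tfrac{1+\lvert\log(\varepsilon\lvert t-s\rvert)\rvert}{\lvert t-s\rvert}\Bigr)^{1/2}=\frac{\varepsilon^{1/2}\bigl(1+\lvert\log(\varepsilon\lvert t-s\rvert)\rvert\bigr)}{\lvert t-s\rvert^{1/2}},
\end{equation*}
which exceeds the target by a factor $(1+\lvert\log(\varepsilon\lvert t-s\rvert)\rvert)^{1/2}$; and the alternative bookkeeping you sketch at the end of the obstacle paragraph (claiming the $\varepsilon^{1/2}$ and $\varepsilon^{-1/2}$ cancel) would give a bound independent of $\varepsilon$, which cannot be right since \eqref{aberja} must vanish as $\varepsilon\to 0+$ for the entire scheme to work. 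The correct pair to interpolate is $\norm{b_{t,s,\varepsilon}(z)}_{L^2(\Sigma;\C^N)\to L^2(\Sigma;\C^N)}\le C\varepsilon(1+\lvert\log(\varepsilon\lvert t-s\rvert)\rvert)$ and $\norm{b_{t,s,\varepsilon}(z)}_{L^2(\Sigma;\C^N)\to H^1(\Sigma;\C^N)}\le C/\lvert t-s\rvert$; that is what reproduces \eqref{aberja} exactly.
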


\begin{proof}
	We split this proof into four steps. In \textit{Step~1} we verify the preliminary estimate
	\begin{equation}\label{eq_diff_b_1}
	\begin{split}
	\sup_{x_\Sigma \in \Sigma}\int_\Sigma (L(x_\Sigma,y_\Sigma) + L(x_\Sigma,y_\Sigma)^{j} )e^{-\omega L(x_\Sigma,y_\Sigma)} \, d\sigma(y_\Sigma) &
	\\
	\leq C\begin{cases}
	1+\abs{\log(\varepsilon \abs{t-s})}, & j= 1-\theta, \\
	\frac{1}{\varepsilon \abs{t-s}}, & j = -\theta,
	\end{cases}&
	\end{split}
	\end{equation}
	which will be used in \textit{Step~2} and \textit{Step~3} to obtain bounds for $b_{t,s,\varepsilon}(z)$ 
	viewed as an operator from $L^2(\Sigma;\C^N)$ to $L^2(\Sigma;\C^N)$ and from $L^2(\Sigma;\C^N)$ to $H^1(\Sigma;\C^N)$, respectively. Finally, we conclude with an interpolation argument
	\eqref{aberja} in \textit{Step~4}.
	
	\textit{Step~1.} 
	Let $x_\Sigma \in \Sigma$ and $j \in \{1-\theta,-\theta\}$. Recall that $\Sigma $ satisfies Hypothesis~\ref{hypothesis_Sigma} and let $(\varphi_l)_{l \in \{1,\dots,p\}}$ be the partition of unity from Lemma \ref{lem_part_of_unity}. Using the definition of the boundary integral, we can write
	\begin{equation*}
	\begin{split}
	\int_\Sigma (L(x_\Sigma&,y_\Sigma) + L(x_\Sigma,y_\Sigma)^{j} )e^{-\omega L(x_\Sigma,y_\Sigma)} \, d\sigma(y_\Sigma)\\
	&=\sum_{n=1}^p \int_{x_{\Sigma_n}^{-1}(\Sigma) }( L(x_\Sigma,x_{\Sigma_n}(y'))+ L(x_\Sigma,x_{\Sigma_n}(y'))^{j}) e^{-\omega L(x_\Sigma,x_{\Sigma_n}(y'))}   \\
	&\qquad\qquad\qquad\qquad\qquad\qquad\qquad\qquad\cdot\varphi_n(x_{\Sigma_n}(y'))\sqrt{1+\abs{\nabla \zeta_n(y')}^2}\,dy'.
	\end{split}
	\end{equation*}
	Hence, $0 \leq \varphi_n \leq 1$, $\zeta_n \in C^2_b(\R^{\theta-1};\R)$, and $x_{\Sigma_n}^{-1}(\Sigma)  \subset \R^{\theta-1}$ yield
	\begin{equation*}
	\begin{split}
	\int_\Sigma& (L(x_\Sigma,y_\Sigma)+L(x_\Sigma,y_\Sigma)^{j}) e^{-\omega L(x_\Sigma,y_\Sigma)} \, d\sigma(y_\Sigma) \\
	&\leq C \max_{n \in \{1,\dots,p\}} \int_{\R^{\theta-1}}( L(x_\Sigma,x_{\Sigma_n}(y'))+ L(x_\Sigma,x_{\Sigma_n}(y'))^{j}) e^{-\omega L(x_\Sigma,x_{\Sigma_n}(y'))}\,dy' \\
	&\leq  C \max_{n \in \{1,\dots,p\}} \int_{\R^{\theta-1}}( \omega L(x_\Sigma,x_{\Sigma_n}(y'))+ \omega^j L(x_\Sigma,x_{\Sigma_n}(y'))^{j}) e^{-\omega L(x_\Sigma,x_{\Sigma_n}(y'))}\,dy',\\
	&
	\end{split}
	\end{equation*}
	where $C>0$ does not depend on $x_\Sigma$, $t$, $s$, and $\varepsilon$.
 	Next, let $n \in \{1,\dots,p\}$ and fix $x_n'\in \R^{\theta-1}$ such that  $\abs{ x_\Sigma - x_{\Sigma_n}(x_n') } = \min_{y'\in \R^{\theta-1}}\abs{x_\Sigma - x_{\Sigma_n}(y')}$. With this choice we obtain for all $y' \in \R^{\theta-1}$
	\begin{equation*}
	\begin{split}
	\frac{1}{2} \abs{x_n'- y'}  &\leq  \frac{1}{2} \abs{x_{\Sigma_n}(x_n')-x_{\Sigma_n}(y')} \\
	&\leq  \frac{1}{2}(\abs{x_\Sigma - x_{\Sigma_n}(y')} + \abs{x_\Sigma -x_{\Sigma_n}(x_n')} ) \\
	&\leq
	 \abs{x_\Sigma - x_{\Sigma_n}(y')}.
	 \end{split}
	\end{equation*}
	This implies for any $y' \in \R^{\theta-1}$
	\begin{equation*}
	\omega L(x_\Sigma,x_{\Sigma_n}(y')) = \omega (\abs{x_\Sigma-x_{\Sigma_n}(y')}  + \varepsilon\abs{t-s}) \geq \frac{\omega}{2}\abs{x_n'- y'} + \omega \varepsilon 
	\abs{t-s}.
	\end{equation*}
	Moreover,  $ a \mapsto (a + a^{j})e^{-a}$, $a >0$,  is a monotonically decreasing function and therefore we get with  $\rho(x_n',y') := \frac{\omega}{2}\abs{x_n'- y'}$ 
	\begin{equation*}
	\begin{split}
	\int_\Sigma (L(x_\Sigma,y_\Sigma)+&L(x_\Sigma,y_\Sigma)^{j}) e^{-\omega L(x_\Sigma,y_\Sigma)} \, d\sigma(y_\Sigma) \\
	&\leq C \max_{n \in \{1,\dots,p\}} \int_{\R^{\theta-1}}( (\rho(x_n',y') + \omega \varepsilon\abs{t-s}) \\
	&\qquad \qquad +  (\rho(x_n',y') + \omega \varepsilon \abs{t-s})^{j})  e^{- \rho(x_n',y') - \omega\varepsilon \abs{t-s}}\,dy' \\
	&\leq  C  \int_0^\infty( (\rho+ \omega \varepsilon\abs{t-s})+  (\rho + \omega \varepsilon \abs{t-s})^{j}) e^{- \rho- \omega\varepsilon \abs{t-s}} \rho^{\theta-2}d \rho\\
	%&\leq  C \int_0^\infty \big( (\rho+ \omega \varepsilon\abs{t-s})^{\theta-1} + (\rho+ \omega \varepsilon\abs{t-s})^{j+\theta-2} \big) e^{- \rho- \omega\varepsilon \abs{t-s}}  \, d \rho \\
	&=  C \biggl(  \int_{\omega \varepsilon \abs{t-s}}^\infty \rho^{\theta-1}e^{- \rho} \, d \rho +  \int_{\omega \varepsilon \abs{t-s}}^\infty \rho^{j+\theta-2} e^{- \rho} \, d \rho \biggr) \\
	&\leq C\begin{cases}
	1+\abs{\log(\varepsilon \abs{t-s})}, &  j = 1-\theta, \\
	\frac{1}{\varepsilon \abs{t-s}}, & j = -\theta,
	\end{cases}
	\end{split}
	\end{equation*}
	where $C>0$ does not depend on $x_\Sigma$, $t$, $s$, and $\varepsilon$. This proves \eqref{eq_diff_b_1}.
	
	\textit{Step~2.} 
	In this step we verify the estimate
	\begin{equation}\label{l2bound}
	 \Vert b_{t,s,\varepsilon}( z)\psi\Vert_{L^2(\Sigma;\C^N)}\leq C \varepsilon (1+\abs{\log(\varepsilon\abs{t-s})})\Vert \psi \Vert_{L^2(\Sigma;\C^N)},
	 \quad \psi \in L^2(\Sigma;\C^N).
	\end{equation}
 In fact, with the help of the Cauchy-Schwarz  inequality, Lemma \ref{lem_diff_G_z}~(i), and \eqref{eq_diff_b_1} we obtain
	for $\psi \in L^2(\Sigma;\C^N)$ and $x_\Sigma \in \Sigma$
	\begin{equation}\label{eq_diff_b_2}
		\begin{split}
		\abs{b_{t,s,\varepsilon}( z)\psi (x_\Sigma)}^2 &= \biggl|\int_{\Sigma} \Delta G_ z(x_\Sigma,y_\Sigma) \psi(y_\Sigma) \,d\sigma(y_\Sigma)\biggr|^2 \\
		&\leq \int_{\Sigma} \abs{\Delta G_ z(x_\Sigma,y_\Sigma)} \, d\sigma(y_\Sigma) \int_\Sigma \abs{\Delta G_ z(x_\Sigma,y_\Sigma)}  \abs{\psi(y_\Sigma)}^2 \, d\sigma(y_\Sigma) \\
		&\leq C \varepsilon^{2} \int_\Sigma (L(x_\Sigma,y_\Sigma)+L(x_\Sigma,y_\Sigma)^{{1-\theta}}) e^{-\omega L(x_\Sigma,y_\Sigma)} \, d\sigma(y_\Sigma) \\
		&\qquad  \cdot\int_\Sigma (L(x_\Sigma,y_\Sigma)+L(x_\Sigma,y_\Sigma)^{{1-\theta}}) e^{-\omega L(x_\Sigma,y_\Sigma)}\abs{\psi(y_\Sigma)}^2 \, d\sigma(y_\Sigma)\\
		&\leq C  \varepsilon^{2} (1+\abs{\log(\varepsilon \abs{t-s})})  \int_\Sigma (L(x_\Sigma,y_\Sigma)+L(x_\Sigma,y_\Sigma)^{{1-\theta}}) \\
		&\qquad \qquad \qquad\qquad\qquad\qquad \cdot e^{-\omega L(x_\Sigma,y_\Sigma)}\abs{\psi(y_\Sigma)}^2 \, d\sigma(y_\Sigma).
		\end{split}
	\end{equation}
	Now, Fubini's theorem and \eqref{eq_diff_b_1} show
	\begin{equation*}
		\begin{split}
		 \int_\Sigma&\abs{ b_{t,s,\varepsilon}( z)\psi (x_\Sigma)}^2 \, d\sigma(x_\Sigma)\\
		 &\leq C \varepsilon^{2} (1+\abs{\log(\varepsilon\abs{t-s})}) \\
		 &\quad \cdot\int_\Sigma  \int_\Sigma (L(x_\Sigma,y_\Sigma)+L(x_\Sigma,y_\Sigma)^{{1-\theta}}) e^{-\omega L(x_\Sigma,y_\Sigma)}\abs{\psi(y_\Sigma)}^2 \, d\sigma(y_\Sigma) \, d\sigma(x_\Sigma)\\
		&=C \varepsilon^{2} (1+\abs{\log(\varepsilon\abs{t-s})}) \\
		&\quad \cdot\int_\Sigma\int_\Sigma  (L(x_\Sigma,y_\Sigma)+L(x_\Sigma,y_\Sigma)^{{1-\theta}}) e^{-\omega L(x_\Sigma,y_\Sigma)}\, d\sigma(x_\Sigma) \abs{\psi(y_\Sigma)}^2 \, d\sigma(y_\Sigma) \\
		&\leq C \varepsilon^{2} (1+\abs{\log(\varepsilon\abs{t-s})})^2   \int_\Sigma \abs{\psi(y_\Sigma)}^2 \, d\sigma(y_\Sigma),
		\end{split}
	\end{equation*}
	which yields \eqref{l2bound}.
	
	\textit{Step~3.} Next we prove the estimate
	\begin{equation}\label{h1bound}
	 \Vert b_{t,s,\varepsilon}( z)\psi\Vert_{H^1(\Sigma;\C^N)}\leq 
	 C \frac{1}{\abs{t-s}}\Vert \psi \Vert_{L^2(\Sigma;\C^N)},
	 \quad \psi \in L^2(\Sigma;\C^N).
	\end{equation}
	Let $\psi \in L^2(\Sigma;\C^N)$ and $ x_{\Sigma_l}(x') = x_\Sigma \in \Sigma$ with $x' \in \R^{\theta -1}$. By Lemma~\ref{lem_part_of_unity} the function $\varphi_l$ and its derivatives  are bounded. Thus, with $\widetilde{\varphi}_l := \varphi_l \circ x_{\Sigma_l}$ we have
	\begin{equation*}%\label{eq_diff_b_3}
	\begin{split}
			\biggl|\frac{d}{dx_k'}& (\widetilde{\varphi}_l(x') (b_{t,s,\varepsilon}( z)\psi )(x_{\Sigma_l}(x')))\biggr|^2 \\
			%\leq 2\abs{ \widetilde{\varphi}_l(x')\frac{d}{dx_k'}(b_{t,s,\varepsilon}( z)\psi )(x_{\Sigma_l}(x')) }^2
			%+ 2\abs{ \left(\frac{d}{dx_k'} \widetilde{\varphi}_l(x') \right)b_{t,s,\varepsilon}( z)\psi(x_{\Sigma_l}(x')) }^2 \\
			 &\leq  C \Bigl(\bigl| \frac{d}{dx_k'} (b_{t,s,\varepsilon}( z)\psi )(x_{\Sigma_l}(x')) \bigr|^2 + \abs{  b_{t,s,\varepsilon}( z)\psi (x_{\Sigma_l}(x'))}^2\Bigr).
			 \end{split}
	\end{equation*}
	 Using the dominated convergence theorem and the properties of $\Delta G_z$ stated in Lemma~\ref{lem_diff_G_z} one obtains
	\begin{equation*}
		\frac{d}{dx_k'}(b_{t,s,\varepsilon}( z)\psi )(x_{\Sigma_l}(x'))= \int_{\Sigma} \frac{d}{dx_k'} \Delta G_ z(x_{\Sigma_l}(x'),y_\Sigma) \psi (y_\Sigma)\, d\sigma(y_\Sigma).
	\end{equation*}
	 Hence, we get with the Cauchy Schwarz inequality, Lemma \ref{lem_diff_G_z}~(ii), $x_\Sigma =  x_{\Sigma_l}(x')$, and \eqref{eq_diff_b_1} 
	\begin{equation*}
		\begin{split}
		&\biggl| \frac{d}{dx_k'} (b_{t,s,\varepsilon}( z)\psi )(x_{\Sigma_l}(x')) \biggr|^2  = \biggl|\int_{\Sigma} \frac{d}{dx_k'} \Delta G_ z(x_{\Sigma_l}(x'),y_\Sigma) \psi (y_\Sigma)\, d\sigma(y_\Sigma) \biggr|^2 \\ 
		&\quad\leq \int_{\Sigma} \bigl|\frac{d}{dx_k'}\Delta G_ z(x_{\Sigma_l}(x'),y_\Sigma)\bigr| \, d\sigma(y_\Sigma) \int_\Sigma \bigl|\frac{d}{dx_k'}\Delta G_ z(x_{\Sigma_l}(x'),y_\Sigma)\bigr|  \abs{\psi(y_\Sigma)}^2  d\sigma(y_\Sigma) \\
		&\quad\leq C \varepsilon^{2} \int_\Sigma (L(x_\Sigma,y_\Sigma)+L(x_\Sigma,y_\Sigma)^{-\theta}) e^{-\omega L(x_\Sigma,y_\Sigma)} \, d\sigma(y_\Sigma) \\
		&\qquad \qquad \cdot \int_\Sigma (L(x_\Sigma,y_\Sigma)+L(x_\Sigma,y_\Sigma)^{{-\theta}}) e^{-\omega L(x_\Sigma,y_\Sigma)}\abs{\psi(y_\Sigma)}^2 \, d\sigma(y_\Sigma)\\
		&\quad\leq C \frac{\varepsilon}{\abs{t-s}} \int_\Sigma (L(x_\Sigma,y_\Sigma)+L(x_\Sigma,y_\Sigma)^{{-\theta}}) e^{-\omega L(x_\Sigma,y_\Sigma)}\abs{\psi(y_\Sigma)}^2 \, d\sigma(y_\Sigma).
		\end{split}
	\end{equation*}
 	According to \eqref{eq_diff_b_2} we can estimate
 	\begin{equation*}
 	\begin{split}
  			\abs{ b_{t,s,\varepsilon}( z)\psi (x_{\Sigma_l}(x')) }^2  &\leq C  \varepsilon^{2} (1+\abs{\log(\varepsilon \abs{t-s})})  \\
  			&\cdot\int_\Sigma (L(x_\Sigma,y_\Sigma)+L(x_\Sigma,y_\Sigma)^{{1-\theta}}) e^{-\omega L(x_\Sigma,y_\Sigma)}\abs{\psi(y_\Sigma)}^2\, d\sigma(y_\Sigma), 
  			\end{split}
 	\end{equation*}
 	where $x_\Sigma = x_{\Sigma_l}(x')$.
  	Moreover,  $a+a^{1-\theta} \leq 2(a + a^{-\theta})$ for $a>0$ and $1+\abs{\log(b)} \leq C\tfrac{1}{b}$ for $b \in (0,2\varepsilon_2)$ yields
  		\begin{equation*}
  		\begin{split}
  			| b_{t,s,\varepsilon}( z)&\psi (x_{\Sigma_l}(x')) |^2  \\
  			 &\leq C \frac{\varepsilon}{\abs{t-s}} \int_\Sigma (L(x_\Sigma,y_\Sigma)+L(x_\Sigma,y_\Sigma)^{{-\theta}}) e^{-\omega L(x_\Sigma,y_\Sigma)}\abs{\psi(y_\Sigma)}^2 \, d\sigma(y_\Sigma).
  			 \end{split}
  	\end{equation*}
	Thus, 
	\begin{equation*}
		\begin{split}
		& \int_{x_{\Sigma_l}^{-1}(\Sigma)} \bigl|\frac{d}{dx_k'} \Big( \varphi_l(x_{\Sigma_l}(x')) b_{t,s,\varepsilon}( z)\psi\Big)(x')\bigr|^2 \, dx' \\
		&\quad\leq C \frac{\varepsilon}{\abs{t-s}}  \int_{x_{\Sigma_l}^{-1}(\Sigma)} \int_\Sigma  (L(x_{\Sigma_l}(x'),y_\Sigma)+L(x_{\Sigma_l}(x'),y_\Sigma)^{{-\theta}})\\
		&\qquad \qquad\qquad\qquad\qquad\qquad \cdot e^{-\omega L(x_{\Sigma_l}(x'),y_\Sigma)}\abs{\psi(y_\Sigma)}^2 \, d\sigma(y_\Sigma)  dx'\\
%		&\quad\leq C  \frac{\varepsilon}{\abs{t-s}}  \int_{x_{\Sigma_l}^{-1}(\Sigma)} \int_\Sigma  (L(x_{\Sigma_l}(x'),y_\Sigma)+L(x_{\Sigma_l}(x'),y_\Sigma)^{{-\theta}})  e^{-\omega L(x_{\Sigma_l}(x'),y_\Sigma)}\abs{\psi(y_\Sigma)}^2 \, d\sigma(y_\Sigma) \sqrt{1 +\abs{\nabla \zeta_l(x')}^2}\, dx'\\
		&\quad\leq C \frac{\varepsilon}{\abs{t-s}} \int_\Sigma   \int_{\Sigma}  (L(x_\Sigma,y_\Sigma)+L(x_\Sigma,y_\Sigma)^{{-\theta}}) e^{-\omega L(x_\Sigma,y_\Sigma)}\, \abs{\psi(y_\Sigma)}^2 \, d\sigma(y_\Sigma) d \sigma(x_\Sigma).
		\end{split}
	\end{equation*}
	Therefore, Fubini's theorem and~\eqref{eq_diff_b_1}  yield
	\begin{equation*}
		\int_{x_{\Sigma_l}^{-1}(\Sigma)} \bigl|\frac{d}{dx_k'} \Big( \varphi_l(x_{\Sigma_l}(x')) b_{t,s,\varepsilon}( z)\psi\Big)(x')\bigr|^2 \, dx' \leq C \frac{1}{\abs{t-s}^2}   \norm{\psi}_{L^2(\Sigma;\C^N)}^2.
	\end{equation*}
	 This estimate, the definition of the norm in $H^1(\Sigma;\C^N)$, see
	 Section~\ref{sec_geometry}, and \eqref{l2bound} imply \eqref{h1bound}.
	
	\textit{Step~4.}
	 By combining $H^{1/2}(\Sigma;\C^N) = [L^2(\Sigma;\C^N),H^{1}(\Sigma;\C^N) ]_{1/2}$, see \eqref{eq_interpolation}, and   $L^2(\Sigma;\C^N) = [L^2(\Sigma;\C^N),L^2(\Sigma;\C^N)]_{1/2}$
	 we conclude from the bounds
	 \eqref{l2bound} and \eqref{h1bound} together with
	 \eqref{eq_interpolation_inequality} that
	\begin{equation*}
	\begin{split}
	\|b_{t,s,\varepsilon}( z) &\|_{L^2(\Sigma;\C^N) \to H^{1/2}(\Sigma;\C^N)} \\
	&\leq C \norm{b_{t,s,\varepsilon}( z)}_{L^2(\Sigma;\C^N) \to L^2(\Sigma;\C^N)}^{1/2}  \norm{b_{t,s,\varepsilon}( z)}_{L^2(\Sigma;\C^N) \to H^1(\Sigma;\C^N)}^{1/2}\\
	&\leq  C (\varepsilon +\varepsilon\abs{\log(\varepsilon \abs{t-s })})^{1/2} \frac{1}{\abs{t-s}^{1/2}}.
	\end{split}
	\end{equation*}
	This completes the proof of Proposition~\ref{prop_diff_b}.
\end{proof}

%\begin{proposition}\label{prop_diff_B}
%	Let $r \in [0,1)$ and $\overline{B}_\varepsilon(z)$ as well as $\widetilde{B}_\varepsilon(z)$ be defined by \eqref{def_B_bar} and \eqref{def_B_tilde}, respectively. Then, $\widetilde{B}_\varepsilon( z) - \overline{B}_\varepsilon(z)$ can be extended to an bounded operator from $L^2\big((-1,1);L^2(\Sigma;\C^N)\big)$ to $L^2\big((-1,1);H^r(\Sigma;\C^N)\big)$ and there holds
%	\begin{equation*}
%		\norm{\widetilde{B}_\varepsilon( z) - \overline{B}_\varepsilon(z)}_{0 \to r} \leq C (\varepsilon \abs{\log(\varepsilon)})^{1-r}.
%	\end{equation*}
%\end{proposition}

\begin{lemma}\label{lem_Bochner_measuarble}
	Let  $\varepsilon \in (0,\varepsilon_A)$. Then, the operator-valued function   
	\begin{equation*}
			F: (-1,1)^2 \to \mathcal{L}(L^2(\Sigma;\C^N),H^{1/2}(\Sigma;\C^N)), \qquad
			F(t,s) =\begin{cases}
				b_{t,s,\varepsilon}(z),		& \textup{if } t \neq s,\\
				0, 	&\textup{if } t=s,		
			\end{cases} 
	\end{equation*}
	is measurable.
\end{lemma}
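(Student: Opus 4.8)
The plan is to reduce the measurability of $F$ to the weak measurability notion of Definition~\ref{def_Bochner_measurable}, i.e.\ to show that $(-1,1)^2 \ni (t,s) \mapsto F(t,s)\psi \in H^{1/2}(\Sigma;\C^N)$ is measurable for every fixed $\psi \in L^2(\Sigma;\C^N)$. Since $\mathcal{L}(L^2(\Sigma;\C^N),H^{1/2}(\Sigma;\C^N))$ acts between separable Hilbert spaces, this is exactly the definition of measurability required there, and it is most naturally checked by exhibiting $(t,s)\mapsto F(t,s)\psi$ as a pointwise (a.e.) limit of continuous functions. First I would fix $\psi\in L^2(\Sigma;\C^N)$ and note that the diagonal $\{t=s\}$ has two-dimensional Lebesgue measure zero, so it suffices to prove that $(t,s)\mapsto b_{t,s,\varepsilon}(z)\psi$ is measurable on $(-1,1)^2\setminus\{t=s\}$, where by Proposition~\ref{prop_diff_b} it is a well-defined element of $H^{1/2}(\Sigma;\C^N)$ with norm controlled by \eqref{aberja}.

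The key step is a continuity argument for the map $(t,s)\mapsto b_{t,s,\varepsilon}(z)\psi$ away from the diagonal. From the integral representation \eqref{eq_def_b_t,s,eps} and the definition of $\Delta G_z$ in \eqref{eq_diff_G_z}, for fixed $x_\Sigma,y_\Sigma\in\Sigma$ with $x_\Sigma\neq y_\Sigma$ the map $(t,s)\mapsto \Delta G_z(x_\Sigma,y_\Sigma,t,s)$ is continuous, because $G_z$ is smooth on $\R^\theta\setminus\{0\}$ (see \eqref{eq_G_z_2D}--\eqref{eq_G_z_3D}) and, by Proposition~\ref{prop_iota}~(ii) together with $\varepsilon<\varepsilon_A$, the arguments $z_0(x_\Sigma,y_\Sigma,t,s)$ and $z_1(x_\Sigma,y_\Sigma,t,s)$ stay in a region where $|z_\mu|\geq C_{A,2}^{-1}L(x_\Sigma,y_\Sigma,t,s)>0$ as long as $(t,s)$ ranges over a small neighbourhood of a point with $t\neq s$. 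The uniform pointwise bound from Lemma~\ref{lem_diff_G_z}~(i), namely $|\Delta G_z(x_\Sigma,y_\Sigma,t,s)|\leq C\varepsilon\bigl(L+L^{1-\theta}\bigr)e^{-\omega L}$ with $L=L(x_\Sigma,y_\Sigma,t,s)\geq \varepsilon|t-s|$, gives (via \textit{Step~1} of the proof of Proposition~\ref{prop_diff_b} and the Cauchy--Schwarz estimate \eqref{eq_diff_b_2}) an $L^2$-dominating function for the integrand that is locally uniform in $(t,s)$ on any compact subset of $\{t\neq s\}$. Hence dominated convergence shows $(t,s)\mapsto b_{t,s,\varepsilon}(z)\psi$ is continuous from $(-1,1)^2\setminus\{t=s\}$ into $L^2(\Sigma;\C^N)$. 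The same reasoning applied to the $x_k'$-derivatives, using Lemma~\ref{lem_diff_G_z}~(ii) and \textit{Step~3} of the proof of Proposition~\ref{prop_diff_b}, shows continuity into $H^1(\Sigma;\C^N)$ when one localizes with the partition of unity $(\varphi_l)$, and by interpolation (or directly, by continuity of each localized $H^1$-component together with boundedness in $H^{1/2}$) one obtains continuity into $H^{1/2}(\Sigma;\C^N)$ on $\{t\neq s\}$.

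Having this, the conclusion is routine: the function $(t,s)\mapsto F(t,s)\psi$ is continuous on the open full-measure set $(-1,1)^2\setminus\{t=s\}$ and vanishes on the null set $\{t=s\}$, hence it is measurable (e.g.\ it is the a.e.\ limit of the continuous truncations $\chi_{\{|t-s|>1/n\}}(t,s)\,b_{t,s,\varepsilon}(z)\psi$), so $F$ is weakly measurable; by Pettis' theorem, cited in Section~\ref{sec_Bochner}, weak and strong measurability coincide here, which is precisely the measurability required by Definition~\ref{def_Bochner_measurable}. The main obstacle is the continuity step near, but not on, the diagonal: one must make sure that the singularity of $G_z$ at the origin, which is approached as $x_\Sigma\to y_\Sigma$, does not interfere — this is handled cleanly by the lower bound $L\geq\varepsilon|t-s|$ and the integrable singularity estimates of Lemma~\ref{lem_diff_G_z} and \textit{Step~1} of Proposition~\ref{prop_diff_b}, which together provide a locally uniform dominating function on compact subsets of $\{t\neq s\}$. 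Once that domination is in place, everything reduces to standard dominated-convergence bookkeeping and the quoted interpolation and Pettis results.
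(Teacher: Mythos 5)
Your argument is correct, but it takes a different route to the same conclusion, and it is worth comparing the two. The paper's proof establishes only \emph{weak} continuity of $(t,s)\mapsto b_{t,s,\varepsilon}(z)\varphi$: it shows by dominated convergence that $(b_{t_n,s_n,\varepsilon}(z)\varphi)_{n}$ converges weakly to $b_{t,s,\varepsilon}(z)\varphi$ in $L^2(\Sigma;\C^N)$, then combines the $H^{1/2}$-bound from Proposition~\ref{prop_diff_b} with a uniqueness-of-weak-limits subsequence argument to upgrade to weak convergence in $H^{1/2}(\Sigma;\C^N)$, which gives measurability of the scalar pairing $\langle F(\cdot,\cdot)\varphi,\psi\rangle_{H^{1/2}}$ directly. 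You instead prove \emph{strong} continuity: a nested dominated-convergence argument gives convergence in $L^2(\Sigma;\C^N)$, and then either a parallel argument on the derivatives (using Lemma~\ref{lem_diff_G_z}~(ii) and the partition of unity) or simply the $H^1$-boundedness from Step~3 of Proposition~\ref{prop_diff_b} combined with the interpolation inequality $\|g\|_{H^{1/2}}\leq C\|g\|_{L^2}^{1/2}\|g\|_{H^1}^{1/2}$ yields convergence in $H^{1/2}(\Sigma;\C^N)$. Both are valid; your route avoids the weak-compactness subsequence argument entirely and actually produces a stronger intermediate statement (strong rather than weak continuity), at the cost of a more careful double application of dominated convergence (inner integral over $y_\Sigma$ for a.e.\ $x_\Sigma$, then outer integral over $x_\Sigma$). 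The one place where you should be as explicit as the paper is in the domination step: to get a dominating function for $|\Delta G_z(x_\Sigma,y_\Sigma,t_n,s_n)|$ that is independent of $n$, one needs a comparison of the form $L(x_\Sigma,y_\Sigma,t_n,s_n)\asymp L(x_\Sigma,y_\Sigma,t,s)$ uniformly in $n$ (the paper arranges this by assuming $\tfrac{3}{2}(t_n-s_n)>t-s>\tfrac{1}{2}(t_n-s_n)$); your phrase ``locally uniform on compact subsets of $\{t\neq s\}$'' covers the same ground but should be unpacked into such an inequality in a final write-up.
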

\begin{proof}
	It suffices to prove that $\spv{F(\cdot,\cdot)\varphi}{\psi}_{H^{1/2}(\Sigma;\C^N)}$ is measurable on $(-1,1)^2$ for all $\varphi \in L^2(\Sigma;\C^N)$ and $\psi \in H^{1/2}(\Sigma;\C^N)$; cf. Definition~\ref{def_Bochner_measurable}. For this, we prove that the function $\spv{F(\cdot,\cdot)\varphi}{\psi}_{H^{1/2}(\Sigma;\C^N)}$ is continuous on $\mathcal{O} := (-1,1)^2 \setminus \{(t,t):  t \in (-1,1) \}$. Let $(t,s) \in \mathcal{O}$ be fixed and consider the case $t > s $.
	Choose a  sequence $((t_n,s_n))_{n \in \N} \subset \mathcal{O}$ which converges to $(t,s)$. It is no restriction to  assume that $\tfrac{3}{2} (t_n -s_n) > t-s > \frac{1}{2} (t_n -s_n)$ holds for all $n \in \N$. Then, 
	\begin{equation*}
	  \begin{split}
  	  L(x_\Sigma,y_\Sigma,t_n,s_n) &= |x_\Sigma - y_\Sigma| + \varepsilon |t_n - s_n| < |x_\Sigma - y_\Sigma| + 2 \varepsilon |t - s| 
  	  \leq 2 L(x_\Sigma,y_\Sigma,t,s)
  	  \end{split}
	\end{equation*}
	and in a similar way
	\begin{equation*}
	  L(x_\Sigma,y_\Sigma,t_n,s_n)^{1 - \theta} \leq \left( \frac{2}{3} \right)^{1-\theta} L(x_\Sigma,y_\Sigma,t,s)^{1 - \theta}.
	\end{equation*}
	Moreover, as $|t_n - s_n| \geq 0 = |t - s| - |t - s| \geq |t - s| - 2$, one has 
	\begin{equation*}
	  e^{-\omega L(x_\Sigma,y_\Sigma,t_n,s_n)} \leq e^{-\omega (|x_\Sigma - y_\Sigma| + \varepsilon |t - s|) + 2 \omega \varepsilon} \leq e^{2 \omega \varepsilon_A} e^{-\omega L(x_\Sigma,y_\Sigma,t,s)}.
	\end{equation*}
	Combining Lemma~\ref{lem_diff_G_z}~(i) with the latter three displayed formulas yields the existence of a constant $C >0$ which is independent of $x_\Sigma, y_\Sigma, t, s, t_n, s_n$, and $\varepsilon$ such that 
	\begin{equation} \label{equation_Delta_s_n_t_n}
	  \begin{split}
  	  | \Delta G_z(x_\Sigma,y_\Sigma,t_n,s_n) | &\leq C\varepsilon \bigl(L(x_\Sigma,y_\Sigma,t_n,s_n)+L(x_\Sigma,y_\Sigma,t_n,s_n)^{{1-\theta}}\bigr) e^{-\omega L(x_\Sigma,y_\Sigma,t_n,s_n)} \\
  	  &\leq C\varepsilon \bigl(L(x_\Sigma,y_\Sigma,t,s)+L(x_\Sigma,y_\Sigma,t,s)^{{1-\theta}}\bigr) e^{-\omega L(x_\Sigma,y_\Sigma,t,s)}\\
  	  &=: 	M(x_\Sigma,y_\Sigma).
  	  \end{split}
	\end{equation}
	We claim that $(b_{t_n,s_n,\varepsilon}(z)\varphi)_{n \in \N}$ converges weakly to $b_{t,s,\varepsilon}(z)\varphi$ in $L^2(\Sigma;\C^N)$. Let $\gamma \in L^2(\Sigma;\C^N)$ be fixed, then
	\begin{equation*}
	\begin{split}
		&\spv{(b_{t_n,s_n,\varepsilon}(z)- b_{t,s,\varepsilon}(z))\varphi}{\gamma}_{L^2(\Sigma;\C^N)} \\
		&\quad= \int_\Sigma \int_\Sigma \spf{(\Delta G_z(x_\Sigma,y_\Sigma,t_n,s_n)-\Delta G_z(x_\Sigma,y_\Sigma,t,s)) \varphi(y_\Sigma )}{\gamma(x_\Sigma)} \,d\sigma(y_\Sigma) \,d \sigma(x_\Sigma).
		\end{split}
	\end{equation*}
	The integrand on the right hand side converges pointwise almost everywhere to zero, as $n \to \infty$. Moreover,~\eqref{equation_Delta_s_n_t_n} shows that the integrand is bounded by $M(x_\Sigma,y_\Sigma) \abs{\varphi(y_\Sigma)} \abs{\gamma(x_\Sigma)}$.
	Applying the Cauchy-Schwarz inequality twice, Fubini's theorem, and the symmetry relation $M(x_\Sigma,y_\Sigma) = M(y_\Sigma, x_\Sigma)$ yields
	\begin{equation*}
		\begin{split}
			\bigg(\int_\Sigma &\int _\Sigma  M(x_\Sigma,y_\Sigma) \abs{\varphi(y_\Sigma)} \abs{\gamma(x_\Sigma)} \, d\sigma(y_\Sigma) \, d\sigma(x_\Sigma)\bigg)^2 \\
			&\leq\int_\Sigma \bigg(\int _\Sigma  M(x_\Sigma,y_\Sigma) \abs{\varphi(y_\Sigma)} \, d\sigma(y_\Sigma) \bigg)^2 \, d \sigma(x_\Sigma)\norm{\gamma}_{L^2(\Sigma;\C^N)}^2\\
			&\leq\int_\Sigma \int _\Sigma  M(x_\Sigma,y_\Sigma) \abs{\varphi(y_\Sigma)}^2 \, d\sigma(y_\Sigma) \int _\Sigma  M(x_\Sigma,y_\Sigma)  \, d \sigma(y_\Sigma) \, d \sigma(x_\Sigma)\norm{\gamma}_{L^2(\Sigma;\C^N)}^2 \\
			&\leq C \bigg(\sup_{x_\Sigma \in \Sigma} \int_\Sigma M(x_\Sigma,y_\Sigma) \, d\sigma(y_\Sigma) \bigg)^2 \norm{\varphi}_{L^2(\Sigma;\C^N)}^2  \norm{\gamma}_{L^2(\Sigma;\C^N)}^2 .
		\end{split}
	\end{equation*}
	Furthermore, with \eqref{eq_diff_b_1} we see that $\sup_{x_\Sigma \in \Sigma} \int_\Sigma M(x_\Sigma,y_\Sigma) \, d\sigma(y_\Sigma) \leq C \varepsilon (1+\abs{\log (\varepsilon \abs{t-s})}) < \infty$.
	Hence, dominated convergence yields $$\spv{(b_{t_n,s_n,\varepsilon}(z)- b_{t,s,\varepsilon}(z))\varphi}{\gamma}_{L^2(\Sigma;\C^N)} \to 0$$ for $n \to \infty$.  Since, $\gamma \in L^2(\Sigma;\C^N)$ was arbitrary, we conclude that $(b_{t_n,s_n,\varepsilon}(z) \varphi)_{n \in \N}$ converges weakly to $b_{t,s,\varepsilon}(z) \varphi$ in $L^2(\Sigma;\C^N)$. 
	
	Next, we show that $(b_{t_n,s_n,\varepsilon}(z)\varphi)_{n \in \N}$ converges weakly to $b_{t,s,\varepsilon}(z)\varphi$ in the space $H^{1/2}(\Sigma; \mathbb{C}^N)$, which proves the claimed continuity.	
	For this, we note that Lemma~\ref{prop_diff_b} and $\tfrac{3}{2} (t_n -s_n) > t-s >0$ imply that $(b_{t_n,s_n,\varepsilon}(z)\varphi)_{n \in \N}$ is a bounded sequence in $H^{1/2}(\Sigma;\C^N)$. Let us assume that $(b_{t_n,s_n,\varepsilon}(z) \varphi)_{n \in \N}$ does not converge weakly to $b_{t,s,\varepsilon}(z) \varphi$ in $H^{1/2}(\Sigma;\C^N)$. Then, the $H^{1/2}$-boundedness implies that there exists a weakly convergent subsequence $(b_{t_{n_k},s_{n_k},\varepsilon}(z) \varphi )_{k \in \N}$ which converges to some $\varphi' \in H^{1/2}(\Sigma;\C^N)$ with $\varphi' \neq b_{t,s,\varepsilon}(z) \varphi$. However, in this case $(b_{t_{n_k},s_{n_k},\varepsilon} (z)\varphi)_{k \in \N}$ would also converge weakly to $\varphi'$ in $L^2(\Sigma;\C^N)$ which contradicts the first part of the proof. Hence, $(b_{t_n,s_n,\varepsilon}(z) \varphi)_{n \in \N}$ converges weakly to $b_{t,s,\varepsilon}(z) \varphi$ in $H^{1/2}(\Sigma; \mathbb{C}^N)$ and therefore, $(\spv{b_{t_n,s_n,\varepsilon}(z) \varphi}{\psi}_{H^{1/2}(\Sigma;\C^N)})_{n \in \N}$ converges to $\spv{b_{t,s,\varepsilon}(z)\varphi}{\psi}_{H^{1/2}(\Sigma;\C^N)}$. 
\end{proof}

After all these preliminary considerations we are prepared to prove \eqref{ESTIMATE_B_BAR_B_TILDE}.
\begin{proof}[Proof of \eqref{ESTIMATE_B_BAR_B_TILDE}]
	Let $f \in L^2((-1,1);L^2(\Sigma;\C^N))$ be fixed. Using Proposition \ref{prop_diff_b}, the Cauchy-Schwarz inequality, and Fubini's theorem we obtain   
	\begin{equation*}
		\begin{split}
			\int_{-1}^1& \biggl(\int_{-1}^1 \norm{b_{t,s,\varepsilon}( z) f(s)}_{H^{1/2}
				(\Sigma;\C^N)} \,ds\biggr)^2 \, dt \\
				&\leq    C  \int_{-1}^1\biggl(\int_{-1}^1(\varepsilon + \varepsilon\abs{ \log(\varepsilon\abs{t-s})})^{1/2} \frac{1}{\abs{t-s}^{1/2}}\norm{f(s)}_{L^2(\Sigma;\C^N)} \,ds \biggr)^2  \, dt\\
			&\leq    C  \int_{-1}^1\biggl(\int_{-1}^1(\varepsilon + \varepsilon\abs{ \log(\varepsilon\abs{t-s})})^{1/2} \frac{1}{\abs{t-s}^{1/2}}  \,ds  \\
			&\qquad \cdot\int_{-1}^1 (\varepsilon + \varepsilon\abs{ \log(\varepsilon\abs{t-s})})^{1/2} \frac{1}{\abs{t-s}^{1/2}} \norm{f(s)}_{L^2(\Sigma;\C^N)}^2 \,ds   \biggr) dt\\
			&\leq    C  \sup_{s \in (-1, 1)} \biggl(\int_{-1}^1(\varepsilon + \varepsilon\abs{ \log(\varepsilon\abs{t-s})})^{1/2} \frac{1}{\abs{t-s}^{1/2}} \,dt \biggr)^2 \int_{-1}^{1}\norm{f(s)}_{L^2(\Sigma;\C^N)}^2 \,ds  \\
			&\leq    C \varepsilon (1+\abs{\log(\varepsilon)}) \norm{f}_0^2.
		\end{split}
	\end{equation*}
%	where we used in the last estimate that for $\varepsilon>0$ sufficiently small one has
%	\begin{equation*}
%		\begin{split}
%			\sup_{s \in (-1, 1)} &\int_{-1}^1(\varepsilon + \varepsilon \abs{\log(\varepsilon\abs{t-s})})^{1/2} \frac{1}{\abs{t-s}^{1/2}} \,dt= \sup_{t \in (-1, 1)} \int_{-1}^1(\varepsilon + \varepsilon \abs{\log(\varepsilon\abs{t-s})})^{1/2}  \frac{1}{\abs{t-s}^{1/2}} \,ds  \\
%			&\leq \int_{-2}^2 (\varepsilon + \varepsilon \abs{\log(\varepsilon\abs{\tau})})^{1/2} \frac{1}{\abs{\tau}^{1/2}} \,d\tau \leq C (\varepsilon + \varepsilon \abs{\log(\varepsilon)})^{1/2}   \int_{-2}^2(1 + \abs{ \log (\abs{\tau  })})^{1/2} \frac{1}{\abs{\tau}^{1/2}} \,d\tau\\
%			&\leq C (\varepsilon + \varepsilon \abs{\log(\varepsilon)})^{1/2} .
%		\end{split}
%	\end{equation*}
	Combined with Lemma \ref{lem_Bochner_measuarble} and the discussion below Definition \ref{def_Bochner_measurable} this shows that 
	$\int_{-1}^1 b_{t,s,\varepsilon}(z) f(s)\,ds \in H^{1/2}(\Sigma;\C^N)$ exists for a.e. $t \in (-1,1)$ and  that the function $t\mapsto \int_{-1}^1 b_{t,s,\varepsilon}(z)f(s) \,ds \in H^{1/2}(\Sigma;\C^N) $ is measurable. Hence, the mapping 
    \begin{equation*}
    \begin{split}
        \mathcal{B}&: L^2((-1,1);L^2(\Sigma;\C^N)) \to L^2((-1,1);H^{1/2}(\Sigma;\C^N)),\\
        \mathcal{B} f(t) &:= \int_{-1}^1 b_{t,s,\varepsilon}(z)f(s) \, ds,
        \end{split}
    \end{equation*}
    is well-defined, bounded, and $\| \mathcal{B} \|_{0 \to 1/2} \leq C (\varepsilon + \varepsilon \abs{\log(\varepsilon)})^{1/2}$. By  \eqref{eq_diff_B_tilde_bar}, \eqref{eq_def_b_t,s,eps}, and Proposition~\ref{prop_Bochner}~(iii) we also  have  
	$$(\widetilde{B}_\varepsilon( z) - \overline{B}_\varepsilon(z))f(t) = \int_{-1}^1 b_{t,s,\varepsilon}(z)f(s) \, ds = \mathcal{B} f(t) $$
	for all $f \in L^2((-1,1);H^{1/2}(\Sigma;\C^N))$. Hence, $\widetilde{B}_\varepsilon( z) - \overline{B}_\varepsilon(z)$ can be extended to a bounded operator from $L^2((-1,1);L^2(\Sigma;\C^N))$ to $L^2((-1,1);H^{1/2}(\Sigma;\C^N))$ and~\eqref{ESTIMATE_B_BAR_B_TILDE} is true, i.e. all claims are shown.
\end{proof}

\section{Properties of $\Phi_{\lowercase{z}}$ and $\mathcal{C}_{\lowercase{z}}$} \label{appendix_Phi_C}

This appendix is devoted to the proofs of Proposition~\ref{prop_Phi_z} and Proposition~\ref{proposition_C_z}, which are inspired by the abstract notion of boundary triples, their $\gamma$-fields and Weyl functions from extension theory of symmetric operators in Hilbert spaces;
cf. \cite{BHS20,BL07,BL12,BGP08,DHMS06,DM91,DM95}. Here
we follow a similar strategy as in \cite{BHSS22}, where similar results for bounded $\Sigma$ were shown.  We also refer to \cite{BH20, BHM20, BHOP20, Ben21, OV16} for related considerations  in the context of two and three-dimensional Dirac operators  and to \cite{BMP17, CMP13} for one-dimensional Dirac operators,

Throughout this appendix, let $m \in \R$ and let $\Sigma \subset \mathbb{R}^\theta$ satisfy Hypothesis~\ref{hypothesis_Sigma}. We define in $L^2(\mathbb{R}^\theta; \mathbb{C}^N)$ the operator $T$ by
	\begin{equation*} %\label{def_T}
	\begin{split}
	Tu &:= (- i (\alpha \cdot \nabla) + m \beta ) u_+ \oplus ( -i(\alpha \cdot \nabla) + m \beta ) u_-, \\
	\dom T&:= H^1(\R^\theta \setminus \Sigma;\C^N)= H^1(\Omega_+;\C^N)  \oplus H^1(\Omega_-;\C^N),
	\end{split}
	\end{equation*}
and the mappings $\Gamma_0, \Gamma_1: \dom T \to H^{1/2}(\Sigma;\C^N)$ by
\begin{equation*} %\label{def_Gamma}
	\begin{aligned}
  	\Gamma_0 u &:= i (\alpha \cdot \nu)( \tr^+ u_+ - \tr^- u_- ) \quad \text{and} \quad & \Gamma_1 u &:= \frac{1}{2}( \tr^+ u_+  + \tr^- u_-  ).
	\end{aligned}
\end{equation*}
Recall that $\nu$ is pointing outwards of $\Omega_+$. For $u_\pm, v_\pm \in H^1(\Omega_\pm; \mathbb{C}^N)$ 
integration by parts implies 
\begin{equation*}
  ((\alpha \cdot \nabla) u_\pm, v_\pm)_{L^2(\Omega_\pm; \mathbb{C}^N)} = -(u_\pm, (\alpha \cdot \nabla) v_\pm)_{L^2(\Omega_\pm; \mathbb{C}^N)} \pm ((\alpha \cdot \nu) u_\pm, v_\pm)_{L^2(\Sigma; \mathbb{C}^N)}
\end{equation*}
and one finds in the same way as, e.g., in the proof of \cite[Theorem 4.3 (i)]{BHSS22} that
\begin{equation}\label{eq_Greens_formula}
	\spv{Tu}{v}_{L^2(\R^\theta;\C^N)} - \spv{u}{Tv}_{L^2(\R^\theta;\C^N)}  = \spv{\Gamma_1u}{ \Gamma_0v}_{L^2(\Sigma;\C^N)}  - \spv{\Gamma_0u}{\Gamma_1v}_{L^2(\Sigma;\C^N)} 
\end{equation}
holds for all $u,v \in \dom T$. 
Note that~\eqref{eq_Greens_formula} implies that $T \upharpoonright \ker \Gamma_0$ is symmetric. Furthermore, $H^1(\mathbb{R}^\theta; \mathbb{C}^N) \subset \ker \Gamma_0$. Since the free Dirac operator $H$ in~\eqref{def_free_op} is self-adjoint, this implies that
\begin{equation} \label{equation_ker_Gamma_0}
  T \upharpoonright \ker \Gamma_0  = H.
\end{equation}
Now, we are prepared to prove Proposition~\ref{prop_Phi_z} and Proposition~\ref{proposition_C_z}.

\begin{proof}[Proof of Proposition~\ref{prop_Phi_z}]
	First, Fubini's theorem shows the representation in~\eqref{equation_Phi_star}. Hence, the mapping properties of $\tr$ and $R_z$ prove assertion~(iii).  
	
	To verify item~(i), we note first that by~(iii) and anti-duality $\Phi_z$ has the bounded extension
	\begin{equation} \label{def_Phi_z_tilde}
	  \widetilde{\Phi}_z := (\Phi_z^*)': H^{-1/2}(\Sigma; \mathbb{C}^N) \rightarrow L^2(\mathbb{R}^\theta; \mathbb{C}^N) = H^0(\Omega_+; \mathbb{C}^N) \oplus H^0(\Omega_-; \mathbb{C}^N).
	\end{equation}
	Next, we show the statement for $r = \frac{1}{2}$. If we manage to do that, then the claim for $r \in [0, \frac{1}{2})$ follows from~\eqref{def_Phi_z_tilde} and interpolation. 
	
	To prove the claim for $r = \frac{1}{2}$ we note that with~\eqref{equation_ker_Gamma_0} one can show for $z \in \rho(H)$ the direct sum decomposition
	\begin{equation*}
		\dom T  = \dom H  \dot{+}  \ker (T -  z ) =   \ker \Gamma_0 \dot{+}  \ker (T -  z ),
	\end{equation*}
	which allows us to define the auxiliary operator
	\begin{equation}\label{eq_gamma}
		\widehat{\Phi}_z := (\Gamma_0 \upharpoonright \ker (T-  z))^{-1}.
	\end{equation}
	We remark that this is the usual formula for the $\gamma$-field corresponding to a (quasi or generalized) boundary triple.
	Note that the properties of the trace operator in Proposition~\ref{proposition_trace_theorem} imply that $\ran \Gamma_0 =H^{1/2}(\Sigma;\C^N)$ and we also have $\dom T = H^{1}(\R^\theta \setminus \Sigma;\C^N)$. Thus, $\widehat{\Phi}_z$ is a linear operator from $H^{1/2}(\Sigma;\C^N)$ to $H^1(\R^\theta \setminus \Sigma;\C^N)$. Next, we show that $\widehat{\Phi}_z$ is a restriction of $\Phi_z$. To see this, we observe for $v \in  L^2(\R^\theta;\C^N)$, $\varphi \in H^{1/2}(\Sigma;\C^N)$, and  $u = R_{\overline{z}}v = (H-\overline{z})^{-1} v \in \dom H = \ker \Gamma_0$ with the help of \eqref{eq_Greens_formula} that
	\begin{equation*}%\label{eq_gamma_adj}
		\begin{split}
			(\widehat{\Phi}_z \varphi, v)_{L^2(\R^\theta;\C^N)} &= (\widehat{\Phi}_z \varphi, (H-\overline{ z})u )_{L^2(\R^\theta;\C^N)} \\
			&= (\widehat{\Phi}_z \varphi, H u)_{L^2(\R^\theta;\C^N)} -  ( z \widehat{\Phi}_z \varphi, u)_{L^2(\R^\theta;\C^N)}\\
			&= (\widehat{\Phi}_z \varphi, T u)_{L^2(\R^\theta;\C^N)} -  (T \widehat{\Phi}_z \varphi, u)_{L^2(\R^\theta;\C^N)}\\
			&=- (\Gamma_1\widehat{\Phi}_z \varphi, \Gamma_0 u)_{L^2(\Sigma;\C^N)}  + (\Gamma_0\widehat{\Phi}_z \varphi, \Gamma_1 u)_{L^2(\Sigma;\C^N)} \\
			&= \spv{\varphi}{\Gamma_1 R_{\overline{ z}}v}_{L^2(\Sigma;\C^N)} \\
			&=  \spv{\varphi}{\Phi_z^*v}_{L^2(\Sigma;\C^N)}=\spv{\Phi_z \varphi}{v}_{L^2(\R^{\theta};\C^N)} . 
		\end{split}
	\end{equation*}
	Since this holds for all $v \in L^2(\mathbb{R}^\theta; \mathbb{C}^N)$ we conclude  $\widehat{\Phi}_z \varphi = \Phi_z \varphi$, i.e. $\widehat{\Phi}_z = \Phi_z \upharpoonright H^{1/2}(\Sigma; \mathbb{C}^N)$. In particular, $\Phi_z \varphi \in \ker (T-z)$ by~\eqref{eq_gamma}, which yields item~(ii).
	Eventually, we show that this restriction of $\Phi_z$ is bounded from $H^{1/2}(\Sigma; \mathbb{C}^N)$ to $H^1(\R^\theta \setminus \Sigma;\C^N)$. To see this, we show that $\widehat{\Phi}_z$ is closed with respect to these spaces.
	But this follows from the $L^2$ boundedness of $\Phi_z$ and the fact that $H^{1/2}(\Sigma;\C^N) $  and $H^1(\R^\theta \setminus \Sigma;\C^N)$ are continuously embedded in $L^2(\Sigma;\C^N)$ and $L^2(\R^\theta;\C^N)$, respectively.  Thus, the closed graph theorem shows that
	\begin{equation*}
	  \widehat{\Phi}_z = \Phi_z \upharpoonright H^{1/2}(\Sigma; \mathbb{C}^N): H^{1/2}(\Sigma; \mathbb{C}^N) \rightarrow H^1(\Omega_+; \mathbb{C}^N) \oplus H^1(\Omega_-; \mathbb{C}^N)
	\end{equation*}
	is bounded, which finishes the proof.
\end{proof}

\begin{proof}[Proof of Proposition~\ref{proposition_C_z}]
	(i) First, it follows from Proposition~\ref{prop_Phi_z}~(i) and~\eqref{def_C_z} that $\mathcal{C}_z$ is a bounded operator in $H^{1/2}(\Sigma;\C^N)$. Next, we show that the anti-dual $\mathcal{C}_{\overline{ z}}'$ of $\mathcal{C}_{\overline{ z}}$, which is a bounded map in $H^{-1/2}(\Sigma; \mathbb{C}^N)$, is an extension of $\mathcal{C}_z$. To see this, let $\varphi, \psi \in H^{1/2}(\Sigma;\C^N)$. We use~\eqref{eq_Greens_formula}, Proposition~\ref{prop_Phi_z}~(ii),~\eqref{eq_gamma}, and the definition of $\mathcal{C}_z$ to obtain
	\begin{equation*}
	\begin{split}
	0&=(T\widehat{\Phi}_z \varphi, \widehat{\Phi}_{\overline{ z}} \psi)_{L^2(\R^\theta;\C^N)} - (\widehat{\Phi}_z \varphi, T \widehat{\Phi}_{\overline{ z}} \psi )_{L^2(\R^\theta;\C^N)} \\
	&= \spv{\mathcal{C}_z \varphi}{\psi}_{L^2(\Sigma;\C^N)}  - \spv{\varphi}{\mathcal{C}_{\overline{ z}} \psi}_{L^2(\Sigma;\C^N)} \\
	&= \spf{\mathcal{C}_z \varphi}{\psi}_{H^{-1/2}(\Sigma;\C^N) \times H^{1/2}(\Sigma;\C^N) }  - \spf{\varphi}{\mathcal{C}_{\overline{ z}} \psi}_{H^{-1/2}(\Sigma;\C^N) \times H^{1/2}(\Sigma;\C^N) } \\
	&= \spf{(\mathcal{C}_z - \mathcal{C}_{\overline{ z}}')\varphi}{\psi}_{H^{-1/2}(\Sigma;\C^N) \times H^{1/2}(\Sigma;\C^N) },
	\end{split}
	\end{equation*}
	where $\spf{\cdot}{\cdot}_{H^{-1/2}(\Sigma;\C^N) \times H^{1/2}(\Sigma;\C^N) }$ denotes the sesquilinear duality product, which is anti-linear in the second argument, on  $H^{-1/2}(\Sigma;\C^N) \times H^{1/2}(\Sigma;\C^N)$.
	Hence, $\mathcal{C}_{\overline{ z}}'$ is an extension of $\mathcal{C}_z$ which is bounded in $H^{-{1/2}}(\Sigma;\C^N)$, i.e. the claim is true for $r = -\frac{1}{2}$. By interpolation, we conclude that $\mathcal{C}_z$ gives rise to a bounded map in $H^r(\Sigma; \mathbb{C}^N)$ for any $r \in [-\frac{1}{2}, \frac{1}{2}]$. 
	
	(ii) First, for $\varphi \in H^{1/2}(\Sigma;\C^N)$ the definition of $\mathcal{C}_z$ in~\eqref{def_C_z} and the relation~\eqref{eq_gamma} imply
	\begin{equation} \label{Plemelj_Sokhotsky}
	\begin{split}
	\mathcal{C}_ z \varphi &= \frac{1}{2} \tr^+ (	\Phi_z \varphi)_+ + \frac{1}{2} \tr^- (	\Phi_z \varphi)_- =  \mp \frac{1}{2} (\tr^+ (	\widehat{\Phi}_z \varphi)_+ - \tr^- (	\widehat{\Phi}_z \varphi )_- )   +\tr^\pm (	\Phi_z \varphi)_\pm \\
	&=\pm  \frac{i}{2} (\alpha \cdot \nu ) \Gamma_0	\widehat{\Phi}_z \varphi  +\tr^\pm (	\Phi_z \varphi)_\pm =\pm  \frac{i}{2} (\alpha \cdot \nu )\varphi  +\tr^\pm (	\Phi_z \varphi)_\pm,
	\end{split}
	\end{equation}
	which is the claimed identity for $\varphi \in H^{1/2}(\Sigma;\C^N)$.
	If $\varphi  \in H^r(\Sigma;\C^N)$ and $r \in (0,\frac{1}{2})$, then the assertion follows from~\eqref{Plemelj_Sokhotsky} by continuity and density. 
\end{proof}

\begin{remark}
 We note that the results stated in Proposition~\ref{prop_Phi_z} and Proposition~\ref{proposition_C_z} are not optimal 
 in the sense of the maximal possible range of Sobolev indices $r$; the stated mapping properties can be proved for a wider range of Sobolev indices in a similar manner as in \cite[Proposition~4.4 and Corollary~4.5]{BHSS22}, but the present formulation is sufficient for the proof of Theorem~\ref{THEO_MAIN}.
 More precisely, Proposition~\ref{prop_Phi_z}~(i) can be extended to $r\in [-\tfrac{1}{2},\frac{1}{2}]$ and (ii) also remains valid for 
 $\varphi\in H^{r}(\Sigma; \mathbb{C}^N)$ with $r\in[-\tfrac{1}{2},\frac{1}{2}]$. Furthermore, the Plemelj-Sokhotsky formula in Proposition~\ref{proposition_C_z}~(ii) can be generalized to all $r\in [-\frac{1}{2},\frac{1}{2}]$.
\end{remark}}

\subsection*{Declarations.} 
The authors have no competing interests to declare that are relevant to the content of this article.

\subsection*{Acknowledgement.}
This research was funded in whole by the Austrian Science Fund (FWF) 10.55776/P 33568-N. For the purpose of open access, the author has applied a CC BY public copyright licence to any Author Accepted Manuscript version arising from this submission. This publication is based upon work from COST Action CA 18232 MAT-DYN-NET, supported by COST (European Cooperation in Science and Technology), www.cost.eu.

\bibliographystyle{abbrv}

\end{document}